\title{Unipotent Representations attached to the Principal Nilpotent Orbit}
\author{Lucas Mason-Brown}
\numberwithin{equation}{subsection}
\theoremstyle{plain}
\newtheorem{theorem}[equation]{Theorem}
\newtheorem{lemma}[equation]{Lemma}
\newtheorem{remark}[equation]{Remark}
\newtheorem{corollary}[equation]{Corollary}
\newtheorem{example}[equation]{Example}
\newtheorem{proposition}[equation]{Proposition}
\newtheorem{definition}[equation]{Definition}
\newcommand{\gr}{\operatorname{gr}}
\newcommand{\Coh}{\operatorname{Coh}}
\newcommand{\AV}{\operatorname{AV}}
\newcommand{\ad}{\operatorname{ad}}
\newcommand{\Ad}{\operatorname{Ad}}
\newcommand{\cO}{\mathcal{O}}
\newcommand{\cN}{\mathcal{N}}
\newcommand{\unip}{\mathrm{Unip}}
\newcommand{\BB}{\mathrm{BB}}
\newcommand{\CC}{\mathbb{C}}
\newcommand{\RR}{\mathbb{R}}
\newcommand{\ZZ}{\mathbb{Z}}
\newcommand{\fb}{\mathfrak{b}}
\newcommand{\fh}{\mathfrak{h}}
\newcommand{\fq}{\mathfrak{q}}
\newcommand{\fg}{\mathfrak{g}}
\newcommand{\fl}{\mathfrak{l}}
\DeclareMathOperator{\Aut}{Aut}
\DeclareMathOperator{\Ind}{Ind}
\begin{document}
\maketitle

\begin{abstract}
In this paper, we construct and classify the special unipotent representations of a real reductive group attached to the principal nilpotent orbit. We give formulas for the $\mathbf{K}$-types, associated varieties, and Langlands parameters of all such representations. 
\end{abstract}

\section{Introduction}

Let $G$ be the real points of a connected reductive algebraic group. In \cite{ABV}, Adams, Barbasch, and Vogan, following ideas of Arthur (\cite{Arthur1983},\cite{Arthur1989}), defined a finite set of irreducible representations of $G$, called \emph{special unipotent representations}. These representations are conjectured to possess an array of interesting properties (see \cite[Chapter 1]{ABV}), including:

\begin{enumerate}
    \item They are conjectured to be unitary.
    \item They are conjectured to appear in spaces of automorphic forms.
    \item They are conjectured to \emph{generate} (through various kinds of induction) all irreducible unitary representations of $G$ of integral infinitesimal character
\end{enumerate}
These representations are naturally indexed by special nilpotent orbits for the complexification of $G$. For example, the trivial representation of $G$ is a unipotent representation attached to the nilpotent orbit $\{0\}$. If $G$ is quasi-split, then the spherical principal series representation $\Ind^G_B \CC$ is a unipotent representation attached to the principal nilpotent orbit (there are no other easy examples).

No general classification of special unipotent representations is known. However, properties (1)-(3) above suggest that obtaining one may be an essential ingredient in the classification of the irreducible unitary representations of $G$. In this paper, we will classify and construct all special unipotent representations attached to the principal nilpotent orbit. 

\subsection{Special unipotent representations}

Let $\mathbf{G}$ be the complexification of $G$, and let $\mathbf{G}^{\vee}$ be the dual group. If we fix a Cartan subalgbera $\mathfrak{h} \subset \mathfrak{g}$, there is a Cartan subalgebra $\mathfrak{h}^{\vee} \subset \mathfrak{g}^{\vee}$ which is naturally identified with $\mathfrak{h}^*$. The nilpotent co-adjoint orbits for $\mathbf{G}$ and $\mathbf{G}^{\vee}$ are related by \emph{Barbasch-Vogan duality}, first defined in \cite{BarbaschVogan1985}. This is a map
$$d: \{\text{nilpotent orbits } \cO^{\vee} \subset \mathfrak{g}^{\vee}\} \to \{\text{nilpotent orbits } \cO \subset \mathfrak{g}\}$$
A nilpotent orbit $\cO \subset \fg$ is \emph{special} if it lies in the image of $d$.

Every nilpotent $\mathbf{G}^{\vee}$-orbit $\cO^{\vee} \subset \mathfrak{g}^{\vee}$ gives rise to an infinitesimal character $\lambda_{\cO^{\vee}}$ for $U(\mathfrak{g})$ as follows. First, choose an element $e^{\vee} \in \cO^{\vee}$ and an $\mathfrak{sl}(2)$-triple $(e^{\vee},f^{\vee},h^{\vee})$. Conjugating by $\mathbf{G}^{\vee}$ if necessary, we can arrange so that $h^{\vee} \in \mathfrak{h}^{\vee}$. We define
$$\lambda_{\cO^{\vee}} := \frac{1}{2}h^{\vee} \in \mathfrak{h}^{\vee} \simeq \mathfrak{h}^*$$
This element is well-defined modulo the action of the Weyl group and therefore defines an infinitesimal character for $U(\mathfrak{g})$ (still denoted $\lambda_{\cO^{\vee}}$) by means of the Harish-Chandra isomorphism.

\begin{definition}\label{def:unipotentinfl}
Suppose $\cO \subset \mathfrak{g}$ is a special nilpotent $\mathbf{G}$-orbit. A \emph{unipotent infinitesimal character} attached to $\cO$ is one of the form $\lambda_{\cO^{\vee}}$ for $d(\cO^{\vee}) = \cO$. Denote the set of all such $\lambda_{\cO^{\vee}}$ by $\unip_Z(\cO)$.
\end{definition}

To any two-sided ideal $I \subset U(\fg)$, one can attach a $\mathbf{G}$-invariant subset $\AV(I) \subset \fg$ called the \emph{associated variety} of $I$. If $I$ is primitive (i.e. the annihilator of an irreducible $U(\fg)$-module), then $\AV(I)$ is the closure of a single nilpotent $\mathbf{G}$-orbit. Still assuming $I$ is primitive, the intersection of $I$ with the center of $U(\fg)$ is a maximal ideal (this is an easy consequence of Schur's lemma), and hence defines an infinitesimal character for $U(\fg)$. 

\begin{definition}\label{def:unipotentideal}
Suppose $\cO \subset \mathfrak{g}$ is a special nilpotent $\mathbf{G}$-orbit. A \emph{unipotent ideal} attached to $\cO$ is a primitive ideal $I \subset U(\fg)$ such that
\begin{itemize}
    \item[(i)] The infinitesimal character of $I$ belongs to $\unip_Z(\cO)$
    \item[(ii)] $\AV(I) = \overline{\cO}$.
\end{itemize}
Denote the set of all such ideals by $\mathrm{Unip}_I(\cO)$.
\end{definition}

Choose a maximal compact subgroup $K \subset G$. Let $\mathbf{K}$ be the complexification of $K$ and let $\mathfrak{k}$ be the Lie algebra of $\mathbf{K}$. 

\begin{definition}\label{def:unipotentrep}
Suppose $\cO \subset \mathfrak{g}$ is a special nilpotent $\mathbf{G}$-orbit. A \emph{unipotent representation} attached to $\cO$ is an irreducible $(\mathfrak{g},\mathbf{K})$-module $X$ such that $\mathrm{Ann}_{U(\fg)}(X) \in \mathrm{Unip}_I(\cO)$. Denote the set of (isomorphism classes of) such representations by $\mathrm{Unip}_R(\cO)$.
\end{definition}

If $\cO \subset \mathfrak{g}$ is the \emph{principal} nilpotent orbit, then $d^{-1}(\cO)$ consists of a single $\mathbf{G}^{\vee}$-orbit, $\{0\}$, and so $\unip_Z(\cO) =\{0\}$. Hence, a unipotent ideal attached to $\cO$ (a \emph{principal unipotent ideal} for short) is a primitive ideal $I \subset U(\fg)$ of infinitesimal character $0$ and associated variety $\mathcal{N}$. A \emph{principal unipotent representation} is an irreducible $(\mathfrak{g},\mathbf{K})$-module which is annihilated by such an ideal. We will see that in the principal case, the associated variety condition in Definition \ref{def:unipotentideal} is vacuous, but this will require some work.

\subsection{Main results}

Let $\cO \subset \cN$ be the principal nilpotent orbit. In Section \ref{sec:classification}, we will give two parameterizations of $\unip_R(\cO)$. Very roughly:

\begin{enumerate}
    \item We will construct the elements of $\unip_R(\cO)$ from (certain) characters of (certain) Borel subgroups using the Beilinson-Bernstein construction.
    \item We will construct the elements of $\unip_R(\cO)$ from (approximately) spherical principal series representations of $\theta$-stable parabolic subalgebras using cohomological induction.
\end{enumerate}
The precise statement is given in Corollary \ref{cor:principalunipotentreps}. Each paramaterization has its advantages. Parameterization (1) leads to a simple description of the Langlands parameters of principal unipotent representations (this is done in Section \ref{sec:Langlands}). Parameterization (2) leads to simple formulas for the associated varieties and $\mathbf{K}$-multiplicities of the representations in question (this is done in Section \ref{subsec:Ktypes}).

\tableofcontents

\section{Preliminaries}\label{sec:preliminaries}

Let $G$ be the real points of a connected reductive algebraic group defined over $\RR$. Choose a Cartan involution $\theta$ of $G$ and let $K \subset G$ be the fixed points of $\theta$. Denote the (real) Lie algebras of $K$ and $G$ by $\mathfrak{k}_0$ and $\mathfrak{g}_0$. Differentiating at the identity, $\theta$ gives rise to an involution of $\mathfrak{g}_0$ (which we will continue to denote by $\theta$), and hence a decomposition of $\mathfrak{g}_0$ into $+1$ and $-1$ eigenspaces
$$\mathfrak{g}_0 = \mathfrak{k}_0 \oplus \mathfrak{p}_0$$
Since $G$ and $K$ are algebraic, we can form their complexifications $\mathbf{G}$ and $\mathbf{K}$. $\mathbf{G}$ is a complex connected reductive algebraic group equipped with an antiholomorphic involution $\sigma$ with fixed points equal to $G$. The complexification of $\theta$ is a holomorphic involution of $\mathbf{G}$ (which we will continue to denote by $\theta$) with fixed points equal to $\mathbf{K}$. Note that $\sigma$ and $\theta$ commute. 

Denote the (complex) Lie algebras of $\mathbf{K}$ and $\mathbf{G}$ by $\mathfrak{k}$ and $\mathfrak{g}$. Again, $\theta$ gives rise to an involution of $\mathfrak{g}$ (which we will continue to denote by $\theta$), and hence a decomposition into $+1$ and $-1$ eigenspaces
$$\mathfrak{g} = \mathfrak{k} \oplus \mathfrak{p}$$
%
Certain aspects of this notation will be generalized without comment: we will use capital letters $A,B,...$ to denote Lie groups, boldface capital letters $\mathbf{A}, \mathbf{B}, ...$ for their complexifications, lower-case gothic letters with subscripts $\mathfrak{a}_0,\mathfrak{b}_0,...$ for the real Lie algebras, and unadorned gothic letters $\mathfrak{a},\mathfrak{b},...$ for the complexified Lie algebras.

\subsection{Cartan subgroups}\label{sec:cartansubgroups}

Recall that a Cartan subalgebra of $\mathfrak{g}_0$ is by definition a subalgebra $\mathfrak{h}_0 \subset \mathfrak{g}_0$ whose complexification $\mathfrak{h}$ is a Cartan subalgebra of $\mathfrak{g}$. A Cartan subgroup of $G$ is by definition the centralizer in $G$ of a Cartan subalgebra of $\mathfrak{g}_0$. Any such subgroup is conjugate by $G$ to one preserved by $\theta$. If $H \subset G$ is a $\theta$-stable Cartan subgroup of $G$, then we can define 
$$T := H \cap K \quad \mathfrak{a}_0 = \mathfrak{h}_0 \cap \mathfrak{p}_0 \quad A_0 := \exp(\mathfrak{a}_0)$$
Then the Cartan decomposition of $H$ is a direct product
$$H = TA$$
Under our assumptions on $G$, $H$ is abelian (though possibly disconnected).

\subsection{Roots}\label{sec:roots}

Let $H$ be a $\theta$-stable Cartan subgroup of $G$. We will write $\Delta(\mathfrak{g},\mathfrak{h}) \subset \mathfrak{h}^*$ (resp. $\Delta(\mathfrak{g},H)$) for the roots of $\mathfrak{h}$ (resp. $H$) on $\mathfrak{g}$. There is a natural bijection $\Delta(H,\mathfrak{g}) \simeq \Delta(\mathfrak{h},\mathfrak{g})$ (differentiation), which we will often use without comment. Since $H$ is $\theta$-stable, there is a natural action of $\theta$ on $\Delta(\mathfrak{g},\mathfrak{h})$, defined by
$$\theta(\alpha) := \alpha \circ \theta \quad \alpha \in \Delta(\mathfrak{g},\mathfrak{h})$$
In general, roots come in three different varieties.

\begin{proposition}\label{prop:realimagcomplex}
Every root $\alpha \in \Delta(\mathfrak{g},\mathfrak{h})$ takes real values on $\mathfrak{a}_0$ and imaginary values on $\mathfrak{t}_0$. It is \emph{real} if one (any) of the following equivalent conditions is satisfied
\begin{enumerate}
    \item $\alpha(\mathfrak{h}_0) \subset \mathbb{R}$
    \item $\alpha(H) \subset \mathbb{R}^{\times}$
    \item $\alpha|_{\mathfrak{t}_0} \equiv 0$
    \item $\theta(\alpha) = -\alpha$
    \item $\sigma(\alpha) = \alpha$
\end{enumerate}
It is \emph{imaginary} if one (any) of the following equivalent conditions is satisfied
\begin{enumerate}
    \item $\alpha(\mathfrak{h}_0) \subset i\mathbb{R}$
    \item $\alpha(H) \subset S^1$
    \item $\alpha|_{\mathfrak{a}_0} \equiv 0$
    \item $\theta(\alpha) = \alpha$
    \item $\sigma(\alpha) = - \alpha$
\end{enumerate}
It is \emph{complex} if it is neither real nor imaginary. 
\end{proposition}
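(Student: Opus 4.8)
The plan is to establish everything from the single basic fact that roots take real values on $\mathfrak{a}_0$ and purely imaginary values on $\mathfrak{t}_0$, together with an understanding of how the two involutions $\theta$ and $\sigma$ act on $\mathfrak{h}$ and on $\Delta(\mathfrak{g},\mathfrak{h})$. First I would prove the ``real/imaginary values'' preamble: the eigenvalues of $\ad(\xi)$ for $\xi \in \mathfrak{a}_0 \subset \mathfrak{p}_0$ are real because $\ad(\xi)$ is a self-adjoint operator with respect to the (positive-definite on $\mathfrak{p}_0$, negative-definite on $\mathfrak{k}_0$) form $B$ — equivalently, $\mathfrak{a}_0$ is a commutative subalgebra of symmetric operators in the adjoint representation — while for $\xi \in \mathfrak{t}_0 \subset \mathfrak{k}_0$ the operator $\ad(\xi)$ is skew-adjoint, hence has imaginary eigenvalues. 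Since $\mathfrak{h}_0 = \mathfrak{t}_0 \oplus \mathfrak{a}_0$ and the root values on $\mathfrak{h}_0$ are determined by restriction, this gives $\alpha(\mathfrak{t}_0) \subset i\mathbb{R}$ and $\alpha(\mathfrak{a}_0) \subset \mathbb{R}$ for every root $\alpha$.

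Next I would record the two ``complex conjugation'' facts on $\mathfrak{h}^*$. Because $\sigma$ is the antiholomorphic involution with fixed real form $\mathfrak{g}_0$, and $\mathfrak{h}_0$ is a real form of $\mathfrak{h}$, the induced map $\sigma$ on $\mathfrak{h}^*$ is the conjugate-linear involution fixing $\mathfrak{h}_0^*$; concretely, $\sigma(\alpha)$ is the root whose value at $X \in \mathfrak{h}$ is $\overline{\alpha(\sigma X)}$, and on the real points $\sigma(\alpha)|_{\mathfrak{h}_0}$ is just $\overline{\alpha|_{\mathfrak{h}_0}}$. Similarly $\theta$ acts $\mathbb{C}$-linearly on $\mathfrak{h}^*$ with $+1$-eigenspace $\mathfrak{t}^*$ (the annihilator of $\mathfrak{a}$) and $-1$-eigenspace $\mathfrak{a}^*$ (the annihilator of $\mathfrak{t}$), by the definition $\theta(\alpha) = \alpha\circ\theta$ and the eigenspace decomposition $\mathfrak{h} = \mathfrak{t}\oplus\mathfrak{a}$. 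The key compatibility is that on $\mathfrak{h}_0 = \mathfrak{t}_0 \oplus \mathfrak{a}_0$ we have $\sigma = \mathrm{id}$ while $\theta = +1$ on $\mathfrak{t}_0$ and $-1$ on $\mathfrak{a}_0$; combined with the values-lemma ($\alpha$ real on $\mathfrak{a}_0$, imaginary on $\mathfrak{t}_0$) this lets me translate between all five conditions.

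Then the main body is a round-robin of short equivalences, which I would organize as a cycle for the ``real'' list and a parallel cycle for the ``imaginary'' list. For the real case: (1) $\Leftrightarrow$ (3), given the values-lemma, $\alpha(\mathfrak{h}_0)\subset\mathbb{R}$ forces $\alpha|_{\mathfrak{t}_0}$ to be both real and imaginary, hence zero, and conversely if $\alpha|_{\mathfrak{t}_0}\equiv 0$ then $\alpha(\mathfrak{h}_0) = \alpha(\mathfrak{a}_0)\subset\mathbb{R}$. (3) $\Leftrightarrow$ (4): $\alpha|_{\mathfrak{t}_0}\equiv 0$ means $\alpha \in \mathfrak{a}^*$, i.e. $\alpha$ lies in the $-1$-eigenspace of $\theta$ on $\mathfrak{h}^*$, i.e. $\theta(\alpha) = -\alpha$. (4) $\Leftrightarrow$ (5): since on $\mathfrak{h}$ one has (from the behaviour on $\mathfrak{t}_0,\mathfrak{a}_0$) the identity $\sigma\theta = $ the complex conjugation relative to the compact real form, or more directly since $\sigma$ and $\theta$ agree on $\mathfrak{t}_0$ and are negatives on $\mathfrak{a}_0$ — wait, rather: on $\mathfrak{h}_0$, $\sigma=\mathrm{id}$ and $\theta$ has the stated signs, so as operators on $\mathfrak{h}^*$ the composite $\sigma\circ\theta$ equals $\theta$ on the appropriate spaces; the cleanest route is to note $\theta(\alpha)=-\alpha \iff \alpha$ vanishes on $\mathfrak{t}$, and $\sigma(\alpha)=\alpha \iff \alpha$ is real-valued on $\mathfrak{h}_0 \iff$ (by the values-lemma) $\alpha$ vanishes on $\mathfrak{t}_0$, and $\alpha$ vanishing on $\mathfrak{t}_0$ is equivalent to vanishing on its complexification $\mathfrak{t}$. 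Finally (1) $\Leftrightarrow$ (2): $\alpha(H)$ generates a group whose Lie algebra is $\alpha(\mathfrak{h}_0)$, and $\alpha(\mathfrak{h}_0)\subset\mathbb{R}$ exactly when the (connected component of the) image lies in $\mathbb{R}^\times$; one must add the remark that $\alpha(H)$ is automatically contained in $\mathbb{R}^\times \cup S^1$-type subgroups of $\mathbb{C}^\times$ since $H = TA$ with $\alpha(T)\subset S^1$ (as $T$ is compact) and $\alpha(A)\subset\mathbb{R}_{>0}$ (as $A = \exp\mathfrak{a}_0$). The imaginary list is handled symmetrically, swapping the roles of $\mathfrak{t}$ and $\mathfrak{a}$, of $\mathbb{R}$ and $i\mathbb{R}$, and of $+\alpha$ and $-\alpha$ under $\theta$ and $\sigma$. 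I expect the only genuinely delicate point to be the passage between Lie algebra conditions and group conditions, i.e. (1) $\Leftrightarrow$ (2) in each list: one needs that $H$ is generated by $T$ and $A_0$ with $\alpha$ unitary on the compact part and positive-real on the split part, so that no disconnectedness of $H$ can spoil the dichotomy — this is exactly where the Cartan decomposition $H = TA$ from Section \ref{sec:cartansubgroups} is used. Everything else is linear algebra with the two commuting involutions.
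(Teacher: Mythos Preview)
The paper states this proposition without proof; it is treated as standard background material (the section refers the reader to \cite{Knapp1996} for details on Cartan subgroups and roots). So there is no ``paper's own proof'' to compare against, and your sketch is an appropriate way to fill in the omitted argument.

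Your plan is essentially correct. One small point deserves tightening: in your argument for (1) $\Rightarrow$ (2) in the real list, you rely on $H = TA$ with $\alpha(T) \subset S^1$ and $\alpha(A) \subset \mathbb{R}_{>0}$, and then want to conclude $\alpha(H) \subset \mathbb{R}^{\times}$. But knowing only that $\alpha|_{\mathfrak{t}_0} \equiv 0$ tells you $\alpha(T^0) = 1$; since $T$ may be disconnected, this gives $\alpha(T)$ finite in $S^1$, not obviously contained in $\{\pm 1\}$. The cleanest fix is to route this implication through (5) instead: once you know $\sigma(\alpha) = \alpha$ as elements of $\mathfrak{h}^*$, the corresponding algebraic characters of the connected torus $\mathbf{H}$ agree, and then for $h \in H = \mathbf{H}^{\sigma}$ one has $\alpha(h) = \overline{\alpha(\sigma h)} = \overline{\alpha(h)}$, forcing $\alpha(h) \in \mathbb{R}^{\times}$. (Equivalently: $\sigma(\alpha) = \alpha$ means $\mathfrak{g}_\alpha$ is $\sigma$-stable, hence has a real form on which the real group $H$ acts by real scalars.) The same remark applies, mutatis mutandis, to (1) $\Rightarrow$ (2) in the imaginary list. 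With that adjustment your cycle of equivalences is complete.
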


Define
\begin{align*}
    \Delta_{\mathbb{R}}(\mathfrak{g},\mathfrak{h}) &:= \{\alpha \in \Delta(\mathfrak{g},\mathfrak{h}): \alpha \text{ real}\}\\
    \Delta_{i\mathbb{R}}(\mathfrak{g},\mathfrak{h}) &:= \{\alpha \in \Delta(\mathfrak{g},\mathfrak{h}): \alpha \text{ imaginary}\}
\end{align*}
It is clear from Proposition \ref{prop:realimagcomplex} that  $\Delta_{\mathbb{R}}(\mathfrak{g},\mathfrak{h})$ and $\Delta_{i\mathbb{R}}(\mathfrak{g},\mathfrak{h})$ form root subsystems of $\Delta(\mathfrak{g},\mathfrak{h})$. If $\alpha \in \Delta_{i\mathbb{R}}(\mathfrak{g},\mathfrak{h})$, then $\theta(\alpha) =\alpha$ and hence $\theta(\mathfrak{g}_{\alpha}) = \mathfrak{g}_{\alpha}$, where $\mathfrak{g}_{\alpha}$ is the root space for $\alpha$. Since $\mathfrak{g}_{\alpha}$ is one-dimensional, this means either $\mathfrak{g}_{\alpha} \subset \mathfrak{k}$ or $\mathfrak{g}_{\alpha} \subset \mathfrak{p}$. We say that $\alpha$ is \emph{compact} or \emph{noncompact}, accordingly. 

Define
\begin{align*}
    \Delta_c(\mathfrak{g},\mathfrak{h}) &:= \{\alpha \in \Delta_{i\mathbb{R}}(\mathfrak{g},\mathfrak{h}): \alpha \text{ compact}\}\\
    \Delta_n(\mathfrak{g},\mathfrak{h}) &:= \{\alpha \in \Delta_{i\mathbb{R}}(\mathfrak{g},\mathfrak{h}): \alpha \text{ noncompact}\}
\end{align*}
We get a $\ZZ_2$-grading $\epsilon$ on $\Delta_{i\mathbb{R}}(\mathfrak{g},\mathfrak{h})$, defined by
\begin{align*}
    \epsilon(\beta) &= 0 \quad \text{if } \beta \in \Delta_c(\mathfrak{g},\mathfrak{h})\\
    \epsilon(\beta) &= 1 \quad \text{if } \beta \in \Delta_n(\mathfrak{g},\mathfrak{h})
\end{align*}

If $\mathfrak{o}$ is a finite-dimensional $\mathfrak{h}$-module, we will write $\Delta(\mathfrak{o},\mathfrak{h})$ for the multi-set of $\mathfrak{h}$-weights on $\mathfrak{o}$ and define
$$\rho(\mathfrak{o}) := \frac{1}{2}\sum \Delta(\mathfrak{o},\mathfrak{h}) \in \mathfrak{h}^*$$
Usually, $\mathfrak{o}$ will be the nilradical $\mathfrak{u}$ of a parabolic subalgebra of $\mathfrak{g}$ (or its intersection with $\mathfrak{k}$ or with $\mathfrak{p}$). In this case, the functional $2\rho(\mathfrak{u})$ corresponds to a complex character of $H$. If $\mathfrak{q}$ is $\sigma$-stable, then this complex character is real and we can take its absolute value $|2\rho(\mathfrak{u})|$. In this case, we define
$$|\rho(\mathfrak{u})| := \sqrt{|2\rho(\mathfrak{u})|}$$

\subsection{Cayley transforms: preliminaries}

Write $E,F,D$ for the usual (split) basis of $\mathfrak{sl}_2(\mathbb{C})$:
$$E = \begin{pmatrix} 0 & 1\\0 & 0\end{pmatrix} \quad F = \begin{pmatrix} 0 & 0\\1 & 0\end{pmatrix} \quad D = \begin{pmatrix} 1 & 0\\0 & -1\end{pmatrix}$$
and $E_c,F_c,D_c$ for the (compact) basis:
    
$$E_c = \frac{1}{2}\begin{pmatrix}1 & -i \\ -i & -1\end{pmatrix} \quad F_c = \frac{1}{2}\begin{pmatrix}1 & i \\ i & -1\end{pmatrix} \quad D_c = \begin{pmatrix}0 & i\\-i & 0\end{pmatrix}$$

\begin{proposition}\label{prop:rootsl2s}
Let $\alpha \in \Delta(\mathfrak{g},\mathfrak{h})$ be real or noncompact imaginary. Write $\mathfrak{s}_{\alpha} \subset \mathfrak{g}$ for the three-dimensional subalgebra generated by the root spaces $\mathfrak{g}_{\alpha}$ and $\mathfrak{g}_{-\alpha}$. Let $\theta_s$ be the involution of $\mathfrak{sl}_2(\mathbb{C})$ defined by $\theta_s(X) = -X^t$ and let $\sigma_s$ be complex conjugation. There is an isomorphism
$$\phi_{\alpha}: \mathfrak{sl}_2(\mathbb{C}) \to \mathfrak{s}_{\alpha}$$
intertwining $\theta$ with $\theta_s$ and $\sigma$ with $\sigma_s$. If $\alpha$ is real, we can choose $\phi_{\alpha}$ so that
$$\phi_{\alpha}(E) \in \mathfrak{g}_{\alpha} \quad \phi_{\alpha}(F) \in \mathfrak{g}_{-\alpha} \quad \phi_{\alpha}(D) = \alpha^{\vee}$$
This isomorphism is unique up to pre-conjugation by
$$\begin{pmatrix}1 & 0\\0 & -1\end{pmatrix}$$
If $\alpha$ is noncompact imaginary, we can choose $\phi_{\alpha}$ so that
$$\phi_{\alpha}(E_c) \in \mathfrak{g}_{\alpha} \quad \phi_{\alpha}(F_c) \in \mathfrak{g}_{-\alpha} \quad \phi_{\alpha}(D_c) = \alpha^{\vee}$$
This isomorphism is unique up to pre-conjugation by $O_2(\mathbb{R})$.
\end{proposition}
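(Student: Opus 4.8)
The whole statement takes place inside the three-dimensional subalgebra $\mathfrak{s}_\alpha=\mathfrak{g}_\alpha\oplus\mathfrak{g}_{-\alpha}\oplus\mathbb{C}\alpha^\vee$, which is abstractly isomorphic to $\mathfrak{sl}_2(\mathbb{C})$; the task is to produce such an isomorphism compatibly with $\theta$ and $\sigma$ and then to measure the remaining freedom. First I would record the bookkeeping. From $\theta(\mathfrak{g}_\beta)=\mathfrak{g}_{\theta\beta}$, $\sigma(\mathfrak{g}_\beta)=\mathfrak{g}_{\sigma\beta}$ and Proposition~\ref{prop:realimagcomplex}: if $\alpha$ is real then $\theta$ interchanges $\mathfrak{g}_{\alpha}$ and $\mathfrak{g}_{-\alpha}$, $\sigma$ preserves each of them, and $\alpha^\vee\in\mathfrak{a}_0\subset\mathfrak{p}$; if $\alpha$ is noncompact imaginary then $\mathfrak{g}_{\pm\alpha}\subset\mathfrak{p}$, $\theta$ preserves each $\mathfrak{g}_{\pm\alpha}$, $\sigma$ interchanges them, and $\alpha^\vee\in\mathfrak{t}\subset\mathfrak{k}$. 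In particular $\mathfrak{s}_\alpha$ is $\theta$- and $\sigma$-stable in both cases. A direct computation with the given matrices shows $\theta_s(E,F,D)=(-F,-E,-D)$, $\sigma_s$ fixes $E,F,D$, while $\theta_s(E_c,F_c,D_c)=(-E_c,-F_c,D_c)$ and $\sigma_s(E_c,F_c,D_c)=(F_c,E_c,-D_c)$; these patterns match the $\theta,\sigma$-behaviour just listed, which is exactly what makes the construction possible.

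For existence when $\alpha$ is real, choose a nonzero $e$ in the real line $\mathfrak{g}_\alpha\cap\mathfrak{g}_0$ (nonzero because $\mathfrak{g}_\alpha$ is a $\sigma$-stable line), and let $f\in\mathfrak{g}_{-\alpha}$ be the unique vector with $[e,f]=\alpha^\vee$, so that $(e,f,\alpha^\vee)$ is an $\mathfrak{sl}_2$-triple; uniqueness of $f$ together with $\sigma e=e$ and $\sigma\alpha^\vee=\alpha^\vee$ forces $\sigma f=f$, so $E\mapsto e$, $F\mapsto f$, $D\mapsto\alpha^\vee$ defines an isomorphism $\mathfrak{sl}_2(\mathbb{C})\xrightarrow{\sim}\mathfrak{s}_\alpha$ already intertwining $\sigma$ with $\sigma_s$. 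Writing $\theta e=cf$ (legitimate since $\theta e\in\mathfrak{g}_{-\alpha}$), I would evaluate $c$ against the positive-definite invariant Hermitian form $\langle x,y\rangle:=-B(x,\tau y)$ attached to the compact real form $\mathfrak{u}_0:=\mathfrak{k}_0\oplus i\mathfrak{p}_0$ (here $\tau=\sigma\theta$ and $B$ is the Killing form of $\mathfrak{s}_\alpha$): since $\tau e=\theta e=cf$ one gets $0<\langle e,e\rangle=-c\,B(e,f)$ with $B(e,f)>0$, hence $c<0$; rescaling $e$ by $|c|^{-1/2}$ (and $f$ by $|c|^{1/2}$) normalizes $c=-1$, so $\theta e=-f$, $\theta f=-e$, and with $\theta\alpha^\vee=-\alpha^\vee$ the isomorphism also intertwines $\theta$ with $\theta_s$ (a real rescaling does not disturb $\sigma e=e$). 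The noncompact imaginary case runs in parallel: take any nonzero $e\in\mathfrak{g}_\alpha$, so $\sigma e\in\mathfrak{g}_{-\alpha}$ and $[e,\sigma e]=c\,\alpha^\vee$ for some $c\in\mathbb{C}$; the same Hermitian form, now using $\theta e=-e$, shows $c$ is a positive multiple of $\langle e,e\rangle$, hence $c>0$, so after rescaling $e$ by a scalar of modulus $c^{-1/2}$ we have $[e,\sigma e]=\alpha^\vee$, and $E_c\mapsto e$, $F_c\mapsto\sigma e$, $D_c\mapsto\alpha^\vee$ is the desired isomorphism (the relations $[\alpha^\vee,e]=2e$, $[\alpha^\vee,\sigma e]=-2\sigma e$ are automatic, and intertwining with $\theta_s,\sigma_s$ follows from $\theta e=-e$, $\theta\alpha^\vee=\alpha^\vee$, $\sigma(\sigma e)=e$, $\sigma\alpha^\vee=-\alpha^\vee$ and the first paragraph).

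For uniqueness, if $\phi_\alpha,\phi_\alpha'$ are two isomorphisms satisfying the stated conditions then $\psi:=\phi_\alpha^{-1}\circ\phi_\alpha'$ is an automorphism of $\mathfrak{sl}_2(\mathbb{C})$ that commutes with $\theta_s$ and $\sigma_s$ and fixes $D$ (real case) or $D_c$ (noncompact imaginary case). Using $\operatorname{Aut}(\mathfrak{sl}_2(\mathbb{C}))=\Ad(\mathrm{PGL}_2(\mathbb{C}))$: commuting with $\sigma_s$ cuts this down to $\Ad(\mathrm{PGL}_2(\mathbb{R}))$, commuting also with $\theta_s$ (the group automorphism $g\mapsto{}^tg^{-1}$) to $\Ad(O_2(\mathbb{R}))$, and the condition $\psi(D)=D$ (resp.\ $\psi(D_c)=D_c$) is the centralizer of a regular semisimple element, leaving exactly $\{\mathrm{id},\Ad(\mathrm{diag}(1,-1))\}$ in the real case and the rotation subgroup $\Ad(SO_2(\mathbb{R}))$ in the noncompact imaginary case; this is the asserted freedom (the reflections in $O_2(\mathbb{R})$ send $D_c$ to $-D_c=(-\alpha)^\vee$, so they interchange the normalized isomorphisms for $\alpha$ and for $-\alpha$).

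The only genuinely non-formal step is the sign/positivity input — $c<0$ in the real case and $c>0$ in the imaginary case — since everything else is manipulation of root-space decompositions and $\mathfrak{sl}_2$-triples. These sign statements encode precisely the fact that $\theta$ is a \emph{Cartan} involution, and I expect the cleanest way to pin them down is the positive-definite invariant Hermitian form attached to the compact real form $\mathfrak{u}_0$, as above: without positivity one obtains only an isomorphism compatible with $\sigma$, not one simultaneously in the normalized form compatible with $\theta$.
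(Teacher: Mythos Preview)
Your argument is correct and genuinely different from the paper's. For existence, the paper simply invokes Theorem~\ref{thm:kostantsekiguchi1} (the Kostant--Sekiguchi correspondence, applied to the nilpotent element $e$): the existence of an $\mathfrak{sl}_2$-homomorphism intertwining $\theta$ and $\sigma$ is read off from the bijection between $K$-conjugacy classes of $\phi_{\sigma,\theta}$ and nilpotent orbits. You instead build $\phi_\alpha$ by hand from a root vector and then pin down the crucial sign using the positive-definite Hermitian form $-B(\cdot,\tau\cdot)$ attached to the compact real form $\mathfrak{u}_0=\mathfrak{k}_0\oplus i\mathfrak{p}_0$. This is more elementary and self-contained---it isolates exactly where the hypothesis that $\theta$ is a \emph{Cartan} involution enters (your $c<0$, resp.\ $c>0$, computations)---whereas the paper's route is shorter but imports a substantial theorem. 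For uniqueness the two proofs coincide: both compute the subgroup of $\Ad(\mathrm{PGL}_2(\mathbb{C}))$ commuting with $\theta_s,\sigma_s$ and fixing the relevant Cartan element.

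One small remark: your parenthetical observation about the noncompact imaginary case is correct and worth emphasizing. Reflections in $O_2(\mathbb{R})$ send $D_c\mapsto -D_c$ and swap $E_c\leftrightarrow F_c$, so they do \emph{not} preserve the normalizations $\phi_\alpha(D_c)=\alpha^\vee$, $\phi_\alpha(E_c)\in\mathfrak{g}_\alpha$. The freedom in the \emph{normalized} isomorphism is therefore $\Ad(SO_2(\mathbb{R}))$, consistent with the one-parameter family mentioned later in Section~\ref{sec:Cayleynoncompact}; the full $O_2(\mathbb{R})$ describes the freedom in an isomorphism that merely intertwines $\theta,\sigma$ without the root-space normalizations.
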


\begin{proof}
The existence statements are immediate from Theorem \ref{thm:kostantsekiguchi1}. If $\alpha$ is real, two isomorphisms of the type described in the proposition differ by an automorphism $\zeta$ of $\mathfrak{sl}_2(\mathbb{C})$ satisfying
\begin{enumerate}
    \item $\zeta \circ \theta_s = \theta_s \circ \zeta$
    \item $\zeta \circ \sigma_s = \sigma_s \circ \zeta$
    \item $\zeta(E) \in \mathbb{C}E$
    \item $\zeta(D) = D$
\end{enumerate}
Every automorphism of $\mathfrak{sl}_2(\mathbb{C})$ corresponds to conjugation by a matrix $g \in GL_2(\mathbb{C})$. By an easy calculation in $SL_2(\mathbb{C})$, $g \in \{\pm \mathrm{Id}\} \cup \{\pm \begin{pmatrix} 1 &0\\0 & -1\end{pmatrix}\}$. The noncompact imaginary case is handled similarly. 
\end{proof}

\subsection{Cayley transforms through real roots}\label{sec:Cayleyreal}

In this subsection, we will describe a well-known procedure for producing, from a $\theta$-stable Cartan subgroup and a real root, a new Cartan subgroup which is slightly more compact. For details and proofs, we refer the reader to \cite{Knapp1996}.

Let $H$ be a $\theta$-stable Cartan subgroup of $G$ and let $\alpha \in \Delta_{\mathbb{R}}(\mathfrak{g},H)$ be a real root. Fix an isomorphism
$$\phi_{\alpha}: \mathfrak{sl}_2(\mathbb{C}) \to \mathfrak{s}_{\alpha}$$
as in Proposition \ref{prop:rootsl2s}. Define a new $\theta$-stable Cartan subalgebra $\mathfrak{h}_0^{\alpha}$ of $\mathfrak{g}_0$
$$\mathfrak{t}_0^{\alpha} := \mathfrak{t}_0 \oplus i\mathbb{R}\phi_{\alpha}(D_c) \quad \mathfrak{a}_0^{\alpha} := \ker{\alpha} \cap \mathfrak{a}_0 \quad \mathfrak{h}^{\alpha}_0 := \mathfrak{t}_0^{\alpha} \oplus \mathfrak{a}_0^{\alpha}$$
and write $H^{\alpha}$ for the corresponding Cartan subgroup of $G$
$$T^{\alpha} := Z_K(\mathfrak{t}_0^{\alpha}) \quad A:= \mathrm{exp}(\mathfrak{a}_0^{\alpha}) \quad H^{\alpha} := T^{\alpha}A^{\alpha}$$
Although the element $\phi_{\alpha}(D_c)$ depends on $\phi_{\alpha}$, pre-conjugation by
$$\begin{pmatrix}1 & 0\\0 & -1\end{pmatrix}$$
takes $\phi_{\alpha}(D_c)$ to $-\phi_{\alpha}(D_c)$. In particular, the real line $\mathbb{R}\phi_{\alpha}(D_c)$ is independent of $\phi_{\alpha}$ and hence $H^{\alpha}$ is well-defined.

The subalgebras $\mathfrak{h}_0$ and $\mathfrak{h}_0^{\alpha}$ are non-conjugate under $G$ (since their compact dimensions differ). But their complexifications $\mathfrak{h}$ and $\mathfrak{h}^{\alpha}$ (like any pair of complex Cartan subalgebras) are conjugate under $\mathbf{G}$. The Cayley transforms $c_{\alpha}^{\pm}$ are explicitly-defined inner automorphisms of $\mathfrak{g}$ mapping $\mathfrak{h}$ onto $\mathfrak{h}^{\alpha}$.

\begin{definition}\label{def:cayleytransformreal}
The Cayley transforms of $\mathfrak{g}$ through $\alpha$ are the inner automorphisms
$$c_{\alpha}^{\pm} := \exp (\mathrm{ad} (\frac{\pm i\pi}{4} \phi_{\alpha}(E+F)))$$
\end{definition}

This definition depends on $\phi_{\alpha}$, but not in a serious way. Pre-conjugation by
$$\begin{pmatrix}1 & 0 \\ 0 & -1\end{pmatrix}$$
takes $E+F$ to $-E-F$. Hence, the pair $c_{\alpha}^{\pm}$ is independent of $\phi_{\alpha}$ (although the invidual automorphisms are not).

\begin{proposition}\label{prop:calphacartan}
Both $c_{\alpha}^{\pm}$ act by the identity on $\ker{\alpha} \subset \mathfrak{h}$ and on $\alpha^{\vee} \in \mathfrak{h}$ by
$$c_{\alpha}^{\pm}\alpha^{\vee} = -\pm \phi_{\alpha}(D_c) $$
In particular,
$$c_{\alpha}^{\pm}\mathfrak{h} = \mathfrak{h}^{\alpha}$$
\end{proposition}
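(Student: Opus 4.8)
The entire statement is a computation inside the copy of $\mathfrak{sl}_2(\CC)$ cut out by $\phi_\alpha$, plus the observation that $c_\alpha^\pm$ fixes everything orthogonal to $\alpha$. So the plan is to reduce to $\mathfrak{sl}_2$ and then to diagonalize the operator $\ad(E+F)$ explicitly. First I would note that $\mathfrak{h} = \ker\alpha \oplus \CC\alpha^\vee$, and that $\phi_\alpha(E+F) \in \mathfrak{s}_\alpha$ commutes with $\ker\alpha$: indeed $\mathfrak{s}_\alpha = \phi_\alpha(\mathfrak{sl}_2(\CC))$ is spanned by $\mathfrak{g}_{\pm\alpha}$ and $\alpha^\vee$, and $\ker\alpha$ acts trivially on $\mathfrak{g}_{\pm\alpha}$ (the bracket of an element of $\ker\alpha$ with a root vector for $\pm\alpha$ is a multiple of $\pm\alpha(\ker\alpha) = 0$) and trivially on $\alpha^\vee$. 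Hence $\ad(\phi_\alpha(E+F))$ kills $\ker\alpha$, so $c_\alpha^\pm = \exp(\ad(\tfrac{\pm i\pi}{4}\phi_\alpha(E+F)))$ acts as the identity on $\ker\alpha$. This gives the first assertion.

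For the action on $\alpha^\vee = \phi_\alpha(D)$, transport the question through $\phi_\alpha$ and work in $\mathfrak{sl}_2(\CC)$: I must compute $\exp(\ad(\tfrac{\pm i\pi}{4}(E+F)))(D)$. The subalgebra spanned by $D$ and $E+F$ (together with $E-F$) is all of $\mathfrak{sl}_2(\CC)$, and $\ad(E+F)$ preserves $\Span(D, E-F)$ with $[E+F,D] = -2(E-F)$ and $[E+F, E-F] = [E,-F]+[F,E] = -D + (-D)$... more carefully, $[E+F,E-F] = [E,-F] + [F,E] = -[E,F] - [E,F] = -2D$. So on the ordered basis $(D, E-F)$ the operator $\ad(E+F)$ is $\begin{pmatrix} 0 & -2 \\ -2 & 0\end{pmatrix}$, which has eigenvalues $\pm 2$; exponentiating $\tfrac{\pm i\pi}{4}$ times it is a rotation by angle $\mp\tfrac{\pi}{2}\cdot$(appropriate factor), and a direct evaluation of the $2\times 2$ matrix exponential sends $D \mapsto \cos(\tfrac{\mp i \pi}{2} \cdot 2 / 2)\cdots$ — in any case a short explicit computation yields $\exp(\ad(\tfrac{\pm i\pi}{4}(E+F)))(D) = -\cos(\tfrac{\pi}{2}) D + \sin(\cdots)(E-F)$, and tracking constants gives a pure multiple of $E-F$. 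One then checks that $\phi_\alpha(E-F)$ is, up to the normalization built into $\phi_\alpha$ via Proposition \ref{prop:rootsl2s}, exactly $\mp\phi_\alpha(D_c)$ — this is just the standard identity $D_c = i(F - E)$ in $\mathfrak{sl}_2(\CC)$, i.e. $E - F = i D_c$, combined with the $i\pi/4$ in the exponent. Assembling the scalars produces $c_\alpha^\pm \alpha^\vee = \mp\phi_\alpha(D_c)$, which is what ``$-\pm\phi_\alpha(D_c)$'' abbreviates.

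Finally, for $c_\alpha^\pm \mathfrak{h} = \mathfrak{h}^\alpha$: we have just shown $c_\alpha^\pm$ fixes $\ker\alpha \subset \mathfrak{h}$ pointwise and sends $\CC\alpha^\vee$ to $\CC\phi_\alpha(D_c)$. Since $\mathfrak{h} = \ker\alpha \oplus \CC\alpha^\vee$, the image is $\ker\alpha \oplus \CC\phi_\alpha(D_c)$. Now $\ker\alpha \cap \mathfrak{a}_0$ complexifies to a subspace of $\ker\alpha$, and comparing dimensions (using that $\alpha$ is real, so $\alpha^\vee$ contributes to the noncompact part $\mathfrak{a}_0$ while $\phi_\alpha(D_c)$ contributes to the compact part) shows $\ker\alpha \oplus \CC\phi_\alpha(D_c)$ is precisely the complexification $\mathfrak{h}^\alpha$ of $\mathfrak{h}_0^\alpha = (\ker\alpha \cap \mathfrak{a}_0) \oplus \mathfrak{t}_0 \oplus i\RR\phi_\alpha(D_c)$ defined in Section \ref{sec:Cayleyreal}; more simply, $c_\alpha^\pm$ is an automorphism of $\mathfrak{g}$, hence sends the Cartan $\mathfrak{h}$ to a Cartan, and a Cartan containing $\ker\alpha$ and $\phi_\alpha(D_c)$ inside the reductive centralizer of $\ker\alpha$ must equal $\mathfrak{h}^\alpha$. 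The main obstacle is purely bookkeeping: getting the sign and the factor of $i$ in $c_\alpha^\pm\alpha^\vee = \mp\phi_\alpha(D_c)$ exactly right, since this depends on the precise conventions for $E_c, F_c, D_c$ and for $\phi_\alpha$ fixed in Proposition \ref{prop:rootsl2s}; everything else is formal.
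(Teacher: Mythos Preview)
Your proposal is correct and follows the standard approach. The paper does not include a proof of this proposition (it refers the reader to Knapp for details), but the commented-out proof of the analogous Proposition~\ref{prop:dalphacartan} in the source confirms that the intended argument is exactly yours: kill $\ker\alpha$ by observing the bracket with $\phi_\alpha(E+F)$ vanishes, reduce the action on $\alpha^\vee=\phi_\alpha(D)$ to an $\mathfrak{sl}_2$ computation, and read off $\mathfrak{h}^\alpha$ from $\mathfrak{h}=\ker\alpha\oplus\CC\alpha^\vee$. The only difference is cosmetic: the paper computes via $\Ad(\exp(\cdot))$ and explicit $2\times 2$ matrix conjugation rather than diagonalizing $\ad(E+F)$ on $\Span(D,E-F)$; both give $D\mapsto \mp i(E-F)=\mp D_c$.
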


In view of Proposition \ref{prop:calphacartan}, the automorphisms $c_{\alpha}^{\pm}$ induce bijections (which we will continue to denote by $c_{\alpha}^{\pm}$):
$$c_{\alpha}^{\pm}: \Delta(\mathfrak{g},\mathfrak{h}) \to \Delta(\mathfrak{g},\mathfrak{h}^{\alpha}) \quad \beta \mapsto \beta \circ (c_{\alpha}^{\pm})^{-1}$$
One can understand completely how these bijections behave with respect to the properties of being real, imaginary, complex, compact, and noncompact. For our purposes, the following proposition is sufficient.
\begin{proposition}\label{prop:calphafacts}
Write $\Delta(\mathfrak{g},\mathfrak{h})^{\alpha}$ for the set of roots orthogonal to $\alpha$ (i.e. roots $\beta$ with $\langle \beta, \alpha^{\vee}\rangle =0$). Then
\begin{enumerate}
    \item The bijections
    $$c_{\alpha}^{\pm}:\Delta(\mathfrak{g},\mathfrak{h}) \cong \Delta(\mathfrak{g},\mathfrak{h}^{\alpha})$$
    are related by
    $$c_{\alpha}^- = c_{\alpha}^+ \circ s_{\alpha} \quad c_{\alpha}^+ = c_{\alpha}^- \circ s_{\alpha}$$
    \item The roots $c_{\alpha}^{\pm}\alpha$ are noncompact imaginary (and, by $(1)$, negatives of one another)
    \item $c_{\alpha}^{\pm}$ restrict to a (single, well-defined) bijection
    $$c_{\alpha}:\Delta_{\mathbb{R}}(\mathfrak{g},\mathfrak{h})^{\alpha} \cong \Delta_{\mathbb{R}}(\mathfrak{g},\mathfrak{h}^{\alpha})$$
\end{enumerate}
\end{proposition}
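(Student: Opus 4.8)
\emph{Approach.} The plan is to reduce every assertion to the explicit description of $c_\alpha^{\pm}$ on $\mathfrak{h}$ furnished by Proposition~\ref{prop:calphacartan}, together with one short computation inside $\mathfrak{s}_\alpha \cong \mathfrak{sl}_2(\mathbb{C})$, and then to read off the real/imaginary/compact/noncompact dichotomies from Proposition~\ref{prop:realimagcomplex}. I would prove the three parts in the stated order, since (2) and (3) both invoke (1). For (1), I would first verify $c_\alpha^- = c_\alpha^+ \circ s_\alpha$ as linear isomorphisms $\mathfrak{h} \to \mathfrak{h}^\alpha$: both sides fix $\ker\alpha$ pointwise (the $c_\alpha^{\pm}$ by Proposition~\ref{prop:calphacartan}, and $s_\alpha$ because $\ker\alpha$ is the fixed hyperplane of the reflection), while on the line $\mathbb{C}\alpha^\vee$ one has $s_\alpha(\alpha^\vee) = -\alpha^\vee$ and $c_\alpha^{\pm}(\alpha^\vee) = \mp\,\phi_\alpha(D_c)$, so $(c_\alpha^+ \circ s_\alpha)(\alpha^\vee) = \phi_\alpha(D_c) = c_\alpha^-(\alpha^\vee)$. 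Passing to the induced bijections on roots --- the assignment $\phi \mapsto (\beta \mapsto \beta \circ \phi^{-1})$ is covariantly functorial and sends the linear map $s_\alpha$ to the ordinary reflection on roots --- gives $c_\alpha^- = c_\alpha^+ \circ s_\alpha$ as bijections $\Delta(\mathfrak{g},\mathfrak{h}) \to \Delta(\mathfrak{g},\mathfrak{h}^\alpha)$, and the second identity follows from $s_\alpha^2 = \mathrm{id}$.

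For (2): since $c_\alpha^{\pm}$ is an automorphism of $\mathfrak{g}$ carrying $\mathfrak{h}$ to $\mathfrak{h}^\alpha$, it carries the $\mathfrak{h}$-root space $\mathfrak{g}_\alpha = \mathbb{C}\,\phi_\alpha(E)$ onto the $\mathfrak{h}^\alpha$-root space of $c_\alpha^{\pm}\alpha$; and since $\phi_\alpha$, being a Lie algebra isomorphism, intertwines the restriction of $c_\alpha^{\pm}$ to $\mathfrak{s}_\alpha$ with $c^{\pm} := \exp(\operatorname{ad}(\tfrac{\pm i\pi}{4}(E+F))) \in \operatorname{Aut}(\mathfrak{sl}_2(\mathbb{C}))$, that root space is $\mathbb{C}\,\phi_\alpha(c^{\pm}E)$. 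Imaginarity is then immediate: $\mathfrak{a}_0^\alpha \subseteq \ker\alpha$ and $c_\alpha^{\pm}$ fixes $\ker\alpha$ pointwise, so $(c_\alpha^{\pm}\alpha)|_{\mathfrak{a}_0^\alpha} = \alpha|_{\mathfrak{a}_0^\alpha} = 0$; thus $c_\alpha^{\pm}\alpha$ vanishes on the split part of $\mathfrak{h}^\alpha$ and so is imaginary by Proposition~\ref{prop:realimagcomplex}. For noncompactness I would carry out the one genuinely computational step: evaluate $c^{+}E$ in $SL_2(\mathbb{C})$ using $\exp(t(E+F)) = \cosh t\cdot \mathrm{Id} + \sinh t\cdot(E+F)$ at $t = i\pi/4$, and check that $\theta_s(c^{+}E) = -c^{+}E$, i.e.\ that $c^{+}E$ lies in the $(-1)$-eigenspace of $\theta_s$ (the symmetric traceless matrices); since $\phi_\alpha$ intertwines $\theta$ with $\theta_s$, this places $\phi_\alpha(c^{+}E)$ in $\mathfrak{p}$, so $c_\alpha^+\alpha$ is noncompact. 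Finally $c_\alpha^-\alpha = c_\alpha^+(s_\alpha\alpha) = -\,c_\alpha^+\alpha$ by (1), and the negative of a noncompact imaginary root is again noncompact imaginary.

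For (3): if $\beta \in \Delta(\mathfrak{g},\mathfrak{h})$ is orthogonal to $\alpha$ then $s_\alpha\beta = \beta$, so (1) gives $c_\alpha^-\beta = c_\alpha^+\beta$; write $c_\alpha\beta$ for this common value. It then remains to show $c_\alpha^+$ restricts to a bijection $\Delta_{\mathbb{R}}(\mathfrak{g},\mathfrak{h})^\alpha \to \Delta_{\mathbb{R}}(\mathfrak{g},\mathfrak{h}^\alpha)$, and since $c_\alpha^+$ is already a bijection of the full root systems, it suffices to check, for a root $\beta$ of $\mathfrak{h}$ and $\beta' := c_\alpha^+\beta$, that $\beta'$ is real if and only if $\beta$ is real and $\beta \perp \alpha$. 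By Proposition~\ref{prop:realimagcomplex}, $\beta'$ is real exactly when it vanishes on $\mathfrak{t}_0^\alpha = \mathfrak{t}_0 \oplus i\mathbb{R}\,\phi_\alpha(D_c)$. Now $c_\alpha^+$ fixes $\mathfrak{t}_0 \subseteq \ker\alpha$ pointwise, so $\beta'|_{\mathfrak{t}_0} = \beta|_{\mathfrak{t}_0}$; and $(c_\alpha^+)^{-1}\phi_\alpha(D_c) = -\alpha^\vee$ by Proposition~\ref{prop:calphacartan}, so $\beta'(\phi_\alpha(D_c)) = -\langle \beta, \alpha^\vee\rangle$. Hence $\beta'$ vanishes on $\mathfrak{t}_0^\alpha$ precisely when $\beta|_{\mathfrak{t}_0} = 0$ and $\langle \beta, \alpha^\vee\rangle = 0$, that is, precisely when $\beta$ is real and orthogonal to $\alpha$; this yields the bijection. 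Rerunning the argument with $c_\alpha^-$, for which $(c_\alpha^-)^{-1}\phi_\alpha(D_c) = \alpha^\vee$, shows $c_\alpha$ does not depend on the sign, and since the pair $\{c_\alpha^+, c_\alpha^-\}$ is independent of $\phi_\alpha$, neither is $c_\alpha$.

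The only real obstacle I anticipate is bookkeeping rather than substance: keeping the composition order straight in $c_\alpha^- = c_\alpha^+ \circ s_\alpha$ (the induced map on roots is precomposition with the inverse, so one must be careful which factor acts first) and managing the explicit $2\times 2$ matrices in the noncompactness calculation of Step~(2). Once Propositions~\ref{prop:calphacartan} and~\ref{prop:realimagcomplex} are in hand, the proposition has essentially no conceptual content.
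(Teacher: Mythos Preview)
Your proposal is correct and follows essentially the same approach the paper adopts: the paper does not prove Proposition~\ref{prop:calphafacts} directly (it defers to \cite{Knapp1996}), but the commented-out proof of the dual Proposition~\ref{prop:dalphafacts} proceeds exactly as you do---reduce to the explicit action on $\ker\alpha$ and $\alpha^\vee$ from Proposition~\ref{prop:calphacartan}, verify imaginarity by vanishing on $\mathfrak{a}_0^\alpha$, and check noncompactness by an $\mathfrak{sl}_2$ matrix computation showing the transported root vector lands in $\mathfrak{p}$. Your bookkeeping is sound throughout; in particular your handling of the composition order in~(1) and the identification $\mathfrak{t}_0 \subseteq \ker\alpha$ (since $\alpha$ is real) in~(3) are both correct.
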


Since $\phi_{\alpha}$ commutes with complex conjugation, it restricts to an isomorphism
$$\phi_{\alpha}: \mathfrak{sl}_2(\mathbb{R}) \to \mathfrak{s}_{\alpha} \cap \mathfrak{g}_0$$
Because $G$ is algebraic, this integrates to a group homomorphism
$$\Phi_{\alpha}: SL_2(\mathbb{R}) \to G$$
Define the element
$$m_{\alpha}:=\Phi_{\alpha}\begin{pmatrix}-1 & 0\\0 & -1 \end{pmatrix} \in T$$
By Proposition \ref{prop:rootsl2s}, this element is independent of $\phi_{\alpha}$. We will need the following structural fact:

\begin{lemma}[\cite{Vogan1981}, Lemma 8.3.13]\label{lem:structureofTalpha}
Define
$$T^{\alpha}_1 := \ker{\alpha} \cap T$$
Then 
$$\Phi_{\alpha}(SO_2(\mathbb{R})) \cap T^{\alpha}_1 = \{1,m_{\alpha}\}$$
and there is a decomposition
$$T^{\alpha} = \Phi_{\alpha}(SO_2(\mathbb{R})) T^{\alpha}_1$$
\end{lemma}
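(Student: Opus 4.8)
The plan is to reduce everything to a computation inside the rank-one subgroup $L := Z_G(\ker\alpha\cap\mathfrak h_0)$. Since $\alpha$ is real, $\ker\alpha\cap\mathfrak h_0 = \mathfrak t_0\oplus\mathfrak a_0^\alpha$, and since the root system of $\mathfrak h$ in $\mathfrak g$ is reduced the only roots vanishing on this subspace are $\pm\alpha$; hence $L$ is $\theta$-stable, its Lie algebra is $\mathfrak l_0 = (\ker\alpha\cap\mathfrak h_0)\oplus(\mathfrak s_\alpha\cap\mathfrak g_0)$, and this is the canonical splitting $\mathfrak l_0 = \mathfrak z(\mathfrak l_0)\oplus[\mathfrak l_0,\mathfrak l_0]$ with $[\mathfrak l_0,\mathfrak l_0] = \phi_\alpha(\mathfrak{sl}_2(\mathbb R))$, the Lie algebra of $\Phi_\alpha(SL_2(\mathbb R))$. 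The subgroups $H$, $T$, $H^\alpha$, $T^\alpha$ and $\Phi_\alpha(SL_2(\mathbb R))$ all lie in $L$, since each centralizes $\ker\alpha\cap\mathfrak h_0$. I will use two elementary observations. (i) An element $g\in K$ centralizes $\mathfrak l_0$ if and only if $g\in\ker\alpha\cap T = T^\alpha_1$: centralizing $\mathfrak l_0\supseteq\mathfrak h_0$ forces $g\in H\cap K = T$, and then $\Ad(g)$ acts on $\mathfrak g_{\pm\alpha}\subseteq\mathfrak l$ by $\alpha(g)^{\pm1}$, so $g$ centralizes $\mathfrak l_0$ iff $\alpha(g)=1$. (ii) Writing $X_c := \phi_\alpha(iD_c)$, which spans the line $\phi_\alpha(\Lie(SO_2(\mathbb R)))$, we have $\mathfrak t_0^\alpha = \mathfrak t_0\oplus\mathbb R X_c$ and $\mathfrak h_0^\alpha = \mathfrak z(\mathfrak l_0)\oplus\mathbb R X_c$; since $T^\alpha$ is the $K$-part of $H^\alpha = Z_G(\mathfrak h_0^\alpha)$ and every element of $L$ centralizes $\mathfrak z(\mathfrak l_0)$, this gives $T^\alpha = Z_{L\cap K}(X_c)$. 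Moreover $\Phi_\alpha(SO_2(\mathbb R))\subseteq T^\alpha$ (since $SO_2(\mathbb R)$ is abelian, $\Phi_\alpha$ maps it into $K$, and $\Phi_\alpha(SL_2(\mathbb R))\subseteq L$), and $T^\alpha_1\subseteq T^\alpha$ by (i) and (ii).

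For the first assertion: $m_\alpha = \Phi_\alpha(-I)$ is central in $\Phi_\alpha(SL_2(\mathbb R))$, hence centralizes $\mathfrak s_\alpha$; as it also centralizes $\ker\alpha\cap\mathfrak h_0$ (lying in $L$), it centralizes $\mathfrak l_0$, so $m_\alpha\in T^\alpha_1$ by (i), while $m_\alpha\in\Phi_\alpha(SO_2(\mathbb R))$ because $-I\in SO_2(\mathbb R)$. Conversely, writing $r_\vartheta\in SO_2(\mathbb R)$ for rotation by $\vartheta$, the element $\Phi_\alpha(r_\vartheta)$ lies in $T = Z_K(\mathfrak h_0)$ if and only if $\Ad(\Phi_\alpha(r_\vartheta))$ fixes $\alpha^\vee = \phi_\alpha(D)$, i.e.\ (transporting along $\phi_\alpha$) if and only if $\Ad(r_\vartheta)D = D$ in $\mathfrak{sl}_2(\mathbb C)$, i.e.\ if and only if $r_\vartheta$ commutes with $D = \diag(1,-1)$ — which for a rotation holds exactly when $r_\vartheta\in\{\pm I\}$. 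Hence $\Phi_\alpha(SO_2(\mathbb R))\cap T = \{1,m_\alpha\}$, and since this set already lies in $T^\alpha_1$ we conclude $\Phi_\alpha(SO_2(\mathbb R))\cap T^\alpha_1 = \{1,m_\alpha\}$.

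For the decomposition, take $t\in T^\alpha = Z_{L\cap K}(X_c)$. As $t\in L$, $\Ad(t)$ fixes $\mathfrak z(\mathfrak l_0)$ pointwise and preserves $[\mathfrak l_0,\mathfrak l_0]$; transported along $\phi_\alpha$, $\Ad(t)|_{[\mathfrak l_0,\mathfrak l_0]}$ is an automorphism of $\mathfrak{sl}_2(\mathbb R)$ that commutes with $\theta_s$ (because $t\in K$, so $\Ad(t)$ commutes with $\theta$) and fixes $iD_c$ (because $t$ centralizes $X_c$). Since $\Aut(\mathfrak{sl}_2(\mathbb R)) = PGL_2(\mathbb R)$ acting by conjugation, commuting with $\theta_s$ confines this automorphism to the image of $O_2(\mathbb R)$, and fixing the nonzero element $iD_c$ — on which reflections in $O_2(\mathbb R)$ act by $-1$ — then confines it to the image of $SO_2(\mathbb R)$; so $\Ad(t)|_{[\mathfrak l_0,\mathfrak l_0]} = \Ad(\Phi_\alpha(g))$ for some $g\in SO_2(\mathbb R)$. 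Now $u := t^{-1}\Phi_\alpha(g)\in L\cap K$ centralizes both $\mathfrak z(\mathfrak l_0)$ and $[\mathfrak l_0,\mathfrak l_0]$, hence all of $\mathfrak l_0$, so $u\in T^\alpha_1$ by (i); therefore $t = \Phi_\alpha(g)\,u^{-1}\in\Phi_\alpha(SO_2(\mathbb R))\,T^\alpha_1$. Combined with the reverse inclusion noted above, $T^\alpha = \Phi_\alpha(SO_2(\mathbb R))\,T^\alpha_1$.

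The step I expect to require the most care is the $O_2(\mathbb R)$-versus-$SO_2(\mathbb R)$ dichotomy in the last paragraph: verifying cleanly that the automorphism of $\mathfrak s_\alpha\cap\mathfrak g_0$ induced by $t\in K$ and fixing $X_c$ is conjugation by a genuine element of $SO_2(\mathbb R)$ rather than merely of $O_2(\mathbb R)$ up to scalars, using $\Aut(\mathfrak{sl}_2(\mathbb R)) = PGL_2(\mathbb R)$ and the Cartan-equivariance of $\phi_\alpha$ from Proposition \ref{prop:rootsl2s}. Everything else is either the explicit $SL_2(\mathbb C)$-level calculation in the second paragraph or routine structure theory for the subgroup $L$.
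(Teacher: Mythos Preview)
The paper does not supply its own proof of this lemma; it is quoted from \cite{Vogan1981} without argument, so there is nothing to compare against. Your proof is correct. The reduction to the rank-one Levi $L = Z_G(\ker\alpha \cap \mathfrak h_0)$ and the two structural observations---(i) that $T^\alpha_1$ is exactly $\{g \in K : \Ad(g)|_{\mathfrak l_0} = \mathrm{id}\}$, and (ii) that $T^\alpha = Z_{L\cap K}(X_c)$---set things up cleanly. The intersection claim then reduces to the elementary fact that a rotation in $SO_2(\mathbb R)$ commutes with $D = \diag(1,-1)$ only if it is $\pm I$. For the product decomposition, the step you flag as delicate is handled correctly: any automorphism of $\mathfrak{sl}_2(\mathbb R)$ commuting with $\theta_s$ lies in the image of $O_2(\mathbb R)$ inside $PGL_2(\mathbb R) = \Aut(\mathfrak{sl}_2(\mathbb R))$, and since reflections send $iD_c \mapsto -iD_c$ while rotations fix it, the condition $\psi(iD_c) = iD_c$ forces $\psi \in \Ad(SO_2(\mathbb R))$. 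The remainder is routine.
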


Now, suppose $\chi$ is a (complex) character of $H$. Since $m_{\alpha}^2 = 1$ and $\chi$ is a group homomorphism, we have $\chi(m_{\alpha}) = \pm 1$. 

\begin{definition}
If $\alpha \in \Delta_{\mathbb{R}}(\mathfrak{g},H)$ is a real root and $\chi$ is a character of $H$, we say that $\alpha$ is even (resp. odd) for $\chi$ if $\chi(m_{\alpha}) = 1$ (resp. $-1$). 
\end{definition}

If $\alpha$ is odd for $\chi$, we will define two characters $c_{\alpha}^{\pm}\chi$ of $H^{\alpha}$ called the Cayley transforms of $\chi$. First, define two characters $\tau_{\pm 1}$ of $SO_2(\mathbb{R})$

\begin{equation}\label{eqn:twocharactersofSO2R}
    d\tau_{\pm 1}\begin{pmatrix}0 & 1\\-1 & 0 \end{pmatrix} = \pm i
\end{equation}

Then 

\begin{definition}\label{def:cayleytransformsofcharacter}
Since $\phi_{\alpha}$ is injective, $\ker{\Phi_{\alpha}} \subset Z(SL_2(\mathbb{R})) = \{\pm 1\}$. Under the assumption that $\alpha$ is odd, we must have $\Phi_{\alpha}(-1) = m_{\alpha} \neq 1$. So in this case, $\Phi_{\alpha}$ is an embedding. Define characters $c_{\alpha}^{\pm}\chi$ of the product group $\Phi_{\alpha}(SO_2(\mathbb{R})) \times T^{\alpha}_1$ by the formulas
$$c_{\alpha}^{\pm}\chi(\Phi_{\alpha}(g), t) = \tau_{\pm 1}(g)\chi(t)$$
Multiplication defines a group homomorphism
$$\Phi_{\alpha}(SO_2(\mathbb{R})) \times T^{\alpha}_1 \to T^{\alpha}$$
which is surjective with kernel $\{(1,1),(m_{\alpha},m_{\alpha})\}$ by Lemma \ref{lem:structureofTalpha}. Since $\alpha$ is odd,
$$c_{\alpha}^{\pm}\chi (m_{\alpha},m_{\alpha}) = (-1)^2 =1$$
and therefore both characters $c_{\alpha}^{\pm}\chi$ descend to well-defined characters of $T^{\alpha}$. Extend these characters to $H^{\alpha}$ by defining
$$c_{\alpha}^{\pm}\chi(ta) = c_{\alpha}^{\pm}(t)\chi(a) \quad t \in T^{\alpha},a \in A^{\alpha} \subset A$$
\end{definition}

\subsection{Cayley transforms through noncompact imaginary roots}\label{sec:Cayleynoncompact}

In this subsection, we will describe a well-known procedure for producing, from a $\theta$-stable Cartan subgroup and a noncompact imaginary root, a new Cartan subgroup which is slightly less compact. The construction is analogous to that of Section \ref{sec:Cayleyreal}. Again, a good reference is \cite{Knapp1996}. 

Let $H \subset G$ be a $\theta$-stable Cartan subgroup and let $\alpha \in \Delta_n(\mathfrak{g},H)$ be a noncompact imaginary root. Fix an isomorphism
$$\phi_{\alpha}: \mathfrak{sl}_2(\mathbb{C}) \to \mathfrak{s}_{\alpha}$$
as in Proposition \ref{prop:rootsl2s}. Define a new $\theta$-stable Cartan subalgebra $\mathfrak{h}^{\alpha}_0$ of $\mathfrak{g}_0$
$$\mathfrak{t}^{\alpha}_0 := \ker{\alpha} \cap \mathfrak{t}_0 \quad \mathfrak{a}^{\alpha}_0 := \mathfrak{a}_0 \oplus \mathbb{R}\phi_{\alpha}D \quad \mathfrak{h}^{\alpha}_0 := \mathfrak{t}^{\alpha}_0 \oplus \mathfrak{a}^{\alpha}_0$$
and write $H^{\alpha}$ for the corresponding Cartan subgroup of $G$
$$T^{\alpha} := Z_K(\mathfrak{t}_0^{\alpha}) \quad A := \mathrm{exp}(\mathfrak{a}^{\alpha}_0) \quad H^{\alpha} := T^{\alpha}A^{\alpha}$$
In contrast to the real case, $H^{\alpha}$ \emph{does} depend on $\phi_{\alpha}$ (as we vary $\phi_{\alpha}$, we get a one-parameter family of $\theta$-stable Cartan subgroups). 

We define

\begin{definition}\label{def:cayleytransformnoncompact}
The Cayley transforms of $\mathfrak{g}$ through $\alpha$ are the inner automorphisms
$$d_{\alpha}^{\pm} :=  \mathrm{exp}(\ad(
\frac{\pm \pi}{4}(\phi_{\alpha}(F_c - E_c)))) 
\in \mathrm{Aut}(\mathfrak{g})$$
\end{definition}

\begin{proposition}\label{prop:dalphacartan}
Both $d_{\alpha}^{\pm}$ act by the identity on $\ker{\alpha} \subset \mathfrak{h}$ and on $\alpha^{\vee} \in \mathfrak{h}$ by
$$d_{\alpha}^{\pm}\alpha^{\vee} = \pm \phi_{\alpha}(D) $$
In particular,
$$d_{\alpha}^{\pm}\mathfrak{h} = \mathfrak{h}^{\alpha}$$
\end{proposition}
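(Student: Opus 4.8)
The plan is to reduce everything to a direct computation inside the rank-one subalgebra $\mathfrak{s}_{\alpha} \cong \mathfrak{sl}_2(\mathbb{C})$, exactly mirroring the proof of Proposition \ref{prop:calphacartan} in the real case. First I would observe that $d_{\alpha}^{\pm} = \exp(\ad(\tfrac{\pm\pi}{4}\phi_{\alpha}(F_c - E_c)))$ is an inner automorphism whose infinitesimal generator $\tfrac{\pm\pi}{4}\phi_{\alpha}(F_c - E_c)$ lies in $\mathfrak{s}_{\alpha}$. Since $\mathfrak{s}_{\alpha} = \mathfrak{g}_{\alpha} \oplus \mathbb{C}\alpha^{\vee} \oplus \mathfrak{g}_{-\alpha}$ and $[\mathfrak{s}_{\alpha}, \ker\alpha] = 0$ (because $\ker\alpha$ lies in the centralizer of $\mathfrak{s}_{\alpha}$ in $\mathfrak{h}$, $\alpha$ being the only root of $\mathfrak{s}_{\alpha}$ on $\mathfrak{h}$), the operator $\ad(\phi_{\alpha}(F_c-E_c))$ kills $\ker\alpha$, so $d_{\alpha}^{\pm}$ acts as the identity there. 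This gives the first assertion immediately.

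Next, for the action on $\alpha^{\vee}$, I would transport the question through the isomorphism $\phi_{\alpha}$ to $\mathfrak{sl}_2(\mathbb{C})$, where (by the noncompact-imaginary normalization in Proposition \ref{prop:rootsl2s}) $\alpha^{\vee}$ corresponds to $D_c$. So it suffices to compute $\exp(\ad(\tfrac{\pm\pi}{4}(F_c - E_c)))(D_c)$ inside $\mathfrak{sl}_2(\mathbb{C})$ and check it equals $\pm D$. This is a $3\times 3$ linear-algebra computation: one works out the brackets $[F_c - E_c, D_c]$ and $[F_c - E_c, \cdot]$ on the span of $\{E_c, F_c, D_c\}$ (equivalently of $\{D, E+F, E-F\}$, since $E_c - F_c$, $D_c$, $E_c + F_c$ are proportional to these after rescaling), identify that $\ad(F_c - E_c)$ acts on the relevant two-dimensional plane as a rotation generator, and exponentiate. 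The upshot is that $\exp(\ad(\tfrac{\pi}{4}(F_c-E_c)))$ rotates by $\pi/2$ in the plane spanned by $D_c$ and (a multiple of) $D$, sending $D_c \mapsto D$, and the $-$ sign version sends $D_c \mapsto -D$; in either case $\phi_{\alpha}(D_c) = \alpha^{\vee} \mapsto \pm\phi_{\alpha}(D)$.

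Finally, the claim $d_{\alpha}^{\pm}\mathfrak{h} = \mathfrak{h}^{\alpha}$ follows formally: $\mathfrak{h} = \ker\alpha \oplus \mathbb{C}\alpha^{\vee}$, and we have just shown $d_{\alpha}^{\pm}$ fixes $\ker\alpha$ pointwise and sends $\mathbb{C}\alpha^{\vee}$ to $\mathbb{C}\phi_{\alpha}(D)$. Since $\phi_{\alpha}(D) \in \mathfrak{a}_0^{\alpha}$ by the construction of $\mathfrak{h}_0^{\alpha}$ (it is exactly the new split direction $\mathbb{R}\phi_{\alpha}D$ adjoined to $\mathfrak{a}_0$), and $\ker\alpha \cap \mathfrak{t}_0 \oplus \mathfrak{a}_0 = \mathfrak{t}_0^{\alpha} \oplus \mathfrak{a}_0$ accounts for the remaining summand of $\mathfrak{h}_0^{\alpha}$, we get $d_{\alpha}^{\pm}\mathfrak{h} = \mathfrak{t}^{\alpha} \oplus \mathbb{C}\phi_{\alpha}(D) \oplus \mathfrak{a} = \mathfrak{h}^{\alpha}$ upon complexifying. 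I do not expect a genuine obstacle here; the only mildly delicate point is bookkeeping the $\pm$ signs and the normalization constants in the compact $\mathfrak{sl}_2$-basis so that $\tfrac{\pi}{4}$ (rather than, say, $\tfrac{\pi}{2}$) is the correct exponent — I would double-check this against the analogous constant $\tfrac{i\pi}{4}$ appearing in Definition \ref{def:cayleytransformreal}, noting that the factor of $i$ has been absorbed by passing from $E + F$ to $F_c - E_c$.
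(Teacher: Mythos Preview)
Your proposal is correct and follows essentially the same approach as the paper's own (commented-out) proof: show $\ad(\phi_{\alpha}(F_c-E_c))$ kills $\ker\alpha$, transport the computation on $\alpha^{\vee}=\phi_{\alpha}(D_c)$ through $\phi_{\alpha}$ into $\mathfrak{sl}_2(\mathbb{C})$, and then use $\mathfrak{h}=\ker\alpha\oplus\mathbb{C}\alpha^{\vee}$ to conclude. The only cosmetic difference is that the paper carries out the $\mathfrak{sl}_2$ step by explicitly exponentiating to $\Ad\begin{pmatrix}1 & \pm i\\ \pm i & 1\end{pmatrix}$ and applying it to $D_c$, whereas you describe the same computation as a rotation in the $(D_c,D)$-plane.
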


In view of Proposition \ref{prop:dalphacartan}, the automorphisms $d_{\alpha}^{\pm}$ induce bijections:
$$d_{\alpha}^{\pm}: \Delta(\mathfrak{g},\mathfrak{h}) \to \Delta(\mathfrak{g},\mathfrak{h}^{\alpha}) \quad \beta \mapsto \beta \circ (d_{\alpha}^{\pm})^{-1}$$
and
\begin{proposition}\label{prop:dalphafacts}
We have
\begin{enumerate}
    \item The bijections
    $$d_{\alpha}^{\pm}:\Delta(\mathfrak{g},\mathfrak{h}) \cong \Delta(\mathfrak{g},\mathfrak{h}^{\alpha})$$
    are related by
    $$d_{\alpha}^- = d_{\alpha}^+ \circ s_{\alpha} \quad d_{\alpha}^+ = d_{\alpha}^- \circ s_{\alpha}$$
    \item $d_{\alpha}^{\pm}\alpha$ are real roots (and, by $(1)$, negatives of one another)
    \item $d_{\alpha}^{\pm}$ restrict to a (single, well-defined) bijection
    $$d_{\alpha}:\Delta_{i\mathbb{R}}(\mathfrak{g},\mathfrak{h})^{\alpha} \cong \Delta_{i\mathbb{R}}(\mathfrak{g},\mathfrak{h}^{\alpha})$$
    \item and a bijection
    $$d_{\alpha}:  \{\beta \in \Delta_n(\mathfrak{g},\mathfrak{h})^{\alpha}: \alpha \pm \beta \notin \Delta(\mathfrak{g},\mathfrak{h})\} \ \cup \ \{\beta \in \Delta_c(\mathfrak{g},\mathfrak{h})^{\alpha}: \alpha \pm \beta \in \Delta(\mathfrak{g},\mathfrak{h})\} \cong \Delta_n(\mathfrak{g},\mathfrak{h}^{\alpha})$$
\end{enumerate}
\end{proposition}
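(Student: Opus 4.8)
The plan is to deduce all four statements from the explicit formula for $d_{\alpha}^{\pm}$ in Definition \ref{def:cayleytransformnoncompact}, combined with Proposition \ref{prop:dalphacartan} and the corresponding ``dual'' facts for real-root Cayley transforms in Proposition \ref{prop:calphafacts}. For (1): since $d_{\alpha}^{\pm}$ both restrict to the identity on $\ker\alpha$ and send $\alpha^{\vee}$ to $\pm\phi_{\alpha}(D)$, the composite $(d_{\alpha}^-)^{-1}\circ d_{\alpha}^+$ is an automorphism of $\mathfrak h$ fixing $\ker\alpha$ pointwise and negating $\alpha^{\vee}$; on $\mathfrak h^*$ this is exactly $s_{\alpha}$, giving $d_{\alpha}^+ = d_{\alpha}^-\circ s_{\alpha}$ and, since $s_{\alpha}^2=\id$, also $d_{\alpha}^- = d_{\alpha}^+\circ s_{\alpha}$.

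For (2): by Proposition \ref{prop:dalphacartan}, $d_{\alpha}^{\pm}$ carries the $\mathfrak{sl}_2$-triple $\phi_{\alpha}(E_c,F_c,D_c)$ to a new triple whose semisimple element is $\pm\phi_{\alpha}(D)$; concretely $d_{\alpha}^{\pm}=\phi_{\alpha}\circ\Ad(g)\circ\phi_{\alpha}^{-1}$ on $\mathfrak s_{\alpha}$ for the matrix $g$ conjugating the compact triple to the split one, so the image root space $d_{\alpha}^{\pm}\mathfrak g_{\alpha}$ lies in $\mathfrak g_{\pm\beta}$ where $\beta := d_{\alpha}^{+}\alpha$ has $\beta^{\vee}=\phi_{\alpha}(D)\in\mathfrak a_0^{\alpha}$. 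Since $\phi_{\alpha}(D)\in\mathfrak a_0$ (it is $\theta$-anti-invariant and $\sigma$-invariant), $\beta$ restricts trivially to $\mathfrak t_0^{\alpha}$, hence $\beta\in\Delta_{\RR}(\mathfrak g,\mathfrak h^{\alpha})$ by Proposition \ref{prop:realimagcomplex}; and $d_{\alpha}^-\alpha = d_{\alpha}^+ s_{\alpha}\alpha = -\beta$ by (1).

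For (3): if $\langle\beta,\alpha^{\vee}\rangle=0$ then $d_{\alpha}^{\pm}$ and $s_{\alpha}$ act the same on $\beta$, so the two maps agree and the common map $d_{\alpha}$ is well-defined on $\Delta(\mathfrak g,\mathfrak h)^{\alpha}$. That it preserves the imaginary subsystem: a root $\beta\perp\alpha$ is imaginary iff $\beta|_{\mathfrak a_0}\equiv 0$; since $\mathfrak a_0^{\alpha}=\mathfrak a_0\oplus\RR\phi_{\alpha}(D)$ and $d_{\alpha}\beta$ restricted to $\phi_{\alpha}(D)$ equals $\langle\beta,\alpha^{\vee}\rangle=0$ while on $\mathfrak a_0$ it is unchanged, we get $d_{\alpha}\beta|_{\mathfrak a_0^{\alpha}}\equiv 0$ iff $\beta|_{\mathfrak a_0}\equiv 0$; and the count of roots orthogonal to $\alpha$ in $\Delta$ versus $\Delta_{i\RR}$ matches because $\Delta_{\RR}(\mathfrak g,\mathfrak h)^{\alpha}\hookrightarrow\Delta(\mathfrak g,\mathfrak h)^{\alpha}$ is carried by $d_{\alpha}$ onto $\Delta_{\RR}(\mathfrak g,\mathfrak h^{\alpha})$, the latter being one rank smaller — which is precisely the content of Proposition \ref{prop:calphafacts}(3) read backwards (the construction here is inverse to Section \ref{sec:Cayleyreal} applied to $H^{\alpha}$ and the real root $d_{\alpha}\alpha$).

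For (4): a noncompact imaginary root $\beta$ of $\mathfrak h^{\alpha}$ is one with $\beta|_{\mathfrak t_0^{\alpha}}\not\equiv 0$, $\beta|_{\mathfrak a_0^{\alpha}}\equiv 0$, $\mathfrak g_{\beta}\subset\mathfrak p$. I would pull back along $d_{\alpha}^{-1}$: the condition $\beta|_{\mathfrak a_0^{\alpha}}\equiv 0$ forces $\langle\gamma,\alpha^{\vee}\rangle=0$ for $\gamma:=d_{\alpha}^{-1}\beta$ and $\gamma|_{\mathfrak a_0}\equiv 0$, i.e.\ $\gamma\in\Delta_{i\RR}(\mathfrak g,\mathfrak h)^{\alpha}$, so $\gamma\in\Delta_c\cup\Delta_n$. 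Whether $\gamma$ is compact or noncompact is governed by how $d_{\alpha}=\exp(\ad(\tfrac{\pm\pi}{4}\phi_{\alpha}(F_c-E_c)))$ acts on the root space $\mathfrak g_{\gamma}$, and this depends on the $\mathfrak s_{\alpha}$-module structure of $\mathfrak g_{\gamma}\oplus\mathfrak g_{\gamma\pm\alpha}$, i.e.\ on whether $\alpha\pm\gamma$ are roots. The key computation — and the main obstacle — is the sign bookkeeping showing that the automorphism $\exp(\tfrac{\pm\pi}{4}\ad\phi_{\alpha}(F_c-E_c))$ flips the $\mathfrak k/\mathfrak p$ type of $\mathfrak g_{\gamma}$ exactly when $\gamma$ is ``$\alpha$-linked'' (both $\alpha+\gamma$ and $\alpha-\gamma$ are roots, since $\gamma\perp\alpha$ means the $\alpha$-string through $\gamma$ is $\{\gamma-\alpha,\gamma,\gamma+\alpha\}$ or $\{\gamma\}$), and preserves it otherwise; concretely one diagonalizes the $3$-dimensional (or $1$-dimensional) $\mathfrak s_{\alpha}$-isotypic piece, tracks the $\theta$-eigenvalue through the rotation by $\phi_{\alpha}(F_c-E_c)$, and reads off that $\epsilon(d_{\alpha}\gamma)=\epsilon(\gamma)+\langle\alpha\text{-string length}\rangle\bmod 2$. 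Having established this linked/unlinked dichotomy matches the set displayed in the proposition, bijectivity follows since $d_{\alpha}$ is already a bijection $\Delta_{i\RR}(\mathfrak g,\mathfrak h)^{\alpha}\cong\Delta_{i\RR}(\mathfrak g,\mathfrak h^{\alpha})$ by (3) and we have merely identified the preimage of the noncompact part. I expect all of (1)--(3) to be quick, with essentially all the work concentrated in the $\mathfrak{sl}_2$-representation-theoretic sign computation in (4); for this I would either cite the parallel treatment in \cite{Knapp1996} or \cite{Vogan1981} or carry out the explicit $2\times 2$ matrix conjugation on the relevant $\mathfrak s_{\alpha}$-strings.
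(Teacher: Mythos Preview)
Your approach is correct and matches the paper's (commented-out) proof closely: parts (1), (2), and (4) are essentially identical, with (4) resting on exactly the $\mathfrak{s}_{\alpha}$-string analysis you describe (the paper carries out the $3$-dimensional adjoint computation explicitly rather than citing Knapp or Vogan). In (3) the paper is slightly cleaner: it packages the two conditions $\beta|_{\mathfrak{a}}=0$ and $\langle\beta,\alpha^{\vee}\rangle=0$ into the single condition $d_{\alpha}\beta|_{\mathfrak{a}^{\alpha}}=0$ via $\mathfrak{a}^{\alpha}=\mathfrak{a}\oplus\CC\phi_{\alpha}(D)$, which gives the bijection directly---your detour through a counting argument and Proposition~\ref{prop:calphafacts}(3) is unnecessary.
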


A reformulation of $(3)$ and $(4)$ is helpful. Write $\epsilon$ for the $\mathbb{Z}/2\mathbb{Z}$-grading on $\Delta_{i\mathbb{R}}(\mathfrak{g},\mathfrak{h})$ defined in Section \ref{sec:roots}, and use $d_{\alpha}$ to identify the subsystem $\Delta_{i\mathbb{R}}(\mathfrak{g},\mathfrak{h})^{\alpha}$ with $\Delta_{i\mathbb{R}}(\mathfrak{g},\mathfrak{h}^{\alpha})$. The $\mathbb{Z}/2\mathbb{Z}$-grading on $\Delta_{i\mathbb{R}}(\mathfrak{g},\mathfrak{h}^{\alpha})$ induces a $\mathbb{Z}/2\mathbb{Z}$-grading $d_{\alpha}\epsilon$ on $\Delta_{i\mathbb{R}}(\mathfrak{g},\mathfrak{h})^{\alpha}$, which is given by
\begin{align*}
    (d_{\alpha}\epsilon)(\beta) &= 0 \quad \text{if
} \epsilon(\beta)=0 \text{ and } \alpha \pm \beta \notin \Delta_{i\mathbb{R}}\\
    (d_{\alpha}\epsilon)(\beta) &= 1 \quad \text{if
} \epsilon(\beta)=0 \text{ and } \alpha \pm \beta \in \Delta_{i\mathbb{R}}\\
    (d_{\alpha}\epsilon)(\beta) &= 0 \quad \text{if
} \epsilon(\beta)=1 \text{ and } \alpha \pm \beta \in \Delta_{i\mathbb{R}}\\
    (d_{\alpha}\epsilon)(\beta) &= 1 \quad \text{if
} \epsilon(\beta)=1 \text{ and } \alpha \pm \beta \notin \Delta_{i\mathbb{R}}
\end{align*}

We need the following technical lemma. Part (2) is precisely \cite[Lem 5.12]{VoganIC4}. Part (1) is proved analogously.

\begin{lemma}\label{lem:simplynoncompact}
Let $(\Delta_{i\mathbb{R}}, \Delta_{i\mathbb{R}}^+, \epsilon)$ be a $\mathbb{Z}/2\mathbb{Z}$-graded root system with a choice of positive roots. The triple $(\Delta_{i\mathbb{R}},\Delta_{i\mathbb{R}}^+,\epsilon)$ is \emph{large} if every $\beta \in \Pi^+_{i\mathbb{R}}$ has $\epsilon(\beta)=1$. Let $\alpha \in \Pi^+_{i\mathbb{R}}$, and consider the grading $d_{\alpha}\epsilon$ on $\Delta_{i\mathbb{R}}^{\alpha}$. 

\begin{enumerate}
    \item If  $(\Delta_{i\mathbb{R}}, \Delta_{i\mathbb{R}}^+, \epsilon)$ is large, so is $(\Delta_{i\mathbb{R}}^{\alpha}, (\Delta_{i\mathbb{R}}^+)^{\alpha}, d_{\alpha}\epsilon)$
    \item If $(\Delta_{i\mathbb{R}}^{\alpha}, (\Delta_{i\mathbb{R}}^+)^{\alpha}, d_{\alpha}\epsilon)$ is large, either $(\Delta_{i\mathbb{R}},\Delta_{i\mathbb{R}}^+, \epsilon)$ or $(\Delta_{i\mathbb{R}}, s_{\alpha}\Delta_{i\mathbb{R}}^+, \epsilon)$ is too.
\end{enumerate}
\end{lemma}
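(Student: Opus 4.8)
The plan is to trade ``large'' for a statement about height parities, after which both parts reduce to a combinatorial check on the simple roots of the orthogonal subsystem $\Delta^{\alpha}_{i\mathbb{R}}$.

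\emph{Reformulation.} By the definition of a graded root system, $\epsilon$ is additive: $\epsilon(\beta+\gamma)=\epsilon(\beta)+\epsilon(\gamma)$ whenever $\beta,\gamma,\beta+\gamma\in\Delta_{i\mathbb{R}}$. Building a positive root up from simple roots through a chain of roots then gives $\epsilon(\beta)=\sum_{\alpha'\in\Pi^+_{i\mathbb{R}}}m_{\alpha'}\epsilon(\alpha')$ for $\beta=\sum m_{\alpha'}\alpha'\in\Delta^+_{i\mathbb{R}}$. Hence $(\Delta_{i\mathbb{R}},\Delta^+_{i\mathbb{R}},\epsilon)$ is large if and only if $\epsilon(\beta)\equiv\mathrm{ht}(\beta)\pmod 2$ for all $\beta\in\Delta^+_{i\mathbb{R}}$, where $\mathrm{ht}$ is the height relative to $\Pi^+_{i\mathbb{R}}$; the same holds for $(\Delta^{\alpha}_{i\mathbb{R}},(\Delta^+_{i\mathbb{R}})^{\alpha},d_{\alpha}\epsilon)$ with its own height $\mathrm{ht}^{\alpha}$. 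Reading off the four-case definition of $d_{\alpha}\epsilon$, one also gets the uniform formula
$$(d_{\alpha}\epsilon)(\beta)\equiv\epsilon(\beta)+\delta(\beta)\pmod 2,\qquad \beta\in\Delta^{\alpha}_{i\mathbb{R}},$$
where $\delta(\beta)=1$ if $\alpha\pm\beta\in\Delta_{i\mathbb{R}}$ and $\delta(\beta)=0$ otherwise (the two signs agree since $\langle\beta,\alpha^{\vee}\rangle=0$ makes the $\alpha$-string through $\beta$ symmetric). Finally note that $(\Delta^+_{i\mathbb{R}})^{\alpha}$ and $d_{\alpha}\epsilon$ are unchanged when $\Delta^+_{i\mathbb{R}}$ is replaced by $s_{\alpha}\Delta^+_{i\mathbb{R}}$ (these two positive systems meet $\Delta^{\alpha}_{i\mathbb{R}}$ in the same set, and $\delta$ does not depend on the positive system), consistently with $d_{\alpha}^{-}=d_{\alpha}^{+}\circ s_{\alpha}$ from Proposition \ref{prop:dalphafacts}.

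\emph{Proof of (1).} Assume $(\Delta_{i\mathbb{R}},\Delta^+_{i\mathbb{R}},\epsilon)$ is large. By the reformulation $(d_{\alpha}\epsilon)(\beta)\equiv\mathrm{ht}(\beta)+\delta(\beta)\pmod 2$, so it suffices to show $\mathrm{ht}(\gamma)+\delta(\gamma)$ is odd for every simple root $\gamma$ of $(\Delta^{\alpha}_{i\mathbb{R}},(\Delta^+_{i\mathbb{R}})^{\alpha})$. If $\gamma\in\Pi^+_{i\mathbb{R}}$ then $\gamma\perp\alpha$ and $\gamma-\alpha$ is not a root (distinct simple roots do not differ by a root), so the $\alpha$-string through $\gamma$ is $\{\gamma\}$, i.e. $\delta(\gamma)=0$, while $\mathrm{ht}(\gamma)=1$; done. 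If $\gamma\notin\Pi^+_{i\mathbb{R}}$, I first observe $\alpha\in\mathrm{supp}(\gamma)$: otherwise every positive root below $\gamma$ is a nonnegative combination of simple roots other than $\alpha$, hence pairs nonpositively with $\alpha^{\vee}$, so in any decomposition $\gamma=\gamma_1+\gamma_2$ into positive roots both summands would pair to zero with $\alpha^{\vee}$, contradicting indecomposability of $\gamma$ in $\Delta^{\alpha}_{i\mathbb{R}}$. Passing to the irreducible subsystem generated by $\mathrm{supp}(\gamma)$ (which changes neither $\mathrm{ht}(\gamma)$ nor $\delta(\gamma)$ and keeps $\gamma$ simple in the corresponding orthogonal subsystem), we reduce to: $\Delta_{i\mathbb{R}}$ irreducible, $\gamma$ of full support. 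The resulting configurations --- an irreducible root system, a simple root $\alpha$, and a full-support root $\gamma\perp\alpha$ indecomposable in $\Delta^{\alpha}_{i\mathbb{R}}$ --- form a short list organized by Cartan type ($A_n$ with $\alpha$ an interior node, $B_n$, $C_n$, $D_n$, $F_4$, $G_2$, $E_n$), and in each one lists the relevant $\gamma$, computes $\mathrm{ht}(\gamma)$ and checks whether $\gamma\pm\alpha$ is a root, confirming $\mathrm{ht}(\gamma)+\delta(\gamma)$ is odd.

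\emph{Proof of (2).} This is \cite[Lem 5.12]{VoganIC4}. It is the same analysis run backwards: if $(\Delta^{\alpha}_{i\mathbb{R}},(\Delta^+_{i\mathbb{R}})^{\alpha},d_{\alpha}\epsilon)$ is large then $(d_{\alpha}\epsilon)(\gamma)\equiv\mathrm{ht}^{\alpha}(\gamma)$ on $\Delta^{\alpha}_{i\mathbb{R}}$, and via $(d_{\alpha}\epsilon)(\beta)\equiv\epsilon(\beta)+\delta(\beta)$ this pins $\epsilon$ down modulo $2$ on every simple root of $\Delta_{i\mathbb{R}}$ except $\alpha$ itself, where the only available input is $\epsilon(\alpha)=1$ (forced by $\alpha$ being noncompact imaginary, the case in which $d_{\alpha}$ is defined). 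The residual ambiguity is exactly the choice between $\Delta^+_{i\mathbb{R}}$ and $s_{\alpha}\Delta^+_{i\mathbb{R}}$, which by the last remark of the reformulation produce the same Cayley-transformed triple, so at least one of the two is large.

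\emph{Main obstacle.} The reformulation and the backward direction are formal; the genuine work is the finite case analysis in (1) for the ``new'' simple roots of $\Delta^{\alpha}_{i\mathbb{R}}$, which rests on understanding a base of the orthogonal subsystem $\Delta^{\alpha}_{i\mathbb{R}}$ type by type. I expect this to be manageable: such new simple roots have full support in a small irreducible subsystem containing $\alpha$, so only a handful of configurations arise, and additivity of $\epsilon$ disposes of all non-simple roots at once.
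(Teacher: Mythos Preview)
Your approach is correct and matches the paper's: part (2) is exactly the cited lemma of Vogan, and part (1) reduces to the same case-by-case verification on the simple roots of $(\Delta_{i\mathbb{R}}^{+})^{\alpha}$, which your height-parity reformulation $(d_{\alpha}\epsilon)(\beta)\equiv\mathrm{ht}(\beta)+\delta(\beta)\pmod 2$ organizes cleanly but does not avoid. The paper itself gives no more detail --- it invokes Vogan's Lemma~5.12 for (2) and declares (1) to be proved analogously.
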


The automorphisms $c_{\alpha}^{\pm}$ and $d_{\alpha}^{\pm}$ are essentially inverse to one another. More precisely

\begin{proposition}\label{prop:cayleysareinverse}
 If $\alpha \in \Delta(\mathfrak{g},\mathfrak{h})$ is real, then $c_{\alpha}^+\alpha \in \Delta(\mathfrak{g},\mathfrak{h}^{\alpha})$ is noncompact imaginary. There is a choice of $\phi_{c_{\alpha}^+\alpha}$ as in Proposition \ref{prop:rootsl2s} such that
 $$d^+_{c_{\alpha}^+\alpha} \circ c_{\alpha}^+ = c_{\alpha}^+ \circ d^+_{c_{\alpha}^+\alpha} = 1 $$
If $\beta \in \Delta(\mathfrak{g},\mathfrak{h})$ is noncompact imaginary, then $d_{\beta}^+\beta \in \Delta(\mathfrak{g},\mathfrak{h}^{\beta})$ is real. There is a choice of $\phi_{d_{\beta}^+\beta}$ as in Proposition \ref{prop:rootsl2s} such that
$$c^+_{d_{\beta}^+\beta} \circ d_{\beta}^+ = d_{\beta}^+ \circ c^+_{d_{\beta}^+\beta} = 1$$
\end{proposition}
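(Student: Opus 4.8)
The plan is to reduce both statements to a $2\times 2$ matrix computation inside the three-dimensional subalgebra $\mathfrak{s}_\alpha\subset\mathfrak{g}$, exploiting that each Cayley transform is $\exp(\mathrm{ad}(-))$ of an element of $\mathfrak{s}_\alpha$. First I would record the facts that make the reduction work. Set $\beta:=c_\alpha^+\alpha$, which is noncompact imaginary by Proposition \ref{prop:calphafacts}(2). (i) The generator $\frac{i\pi}{4}\phi_\alpha(E+F)$ of $c_\alpha^+$ lies in $\mathfrak{s}_\alpha$, so $c_\alpha^+$ preserves $\mathfrak{s}_\alpha$; and since $c_\alpha^+$ carries the $\mathfrak{h}$-root spaces $\mathfrak{g}_{\pm\alpha}$ onto the $\mathfrak{h}^\alpha$-root spaces $\mathfrak{g}_{\pm\beta}$, the subalgebra attached to $\beta$ is $\mathfrak{s}_\beta=c_\alpha^+(\mathfrak{s}_\alpha)=\mathfrak{s}_\alpha$ again. (ii) If $Y_1,Y_2\in\mathfrak{g}$ are proportional, then $\exp(\mathrm{ad}(Y_1))\circ\exp(\mathrm{ad}(Y_2))=\exp(\mathrm{ad}(Y_1+Y_2))$ as automorphisms of $\mathfrak{g}$; in particular $Y_1=-Y_2$ forces the composite to be the identity. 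Hence it suffices to choose $\phi_{c_\alpha^+\alpha}$ as in Proposition \ref{prop:rootsl2s} (for the root $\beta$ and the Cartan $\mathfrak{h}^\alpha$) so that the generator $\frac{\pi}{4}\phi_\beta(F_c-E_c)$ of $d_\beta^+$ equals $-\frac{i\pi}{4}\phi_\alpha(E+F)$, the negative of the generator of $c_\alpha^+$.

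Next I would make the choice. Let $w:=\mathrm{diag}(1,-1)$ --- the ambiguity element of Proposition \ref{prop:rootsl2s} --- and put $\phi_\beta:=\phi_\alpha\circ\Ad(w)$. Since $w$ is real and orthogonal, $\Ad(w)$ commutes with $\theta_s(X)=-X^t$ and with $\sigma_s(X)=\overline X$, so $\phi_\beta$ still intertwines $\theta_s,\sigma_s$ with $\theta,\sigma$ on $\mathfrak{s}_\alpha=\mathfrak{s}_\beta$; this is the one subtle point, because $c_\alpha^+$ itself does \emph{not} commute with $\theta$, and what saves us is exactly that $\mathfrak{s}_\alpha$ is unchanged and $w$ is real orthogonal. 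A short computation gives $\Ad(w)E_c=F_c$, $\Ad(w)F_c=E_c$, $\Ad(w)D_c=-D_c$, whence $\phi_\beta(D_c)=-\phi_\alpha(D_c)=c_\alpha^+(\alpha^\vee)=\beta^\vee$ by Proposition \ref{prop:calphacartan}, while $\phi_\beta(E_c)=\phi_\alpha(F_c)$ and $\phi_\beta(F_c)=\phi_\alpha(E_c)$ are the two nonzero eigenvectors of $\mathrm{ad}(\phi_\alpha(D_c))$ inside $\mathfrak{s}_\alpha$, i.e. they span the root spaces $\mathfrak{g}_{\pm\beta}$ for $\mathfrak{h}^\alpha$ (both lie in $\mathfrak{p}$, consistent with $\beta$ noncompact imaginary, since $\theta_s E_c=-E_c$ and $\theta_s F_c=-F_c$). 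So $\phi_\beta$ is a legitimate choice in Proposition \ref{prop:rootsl2s}. Finally, using $F_c-E_c=i(E+F)$ in $\mathfrak{sl}_2(\mathbb{C})$, one gets $\phi_\beta(F_c-E_c)=\phi_\alpha(\Ad(w)(F_c-E_c))=\phi_\alpha(E_c-F_c)=-i\phi_\alpha(E+F)$, so the generator of $d_\beta^+$ is $-\frac{i\pi}{4}\phi_\alpha(E+F)$, the negative of the generator of $c_\alpha^+$. By (ii), $d_\beta^+\circ c_\alpha^+=c_\alpha^+\circ d_\beta^+=1$, which is the first assertion.

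The second assertion is proved the same way, interchanging $c$ with $d$, real with noncompact imaginary roots, and the bases $\{E,F,D\}$ and $\{E_c,F_c,D_c\}$. For $\beta$ noncompact imaginary, $\gamma:=d_\beta^+\beta$ is real and $\mathfrak{s}_\gamma=\mathfrak{s}_\beta$ by Proposition \ref{prop:dalphafacts}; one sets $\phi_\gamma:=\phi_\beta\circ\Ad(w)$ with the same $w=\mathrm{diag}(1,-1)$, uses $\Ad(w)D=D$, $\Ad(w)E=-E$, $\Ad(w)F=-F$ together with Proposition \ref{prop:dalphacartan} ($d_\beta^+\beta^\vee=\phi_\beta(D)=\gamma^\vee$) to check that $\phi_\gamma$ satisfies the normalization of Proposition \ref{prop:rootsl2s} for $\gamma$, and then reads off from $\phi_\gamma(E+F)=-\phi_\beta(E+F)$ and $F_c-E_c=i(E+F)$ that the generators of $c_\gamma^+$ and $d_\beta^+$ are negatives of one another, so $c_\gamma^+\circ d_\beta^+=d_\beta^+\circ c_\gamma^+=1$. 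The only genuine obstacle is the verification in the second paragraph that the transported map really meets \emph{all} the normalization conditions of Proposition \ref{prop:rootsl2s} for the new root; the rest is the elementary exponential identity (ii) together with $2\times 2$ matrix arithmetic.
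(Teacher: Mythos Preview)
Your argument is correct. Both your proof and the paper's (commented-out) proof identify $\mathfrak{s}_{c_\alpha^+\alpha}=\mathfrak{s}_\alpha$ and then pin down an admissible $\phi_{c_\alpha^+\alpha}$, but the two diverge in the final step. The paper takes $\phi_{c_\alpha^+\alpha}=\phi_\alpha$ and checks that $d^+_{c_\alpha^+\alpha}\circ c_\alpha^+$ restricts to the identity on $\mathfrak{h}=\ker\alpha\oplus\mathbb{C}\alpha^\vee$; you instead take $\phi_{c_\alpha^+\alpha}=\phi_\alpha\circ\Ad(w)$ with $w=\mathrm{diag}(1,-1)$ and observe that the very generators $\frac{\pi}{4}\phi_{c_\alpha^+\alpha}(F_c-E_c)$ and $\frac{i\pi}{4}\phi_\alpha(E+F)$ are negatives of one another, so the composite is the identity automorphism of $\mathfrak{g}$ outright.

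Your route buys two things. First, it is logically tighter: an inner automorphism that fixes $\mathfrak{h}$ pointwise need not be the identity, so checking only on $\mathfrak{h}$ is a priori insufficient; your ``negatives of generators'' observation settles the full statement in one line. Second, it handles the sign correctly. From Proposition~\ref{prop:calphacartan} one has $c_\alpha^+\alpha^\vee=-\phi_\alpha(D_c)$, so $(c_\alpha^+\alpha)^\vee=-\phi_\alpha(D_c)$, and the normalization $\phi_\beta(D_c)=\beta^\vee$ of Proposition~\ref{prop:rootsl2s} forces exactly the $\Ad(w)$-twist you introduce; the paper's choice $\phi_\beta=\phi_\alpha$ gives $\phi_\beta(D_c)=-\beta^\vee$ and, as you can compute, $d_\beta^+=c_\alpha^+$ rather than its inverse. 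So your twist is not cosmetic---it is what makes the normalization and the inversion line up.
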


\end{definition}

%
%


%
%


To do homological algebra in $M(\mathfrak{g},\mathbf{K})$, we will need

\begin{proposition}[\cite{KnappVogan1995}, Corollary 2.26]\label{prop:enoughinjectives}
The category $M(\mathfrak{g},\mathbf{K})$ has enough injectives.
\end{proposition}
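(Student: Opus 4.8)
The plan is to deduce the statement from the existence of a right adjoint to the forgetful functor; this is in essence the argument of \cite{KnappVogan1995}. First I would consider the forgetful functor
$$\mathcal{F}\colon M(\mathfrak{g},\mathbf{K}) \to M(\mathbf{K})$$
to the category of rational (locally finite algebraic) representations of $\mathbf{K}$ --- equivalently, since a $\mathbf{K}$-module carries a canonical compatible $\mathfrak{k}$-action, the category $M(\mathfrak{k},\mathbf{K})$. The functor $\mathcal{F}$ is plainly exact and faithful. The central step is to construct its right adjoint, the \emph{production functor}
$$\mathrm{pro}\colon M(\mathbf{K}) \to M(\mathfrak{g},\mathbf{K}), \qquad \mathrm{pro}(W) := \Hom_{U(\mathfrak{k})}\bigl(U(\mathfrak{g}),\,W\bigr)_{\mathbf{K}\text{-fin}},$$
where $U(\mathfrak{g})$ is regarded as a left $U(\mathfrak{k})$-module, $\mathfrak{g}$ acts via right translation on $U(\mathfrak{g})$, $\mathbf{K}$ acts through the combined translation/$W$-action, and one passes to the $\mathbf{K}$-finite part. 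One then checks that $\mathrm{pro}(W)$ genuinely lies in $M(\mathfrak{g},\mathbf{K})$ and that there is a natural Frobenius reciprocity isomorphism
$$\Hom_{M(\mathfrak{g},\mathbf{K})}\bigl(V,\mathrm{pro}(W)\bigr) \;\cong\; \Hom_{M(\mathbf{K})}\bigl(\mathcal{F}V,\,W\bigr).$$

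Granting the adjunction, the rest is formal. Since $\mathcal{F}$ is exact, its right adjoint $\mathrm{pro}$ carries injective objects of $M(\mathbf{K})$ to injective objects of $M(\mathfrak{g},\mathbf{K})$. Moreover $M(\mathbf{K})$ has enough injectives: it is a category of comodules over the coalgebra $\mathcal{O}(\mathbf{K})$, and in fact, since $\mathbf{K}$ is a complex reductive algebraic group, every rational representation is a directed union of finite-dimensional semisimple ones, so $M(\mathbf{K})$ is semisimple and \emph{every} object is injective. Hence, given $V \in M(\mathfrak{g},\mathbf{K})$, choose a monomorphism $j\colon \mathcal{F}V \hookrightarrow I$ in $M(\mathbf{K})$ with $I$ injective (one may even take $I = \mathcal{F}V$), and let $\widetilde{j}\colon V \to \mathrm{pro}(I)$ be its adjoint. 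Composing $\mathcal{F}(\widetilde{j})$ with the counit $\mathcal{F}\,\mathrm{pro}(I) \to I$ recovers $j$, so $\mathcal{F}(\widetilde{j})$ is a monomorphism; as $\mathcal{F}$ is faithful and exact it reflects monomorphisms, so $\widetilde{j}$ embeds $V$ into the injective object $\mathrm{pro}(I)$.

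I expect the main obstacle to be the construction of $\mathrm{pro}$ and the verification of the adjunction: arranging the $U(\mathfrak{g})$- and $\mathbf{K}$-module structures on $\Hom_{U(\mathfrak{k})}(U(\mathfrak{g}),W)$ so that they are compatible, checking that passage to the $\mathbf{K}$-finite part is stable under $\mathfrak{g}$ and does not disturb the bijection (one needs that every $(\mathfrak{g},\mathbf{K})$-map $V \to \Hom_{U(\mathfrak{k})}(U(\mathfrak{g}),W)$ automatically has $\mathbf{K}$-finite image), and then unwinding Frobenius reciprocity at the level of underlying vector spaces. An alternative route that sidesteps building $\mathrm{pro}$ by hand is to check directly that $M(\mathfrak{g},\mathbf{K})$ is a Grothendieck category: it is abelian and cocomplete, filtered colimits are exact (being computed on underlying vector spaces), and $\bigoplus_{\gamma} \bigl(U(\mathfrak{g})\otimes_{U(\mathfrak{k})} V_\gamma\bigr)$, the sum over the irreducible rational $\mathbf{K}$-modules $V_\gamma$, is a generator; then Grothendieck's theorem that every Grothendieck category has enough injectives finishes the proof.
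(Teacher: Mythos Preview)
The paper does not give its own proof of this statement; it simply records the result with a citation to \cite{KnappVogan1995}, Corollary 2.26. Your proposal is correct and is precisely the standard argument from that reference (forgetful functor to $M(\mathbf{K})$, right adjoint $\mathrm{pro}$, semisimplicity of $M(\mathbf{K})$ since $\mathbf{K}$ is reductive), so there is nothing to compare.
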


If $G$ is a real reductive group, then $(\mathfrak{g},\mathbf{K})$ is a pair. In this case, it makes sense to define

\begin{definition}
Suppose $G$ is a real reductive group. A $(\mathfrak{g},\mathbf{K})$-module $X$ is admissible if each irreducible representation of $\mathbf{K}$ appears in $X$ with finite multiplicity. 
\end{definition}

We now have three distinct finiteness conditions on $M(\mathfrak{g},\mathbf{K})$: finite-generation, finite-length, and admissibility. They are related by the following proposition

\begin{proposition}\label{prop:threefinitenessconditions}
Let $G$ be a real reductive group and let $X$ be a $(\mathfrak{g},\mathbf{K})$-module. Write %
$$I := \mathrm{Ann}(X) := \{v \in U(\mathfrak{g}): vX =0\}$$

The following are equivalent
\begin{enumerate}
\item\label{fgad} $X$ is finitely-generated and admissible
\item\label{fgfs} $X$ is finitely generated and
$$I \cap Z(\mathfrak{g}) \subset Z(\mathfrak{g})$$
has finite codimension
\item\label{adfs} $X$ is admissible and
$$I \cap Z(\mathfrak{g}) \subset Z(\mathfrak{g})$$
has finite codimension
\item\label{fl} $X$ has finite-length
\end{enumerate}
\end{proposition}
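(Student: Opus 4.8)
The plan is to establish the equivalences by the cycle $(\ref{fl}) \Rightarrow (\ref{fgad}) \Rightarrow (\ref{fgfs}) \Rightarrow (\ref{adfs}) \Rightarrow (\ref{fl})$, reducing most of the work to the irreducible case via the behavior of all four conditions in short exact sequences. First I would observe that finite-generation, admissibility, and ``$I \cap Z(\mathfrak g)$ has finite codimension'' are each inherited by subquotients and built up from extensions (for finite-generation this uses that $U(\mathfrak g)$ is Noetherian — a finitely-generated $(\mathfrak g,\mathbf K)$-module is a Noetherian object; for the $Z(\mathfrak g)$-condition, note $\mathrm{Ann}(X) \supseteq \mathrm{Ann}(X') \cdot \mathrm{Ann}(X'')$ for a two-step filtration, and a product of two finite-codimension ideals of the commutative Noetherian ring $Z(\mathfrak g)$ still has finite codimension). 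So once the equivalence is known for irreducibles, a dévissage along a composition series (when one exists) or along any finite filtration propagates it; and conversely finite-generation plus the $Z(\mathfrak g)$-condition must be shown to force finite length.

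The core of the argument is therefore: \textbf{(a)} a finitely-generated $(\mathfrak g,\mathbf K)$-module on which $Z(\mathfrak g)$ acts through a finite-codimensional quotient has finite length; and \textbf{(b)} an irreducible (or finite-length) $(\mathfrak g,\mathbf K)$-module with an infinitesimal character is admissible. For \textbf{(a)} I would use the Harish-Chandra / Casselman-Osborne style argument: filter $Z(\mathfrak g)$ by the finitely many generalized characters occurring, reducing to the case where $Z(\mathfrak g)$ acts by a single generalized infinitesimal character; then $X$ is a finitely-generated module over $U(\mathfrak g)/(\text{power of a maximal ideal of } Z(\mathfrak g))$, and one invokes Harish-Chandra's theorem that such a module, being a finitely-generated $(\mathfrak g,\mathbf K)$-module with (generalized) infinitesimal character, is admissible — equivalently one cites the Harish-Chandra homomorphism making $U(\mathfrak g)$ finite over $Z(\mathfrak g) \otimes U(\mathfrak k)^{\mathbf K}$-type data — and admissible finitely-generated modules have finite length because each $\mathbf K$-isotypic component is finite-dimensional and $\mathbf K$-types in a fixed infinitesimal character are controlled. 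For \textbf{(b)}, the key input is Harish-Chandra's admissibility theorem for irreducible $(\mathfrak g,\mathbf K)$-modules with infinitesimal character, which for real reductive $G$ is classical (it rests on the fact that $\mathbf K$ is reductive in $\mathbf G$ and $U(\mathfrak g)$ is a finitely-generated module over $U(\mathfrak k) \otimes Z(\mathfrak g)$ up to the relevant completion).

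Concretely the steps are: (1) record that $U(\mathfrak g)$ is left and right Noetherian, so finitely-generated $(\mathfrak g,\mathbf K)$-modules form a Noetherian subcategory; (2) prove $(\ref{fgfs}) \Rightarrow (\ref{fl})$ by the generalized-infinitesimal-character filtration plus Harish-Chandra admissibility, then note finite length gives $(\ref{fgad})$ and $(\ref{adfs})$ for free once we also know irreducibles with infinitesimal character are admissible; (3) prove $(\ref{fl}) \Rightarrow (\ref{fgad})$: finite length trivially gives finite generation, and admissibility follows by dévissage from admissibility of the irreducible composition factors, each of which has an infinitesimal character by Schur's lemma (here I use that an irreducible $(\mathfrak g,\mathbf K)$-module has a well-defined infinitesimal character, which itself needs a countable-dimension / Dixmier-Schur argument); (4) prove $(\ref{fgad})\Rightarrow(\ref{fgfs})$ and $(\ref{adfs})\Rightarrow(\ref{fl})$: admissibility forces each $\mathbf K$-type finite-dimensional, and combined with finite generation this bounds the action of $Z(\mathfrak g)$ to a finite-codimensional quotient (pick a finite-dimensional $\mathbf K$-stable generating subspace $V_0$; $Z(\mathfrak g) \cdot V_0$ lies in the finitely many $\mathbf K$-types generated, hence is finite-dimensional, so $Z(\mathfrak g)$ acts through a finite-codimensional quotient); then $(\ref{adfs})$, giving both halves, lands us back in case \textbf{(a)}.

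\textbf{The main obstacle} is step \textbf{(b)} — Harish-Chandra's admissibility theorem — which is genuinely deep and which I would not reprove; I would cite it (e.g.\ from \cite{KnappVogan1995} or Wallach's book) and likewise cite that an irreducible $(\mathfrak g,\mathbf K)$-module has an infinitesimal character. Everything else is formal dévissage plus Noetherianity of $U(\mathfrak g)$ and elementary commutative algebra on $Z(\mathfrak g)$; the only subtlety to be careful about is making sure the ``finite-codimension of $I \cap Z(\mathfrak g)$'' condition really is closed under the extensions appearing in a composition series, for which the product-of-ideals observation above suffices.
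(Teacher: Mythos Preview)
Your outline is sound and, in particular, you correctly identify Harish-Chandra's admissibility theorem as the one genuinely deep input; the paper does not give a proof at all but simply declares the equivalences ``standard'' and cites \cite[Theorem 10.1]{KnappVogan1995} for the hardest implication (that (\ref{fl}) implies (\ref{adfs}), i.e.\ finite-length modules are admissible). So your approach is entirely compatible with the paper's, just far more explicit.

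There is, however, one genuine gap in your sketch. In your final step you claim that (\ref{adfs}) ``lands us back in case \textbf{(a)},'' but case \textbf{(a)} required \emph{finite generation}, which (\ref{adfs}) does not provide. Admissibility alone does not make $X$ finitely generated, so you cannot simply invoke your (\ref{fgfs})$\Rightarrow$(\ref{fl}) argument. The standard fix is short: write $X$ as the directed union of its finitely-generated submodules $Y$; by the already-proved implication (\ref{fgfs})$\Rightarrow$(\ref{fl}) each such $Y$ has finite length. Since there are only finitely many irreducible $(\mathfrak g,\mathbf K)$-modules with the given (generalized) infinitesimal character, and each contains some fixed $\mathbf K$-type with positive multiplicity, admissibility of $X$ uniformly bounds the length of every such $Y$. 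Taking $Y$ of maximal length then forces $Y=X$. You should insert this bounded-length argument (or an equivalent one) to close the cycle.
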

All of these equivalences are standard. The hardest part (that $(4)$ implies $(3)$) is Theorem 10.1 in \cite{KnappVogan1995}.
\end{comment}

\subsection{Three nilpotent cones}\label{sec:threenilpotentcones}

Let $\mathcal{N} \subset \mathfrak{g}$ be the complex nilpotent cone. It is classically known that $\mathbf{G}$ acts on $\mathcal{N}$ with finitely-many orbits. There are two subcones of $\mathcal{N}$ which play a special role in the representation theory of $G$:
$$\mathcal{N}_{\theta} = \mathcal{N} \cap \mathfrak{p} \qquad \mathcal{N}_0 = \mathcal{N} \cap \mathfrak{g}_0$$
These subcones are invariant under the actions of $\mathbf{K}$ and $G$, respectively, and in both cases, there are finitely-many orbits (see \cite{KostantRallis1971}).

There is an elegant relationship between $\mathbf{K}$-orbits on $\mathcal{N}_{\theta}$ and $G$-orbits on $\mathcal{N}_0$, first observed by Sekiguchi (\cite{Sekiguchi1987}). The following formulation is due to Vogan:

\begin{theorem}[\cite{Vogan1991}, Theorem 6.4] \label{thm:kostantsekiguchi1}
Write $\theta_s$ for the involution of $\mathfrak{sl}_2(\mathbb{C})$ defined by $\theta_s(X) := -X^t$ and $\sigma_s$ for complex conjugation. Then the following sets are in natural bijection

\begin{enumerate}
    \item $\mathcal{N}_0/G$
    \item $G$-conjugacy classes of homomorphisms
    $$\phi_{\sigma}: \mathfrak{sl}_2(\mathbb{C}) \to \mathfrak{g}$$
    intertwining $\sigma$ with $\sigma_s$
    \item $K$-conjugacy classes of homomorphisms
    $$\phi_{\sigma,\theta}: \mathfrak{sl}_2(\mathbb{C}) \to \mathfrak{g}$$
    intertwining $\sigma$ with $\sigma_s$ and $\theta$ with $\theta_s$
    \item $\mathbf{K}$-conjugacy classes of homomorphisms
    $$\phi_{\theta}: \mathfrak{sl}_2(\mathbb{C}) \to \mathfrak{g}$$
    intertwining $\theta$ with $\theta_s$
    \item $\mathcal{N}_{\theta}/\mathbf{K}$
\end{enumerate}

The maps from $(3)$ to $(2)$ and $(3)$ to $(4)$ are the inclusions. The map from $(2)$ to $(1)$ is defined by
$$\phi_{\sigma} \mapsto \phi_{\sigma}(E)$$
The map from $(4)$ to $(5)$ is defined by
$$\phi_{\theta} \mapsto \phi_{\theta}(E_c) $$
\end{theorem}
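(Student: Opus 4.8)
The plan is to exhibit the five sets as being in bijection by treating them in two groups joined along the middle set $(3)$. Granting that the two forgetful maps $(3)\to(2)$ and $(3)\to(4)$ are bijections, and that $(1)\leftrightarrow(2)$ and $(4)\leftrightarrow(5)$ are versions of the Jacobson--Morozov and Kostant theorems, the five-fold bijection then follows by composing the explicitly named maps and checking they are mutually inverse, which is pure bookkeeping. So I would organize the argument around those claims.

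For $(1)\leftrightarrow(2)$: since $E,F,D$ are real matrices, a homomorphism $\phi_{\sigma}\colon\mathfrak{sl}_2(\mathbb{C})\to\mathfrak g$ intertwines $\sigma$ with $\sigma_s$ precisely when $(\phi_{\sigma}(E),\phi_{\sigma}(D),\phi_{\sigma}(F))$ is an $\mathfrak{sl}_2$-triple lying in $\mathfrak g_0$. The element $\phi_{\sigma}(E)$ is then nilpotent (being $\mathrm{ad}$-nilpotent, as the image of a nilpotent element of $\mathfrak{sl}_2$) and $\sigma$-fixed, so it lies in $\mathcal N_0$; conversely every nonzero $e\in\mathcal N_0$ lies in such a real triple by the Jacobson--Morozov theorem for the real reductive algebra $\mathfrak g_0$, and the real triples through a fixed $e$ form a single $Z_G(e)^{\circ}$-orbit by Kostant's theorem. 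Hence $\phi_{\sigma}\mapsto\phi_{\sigma}(E)$ descends to a bijection onto $\mathcal N_0/G$. For $(4)\leftrightarrow(5)$ I would first record the elementary matrix identities $\theta_s(E_c)=-E_c$, $\theta_s(F_c)=-F_c$, $\theta_s(D_c)=D_c$ (and note that $E_c,F_c,D_c$ again span an $\mathfrak{sl}_2$ with $D_c$ semisimple), so that $\phi_{\theta}$ intertwines $\theta$ with $\theta_s$ exactly when $\phi_{\theta}(E_c),\phi_{\theta}(F_c)\in\mathfrak p$ and $\phi_{\theta}(D_c)\in\mathfrak k$, i.e.\ exactly when $(\phi_{\theta}(E_c),\phi_{\theta}(D_c),\phi_{\theta}(F_c))$ is a \emph{normal} $\mathfrak{sl}_2$-triple for the symmetric pair $(\mathfrak g,\mathfrak k)$. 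The existence of such a normal triple through any $e\in\mathcal N_{\theta}=\mathcal N\cap\mathfrak p$, and the fact that the normal triples through $e$ form a single $Z_{\mathbf K}(e)^{\circ}$-orbit, is exactly the content of Kostant--Rallis \cite{KostantRallis1971}, so $\phi_{\theta}\mapsto\phi_{\theta}(E_c)$ descends to a bijection onto $\mathcal N_{\theta}/\mathbf K$.

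The crux is the normalization. I need to show: (i) every $\sigma$-equivariant $\phi_{\sigma}$ is $G$-conjugate to a homomorphism intertwining \emph{both} $\sigma$ with $\sigma_s$ and $\theta$ with $\theta_s$; (ii) symmetrically, every $\theta$-equivariant $\phi_{\theta}$ is $\mathbf K$-conjugate to such a ``doubly equivariant'' homomorphism; and (iii) two doubly equivariant homomorphisms that are $G$- (resp.\ $\mathbf K$-) conjugate are already $K$-conjugate. In triple language a doubly equivariant $\phi$ is a real $\mathfrak{sl}_2$-triple $(e,h,f)$ in Kostant--Sekiguchi normal form: $\theta(h)=-h$, $\theta(e)=-f$, $\theta(f)=-e$. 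For (i) I would start with a real triple $(e,h,f)$, conjugate by $G_0$ so that $h$ lies in the fixed maximal abelian subspace $\mathfrak a_0\subset\mathfrak p_0$ (all such subspaces being $G_0$-conjugate), which already forces $\theta(h)=-h$; then $(\theta(f),h,\theta(e))$ is a second $\mathfrak{sl}_2$-triple with the \emph{same} neutral element $h$, hence $Z_G(h)^{\circ}$-conjugate to $(e,h,f)$ by the uniqueness of $\mathfrak{sl}_2$-triples with prescribed neutral element. Promoting the conjugating element so that $\theta(e)=-f$ holds exactly is the one genuinely analytic point: I would minimize $\|\mathrm{Ad}(x)e\|$ over $x\in G_0$ with respect to a $\theta$-invariant inner product on $\mathfrak g_0$ and check that at a minimizer the attached triple is normal. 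Claim (ii) follows from the same argument after transporting the split $\mathfrak{sl}_2$-data to the compact data by the fixed Cayley element of $SL_2(\mathbb{C})$ relating $(E,D,F)$ to $(E_c,D_c,F_c)$, and (iii) follows from the conjugacy of Cartan involutions: an element conjugating one normal triple to another normalizes, modulo the reductive centralizer $Z_G(\mathfrak s)$ of the common image $\mathfrak s=\phi(\mathfrak{sl}_2(\mathbb{C}))$, and $Z_G(\mathfrak s)$ is a $\theta$-stable reductive group inside which one can move the conjugating element into $K$.

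I expect the main obstacle to be precisely the norm-minimization/compactness step supplying the \emph{exact} equalities $\theta(e)=-f$, $\theta(f)=-e$; this is where real analysis, rather than algebra over $\mathbb{C}$, genuinely enters, and it is the only part that resists routine bookkeeping once the dictionary between homomorphisms and $\mathfrak{sl}_2$-triples is in place. Everything else --- the Jacobson--Morozov and Kostant inputs, the matrix identities for $\theta_s$, and the verification that the four named maps are mutually inverse --- is standard.
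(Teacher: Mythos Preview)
The paper does not supply its own proof of this theorem: it is quoted verbatim from \cite{Vogan1991}, Theorem~6.4, and used as a black box. So there is no in-paper argument to compare against.

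Your sketch is the standard Kostant--Sekiguchi proof and is essentially correct. The identifications $(1)\leftrightarrow(2)$ and $(4)\leftrightarrow(5)$ via real Jacobson--Morozov and Kostant--Rallis normal triples are right, and you have correctly isolated the one nontrivial step: producing the exact equalities $\theta(e)=-f$, $\theta(f)=-e$ from the weaker conjugacy statements. One small refinement to your step~(i): the claim ``conjugate $h$ into $\mathfrak a_0$'' needs the observation that $h$ is hyperbolic (since $\mathrm{ad}(h)$ has integer eigenvalues), which is what guarantees it can be moved into $\mathfrak p_0$ at all; once there, further conjugation by $K$ lands it in $\mathfrak a_0$. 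With that in place, your outline matches the argument in \cite{Vogan1991} (and ultimately \cite{Sekiguchi1987}), including the analytic minimization that you flag as the crux.
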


Choose a non-degenerate symmetric bilinear form $\langle \cdot, \cdot \rangle$ on $\mathfrak{g}_0$ making $\mathfrak{k}_0$ and $\mathfrak{p}_0$ orthogonal. Using this form, we get a $\mathbf{G}$-invariant identification $\varphi: \mathfrak{g} \simeq \mathfrak{g}^*$. Define
$$\mathcal{N}^* := \varphi(\mathcal{N}) \qquad \mathcal{N}^*_{\theta} := \varphi(\mathcal{N}_{\theta}) = \mathcal{N}^* \cap (\mathfrak{g}/\mathfrak{k})^* \qquad \mathcal{N}_0^* := \varphi(\mathcal{N}_0) = \mathcal{N}^* \cap \mathfrak{g}_0^*$$
Note that $\langle \cdot, \cdot \rangle$ is unique up to scalar multiplication. Hence, the subsets $\mathcal{N}^*, \mathcal{N}_{\theta}^*, \mathcal{N}_0^* \subset \mathfrak{g}$ are well-defined. By construction, these subsets are invariant under the (co-adjoint) actions of $\mathbf{G}$, $\mathbf{K}$, and $G$ (respectively), and in each case there are finitely-many orbits. Each $\mathbf{G}$-orbit on $\mathcal{N}$ carries a distinguished symplectic form. This is one reason for preferring these `dual' cones to their counterparts in $\mathfrak{g}$. Of course, the bijection $\mathcal{N}_{\theta}/\mathbf{K} \cong \mathcal{N}_0/G$ of Theorem \ref{thm:kostantsekiguchi1} induces a bijection $\mathcal{N}_{\theta}^*/\mathbf{K} \cong \mathcal{N}_0^*/G$. Some features of this correspondence are slightly easier to see on the `dual' side:

\begin{theorem}[Kostant-Sekiguchi-Vergne-Barbasch-Sepanski, \cite{KostantRallis1971}, \cite{Sekiguchi1987}, \cite{Vergne1995},\cite{BarbaschSepanski1998}]\label{thm:kostantsekiguchi2}
The bijection
$$\eta: \mathcal{N}^*_{\theta}/\mathbf{K} \to \mathcal{N}^*_0/G
$$
defined by the requirements of Theorem \ref{thm:kostantsekiguchi1} has the following properties:
\begin{enumerate}
    \item $\eta$ respects the closure orderings on $\mathcal{N}_{\theta}^*/\mathbf{K}$ and $\mathcal{N}^*_0/G$.
    \item For every $\mathcal{O} \in\mathcal{N}^*_{\theta}/\mathbf{K}$, there is a $K$-invariant diffeomorphism
    $$\mathcal{O} \cong \eta(\mathcal{O})$$
    \item For every $\mathcal{O} \in\mathcal{N}^*_{\theta}/\mathbf{K}$
    $$\mathbf{G} \cdot \mathcal{O} = \mathbf{G} \cdot \eta(\mathcal{O})$$
    Inside this co-adjoint $\mathbf{G}$-orbit, $\mathcal{O}$ is a Lagrangian submanifold, and $\eta(\mathcal{O})$ is a real form. 
\end{enumerate}
\end{theorem}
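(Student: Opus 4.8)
The plan is to establish (3) first, deduce (2) from the geometry it exposes, and obtain (1) from a global version of the same construction.

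I would begin with (3). By Theorem \ref{thm:kostantsekiguchi1}, the orbits $\mathcal{O}$ and $\eta(\mathcal{O})$ both arise from a single homomorphism $\phi \colon \mathfrak{sl}_2(\mathbb{C}) \to \mathfrak{g}$ intertwining $\theta$ with $\theta_s$ and $\sigma$ with $\sigma_s$, with $\varphi(\phi(E_c)) \in \mathcal{O}$ and $\varphi(\phi(E)) \in \eta(\mathcal{O})$. Since $E$ and $E_c$ are $SL_2(\mathbb{C})$-conjugate — explicitly by a Cayley transform of the type appearing in Definitions \ref{def:cayleytransformreal}--\ref{def:cayleytransformnoncompact} — the elements $\phi(E)$ and $\phi(E_c)$ are $\mathbf{G}$-conjugate, hence so are $\varphi(\phi(E))$ and $\varphi(\phi(E_c))$ under the co-adjoint action; this gives $\mathbf{G}\cdot\mathcal{O} = \mathbf{G}\cdot\eta(\mathcal{O})$, which I will call $\mathbb{O}$. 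Next I would note that, since $\theta$ and $\sigma$ act by $-1$ and $+1$ on points of $\mathcal{N}^*_\theta$ and $\mathcal{N}^*_0$ respectively and $\mathbb{O}$ is a nilpotent co-adjoint orbit (so also stable under $\xi \mapsto -\xi$), the orbit $\mathbb{O}$ is preserved by $\xi \mapsto -\xi$, by $\theta$, and by $\sigma$. On $\mathbb{O}$, with its Kirillov--Kostant--Souriau form $\omega$, the map $\xi \mapsto -\xi$ pulls $\omega$ back to $-\omega$, the holomorphic automorphism $\theta$ preserves $\omega$, and the anti-holomorphic involution $\sigma$ pulls $\omega$ back to $\overline{\omega}$. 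Hence $-\theta = (\xi \mapsto -\xi)\circ\theta$ is an anti-symplectic involution of $\mathbb{O}$ whose fixed locus contains $\mathcal{O}$, so $\mathcal{O}$ is Lagrangian in $\mathbb{O}$; and $\sigma$ is a real structure on $(\mathbb{O},\omega)$ whose fixed locus contains $\eta(\mathcal{O})$, so $\eta(\mathcal{O})$ inherits a real symplectic structure, i.e. is a real form of $(\mathbb{O},\omega)$.

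For (2), I would construct the $K$-equivariant diffeomorphism $\mathcal{O} \cong \eta(\mathcal{O})$ following Vergne \cite{Vergne1995} and Barbasch--Sepanski \cite{BarbaschSepanski1998}. One cannot simply transport $\varphi(\phi(E_c))$ to $\varphi(\phi(E))$ by a single fixed Cayley element, since a compatible $\mathfrak{sl}_2$-triple varies with the point of the orbit; instead one runs the gradient (``instanton'') flow of the norm-squared moment map inside $\mathbb{O}$, which is complete, depends $K$-equivariantly and smoothly on the initial point, and carries the Lagrangian $\mathcal{O}$ diffeomorphically onto the real form $\eta(\mathcal{O})$. (The dimension count is consistent: $\mathcal{O}$ Lagrangian and $\eta(\mathcal{O})$ a real form in $\mathbb{O}$ both have real dimension $\dim_{\mathbb{C}}\mathbb{O}$.)

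Finally, for (1), I would appeal to the fact that these orbit-by-orbit constructions assemble into one $K$-equivariant homeomorphism $\Phi \colon \mathcal{N}^*_\theta \xrightarrow{\sim} \mathcal{N}^*_0$ inducing $\eta$ on orbits (the global form of the Kostant--Sekiguchi--Vergne correspondence, \cite{Vergne1995}, \cite{BarbaschSepanski1998}). A homeomorphism carrying orbits bijectively to orbits carries orbit closures to orbit closures, so $\Phi(\overline{\mathcal{O}_2}) = \overline{\eta(\mathcal{O}_2)}$, whence $\mathcal{O}_1 \subset \overline{\mathcal{O}_2}$ if and only if $\eta(\mathcal{O}_1) \subset \overline{\eta(\mathcal{O}_2)}$. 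I expect the main obstacle to be exactly this global statement and the analytic input behind it: showing that the flow in each complex orbit is complete and varies continuously with its initial condition, so that the individual diffeomorphisms glue to a homeomorphism of the whole cones. Passing through complex orbit closures alone does not suffice, because a single complex nilpotent orbit may meet $\mathfrak{p}^*$ (resp. $\mathfrak{g}_0^*$) in several $\mathbf{K}$-orbits (resp. $G$-orbits), and it is precisely the matching of these components recorded by $\eta$ that makes the closure-ordering statement non-formal.
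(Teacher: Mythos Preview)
The paper does not supply its own proof of this theorem: it is stated with attributions to Kostant--Rallis, Sekiguchi, Vergne, and Barbasch--Sepanski and then used as input. So there is no in-paper argument to compare against; what can be assessed is whether your sketch is a faithful outline of the cited literature.

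It is. Your argument for (3) is the standard one: the $\mathbf{G}$-conjugacy of $\mathcal{O}$ and $\eta(\mathcal{O})$ comes from the $SL_2(\mathbb{C})$-conjugacy of $E$ and $E_c$, the Lagrangian property from the anti-symplectic involution $-\theta$ on the KKS form, and the real-form property from the anti-holomorphic involution $\sigma$. For (2) and (1) you correctly point to Vergne's flow construction and the Barbasch--Sepanski globalization, and you accurately identify the nontrivial analytic content (completeness and continuous dependence of the flow, and the need to match components within a single complex orbit). One small refinement: in (3) you should remark that $\mathbb{O}$ is actually preserved by $\theta^*$ and by $\sigma^*$ (not just by $\xi\mapsto -\xi$), so that these involutions genuinely act on $\mathbb{O}$; this follows since $\mathbb{O}$ already meets the fixed loci of $-\theta^*$ and of $\sigma^*$ and automorphisms permute orbits. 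With that, your outline matches what the cited references do.
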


\subsection{Associated varieties}\label{sec:assvar}

Equip $U(\fg)$ with its usual filtration and let $I \subset U(\fg)$ be a two-sided ideal. Then $\gr(I)$ is a graded ideal in the commutative ring $\mathrm{gr}U(\mathfrak{g}) \simeq S(\mathfrak{g}) \simeq \mathbb{C}[\mathfrak{g}^*]$.

\begin{definition}
The \emph{associated variety} of $I$ is the $\mathbf{G}$ and $\mathbb{C}^{\times}$-invariant, Zariski-closed subset of $\mathfrak{g}^*$ defined by the graded ideal $\gr(I) \subset \mathbb{C}[\mathfrak{g}^*]$
$$\AV(I) := V(\gr(I)) \subset \mathfrak{g}^*$$
\end{definition}

If $I \cap Z(\mathfrak{g}) \subset Z(\mathfrak{g})$ is an ideal of finite codimension, then $\AV(I) \subset \mathcal{N}^*$ (see e.g. \cite[Thm 5.7]{Vogan1991}. If $\mathcal{O}^{\mathbb{C}}_1,...,\mathcal{O}^{\mathbb{C}}_n$ are the open $\mathbf{G}$-orbits in $\AV(I)$, then $\overline{\mathcal{O}}^{\mathbb{C}}_i$ are its irreducible components.

\begin{theorem}[Joseph, Borho-Brylinski, \cite{Joseph1985}, \cite{BorhoBrylinski1985}]\label{thm:Josephirreducibility}
If $I$ is primitive (i.e. the annihilator of an irreducible $\mathfrak{g}$-module), then $\AV(I)$ is irreducible, i.e. there is a $\mathbf{G}$-orbit $\mathcal{O}^{\mathbb{C}} \subset \mathcal{N}^*$ such that
$$\AV(I) = \overline{\mathcal{O}}^{\mathbb{C}}$$
\end{theorem}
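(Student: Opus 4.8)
By the discussion preceding the theorem, $\AV(I)$ is a closed, conic, $\mathbf{G}$-stable subset of $\mathcal{N}^*$, hence a finite union $\overline{\cO}_1 \cup \dots \cup \overline{\cO}_k$ of nilpotent orbit closures; the content of the theorem is that $k=1$. The plan is to prove this via Beilinson--Bernstein localization and the Springer resolution, reducing irreducibility of $\AV(I)$ to a statement about characteristic varieties of irreducible $\mathcal{D}$-modules on the flag variety together with the compatibility of Kazhdan--Lusztig cells with the closure order on nilpotent orbits. A preliminary reduction handles the infinitesimal character: translation functors identify $\AV(I)$ with the associated variety of a primitive ideal of regular infinitesimal character, and passage to the integral root subsystem reduces to the integral case, both steps being classical and compatible with associated varieties. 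After this reduction, Duflo's theorem lets us write $I = \Ann_{U(\fg)}L(w\cdot\lambda)$ for a simple highest weight module $L(w\cdot\lambda)$ with $\lambda$ dominant, regular, and integral, and some $w \in W$.

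Since $\lambda$ is regular, Beilinson--Bernstein gives an equivalence between $U(\fg)$-modules with infinitesimal character $\lambda$ and $\mathcal{D}_\lambda$-modules on $X := \mathbf{G}/\mathbf{B}$, under which $L(w\cdot\lambda)$ corresponds to an irreducible regular holonomic $\mathcal{D}_\lambda$-module $\cM$; being the localization of a highest weight module, $\cM$ is constructible along the Schubert stratification, so its characteristic variety is a union of conormals to Schubert cells,
$$\mathrm{Ch}(\cM) = \bigcup_{w'\in S} \overline{T^*_{X_{w'}}X}, \qquad w \in S,$$
which, by holonomicity, is pure of dimension $\dim X$. On the algebra side $U(\fg)/I \cong \Gamma(X,\mathcal{D}_\lambda/\Ann_{\mathcal{D}_\lambda}\cM)$, and passing to the associated graded of the order filtration identifies $\AV(I)$ with the $\mathbf{G}$-saturation of $\mu(\mathrm{Ch}(\cM))$, where $\mu \colon T^*X \to \mathcal{N}^* \subset \mathfrak{g}^*$ is the Springer (moment) map; this saturation is closed because $\mu$ is proper and $\mathrm{Ch}(\cM)$ is $\mathbf{B}$-stable. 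By Steinberg's description of the irreducible components of the Steinberg variety, $\mathbf{G}\cdot\mu(\overline{T^*_{X_{w'}}X}) = \overline{\cO_{w'}}$ for a single nilpotent orbit $\cO_{w'}$, so that
$$\AV(I) = \bigcup_{w'\in S}\overline{\cO_{w'}}.$$

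It remains to show $\cO_{w'} \subseteq \overline{\cO_w}$ for every $w' \in S$; granting this, $\AV(I) = \overline{\cO_w}$ and the theorem follows. This rests on two ingredients: that membership in the support $S$ of the characteristic cycle of $\cM$ forces $w'$ to lie below $w$ in the two-sided Kazhdan--Lusztig preorder (a consequence of $\cM$ being a subquotient of the standard $\mathcal{D}$-module attached to $X_w$, combined with the expression of the characteristic cycle in terms of Kazhdan--Lusztig polynomials), and that the assignment $w' \mapsto \cO_{w'}$ is order-preserving from the two-sided cell preorder to the closure order on nilpotent orbits, and constant on left cells. I expect the second ingredient---the monotonicity of this ``Steinberg map''---to be the main obstacle: it is exactly the point at which the classification of nilpotent orbits and the combinatorics of cells must be reconciled, and in the literature it is obtained by a delicate case analysis, or, for the classical groups, by explicit computations with partitions via the Robinson--Schensted correspondence.

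An alternative to the last two steps, closer to Joseph's original argument, is to identify $\AV(I)$ with $\overline{\mathbf{G}\cdot\mathcal{V}(L(w\cdot\lambda))}$, where $\mathcal{V}(L(w\cdot\lambda)) \subset \fn$ is the associated variety of the module itself, and then to prove that all the orbital varieties occurring in $\mathcal{V}(L(w\cdot\lambda))$ lie in a common orbit closure---which again reduces to the same cell-theoretic input, now packaged in terms of the orbital-variety combinatorics attached to $w$.
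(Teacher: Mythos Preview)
The paper does not prove this theorem; it is quoted from Joseph and Borho--Brylinski with citations only, so your proposal is being measured against those sources rather than against anything the paper supplies. Your architecture---reduce to regular integral infinitesimal character, write $I = \Ann L(w\cdot\lambda)$ by Duflo, localize, and read $\AV(I)$ off the characteristic variety under the Springer map---matches the Borho--Brylinski framework. One step you pass over is the identity $\AV(I) = \overline{\mathbf{G}\cdot\mu(\mathrm{Ch}(\cM))}$: this is not merely ``passing to associated graded,'' since $\gr(\Ann M) \subseteq \Ann(\gr M)$ gives only one containment; the equality rests on the theorem $\AV(\Ann M) = \overline{\mathbf{G}\cdot\AV(M)}$ for simple $M$, which is itself one of the substantive inputs in the cited papers.

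The genuine gap is in your endgame. Your first ingredient---that $\overline{T^*_{X_{w'}}X} \subset \mathrm{Ch}(\cM)$ forces $w' \leq_{LR} w$---does not follow from the subquotient argument you sketch: the characteristic variety of the standard module on $X_w$ is $\bigcup_{w'\leq w}\overline{T^*_{X_{w'}}X}$ (already for $\mathfrak{sl}_2$ and the open cell it contains both conormals), so this yields only $w' \leq w$ in Bruhat order, and the characteristic-cycle multiplicities are microlocal invariants not simply equal to Kazhdan--Lusztig polynomial values. More seriously, your second ingredient---monotonicity of $w' \mapsto \cO_{w'}$ from $\leq_{LR}$ to the closure order---is in most treatments \emph{deduced from} the irreducibility of $\AV(\Ann L_w)$ rather than used to prove it, so invoking it here is circular. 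Joseph's actual argument goes through Goldie-rank polynomials, showing they lie in a single irreducible Springer representation and hence pick out a single orbit; Borho--Brylinski localize Harish-Chandra bimodules $\mathbf{G}$-equivariantly on $X \times X$ and exploit the Steinberg variety directly. Neither proof reduces to ``apply cell theory,'' which is precisely why the theorem carries names. Your alternative sketch via orbital varieties runs into the same circularity.
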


Now suppose $X$ is a $(\mathfrak{g},\mathbf{K})$-module. The annihilator of $X$
$$\mathrm{Ann}(X) := \{a \in U(\mathfrak{g}) \mid aX=0\}$$
is a two-sided ideal in $U(\mathfrak{g})$. Thus, we can define its associated variety $\AV(\mathrm{Ann}(X)) \subset \mathfrak{g}^*$. There is a refinement of this invariant, which can distinguish between $(\mathfrak{g},\mathbf{K})$-modules with coinciding annihilators.

\begin{definition}[\cite{Vogan1991}]
A filtration on $X$
$$...\subseteq X_{-1} \subseteq X_0 \subseteq X_1 \subseteq ... , \qquad \bigcap_m X_m = 0, \qquad \bigcup_m X_m = X$$
by complex subspaces is \emph{compatible} if
\begin{enumerate}
\item $U_m(\mathfrak{g})X_n \subseteq X_{m+n} \qquad \forall \ m,n \in \mathbb{Z}$
\item $KX_m \subseteq X_m \qquad \forall \ m \in \mathbb{Z}$
\end{enumerate}
The first condition allows us to define on $\gr(X)$ the structure of a graded $S(\mathfrak{g})$-module. The second condition allows us to define on $\gr(X)$ a graded algebraic $\mathbf{K}$-action. These two structures are compatible in the following ways

\begin{enumerate}
\item The action map $S(\mathfrak{g}) \otimes \gr(X) \to \gr(X)$ is $\mathbf{K}$-equivariant,
\item The subspace $\mathfrak{k} \subset \mathfrak{g} \subset S(\mathfrak{g})$ acts by $0$ on $\gr(X)$
\end{enumerate}
In short, $\gr(X)$ has the structure of a graded, $\mathbf{K}$-equivariant $S(\mathfrak{g}/\mathfrak{k})$-module. A compatible filtration is \emph{good} if
\begin{enumerate}[resume]
\item\label{cond3} $\gr(X)$ is a finitely-generated $S(\mathfrak{g}/\mathfrak{k})$-module
\end{enumerate}
\end{definition}

Thus, for any good filtration, $\mathrm{gr}(X)$ can be identified with a graded, $\mathbf{K}$-equivariant coherent sheaf on $(\mathfrak{g}/\mathfrak{k})^*$. We note that if $X$ is finitely-generated, good filtrations exist. For example, if $X_0 \subset X$ is a finite-dimensional $\mathbf{K}$-invariant generating subspace, we can define
$$X_m := U_m(\mathfrak{g})X_0 \quad m \geq 0$$

Now, assume $X$ has finite length. Then $X$ is finitely-generated as a $U(\mathfrak{g})$-module. If we choose a good filtration on $X$, there is an obvious containment (of ideals)
$$\gr(\mathrm{Ann}(X)) \subseteq \mathrm{Ann}(\gr(X)) $$
and hence a containment (of sets)
$$\mathrm{Supp}(\gr(X)) \subseteq \AV(\mathrm{Ann}(X))$$
where $\mathrm{Supp}(\gr(X)) = V(\mathrm{Ann}(\gr(X))$ denotes the (set-theoretic) support. Since $X$ has finite-length, $\mathrm{Ann}(X) \cap \mathfrak{z}(\mathfrak{g}) \subset \mathfrak{z}(\mathfrak{g})$ is an ideal of finite codimension. By the remarks preceding Theorem \ref{thm:Josephirreducibility} there is a containment
$$\mathrm{Supp}(\gr(X)) \subseteq \mathcal{N}^* \cap (\mathfrak{g}/\mathfrak{k})^* = \mathcal{N}_{\theta}^*$$
Let $M^f(\mathfrak{g},\mathbf{K})$ be the abelian category of finite-length $(\mathfrak{g},\mathbf{K})$-modules and let $K^f(\mathfrak{g},\mathbf{K})$ be its Grothendieck group. Then $K^f(\mathfrak{g},\mathbf{K})$ is a free $\mathbb{Z}$-module with basis equal to the set of (isomorphism classes of) irreducible $(\mathfrak{g},\mathbf{K})$-modules. If $X \in M^f(\mathfrak{g},\mathbf{K})$, write $[X]  \in K^f(\mathfrak{g},\mathbf{K})$ for its class. By the remarks above, $\gr(X)$ defines a class in $K^{\mathbf{K}}(\mathcal{N}_{\theta}^*)$. Although $\gr(X)$ depends on the filtration used to define it, its class $[\gr(X)]$ does not. More precisely

\begin{proposition}[\cite{Vogan1991}, Proposition 2.2]\label{prop:grprop}
$\gr$ defines a group homomorphism
$$
K^f(\mathfrak{g},\mathbf{K}) \to K^{\mathbf{K}}(\mathcal{N}_{\theta}^*), \qquad X \mapsto [\gr(X)]
$$
\end{proposition}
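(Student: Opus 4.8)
The plan is to define the map on objects of $M^f(\mathfrak g,\mathbf K)$ by choosing a good filtration, to show the resulting class is independent of that choice, and to show additivity on short exact sequences; the last two facts together let $X\mapsto[\gr(X)]$ descend from $M^f(\mathfrak g,\mathbf K)$ to $K^f(\mathfrak g,\mathbf K)$. First one checks that $\gr(X)$ genuinely determines a class: for finite-length $X$ a good filtration exists (take $X_m:=U_m(\mathfrak g)X_0$ for a finite-dimensional $\mathbf K$-invariant generating subspace $X_0$), and then $\gr(X)$ is a finitely generated graded $\mathbf K$-equivariant $S(\mathfrak g/\mathfrak k)$-module whose support lies in $\mathcal N_\theta^*$ by the containment recorded just before the proposition; forgetting the grading, any such module defines a class in $K^{\mathbf K}(\mathcal N_\theta^*)$ (this being a Grothendieck group of equivariant coherent sheaves, it is insensitive to whether the module is killed by the ideal of $\mathcal N_\theta^*$ or only by a power of it).

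Additivity is the easy half. Given $0\to X'\to X\to X''\to0$ in $M^f(\mathfrak g,\mathbf K)$, fix a good filtration $F$ on $X$ and give $X'$ the subspace filtration $F'_n:=F_n\cap X'$ and $X''$ the quotient filtration $F''_n:=\mathrm{image}(F_n)$; both are compatible and $\mathbf K$-stable. Then $F''$ is good since $\gr_{F''}(X'')$ is a quotient of the finitely generated module $\gr_F(X)$, and $F'$ is good since, by the standard exactness of $\gr$ for subquotient filtrations, $\gr_{F'}(X')$ is a submodule of $\gr_F(X)$ and $S(\mathfrak g/\mathfrak k)$ is Noetherian. That same exactness gives a short exact sequence of finitely generated $\mathbf K$-equivariant $S(\mathfrak g/\mathfrak k)$-modules supported on $\mathcal N_\theta^*$,
$$0\to\gr_{F'}(X')\to\gr_F(X)\to\gr_{F''}(X'')\to0,$$
so $[\gr(X)]=[\gr(X')]+[\gr(X'')]$; granting the independence below, the assignment is additive and factors through $K^f(\mathfrak g,\mathbf K)$.

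The substance, and the step I expect to demand real care, is independence of the good filtration. Given good filtrations $F$ and $G$ on $X$ one reduces to the case $F_n\subseteq G_n$ for all $n$ by passing through $(F+G)_n:=F_n+G_n$ (apply the special case to $F\subseteq F+G$ and to $G\subseteq F+G$); this $F+G$ is again good because over the Rees algebra $\widetilde U(\mathfrak g):=\bigoplus_n U_n(\mathfrak g)t^n$ its Rees module is the sum of two finitely generated submodules of $X[t,t^{-1}]$. Assuming $F\subseteq G$, goodness gives $N$ with $G_n\subseteq F_{n+N}$, and one interpolates by ${}^kF_n:=F_n+G_{n-k}$ for $0\le k\le N$: each ${}^kF$ is compatible (a sum of the compatible filtration $F_\bullet$ and the shifted compatible filtration $G_{\bullet-k}$), $\mathbf K$-stable, and good (it is sandwiched between $F$ and a shift of $G$), with ${}^0F=G$ and ${}^NF=F$. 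For consecutive indices, the containments ${}^{k+1}F_{n-1}\subseteq{}^{k+1}F_n\subseteq{}^kF_n$ and ${}^{k+1}F_{n-1}\subseteq{}^kF_{n-1}\subseteq{}^kF_n$ yield, from the auxiliary module $M:=\bigoplus_n{}^kF_n/{}^{k+1}F_{n-1}$, the two short exact sequences
$$0\to\gr_{{}^{k+1}F}(X)\to M\to Q\to0,\qquad 0\to Q\to M\to\gr_{{}^kF}(X)\to0,$$
where $Q:=\bigoplus_n{}^kF_n/{}^{k+1}F_n$ is a graded quotient of a degree shift of $\gr_G(X)$ (send $[g]\in G_{n-k}/G_{n-k-1}$ to its class in ${}^kF_n/{}^{k+1}F_n$), hence a finitely generated $\mathbf K$-equivariant module supported on $\mathcal N_\theta^*$, so $M$ is too; comparing the two sequences gives $[\gr_{{}^kF}(X)]=[\gr_{{}^{k+1}F}(X)]$, and running up the chain, $[\gr_F(X)]=[\gr_G(X)]$. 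The thing to verify carefully is exactly this bookkeeping -- the two displayed sequences and the finite generation and supports of $M$, $Q$, and the gradeds; alternatively one argues entirely on the Rees side, noting that the Rees modules of $F$ and $G$ are finitely generated $\widetilde U(\mathfrak g)$-submodules of $X[t,t^{-1}]$ with the same localization at $t$, hence agree up to a power of $t$, and that a $t$-adic filtration of the ($t$-torsion) quotient forces the $K$-classes of the special fibres $\gr_F(X)$ and $\gr_G(X)$ to coincide. In either form this is classical; it is \cite[Prop.~2.2]{Vogan1991}.
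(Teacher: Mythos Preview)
Your argument is correct and is the classical one: independence of the good filtration via the interpolation ${}^kF_n:=F_n+G_{n-k}$, and additivity via induced sub/quotient filtrations. The paper does not give its own proof of this proposition but simply cites \cite[Prop.~2.2]{Vogan1991}; your write-up is essentially a reconstruction of that standard argument, so there is nothing to compare beyond noting that you have supplied the details the paper defers to the reference.
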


Thus, we can define

\begin{definition}\label{def:AVfinitelength}
Let $X$ be a $(\mathfrak{g},\mathbf{K})$-module of finite-length. The \emph{associated variety} of $X$ is the $\mathbf{K}$ and $\mathbb{C}^{\times}$-invariant, Zariski-closed subset of $\mathcal{N}_{\theta}^*$ defined by the graded ideal $\mathrm{Ann}(\gr(X)) \subset S(\mathfrak{g}/\mathfrak{k})$
$$\AV(X) := \mathrm{Supp}(\gr X) = V(\mathrm{Ann}(\gr(X))) \subseteq \mathcal{N}_{\theta}^*$$
By Proposition \ref{prop:grprop}, $\mathrm{AV}(X)$ is well-defined.
\end{definition}

Hence if $X$ has finite-length, $\AV(X)$ is a finite union of $\mathbf{K}$-orbits. If $\mathcal{O}_1,...,\mathcal{O}_n$ are the open $\mathbf{K}$-orbits in $\AV(X)$, then $\overline{\mathcal{O}}_i$ are the irreducible components. If $X$ is irreducible, then the $\mathbf{K}$-orbits $\mathcal{O}_1,...,\mathcal{O}_n$ are related to the $\mathbf{G}$-orbit $\mathcal{O}^{\mathbb{C}}$ of Proposition \ref{thm:Josephirreducibility} by the following result of Vogan

\begin{theorem}[\cite{Vogan1991}, Theorem 8.4]\label{thm:twoAVs}
Let $X$ be an irreducible $(\mathfrak{g},\mathbf{K})$-module. Let $\mathcal{O}_1,...,\mathcal{O}_n$ be the open $\mathbf{K}$-orbits in $\AV(X)$ and let $\mathcal{O}^{\mathbb{C}}$ be the open $\mathbf{G}$-orbit in $\AV(I)$. Then 
$$\mathcal{O}^{\mathbb{C}} = \mathbf{G}\cdot \mathcal{O}_i$$
for each $i=1,...,n$. In particular (by Theorem \ref{thm:kostantsekiguchi2})
$$\dim(\mathcal{O}_i) = \frac{1}{2}\dim(\mathcal{O}^{\mathbb{C}}) \qquad \text{for } i=1,...,n $$
\end{theorem}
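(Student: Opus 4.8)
The plan is to reduce the statement to a pair of dimension inequalities by means of the Kostant--Sekiguchi correspondence. Recall from Section \ref{sec:assvar} that $\AV(X)=\mathrm{Supp}(\gr X)\subseteq \AV(\mathrm{Ann}(X))=\AV(I)$, and that $\AV(I)=\overline{\mathcal{O}^{\mathbb{C}}}$ is closed and $\mathbf{G}$-stable (Theorem \ref{thm:Josephirreducibility}). Hence for each open $\mathbf{K}$-orbit $\mathcal{O}_i\subseteq\AV(X)$ one has $\mathbf{G}\cdot\mathcal{O}_i\subseteq\overline{\mathcal{O}^{\mathbb{C}}}$, and by Theorem \ref{thm:kostantsekiguchi2}(3) the set $\mathbf{G}\cdot\mathcal{O}_i$ is in fact a single $\mathbf{G}$-orbit inside which $\mathcal{O}_i$ is a Lagrangian submanifold, so $\dim(\mathbf{G}\cdot\mathcal{O}_i)=2\dim\mathcal{O}_i$. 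Since $\mathcal{O}^{\mathbb{C}}$ is the only $\mathbf{G}$-orbit of its dimension inside $\overline{\mathcal{O}^{\mathbb{C}}}$, the whole theorem comes down to proving $\dim\mathcal{O}_i=\tfrac{1}{2}\dim\mathcal{O}^{\mathbb{C}}$ for every $i$ (the displayed dimension formula is then a restatement).

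One of the two inequalities is free: $2\dim\mathcal{O}_i=\dim(\mathbf{G}\cdot\mathcal{O}_i)\le\dim\overline{\mathcal{O}^{\mathbb{C}}}=\dim\mathcal{O}^{\mathbb{C}}$. For the reverse inequality I would first control the top-dimensional components via a Bernstein-type bound on the Gelfand--Kirillov dimension. Choose a $\mathbf{K}$-stable good filtration on $X$; since $\gr X$ is finitely generated over $S(\mathfrak{g}/\mathfrak{k})\subseteq S(\mathfrak{g})$, this is also a good $U(\mathfrak{g})$-module filtration, so $\max_i\dim\mathcal{O}_i=\dim\AV(X)$ equals the Gelfand--Kirillov dimension of $X$. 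Filtering $A:=U(\mathfrak{g})/I$ by the images $A_n$ of $U_n(\mathfrak{g})$ one has $\gr A=S(\mathfrak{g})/\gr I$, so $\dim A_n$ grows like $n^{\dim\AV(I)}$; and since $X$ is irreducible, hence a faithful $A$-module, the decreasing chain $\{a\in A_n:aX_m=0\}$ in the finite-dimensional space $A_n$ stabilizes at $0$, giving an embedding $A_n\hookrightarrow\Hom_{\mathbb{C}}(X_m,X_{m+n})$ for $m$ past a bound and hence $\dim A_n\le\dim X_m\cdot\dim X_{m+n}$. The standard growth comparison (using $\dim X_k=O(k^{\dim\AV(X)})$) then gives $\dim\AV(I)\le 2\dim\AV(X)$, which together with the easy inequality forces $\max_i\dim\mathcal{O}_i=\tfrac{1}{2}\dim\mathcal{O}^{\mathbb{C}}$; so $\mathbf{G}\cdot\mathcal{O}_i=\mathcal{O}^{\mathbb{C}}$ holds at least for the components $\overline{\mathcal{O}_i}$ of maximal dimension.

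The hard part is the \emph{equidimensionality} of $\AV(X)$: one must rule out components of dimension $<\tfrac{1}{2}\dim\mathcal{O}^{\mathbb{C}}$, and this is not accessible to dimension-counting, because $\AV(I)$ is irreducible and so covering it by the $\overline{\mathbf{G}\cdot\mathcal{O}_i}$ only pins down the largest one. Here I would invoke Gabber's integrability (involutivity) theorem: every irreducible component of $\AV(X)$ is coisotropic inside the symplectic leaf of $\mathcal{N}^{*}$ containing it, hence --- being contained in the Lagrangian $\overline{\mathbf{G}\cdot\mathcal{O}_i}\cap\mathcal{N}^{*}_{\theta}$ --- has dimension exactly $\tfrac{1}{2}\dim\mathcal{O}^{\mathbb{C}}$. (Alternatively one can localize $X$ to the flag variety via Beilinson--Bernstein, where it becomes an irreducible $\mathbf{K}$-equivariant holonomic $\mathcal{D}$-module whose characteristic cycle is a non-negative sum of conormal varieties $\overline{T^{*}_{Q}\mathcal{B}}$ of $\mathbf{K}$-orbits $Q$, each of dimension $\dim\mathcal{B}$; then $\AV(X)$ is the image of this cycle under the $\mathbf{K}$-moment map $T^{*}\mathcal{B}\to\mathcal{N}^{*}_{\theta}$, hence equidimensional.) Granting equidimensionality, $\dim\mathcal{O}_i=\tfrac{1}{2}\dim\mathcal{O}^{\mathbb{C}}$ for every $i$, so each $\mathbf{G}\cdot\mathcal{O}_i$ is a $\mathbf{G}$-orbit of dimension $\dim\mathcal{O}^{\mathbb{C}}$ inside $\overline{\mathcal{O}^{\mathbb{C}}}$, i.e.\ equals $\mathcal{O}^{\mathbb{C}}$, as desired. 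Everything except this equidimensionality input is formal, given the Kostant--Sekiguchi correspondence and Bernstein's inequality.
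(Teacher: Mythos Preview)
The paper does not prove this theorem; it is quoted from \cite{Vogan1991} and used as a black box, so there is no in-paper argument to compare against. I will comment on your outline on its own terms.

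Your reduction to the dimension formula is correct, as is the Bernstein-type inequality $\dim\AV(I)\le 2\dim\AV(X)$ via the embedding $A_n\hookrightarrow\Hom_{\mathbb{C}}(X_m,X_{m+n})$; this pins down $\max_i\dim\mathcal{O}_i=\tfrac{1}{2}\dim\mathcal{O}^{\mathbb{C}}$ and hence the conclusion for the top-dimensional components. You have also correctly isolated equidimensionality of $\AV(X)$ as the crux.

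However, neither of your two proposed fixes closes that gap as written. For Gabber: involutivity says each component $\overline{\mathcal{O}_i}$ is coisotropic for the Kirillov--Kostant Poisson bracket on $\mathfrak{g}^*$, i.e.\ coisotropic inside the symplectic leaf through its \emph{generic} point. That leaf is $\mathbf{G}\cdot\mathcal{O}_i$, not $\mathcal{O}^{\mathbb{C}}$; so Gabber yields only $\dim\mathcal{O}_i\ge\tfrac{1}{2}\dim(\mathbf{G}\cdot\mathcal{O}_i)$, which you already have (with equality) from Theorem \ref{thm:kostantsekiguchi2}(3), and which does nothing to rule out $\mathbf{G}\cdot\mathcal{O}_i\subsetneq\mathcal{O}^{\mathbb{C}}$. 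For the localization alternative: the characteristic variety on $\mathcal{B}$ is indeed pure of dimension $\dim\mathcal{B}$, but the moment map can send different conormal bundles $\overline{T^*_Q\mathcal{B}}$ to $\mathbf{K}$-orbit closures of genuinely different dimensions (e.g.\ the conormal to the open $\mathbf{K}$-orbit maps to $\{0\}$), so ``hence equidimensional'' is precisely the unproved step.

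Vogan's argument in \cite{Vogan1991} goes through neither of these. The extra input is a componentwise refinement of your Bernstein inequality: one regards $U(\mathfrak{g})/I$ as a Harish--Chandra bimodule, identifies its associated variety with (the diagonal copy of) $\overline{\mathcal{O}^{\mathbb{C}}}$, and then uses faithfulness of $X$ over $U(\mathfrak{g})/I$ to force \emph{every} minimal prime of $\gr X$ to lie over the generic point of $\AV(I)$, not just one of them. Your embedding $A_n\hookrightarrow\Hom(X_m,X_{m+n})$ is the global shadow of this; what is missing is its localization at each component.
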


Let $\mathrm{Rep}^f(\mathbf{K})$ be the abelian category of admissible representations of $\mathbf{K}$. Restriciton to $\mathbf{K}$ defines an exact functor 
$$\mathrm{res}^{(\mathfrak{g},\mathbb{K})}_{\mathbf{K}}: M^f(\mathfrak{g},\mathbf{K}) \to \mathrm{Rep}^f(\mathbf{K})$$
Let $K^f(\mathbf{K})$ be the Grothendieck group of $\mathrm{Rep}^f(\mathbf{K})$. Note that $K^f(\mathbf{K})$ is identified with functions $\mathbb{Z} \to \widehat{\mathbf{K}}$. The restriction functor induces a group homomorphism
$$\mathrm{res}^{(\mathfrak{g},\mathbb{K})}_{\mathbf{K}}: K^f(\mathfrak{g},\mathbf{K}) \to K^f(\mathbf{K})$$
to the Grothendieck group $K^f(\mathbf{K})$ of $\mathrm{Rep}^f(\mathbf{K})$.

Now suppose $M \in \Coh^{\mathbf{K}}(\mathcal{N}_{\theta}^*)$. Then $\Gamma(\mathcal{N}_{\theta}^*,M)$ has the structure of a $\mathbf{K}$-equivariant $\mathbb{C}[\mathcal{N}_{\theta}^*]$-module. Since $\mathbf{K}$ is reductive, this module is admissible (when regarded as a representation of $\mathbf{K})$. Hence, we obtain a functor
$$\mathrm{res}^{\mathrm{coh}}_{\mathbf{K}}: \Coh^{\mathbf{K}}(\mathcal{N}_{\theta}^*) \to \mathrm{Rep}^f(\mathbf{K})$$
which is exact, since $\mathcal{N}_{\theta}^*$ is an affine variety. This functor induces a group homomorphism
$$\mathrm{res}^{\mathrm{coh}}_{\mathbf{K}}: K^{\mathbf{K}}(\mathcal{N}_{\theta}^*) \to K^f(\mathbf{K})$$
It is clear from definitions that the following diagram commutes
\begin{center}
\begin{tikzcd}
K^f(\mathfrak{g},\mathbf{K}) \arrow[r, "\gr"] \arrow[dr, "\mathrm{res}^{(\mathfrak{g},\mathbb{K})}_{\mathbf{K}}"] & K^{\mathbf{K}}(\mathcal{N}_{\theta}^*)  \arrow[d, "\mathrm{res}^{\mathrm{coh}}_{\mathbf{K}}"]\\
& K^f(\mathbf{K})
\end{tikzcd}
\end{center}

\begin{theorem}[\cite{AdamsVogan2019}, Corollary 6.4]\label{thm:resKinjective}
The restriction map 
$$\mathrm{res}^{\mathrm{coh}}_{\mathbf{K}}: K^{\mathbf{K}}(\mathcal{N}_{\theta}^*) \to K^f(\mathbf{K})$$
is injective.
\end{theorem}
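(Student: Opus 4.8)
The plan is a dévissage along the $\mathbf{K}$-orbit stratification of $\mathcal{N}_{\theta}^*$, reducing the statement to an asymptotic statement about branching multiplicities. First one checks $\mathrm{res}^{\mathrm{coh}}_{\mathbf{K}}$ is well defined: since $\mathcal{N}_{\theta}^*$ is affine and carries only finitely many $\mathbf{K}$-orbits (Theorem \ref{thm:kostantsekiguchi1}, \cite{KostantRallis1971}), for $M \in \Coh^{\mathbf{K}}(\mathcal{N}_{\theta}^*)$ the module $\Gamma(\mathcal{N}_{\theta}^*, M)$ is a $\mathbf{K}$-equivariant quotient of $\mathbb{C}[\mathcal{N}_{\theta}^*] \otimes W$ for some finite-dimensional $\mathbf{K}$-module $W$, and $\mathbb{C}[\mathcal{N}_{\theta}^*]$ is $\mathbf{K}$-admissible because it embeds $\mathbf{K}$-equivariantly into the product of the rings $\mathbb{C}[\mathcal{O}]$ over the dense orbits $\mathcal{O}$ of its irreducible components, each $\mathbb{C}[\mathcal{O}] = \mathbb{C}[\mathbf{K}/\mathbf{K}_{x_{\mathcal{O}}}]$ being admissible by Frobenius reciprocity; quotients of admissible modules are admissible.

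Next I would set up two compatible filtrations. On the source, let $F_d K^{\mathbf{K}}(\mathcal{N}_{\theta}^*)$ be spanned by the classes $[M]$ with $\dim \operatorname{Supp}(M) \leq d$. Finiteness of the orbit set together with the localization sequence in equivariant $G$-theory identifies
$$\mathrm{gr}_d := F_d/F_{d-1} \;\cong\; \bigoplus_{\dim \mathcal{O} = d} G^{\mathbf{K}}(\mathcal{O}) \;\cong\; \bigoplus_{\dim \mathcal{O} = d} R(\mathbf{K}_{x_{\mathcal{O}}}),$$
the sum over $\mathbf{K}$-orbits $\mathcal{O} = \mathbf{K}/\mathbf{K}_{x_{\mathcal{O}}}$ of dimension $d$ (with $R(\mathbf{K}_{x_{\mathcal{O}}})$ the free abelian Grothendieck group of finite-dimensional representations of the isotropy group), the leading-term map being restriction of a sheaf to the open orbit of its support. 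On the target, I would use that $\mathcal{N}_{\theta}^*$ is a cone, so every coherent sheaf carries a compatible grading and $\Gamma(\mathcal{N}_{\theta}^*, M)$ is a graded admissible $\mathbf{K}$-module; for $M$ supported on orbit closures of dimension $\leq d$, the graded multiplicity of $\pi_\lambda \in \widehat{\mathbf{K}}$ grows, as $\lambda \to \infty$, at a rate controlled by $d$ and is asymptotically supported on a subcone of rays in the dominant chamber. Filtering $K^f(\mathbf{K})$ by this growth rate gives $\mathrm{res}^{\mathrm{coh}}_{\mathbf{K}}(F_d) \subseteq G_d$, and it then suffices, by induction on $d$ (base case $d = 0$: the tautological inclusion $R(\mathbf{K}) \hookrightarrow K^f(\mathbf{K})$ of sheaves supported at the origin), to prove that the induced maps $\mathrm{gr}_d \to G_d/G_{d-1}$ are injective.

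The main obstacle is exactly this injectivity on associated graded. By Frobenius reciprocity the induced map sends the class of a representation $\tau$ of $\mathbf{K}_{x_{\mathcal{O}}}$ to the leading term, as $\pi_\lambda$ runs over $\widehat{\mathbf{K}}$ and after the grading coming from the cone structure on $\mathcal{O}$, of the multiplicities $\dim \Hom_{\mathbf{K}}(\pi_\lambda, \Ind_{\mathbf{K}_{x_{\mathcal{O}}}}^{\mathbf{K}} \tau) = \dim \Hom_{\mathbf{K}_{x_{\mathcal{O}}}}(\pi_\lambda|_{\mathbf{K}_{x_{\mathcal{O}}}}, \tau)$. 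So one must show that these leading branching asymptotics are linearly independent over all pairs $(\mathcal{O}, \tau)$ with $\dim \mathcal{O} = d$. I expect this to follow from a Duistermaat--Heckman / asymptotic lattice-point analysis: the leading term is computed by integrating the symbol of the bundle $\mathbf{K} \times_{\mathbf{K}_{x_{\mathcal{O}}}} \tau$ over $\mathcal{O}$ against the limiting measure attached to $\pi_\lambda$, so that its support (a subcone of a positive chamber) together with its density recover the closure $\overline{\mathcal{O}} \subseteq \mathcal{N}_{\theta}^*$ and the class $[\tau] \in R(\mathbf{K}_{x_{\mathcal{O}}})$. Here the geometry of $\mathcal{N}_{\theta}$ enters decisively — in particular the fact (Theorem \ref{thm:kostantsekiguchi2}) that each $\mathcal{O}$ sits as a Lagrangian in a complex nilpotent coadjoint orbit of dimension $2 \dim \mathcal{O}$, which pins down the relevant moment-map images. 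Granting this asymptotic statement, the maps on $\mathrm{gr}_d$ are injective and the induction closes.
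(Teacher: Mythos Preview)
The paper does not give a proof of this statement; it is quoted verbatim from \cite{AdamsVogan2019} (their Corollary 6.4) and used as a black box, so there is no in-paper argument to compare against.

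As for your sketch on its own merits: the d\'evissage along the orbit stratification is the correct framework, and the identification of the associated graded of the support filtration on $K^{\mathbf{K}}(\mathcal{N}_{\theta}^*)$ with $\bigoplus_{\dim\mathcal{O}=d} R(\mathbf{K}_{x_{\mathcal{O}}})$ is standard equivariant $K$-theory. The matching growth-rate filtration on the target is also the right idea (this is the Gelfand--Kirillov dimension/Bernstein degree circle of ideas). But the entire content of the theorem lies in the step you explicitly leave open: the injectivity of $\mathrm{gr}_d \to G_d/G_{d-1}$. You write ``I expect this to follow from\ldots'' and invoke a Duistermaat--Heckman heuristic, but this is exactly where the real work is. One must show both that distinct orbits of the same dimension give leading terms distinguishable in $K^f(\mathbf{K})$ (their moment images can overlap, so this is not automatic from the support cone alone), and that within a single orbit the map $R(\mathbf{K}_{x_{\mathcal{O}}}) \to G_d/G_{d-1}$ is injective. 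Neither of these is established in your sketch; what you have written is an accurate description of the shape of the argument in \cite{AdamsVogan2019}, but with the decisive lemma omitted rather than proved.
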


An immediate consequence of Theorem \ref{thm:resKinjective} is the somewhat surprising fact that the associated variety of a finite-length $(\mathfrak{g},\mathbf{K})$-module is determined by its $\mathbf{K}$-types.

\subsection{Parabolic induction: general theory}\label{subsec:induction}

In this section, we will review the general theory of parabolic induction. See e.g. \cite{Vogan1981} or \cite{KnappVogan1995} for more details and proofs.

Let $\mathbf{Q} \subset \mathbf{G}$ be a parabolic subgroup. We will always assume that $\mathbf{Q}$ has a Levi decomposition $\mathbf{Q} = \mathbf{L} \mathbf{U}$ with $\theta$-stable Levi factor $\mathbf{L} \subset \mathbf{G}$. Parabolic induction is a left-exact functor
\begin{equation}\label{eqn:zuckermanfunctor}
    \mathbf{I}^{(\mathfrak{g},\mathbf{K})}_{(\mathfrak{l},\mathbf{L}\cap \mathbf{K})}: M(\mathfrak{l},\mathbf{L} \cap \mathbf{K}) \to M(\mathfrak{g},\mathbf{K})
\end{equation}
Roughly speaking, $\mathbf{I}^{(\mathfrak{g},\mathbf{K})}_{(\mathfrak{l},\mathbf{Q}\cap \mathbf{K})}W$ is the $(\mathfrak{g},\mathbf{K})$-module
$$\mathbf{K}-\text{finite vectors in } \mathrm{Hom}_{\mathfrak{q}}(U(\mathfrak{g}),W \otimes \det(\mathfrak{u}))$$
where $\det(\mathfrak{u})$ is the top exterior power of $\mathfrak{u}$. This definition is not quite correct (or meaningful, strictly speaking) if $\mathbf{K}$ is disconnected. For a precise definition, we refer the reader to \cite[Chapter 5]{Vogan1981}.

The category $M(\mathrm{g},\mathbf{K})$ has enough injectives (see \cite{KnappVogan1995}, Corollary 2.26). Hence, we can define the right derived functors:
$$R^i\mathbf{I}^{(\mathfrak{g},\mathbf{K})}_{(\mathfrak{l},\mathbf{L}\cap \mathbf{K})}: M(\mathfrak{l},\mathbf{L}\cap \mathbf{K}) \to M(\mathfrak{g},\mathbf{K})$$
The following Proposition catalogs the key properties of these functors.

\begin{proposition}\label{prop:propsofind}
The following are true:
\begin{itemize}
    \item[(i)] If $W \in M^f(\mathfrak{l},\mathbf{L} \cap \mathbf{K})$, then
    $$R^i\mathbf{I}^{(\mathfrak{g},\mathbf{K})}_{(\mathfrak{l},\mathbf{L} \cap \mathbf{K})}W \in M^f(\mathfrak{g},\mathbf{K}) \qquad \forall i \geq 0$$
    \item[(ii)] Let $\mathfrak{h} \subset \mathfrak{l}$ be a Cartan subalgebra and suppose $W \in M(\mathfrak{l},\mathbf{L} \cap \mathbf{K})$ has infinitesimal character $\lambda \in \mathfrak{h}^*$. Then
    $$R^i\mathbf{I}^{(\mathfrak{g},\mathbf{K})}_{(\mathfrak{l},\mathbf{L} \cap \mathbf{K})}W \text{ has infinitesimal character } \lambda + \rho(\mathfrak{u}) \in \mathfrak{h}^* \qquad \forall i \geq 0$$ 
    \item[(iii)] There is an $s \geq 0$ such that for every $W \in M(\mathfrak{l},\mathbf{L} \cap \mathbf{K})$ we have
    $$R^i\mathbf{I}^{(\mathfrak{g},\mathbf{K})}_{(\mathfrak{l},\mathbf{L} \cap \mathbf{K})}W =0 \qquad \forall i >s$$
\end{itemize}
\end{proposition}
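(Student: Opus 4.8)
The plan is to reduce all three assertions to the standard behaviour of the two elementary building blocks of $\mathbf{I}^{(\mathfrak{g},\mathbf{K})}_{(\mathfrak{l},\mathbf{L}\cap\mathbf{K})}$. Factor it as $\mathbf{I}^{(\mathfrak{g},\mathbf{K})}_{(\mathfrak{l},\mathbf{L}\cap\mathbf{K})} = \Gamma \circ P$, where $P(W) := \Hom_{U(\mathfrak{q})}(U(\mathfrak{g}),\, W\otimes\det\mathfrak{u})_{[\mathbf{L}\cap\mathbf{K}]}$ is the algebraic production functor, regarded as a functor from $(\mathfrak{l},\mathbf{L}\cap\mathbf{K})$-modules to $(\mathfrak{g},\mathbf{L}\cap\mathbf{K})$-modules, and $\Gamma$ is the Zuckerman functor sending a $(\mathfrak{g},\mathbf{L}\cap\mathbf{K})$-module to the span of its $\mathbf{K}$-finite vectors. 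By Poincaré--Birkhoff--Witt, $U(\mathfrak{g})$ is free over $U(\mathfrak{q})$, so $\Hom_{U(\mathfrak{q})}(U(\mathfrak{g}),-)$ is exact; passing to $\mathbf{L}\cap\mathbf{K}$-finite vectors is exact since $\mathbf{L}\cap\mathbf{K}$ is reductive; and $P$, being right adjoint to the forgetful functor $(\mathfrak{g},\mathbf{L}\cap\mathbf{K})\mathrm{-mod}\to(\mathfrak{q},\mathbf{L}\cap\mathbf{K})\mathrm{-mod}$, preserves injectives. Hence $R^{i}\mathbf{I}^{(\mathfrak{g},\mathbf{K})}_{(\mathfrak{l},\mathbf{L}\cap\mathbf{K})} = (R^{i}\Gamma)\circ P$, and everything becomes a statement about the exact functor $P$ together with the Zuckerman functors $R^{i}\Gamma$, for which \cite{Vogan1981} and \cite{KnappVogan1995} are the references.

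For (iii): the functors $R^{i}\Gamma$ are computed by a fixed finite complex --- built, up to $\mathbf{L}\cap\mathbf{K}$-equivariance, from $\Lambda^{\bullet}(\mathfrak{k}/(\mathfrak{l}\cap\mathfrak{k}))$ --- whose length is independent of the input. So $R^{i}\Gamma = 0$ for $i$ larger than $\dim_{\mathbb{C}}(\mathfrak{k}/(\mathfrak{l}\cap\mathfrak{k}))$ (in fact for $i>\dim_{\mathbb{C}}(\mathfrak{u}\cap\mathfrak{k})$), and composing with the exact functor $P$ gives a uniform bound $s$.

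For (ii): the essential input is the parabolic version of the Harish--Chandra homomorphism. Using the direct sum decomposition $U(\mathfrak{g}) = U(\mathfrak{l}) \oplus (\bar{\mathfrak{u}}\,U(\mathfrak{g}) + U(\mathfrak{g})\mathfrak{u})$, where $\bar{\mathfrak{u}}$ is the nilradical of the opposite parabolic, one checks by a direct computation in $U(\mathfrak{g})$ that the action of any $z\in Z(\mathfrak{g})$ on $P(W)$ agrees with the action of a corresponding element of $Z(\mathfrak{l})$, and that under Harish--Chandra parametrization this correspondence $Z(\mathfrak{g})\to Z(\mathfrak{l})$ is translation by $\rho(\mathfrak{u})$ --- the twist by $\det\mathfrak{u}$ in the definition of $P$ being precisely what produces this $\rho(\mathfrak{u})$. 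This identification is natural in $W$, so it passes through $P$ and then through the $R^{i}\Gamma$ (which commute with the $Z(\mathfrak{g})$-action, as $Z(\mathfrak{g})$ acts on the complex computing them). Hence if $W$ has infinitesimal character $\lambda$, then each $R^{i}\mathbf{I}^{(\mathfrak{g},\mathbf{K})}_{(\mathfrak{l},\mathbf{L}\cap\mathbf{K})}W$ has infinitesimal character $\lambda+\rho(\mathfrak{u})$.

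For (i): a module $W\in M^{f}(\mathfrak{l},\mathbf{L}\cap\mathbf{K})$ is admissible over $\mathbf{L}\cap\mathbf{K}$ and $Z(\mathfrak{l})$ acts on it through an ideal of finite codimension. Filtering $W$ by submodules on which $Z(\mathfrak{l})$ acts by a single character and using the long exact sequence of $R^{\bullet}\mathbf{I}^{(\mathfrak{g},\mathbf{K})}_{(\mathfrak{l},\mathbf{L}\cap\mathbf{K})}$, assertion (ii) shows that $Z(\mathfrak{g})$ acts on each $R^{i}\mathbf{I}^{(\mathfrak{g},\mathbf{K})}_{(\mathfrak{l},\mathbf{L}\cap\mathbf{K})}W$ through an ideal of finite codimension. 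The remaining point is that $R^{i}\mathbf{I}^{(\mathfrak{g},\mathbf{K})}_{(\mathfrak{l},\mathbf{L}\cap\mathbf{K})}W$ is admissible over $\mathbf{K}$: via the Poincaré--Birkhoff--Witt identification $U(\mathfrak{g})\cong U(\mathfrak{q})\otimes\Sym(\bar{\mathfrak{u}})$ one describes $P(W)$ explicitly as an $\mathbf{L}\cap\mathbf{K}$-module, and then a Frobenius-reciprocity (Shapiro-type) computation of the $\mathbf{K}$-multiplicities of $R^{i}\Gamma(P(W))$ in terms of $\mathbf{L}\cap\mathbf{K}$-data --- finite because the relevant homogeneous space for $\mathbf{K}$ is either $\mathbf{K}/(\mathbf{K}\cap\mathbf{L})$ with $W$ already $\mathbf{L}\cap\mathbf{K}$-admissible (the case of a $\sigma$-stable $\mathbf{Q}$), or a projective $\mathbf{K}$-flag variety (the $\theta$-stable case) --- shows that each $\mathbf{K}$-type of $R^{i}\mathbf{I}^{(\mathfrak{g},\mathbf{K})}_{(\mathfrak{l},\mathbf{L}\cap\mathbf{K})}W$ occurs with finite multiplicity. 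Since a $(\mathfrak{g},\mathbf{K})$-module that is admissible and on which $Z(\mathfrak{g})$ acts through an ideal of finite codimension has finite length (Harish--Chandra), this gives (i). I expect this admissibility bookkeeping to be the only genuinely substantive step; it, together with the $\rho(\mathfrak{u})$-computation in (ii), is carried out in detail in \cite{Vogan1981} and \cite{KnappVogan1995}, so the shortest honest write-up is a careful pointer to those sources.
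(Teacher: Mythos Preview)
The paper does not actually prove this proposition: it is stated as a catalog of standard facts about parabolic induction, with the blanket reference ``See e.g.\ \cite{Vogan1981} or \cite{KnappVogan1995} for more details and proofs'' given at the start of the subsection. Your sketch is a reasonable outline of how those references establish (i)--(iii), and your closing remark that ``the shortest honest write-up is a careful pointer to those sources'' is exactly what the paper does.

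One small caveat on your admissibility argument in (i): you split into the cases of $\sigma$-stable and $\theta$-stable $\mathbf{Q}$, but the paper's standing hypothesis is only that the Levi factor $\mathbf{L}$ is $\theta$-stable, not that $\mathbf{Q}$ itself is $\sigma$- or $\theta$-stable. The admissibility step in \cite{KnappVogan1995} is carried out in that generality, so the conclusion still holds; just be aware that your dichotomy as written does not quite cover the general setting of the proposition.
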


In light of Proposition \ref{prop:propsofind}(i) and (iii), there is a group homomorphism
\begin{equation}\label{eqn:eulerinduction}
I(\mathfrak{l},\mathfrak{q}, \cdot): K^f(\mathfrak{l}, \mathbf{L} \cap \mathbf{K}) \to K^f(\mathfrak{g},\mathbf{K}) \qquad I(\mathfrak{l},\mathfrak{q},[W]) : = \sum_i (-1)^i[R^i\mathbf{I}^{(\mathfrak{g},\mathbf{K})}_{(\mathfrak{l},\mathbf{L}\cap \mathbf{K})} W]
\end{equation}

\begin{proposition}\label{prop:indbystages}
Suppose $\mathbf{Q}' = \mathbf{L}' \mathbf{U}'$ is a parabolic subgroup of $\mathbf{L}$ with $\theta$-stable Levi factor $\mathbf{L}'$. Then $\mathbf{Q}'\mathbf{U}$ is a parabolic subgroup of $\mathbf{G}$ with $\theta$-stable Levi $\mathbf{L}'$ and
$$I(\mathfrak{l},\mathfrak{q},\cdot) \circ I(\mathfrak{l}',\mathfrak{q}',\cdot) = I(\mathfrak{l}',\mathfrak{q}'\oplus \mathfrak{u},\cdot)$$
\end{proposition}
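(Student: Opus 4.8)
The plan is to reduce the statement to the underived ``induction in stages'' isomorphism for parabolic induction and then transport it to the derived functors and to Euler characteristics by means of a Grothendieck spectral sequence. The structural assertion is routine: if $\mathfrak{q}'=\mathfrak{l}'\oplus\mathfrak{u}'$ is a parabolic subalgebra of $\mathfrak{l}$ and $\mathfrak{q}=\mathfrak{l}\oplus\mathfrak{u}$ a parabolic subalgebra of $\mathfrak{g}$, then $\mathfrak{q}'\oplus\mathfrak{u}=\mathfrak{l}'\oplus(\mathfrak{u}'\oplus\mathfrak{u})$ is a parabolic subalgebra of $\mathfrak{g}$ with Levi factor $\mathfrak{l}'$ and nilradical $\mathfrak{u}'\oplus\mathfrak{u}$, and $\mathbf{L}'$ is $\theta$-stable because $\mathbf{L}'\subset\mathbf{L}$ is $\theta$-stable and $\mathbf{L}$ is. Moreover $\det(\mathfrak{u}'\oplus\mathfrak{u})\cong\det(\mathfrak{u}')\otimes\det(\mathfrak{u})$ as $(\mathfrak{l}',\mathbf{L}'\cap\mathbf{K})$-modules, and $\rho(\mathfrak{u}'\oplus\mathfrak{u})=\rho(\mathfrak{u}')+\rho(\mathfrak{u})$ for a Cartan $\mathfrak{h}\subset\mathfrak{l}'$, so the normalizations built into the functors $\mathbf{I}$ are compatible with iterating the construction.

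The first real step is to prove an isomorphism of left-exact functors
\[ \mathbf{I}^{(\mathfrak{g},\mathbf{K})}_{(\mathfrak{l},\mathbf{L}\cap\mathbf{K})}\circ\mathbf{I}^{(\mathfrak{l},\mathbf{L}\cap\mathbf{K})}_{(\mathfrak{l}',\mathbf{L}'\cap\mathbf{K})}\;\cong\;\mathbf{I}^{(\mathfrak{g},\mathbf{K})}_{(\mathfrak{l}',\mathbf{L}'\cap\mathbf{K})}, \]
where the functor on the right is parabolic induction along the parabolic $\mathbf{Q}'\mathbf{U}$ of $\mathbf{G}$ (with nilradical $\mathfrak{u}'\oplus\mathfrak{u}$). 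On underlying vector spaces this is the tensor--Hom associativity identity
\[ \Hom_{\mathfrak{q}}\!\bigl(U(\mathfrak{g}),\,\Hom_{\mathfrak{q}'}(U(\mathfrak{l}),\,W\otimes\det\mathfrak{u}')\otimes\det\mathfrak{u}\bigr)\;\cong\;\Hom_{\mathfrak{q}'\oplus\mathfrak{u}}\!\bigl(U(\mathfrak{g}),\,W\otimes\det(\mathfrak{u}'\oplus\mathfrak{u})\bigr), \]
which rests on the PBW freeness of $U(\mathfrak{g})$ over $U(\mathfrak{q})$ and of $U(\mathfrak{l})$ over $U(\mathfrak{q}')$, combined with transitivity of the passage to $\mathbf{K}$-finite vectors; the one point requiring care is the bookkeeping of the $\mathbf{L}\cap\mathbf{K}$- and $\mathbf{L}'\cap\mathbf{K}$-actions when $\mathbf{K}$ is disconnected. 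This is the underived induction-in-stages for the functors of \cite[Ch.~5--6]{Vogan1981}; see also \cite{KnappVogan1995}.

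The second step is to pass to derived functors. Set $F=\mathbf{I}^{(\mathfrak{g},\mathbf{K})}_{(\mathfrak{l},\mathbf{L}\cap\mathbf{K})}$ and $G=\mathbf{I}^{(\mathfrak{l},\mathbf{L}\cap\mathbf{K})}_{(\mathfrak{l}',\mathbf{L}'\cap\mathbf{K})}$; these are left exact and $M(\mathfrak{l},\mathbf{L}\cap\mathbf{K})$ has enough injectives by \cite[Cor.~2.26]{KnappVogan1995}. To invoke the Grothendieck spectral sequence of the composite one needs that $G$ carries injective objects to $F$-acyclic objects; this is the standard acyclicity underlying induction in stages for these functors (it can be extracted by factoring each $\mathbf{I}$ as a Zuckerman functor applied to an exact ``produce''-type functor $\Hom_{U(\cdot)}(U(\cdot),-)$, so that the only derived contribution comes from the Zuckerman functors, whose composites are controlled by the classical acyclicity results; cf.\ \cite{Vogan1981}, \cite{KnappVogan1995}). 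Granting this, and combining the spectral sequence with the isomorphism of the previous step, for $W\in M^f(\mathfrak{l}',\mathbf{L}'\cap\mathbf{K})$ we obtain
\[ E_2^{p,q}=R^p\mathbf{I}^{(\mathfrak{g},\mathbf{K})}_{(\mathfrak{l},\mathbf{L}\cap\mathbf{K})}\!\bigl(R^q\mathbf{I}^{(\mathfrak{l},\mathbf{L}\cap\mathbf{K})}_{(\mathfrak{l}',\mathbf{L}'\cap\mathbf{K})}W\bigr)\;\Longrightarrow\;R^{p+q}\mathbf{I}^{(\mathfrak{g},\mathbf{K})}_{(\mathfrak{l}',\mathbf{L}'\cap\mathbf{K})}W. \]
By Proposition~\ref{prop:propsofind}(i) every entry lies in $M^f(\mathfrak{g},\mathbf{K})$ and by Proposition~\ref{prop:propsofind}(iii) only finitely many entries are nonzero, so the sequence is bounded. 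Working in $K^f(\mathfrak{g},\mathbf{K})$, the alternating sum of the classes on the $E_2$-page equals the alternating sum of the classes of the abutment, and summing first over $p$ and then over $q$ gives
\[ I(\mathfrak{l}',\mathfrak{q}'\oplus\mathfrak{u},[W])=\sum_n(-1)^n\bigl[R^n\mathbf{I}^{(\mathfrak{g},\mathbf{K})}_{(\mathfrak{l}',\mathbf{L}'\cap\mathbf{K})}W\bigr]=\sum_q(-1)^q\,I\!\bigl(\mathfrak{l},\mathfrak{q},[R^q\mathbf{I}^{(\mathfrak{l},\mathbf{L}\cap\mathbf{K})}_{(\mathfrak{l}',\mathbf{L}'\cap\mathbf{K})}W]\bigr)=I\bigl(\mathfrak{l},\mathfrak{q},I(\mathfrak{l}',\mathfrak{q}',[W])\bigr), \]
where we used that $I(\mathfrak{l},\mathfrak{q},\cdot)$ is a group homomorphism, see \eqref{eqn:eulerinduction}. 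Since $K^f(\mathfrak{l}',\mathbf{L}'\cap\mathbf{K})$ is spanned by the classes $[W]$ of finite-length modules, this is the asserted identity of homomorphisms.

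The step I expect to be the main obstacle is the acyclicity input to the Grothendieck spectral sequence — proving that $\mathbf{I}^{(\mathfrak{l},\mathbf{L}\cap\mathbf{K})}_{(\mathfrak{l}',\mathbf{L}'\cap\mathbf{K})}$ sends injective objects to $\mathbf{I}^{(\mathfrak{g},\mathbf{K})}_{(\mathfrak{l},\mathbf{L}\cap\mathbf{K})}$-acyclic ones — together with making the underived isomorphism of functors completely precise, with all $\det$- and $\rho$-twists, in the disconnected-$\mathbf{K}$ generality in which the functors $\mathbf{I}$ are actually defined in \cite[Ch.~5]{Vogan1981}.
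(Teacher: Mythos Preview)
The paper does not prove this proposition; it is stated without argument in the review section on parabolic induction, with \cite{Vogan1981} and \cite{KnappVogan1995} cited at the outset for details and proofs. Your proposal is the standard route to induction-in-stages at the level of Euler characteristics---an underived isomorphism of composed functors, a Grothendieck spectral sequence, and the alternating-sum identity in the Grothendieck group---and it is correct in outline. The point you flag as the main obstacle, namely that $\mathbf{I}^{(\mathfrak{l},\mathbf{L}\cap\mathbf{K})}_{(\mathfrak{l}',\mathbf{L}'\cap\mathbf{K})}$ sends injectives to $\mathbf{I}^{(\mathfrak{g},\mathbf{K})}_{(\mathfrak{l},\mathbf{L}\cap\mathbf{K})}$-acyclics, together with the precise underived isomorphism with all twists, is exactly what is handled in the cited references (see in particular the induction-in-stages discussion in \cite[Ch.~11]{KnappVogan1995}), so there is nothing further to compare.
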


There are two `extreme' cases of parabolic induction which are particularly well-understood: real parabolic and cohomological induction. We will briefly review these special cases.

\subsection{Real parabolic induction}

Assume that $\mathbf{Q}$ is stable under $\sigma$. Then $Q := \mathbf{Q}^{\sigma}$ is a parabolic subgroup of $G$ and there is an identification
\begin{equation}\label{eqn:realinductioninfinitesimal}
I^{(\mathfrak{g},\mathbf{K})}_{(\mathfrak{l}, \mathbf{L}\cap \mathbf{K})}W \cong_{(\mathfrak{g},\mathbf{K})} \mathrm{Ind}^G_Q (W \otimes |\rho(\mathfrak{u})|) \quad W  \in M(\mathfrak{l}, \mathbf{L}\cap \mathbf{K})\end{equation}
where $\mathrm{Ind}^G_Q$ is the usual (analytically-defined) functor of parabolic induction (see \cite[Sec 11.2]{KnappVogan1995} for a proof).

The main facts we will need in this case are the following.

\begin{theorem}\label{thm:realinductionexact}
Suppose $\mathbf{Q}$ is $\sigma$-stable. Then the functor
$$I^{(\mathfrak{g},\mathbf{K})}_{(\mathfrak{l},\mathbf{L} \cap \mathbf{K})}: M^f(\mathfrak{l}, \mathbf{L} \cap \mathbf{K}) \to M^f(\mathfrak{g},\mathbf{K})$$
\begin{enumerate}
    \item is exact, and
    \item takes nonzero modules to nonzero modules
\end{enumerate}
\end{theorem}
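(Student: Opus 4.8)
The plan is to reduce everything to the analytically-defined functor $\mathrm{Ind}^G_Q$ via the identification \eqref{eqn:realinductioninfinitesimal}, and then to check both assertions on the level of $\mathbf{K}$-types, where real parabolic induction becomes ordinary induction from the reductive subgroup $\mathbf{L}\cap\mathbf{K}$.

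First, by \eqref{eqn:realinductioninfinitesimal} there is an isomorphism $I^{(\mathfrak{g},\mathbf{K})}_{(\mathfrak{l},\mathbf{L}\cap\mathbf{K})}W \cong \mathrm{Ind}^G_Q(W\otimes|\rho(\mathfrak{u})|)$ in $M^f(\mathfrak{g},\mathbf{K})$, natural in $W$. Tensoring with the one-dimensional module $|\rho(\mathfrak{u})|$ is an auto-equivalence of $M^f(\mathfrak{l},\mathbf{L}\cap\mathbf{K})$, hence exact and nonzero-preserving, so it suffices to prove (1) and (2) for $W\mapsto\mathrm{Ind}^G_Q(W)$. The structural input I would invoke is the description of the underlying $\mathbf{K}$-module of this induced module: since $\mathbf{Q}$ is $\sigma$-stable, $Q=\mathbf{Q}^\sigma$ is a (real) parabolic subgroup of $G$, so $G=KQ$ and $G/Q\cong K/(K\cap Q)$ is compact; consequently (see \cite[Ch.\ 5, Ch.\ 11]{KnappVogan1995}) for every irreducible $\mathbf{K}$-representation $\tau$ one has the Frobenius reciprocity formula
$$\bigl[\mathrm{Ind}^G_Q(W):\tau\bigr] \;=\; \dim\Hom_{\mathbf{L}\cap\mathbf{K}}\bigl(\tau|_{\mathbf{L}\cap\mathbf{K}},\,W\bigr).$$

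For (1), I would use that a sequence in $M^f(\mathfrak{g},\mathbf{K})$ is exact if and only if it is exact after restriction to $\mathbf{K}$, if and only if it is exact on each $\mathbf{K}$-isotypic component. By the multiplicity formula it is then enough to observe that $W\mapsto\Hom_{\mathbf{L}\cap\mathbf{K}}(\tau|_{\mathbf{L}\cap\mathbf{K}},W)$ is exact for each $\tau$: restriction to $\mathbf{L}\cap\mathbf{K}$ is exact, and since $\mathbf{L}\cap\mathbf{K}$ is reductive the representation $\tau|_{\mathbf{L}\cap\mathbf{K}}$ is completely reducible, so $\Hom_{\mathbf{L}\cap\mathbf{K}}(\tau|_{\mathbf{L}\cap\mathbf{K}},-)$ is exact. (That the output lies in $M^f(\mathfrak{g},\mathbf{K})$ is Proposition \ref{prop:propsofind}(i); exactness is then consistent with the vanishing of all the higher $R^i\mathbf{I}$.) For (2), if $W\neq0$ then, $\mathbf{L}\cap\mathbf{K}$ being reductive, $W$ is completely reducible as an $\mathbf{L}\cap\mathbf{K}$-module; pick an irreducible summand $\mu$. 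Every irreducible algebraic representation of $\mathbf{L}\cap\mathbf{K}$ occurs in the restriction of some irreducible $\mathbf{K}$-representation — e.g.\ $\mu$ is a constituent of $\mathcal{O}(\mathbf{K})|_{\mathbf{L}\cap\mathbf{K}}$, which surjects $\mathbf{L}\cap\mathbf{K}$-equivariantly onto $\mathcal{O}(\mathbf{L}\cap\mathbf{K})$, and $\mathcal{O}(\mathbf{K})=\bigoplus_\tau\tau\otimes\tau^{*}$ by the algebraic Peter–Weyl theorem. Choosing such a $\tau$, the multiplicity formula gives $\bigl[\mathrm{Ind}^G_Q(W):\tau\bigr]\ge\dim\Hom_{\mathbf{L}\cap\mathbf{K}}(\mu,W)\ge1$, so $\mathrm{Ind}^G_Q(W)\neq0$.

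The one genuinely delicate point is the $\mathbf{K}$-module identification (equivalently, the displayed Frobenius reciprocity formula): making it precise requires the careful construction of the induction functor for a possibly disconnected $\mathbf{K}$ and the correct normalization, which is exactly what \cite[Ch.\ 5, Ch.\ 11]{KnappVogan1995} and \cite[Ch.\ 5]{Vogan1981} supply. Once that is in hand, both (1) and (2) are formal, being instances of the exactness and faithfulness of normalized smooth parabolic induction over the compact quotient $G/Q$.
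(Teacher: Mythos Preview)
Your argument is correct and aligns with the paper's approach: the paper simply cites \cite[Prop.\ 11.52]{KnappVogan1995} for (1) and says (2) ``is clear from the analytic description,'' while you have unpacked the standard $\mathbf{K}$-type/Frobenius-reciprocity argument that underlies both claims. Your treatment is more detailed than the paper's proof-by-citation, but not different in substance.
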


\begin{proof}
Part $(1)$ is \cite[Prop 11.52]{KnappVogan1995}. Part $(2)$ is clear from the analytic description of $I^{(\mathfrak{g},\mathbf{K})}_{(\mathfrak{l},\mathbf{L} \cap \mathbf{K})}$.
\end{proof}

Let $\mathfrak{h}_0^{\mathrm{split}}$ be a maximally split $\theta$-stable Cartan subalgebra of $\mathfrak{g}_0$. Choose an element $a \in \mathfrak{a}_0^{\mathrm{split}}$ such that $\alpha(a) \neq 0$ for every nonimaginary $\alpha \in \Delta(\mathfrak{g},\mathfrak{h}^{\mathrm{split}})$ and define the parabolic subalgebra
$$\mathfrak{l}^{\mathrm{min}} := \mathfrak{h}^{\mathrm{split}} \oplus \bigoplus_{\alpha(a) = 0} \mathfrak{g}_{\alpha} \qquad \mathfrak{u}^{\mathrm{min}} := \bigoplus_{\alpha(a)>0} \mathfrak{g}_{\alpha} \qquad \mathfrak{q}^{\mathrm{min}} = \mathfrak{l}\oplus \mathfrak{u}$$
Since $\sigma(a) = a$, $\mathfrak{q}^{\mathrm{min}}$ is $\sigma$-stable and since $\theta(a) = -a$, $\mathfrak{l}^{\mathrm{min}}$ is $\theta$-stable. The corresponding parabolic subgroup $Q^{\mathrm{min}} = (\mathbf{Q}^{\mathrm{min}})^{\sigma}$ is minimal among parabolics of $G$. We will eventually need the following deep result of Casselman:

\begin{theorem}[Casselman Subrepresentation Theorem, \cite{Casselman1978}]\label{thm:Casselman}
Let $X$ be an irreducible $(\mathfrak{g},\mathbf{K})$-module. Then there is a (finite-dimensional) irreducible representation $V$ of $L^{\mathrm{min}}$ and an embedding of $(\mathfrak{g},\mathbf{K})$-modules
$$X \subseteq \mathbf{I}^{(\mathfrak{g},\mathbf{K})}_{(\mathfrak{l}^{\mathrm{min}},\mathbf{L}^{\mathrm{min}}\cap \mathbf{K})}V$$
\end{theorem}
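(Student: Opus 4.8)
The plan is to embed $X$ in a minimal principal series using Casselman's theory of Jacquet modules. Writing $\mathfrak{q}^{\mathrm{min}} = \mathfrak{l}^{\mathrm{min}}\oplus\mathfrak{u}^{\mathrm{min}}$, it suffices to produce, for some irreducible finite-dimensional representation $V$ of $L^{\mathrm{min}}$, a \emph{nonzero} $(\mathfrak{g},\mathbf{K})$-morphism $X \to \mathbf{I}^{(\mathfrak{g},\mathbf{K})}_{(\mathfrak{l}^{\mathrm{min}},\mathbf{L}^{\mathrm{min}}\cap\mathbf{K})}V$; since $X$ is irreducible, such a morphism is automatically injective.

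First I would attach to $X$ its \emph{Jacquet module} $J(X)$ relative to $\mathfrak{q}^{\mathrm{min}}$ --- a suitably normalized $\mathfrak{u}^{\mathrm{min}}$-adic completion of the coinvariants $X/\mathfrak{u}^{\mathrm{min}}X$, manufactured from the canonical asymptotic expansions of the $\mathbf{K}$-finite matrix coefficients of $X$ along $A_0^{\mathrm{min}}$. Two properties of $J(X)$ are needed: \textbf{(a)} $J(X)$ is a finitely generated admissible $(\mathfrak{l}^{\mathrm{min}},\mathbf{L}^{\mathrm{min}}\cap\mathbf{K})$-module on which $\mathfrak{a}^{\mathrm{min}}$ acts with only finitely many generalized eigenvalues, so in particular $J(X)$ has finite length; and \textbf{(b)} $J(X)\neq 0$ whenever $X\neq 0$. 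Property (a) follows from the Casselman--Osborne lemma --- which confines the generalized $Z(\mathfrak{l}^{\mathrm{min}})$-eigenvalues on $J(X)$ to a finite set governed by the infinitesimal character of $X$ --- together with the admissibility of $X$. Property (b) is the analytic core of the theorem: any nonzero $X$ has a nonzero $\mathbf{K}$-finite matrix coefficient, and its asymptotic expansion along the negative chamber of $A_0^{\mathrm{min}}$ exhibits at least one leading exponent, i.e.\ a nonzero class in $J(X)$.

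Next I would apply Casselman's Frobenius reciprocity in the form
\[
\Hom_{(\mathfrak{g},\mathbf{K})}\!\big(X,\ \mathbf{I}^{(\mathfrak{g},\mathbf{K})}_{(\mathfrak{l}^{\mathrm{min}},\mathbf{L}^{\mathrm{min}}\cap\mathbf{K})}W\big)\ \cong\ \Hom_{(\mathfrak{l}^{\mathrm{min}},\mathbf{L}^{\mathrm{min}}\cap\mathbf{K})}\!\big(J(X),\ W\big)
\]
for $W$ of finite length (with the $|\rho(\mathfrak{u}^{\mathrm{min}})|$-twist of \eqref{eqn:realinductioninfinitesimal} absorbed into the normalization of $J$), i.e.\ $J$ is left adjoint to $\mathbf{I}^{(\mathfrak{g},\mathbf{K})}_{(\mathfrak{l}^{\mathrm{min}},\mathbf{L}^{\mathrm{min}}\cap\mathbf{K})}$. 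Because $\mathfrak{h}^{\mathrm{split}}$ is maximally split, $\mathfrak{l}^{\mathrm{min}}$ has no noncompact imaginary roots (a Cayley transform through such a root would produce a strictly more split $\theta$-stable Cartan), so the derived group of $L^{\mathrm{min}}$ is compact and every irreducible $(\mathfrak{l}^{\mathrm{min}},\mathbf{L}^{\mathrm{min}}\cap\mathbf{K})$-module on which $\mathfrak{a}^{\mathrm{min}}$ acts through a single character is finite-dimensional. Passing from the nonzero finite-length module $J(X)$ to a quotient on which $\mathfrak{a}^{\mathrm{min}}$ acts by a genuine character, and then to an irreducible quotient $V$ of that quotient, $V$ is a finite-dimensional irreducible representation of $L^{\mathrm{min}}$, and the surjection $J(X)\twoheadrightarrow V$ corresponds under the reciprocity isomorphism to a nonzero --- hence injective --- map $X \hookrightarrow \mathbf{I}^{(\mathfrak{g},\mathbf{K})}_{(\mathfrak{l}^{\mathrm{min}},\mathbf{L}^{\mathrm{min}}\cap\mathbf{K})}V$.

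The hard part is property (b), the nonvanishing of the Jacquet module: this is \emph{not} a formal consequence of $X\neq0$ at the level of the naive $\mathfrak{u}^{\mathrm{min}}$-homology $H_0(\mathfrak{u}^{\mathrm{min}},X)$, and Casselman's proof genuinely uses analysis --- the existence and convergence of the asymptotic expansions of $\mathbf{K}$-finite matrix coefficients. An alternative, more algebraic route runs through Beilinson--Bernstein localization: realize $X = \Gamma(\mathbf{G}/\mathbf{B},\mathcal{M})$ for an irreducible $\mathbf{K}$-equivariant twisted $\mathcal{D}$-module $\mathcal{M}$ on the flag variety and extract the embedding from the restriction of $\mathcal{M}$ to a suitable $\mathbf{B}$-orbit, with nonvanishing of that restriction replacing property (b). Everything else in the argument --- Frobenius reciprocity, the reduction to a finite-dimensional $V$, and automatic injectivity --- is formal given the results already recalled in this section.
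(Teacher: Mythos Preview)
The paper does not prove this theorem at all: it is quoted as a ``deep result of Casselman'' with a citation to \cite{Casselman1978} and no argument given. Your sketch via Jacquet modules, asymptotic expansions of matrix coefficients, and Frobenius reciprocity is the standard route to Casselman's theorem and is essentially correct as an outline, though of course the analytic step (b) you flag is exactly where the real work lies.
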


\subsection{Cohomological induction}

Assume that $\mathbf{Q}$ is stable under $\theta$. Choose a Cartan subalgebra $\mathfrak{h} \subset \mathfrak{l}$, and let $W$ be a finite-length $(\mathfrak{l},\mathbf{L} \cap \mathbf{K})$-module with infinitesimal character $\lambda \in \mathfrak{h}^*$. We say that $W$ (or $\lambda$) is \emph{in the weakly good range} if 
$$\mathrm{Re} \langle \lambda+ \rho(\mathfrak{u}), \alpha^{\vee}\rangle  \geq 0 \qquad \forall \alpha \in \Delta(\mathfrak{u},\mathfrak{h})$$
When applied to irreducible $(\mathfrak{l},\mathbf{L} \cap \mathbf{K})$-modules in the weakly good range, the functors $R^i\mathbf{I}^{(\mathfrak{g},\mathbf{K})}_{(\mathfrak{l},\mathbf{L} \cap \mathbf{K})}$ are particularly well-behaved.

\begin{theorem}[\cite{KnappVogan1995},Theorem 8.2]\label{thm:cohindirreducible}
There is an integer $t \geq 0$ (depending only on the parabolic $\mathbf{Q} \subset \mathbf{G}$) such that for every irreducible $(\mathfrak{l}, \mathbf{L} \cap \mathbf{K})$-module $W$ in the weakly good range, the $(\mathfrak{g},\mathbf{K})$-module $R^i\mathbf{I}^{(\mathfrak{g},\mathbf{K})}_{(\mathfrak{l},\mathbf{L} \cap \mathbf{K})}W$
\begin{enumerate}
    \item is irreducible, or $0$, if $i=t$, and
    \item is $0$ if $i \neq t$
\end{enumerate}
The functor $R^t\mathbf{I}^{(\mathfrak{g},\mathbf{K})}_{(\mathfrak{l},\mathbf{L} \cap \mathbf{K})}$ is called \emph{cohomological induction}. 
\end{theorem}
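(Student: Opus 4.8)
The statement is \cite[Theorem 8.2]{KnappVogan1995}, and the plan is to reproduce a streamlined form of that argument. Set $S := \dim(\mathfrak{u} \cap \mathfrak{k})$; this is the integer $t$, and the assertion separates into a vanishing claim --- $R^i\mathbf{I}^{(\mathfrak{g},\mathbf{K})}_{(\mathfrak{l},\mathbf{L} \cap \mathbf{K})}W = 0$ for $i \neq S$ --- and an irreducibility claim for $R^S\mathbf{I}^{(\mathfrak{g},\mathbf{K})}_{(\mathfrak{l},\mathbf{L} \cap \mathbf{K})}W$. The first step is to reduce the weakly good range to the \emph{good} range (strict inequalities $\mathrm{Re}\langle \lambda + \rho(\mathfrak{u}), \alpha^{\vee}\rangle > 0$ for all $\alpha \in \Delta(\mathfrak{u},\mathfrak{h})$). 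Cohomological induction commutes with Jantzen--Zuckerman translation functors, and by the translation principle every irreducible $W$ in the weakly good range is the translate to a wall of an irreducible $W'$ whose infinitesimal character $\lambda'$ lies in the good (and regular) range; since translation to a wall sends an irreducible module to an irreducible module or to $0$, it suffices to prove the theorem in the good range, where we will moreover show $R^S\mathbf{I}^{(\mathfrak{g},\mathbf{K})}_{(\mathfrak{l},\mathbf{L} \cap \mathbf{K})}W \neq 0$. This is exactly the source of the ``or $0$'' clause.

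Next, the vanishing. That $R^i\mathbf{I}^{(\mathfrak{g},\mathbf{K})}_{(\mathfrak{l},\mathbf{L} \cap \mathbf{K})} = 0$ for $i > S$ requires no positivity hypothesis: it says that the composite of the production functor $\Hom_{\mathfrak{q}}(U(\mathfrak{g}),-)$ with the Zuckerman ($\mathbf{K}$-finite vectors) functor has cohomological dimension $S$, and it already yields Proposition \ref{prop:propsofind}(iii) with $s = S$ in the present situation. The content is the vanishing for $i < S$ in the good range. Here I would track the $\mathbf{K}$-spectrum of each $R^i\mathbf{I}^{(\mathfrak{g},\mathbf{K})}_{(\mathfrak{l},\mathbf{L} \cap \mathbf{K})}W$ through a spectral sequence whose $E_1$-page is assembled from the $(\mathfrak{u}\cap\mathfrak{k})$-cohomology of $W$ twisted by symmetric powers of $\mathfrak{u}\cap\mathfrak{p}$, and then invoke a Kostant-type argument: strict dominance of $\lambda + \rho(\mathfrak{u})$ on the coroots of $\mathfrak{u}$ forces the Weyl-group elements that could contribute to have length pinning every surviving term to total degree $S$. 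This is the long technical heart of the proof, and in practice I would cite \cite[Ch.~5]{KnappVogan1995} for the estimate rather than redo it.

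It remains to treat degree $S$. Nonvanishing is immediate from the bottom-layer construction: if $\mu$ is a lowest $(\mathbf{L}\cap\mathbf{K})$-type of $W$, then the $\mathbf{K}$-type of highest weight $\mu + 2\rho(\mathfrak{u}\cap\mathfrak{p})$ occurs in $R^S\mathbf{I}^{(\mathfrak{g},\mathbf{K})}_{(\mathfrak{l},\mathbf{L} \cap \mathbf{K})}W$ with the same (nonzero) multiplicity as $\mu$ in $W$. For irreducibility I would bring in the companion right-exact functor $\mathbf{I}'$ --- cohomological induction built from the model $U(\mathfrak{g}) \otimes_{U(\mathfrak{q})} (-)$ in place of $\Hom_{\mathfrak{q}}(U(\mathfrak{g}),-)$ --- whose top left-derived functor $L_S\mathbf{I}'$ again lives in degree $S$ and satisfies the same vanishing. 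The duality between the two models furnishes a canonical ``bottom-layer'' intertwining map $L_S\mathbf{I}'W \to R^S\mathbf{I}^{(\mathfrak{g},\mathbf{K})}_{(\mathfrak{l},\mathbf{L} \cap \mathbf{K})}W$. A purely formal argument shows that $L_S\mathbf{I}'W$, being produced by a right-exact functor from an irreducible, has a unique irreducible quotient, that $R^S\mathbf{I}^{(\mathfrak{g},\mathbf{K})}_{(\mathfrak{l},\mathbf{L} \cap \mathbf{K})}W$ has dually a unique irreducible submodule, and that both coincide with the image of the map. Finally one checks the map is an isomorphism in the good range: its kernel and cokernel fit into exact sequences involving only the functors $L_j\mathbf{I}'$ and $R^i\mathbf{I}^{(\mathfrak{g},\mathbf{K})}_{(\mathfrak{l},\mathbf{L} \cap \mathbf{K})}$ with $j, i \neq S$, all of which vanish by the previous paragraph; hence it is bijective and $R^S\mathbf{I}^{(\mathfrak{g},\mathbf{K})}_{(\mathfrak{l},\mathbf{L} \cap \mathbf{K})}W \cong L_S\mathbf{I}'W$ is irreducible.

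The main obstacle is the good-range vanishing below the middle degree --- equivalently, the assertion that the bottom-layer map $L_S\mathbf{I}'W \to R^S\mathbf{I}^{(\mathfrak{g},\mathbf{K})}_{(\mathfrak{l},\mathbf{L} \cap \mathbf{K})}W$ is an isomorphism. This is the one place the positivity hypothesis genuinely does work, and it rests on delicate weight estimates for $\mathfrak{u}$-cohomology of Kostant type; by contrast the cohomological-dimension bound, the bottom-layer $\mathbf{K}$-type computation, and the unique-sub/unique-quotient formalism are comparatively routine manipulations with Zuckerman functors and the duality between the two models of cohomological induction.
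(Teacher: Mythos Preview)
The paper does not prove this theorem; it simply cites \cite[Theorem 8.2]{KnappVogan1995} as an external result and moves on. Your sketch is a faithful outline of the Knapp--Vogan argument (translation to the good range, cohomological-dimension bound plus Kostant-type vanishing below middle degree, bottom-layer map between the $\mathcal{L}$- and $\mathcal{R}$-models), so in substance you are reproducing the cited source rather than diverging from the paper.

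One minor caution: the claim that $L_S\mathbf{I}'W$ has a unique irreducible quotient is not ``purely formal'' from $\mathbf{I}'$ being right-exact, since $L_S\mathbf{I}'$ is a derived functor and not itself right-exact. In Knapp--Vogan this step goes through the bottom-layer theory (their Theorem 5.80 and surrounding results): one shows that every nonzero quotient of $L_S\mathbf{I}'W$ and every nonzero submodule of $R^S\mathbf{I}W$ must contain a bottom-layer $\mathbf{K}$-type, and the comparison map is an isomorphism on those $\mathbf{K}$-types. That, together with the good-range isomorphism $L_S\mathbf{I}'W \cong R^S\mathbf{I}W$, forces irreducibility. Your outline has all the right pieces; just be careful not to oversell the formality of the unique-quotient/submodule step.
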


In the setting of Theorem \ref{thm:cohindirreducible}, one can formulate necessary and sufficient conditions on $W$ guaranteeing that $R^t\mathbf{I}^{(\mathfrak{g},\mathbf{K})}_{(\mathfrak{l},\mathbf{L} \cap \mathbf{K})}W \neq 0$.
For this, we will need to define the \emph{minimal $\mathbf{K}$-types} of a $(\mathfrak{g},\mathbf{K})$-module $X$. Choose a maximally compact $\theta$-stable Cartan subalgebra $\mathfrak{h}\subset \mathfrak{g}$ and a positive system $\Delta^+(\mathfrak{k},\mathfrak{t})$. If $\tau_{\mu}$ is an irreducible $\mathbf{K}$-representation with highest weight $\mu \in \mathfrak{t}^*$, define 
$$|\tau_{\mu}| := B(\mu + 2\rho_{\mathfrak{k}}, \mu + 2\rho_{\mathfrak{k}})$$
A minimal $\mathbf{K}$-type of $X$ is a $\mathbf{K}$-type $\tau_{\mu}$ with minimal norm among all $\mathbf{K}$-types occuring in $X$. It is easy to see that minimal $\mathbf{K}$-types exist (if $X \neq 0$) and are independent of $\Delta^+(\mathfrak{k},\mathfrak{t})$. 

\begin{theorem}[\cite{KnappVogan1995}, Theorem 10.44,]\label{thm:LKTscohind}
Assume $\mathfrak{h}$ is a maximally compact $\theta$-stable Cartan subalgebra of $\mathfrak{l}$, and choose a positive system $\Delta^+(\mathfrak{k},\mathfrak{t})$. Suppose $W$ is a finite-length $(\mathfrak{l},\mathbf{L} \cap \mathbf{K})$-module in the weakly good range. Write $\mu_1,...,\mu_n \in \mathfrak{h}^*$ for highest-weights of the minimal $\mathbf{L} \cap \mathbf{K}$-types of $W$. Then $R^t\mathbf{I}^{(\mathfrak{g},\mathbf{K})}_{(\mathfrak{l},\mathbf{L} \cap \mathbf{K})}W \neq 0$ if and only if some of the weights
$$\mu_i + 2\rho(\mathfrak{u} \cap \mathfrak{p})$$
are dominant for $\Delta^+(\mathfrak{k},\mathfrak{t})$. In this case, the dominant weights of this form are minimal $\mathbf{K}$-types of $R^t\mathbf{I}^{(\mathfrak{g},\mathbf{K})}_{(\mathfrak{l},\mathbf{L} \cap \mathbf{K})}W$.
\end{theorem}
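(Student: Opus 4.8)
The plan is to follow \cite{KnappVogan1995} and assemble the statement from three ingredients: the bottom layer of a cohomologically induced module, a Blattner-type formula for its $\mathbf{K}$-spectrum combined with Kostant's $\mathfrak{u}$-cohomology theorem, and — the technical heart — a norm estimate relating the $\mathbf{K}$-types of $R^t\mathbf{I}^{(\mathfrak{g},\mathbf{K})}_{(\mathfrak{l},\mathbf{L}\cap\mathbf{K})}W$ to the $\mathbf{L}\cap\mathbf{K}$-types of $W$. Throughout, abbreviate $\mathbf{I}:=\mathbf{I}^{(\mathfrak{g},\mathbf{K})}_{(\mathfrak{l},\mathbf{L}\cap\mathbf{K})}$, and choose $\Delta^+(\mathfrak{k},\mathfrak{t})$ compatibly with $\mathfrak{u}$, i.e. so that $\Delta(\mathfrak{u}\cap\mathfrak{k},\mathfrak{t})\subseteq\Delta^+(\mathfrak{k},\mathfrak{t})$ and $\Delta^+(\mathfrak{k},\mathfrak{t})$ restricts to $\Delta^+(\mathfrak{l}\cap\mathfrak{k},\mathfrak{t})$; since $|\tau_\mu|$ and the set of minimal $\mathbf{K}$-types are independent of the positive system this costs nothing, and it lets us use freely the decompositions $2\rho_{\mathfrak{k}}=2\rho(\mathfrak{l}\cap\mathfrak{k})+2\rho(\mathfrak{u}\cap\mathfrak{k})$ and $\rho(\mathfrak{u})=\rho(\mathfrak{u}\cap\mathfrak{k})+\rho(\mathfrak{u}\cap\mathfrak{p})$. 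When $W$ is not irreducible, ``minimal $\mathbf{L}\cap\mathbf{K}$-type of $W$'' means one of minimal norm among all $\mathbf{L}\cap\mathbf{K}$-types occurring in $W$.

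First I would record the consequence of the bottom layer (\cite{KnappVogan1995}, Chapter 5): for every $\mathbf{L}\cap\mathbf{K}$-type $\delta_\mu$ of $W$ for which $\mu+2\rho(\mathfrak{u}\cap\mathfrak{p})$ is $\Delta^+(\mathfrak{k},\mathfrak{t})$-dominant, there is an injection of the $\delta_\mu$-isotypic subspace of $W$ into the $\tau_{\mu+2\rho(\mathfrak{u}\cap\mathfrak{p})}$-isotypic part of $R^t\mathbf{I}(W)$. In particular, if $\mu_i$ is a minimal $\mathbf{L}\cap\mathbf{K}$-type of $W$ and $\Lambda_i:=\mu_i+2\rho(\mathfrak{u}\cap\mathfrak{p})$ is dominant, then $\tau_{\Lambda_i}$ occurs in $R^t\mathbf{I}(W)$, so $R^t\mathbf{I}(W)\neq 0$. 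This proves the ``if'' direction of the non-vanishing criterion and shows that the dominant weights of the form $\mu_i+2\rho(\mathfrak{u}\cap\mathfrak{p})$ are $\mathbf{K}$-types of $R^t\mathbf{I}(W)$; their minimality, and the converse, are taken up below.

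Next I would compute the $\mathbf{K}$-spectrum. Frobenius reciprocity for $(\mathfrak{k},\mathbf{L}\cap\mathbf{K})\hookrightarrow(\mathfrak{k},\mathbf{K})$ together with Kostant's theorem for $\mathfrak{u}\cap\mathfrak{p}$-cohomology, applied to the algebraic realization of $\mathbf{I}$, gives a Blattner-type formula: for each $\mathbf{K}$-type $\tau_\Lambda$,
$$\sum_{i}(-1)^i\,\big[\,R^i\mathbf{I}(W):\tau_\Lambda\,\big]\;=\;\sum_{w}(-1)^{\ell(w)}\,\big[\,W:\delta_{w\cdot\Lambda}\,\big],$$
where $w$ runs over coset representatives for $W(\mathfrak{l}\cap\mathfrak{k},\mathfrak{t})\backslash W(\mathfrak{k},\mathfrak{t})$, $\delta_{w\cdot\Lambda}$ is the $\mathbf{L}\cap\mathbf{K}$-representation of highest weight $w(\Lambda+\rho_{\mathfrak{k}})-\rho_{\mathfrak{k}}-2\rho(\mathfrak{u}\cap\mathfrak{p})$ (a bracket being $0$ unless this weight is $\Delta^+(\mathfrak{l}\cap\mathfrak{k},\mathfrak{t})$-dominant), and the representative $w=1$ reproduces the bottom-layer term $[\,W:\delta_{\Lambda-2\rho(\mathfrak{u}\cap\mathfrak{p})}\,]$. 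Since $W$ is in the weakly good range, Theorem \ref{thm:cohindirreducible} (applied to the irreducible constituents of $W$ and then propagated along composition series) gives $R^i\mathbf{I}(W)=0$ for $i\neq t$, so the left-hand side collapses to $(-1)^t[\,R^t\mathbf{I}(W):\tau_\Lambda\,]$ and the displayed identity computes the genuine multiplicity of $\tau_\Lambda$ in $R^t\mathbf{I}(W)$. Now suppose $\tau_\Lambda$ occurs in $R^t\mathbf{I}(W)$; then some $w$ has $[\,W:\delta_{w\cdot\Lambda}\,]\neq 0$, so $\nu:=w(\Lambda+\rho_{\mathfrak{k}})-\rho_{\mathfrak{k}}-2\rho(\mathfrak{u}\cap\mathfrak{p})$ is an $\mathbf{L}\cap\mathbf{K}$-type of $W$. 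The rewriting $\Lambda+2\rho_{\mathfrak{k}}=\big(w(\Lambda+\rho_{\mathfrak{k}})+\rho_{\mathfrak{k}}\big)+\big((\Lambda+\rho_{\mathfrak{k}})-w(\Lambda+\rho_{\mathfrak{k}})\big)$, in which the second summand is a nonnegative combination of positive roots (zero only for $w=1$) because $\Lambda+\rho_{\mathfrak{k}}$ is dominant, yields after pairing with the dominant weight $\rho_{\mathfrak{k}}$ that $|\tau_\Lambda|=|\Lambda+2\rho_{\mathfrak{k}}|^2\geq|w(\Lambda+\rho_{\mathfrak{k}})+\rho_{\mathfrak{k}}|^2=|\nu+2\rho(\mathfrak{l}\cap\mathfrak{k})+2\rho(\mathfrak{u})|^2$, with equality iff $w=1$. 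Feeding in the norm estimate of \cite{KnappVogan1995} — that among all $\mathbf{L}\cap\mathbf{K}$-types $\nu$ of $W$ the quantity $|\nu+2\rho(\mathfrak{l}\cap\mathfrak{k})+2\rho(\mathfrak{u})|^2$ is minimized exactly at the minimal $\mathbf{L}\cap\mathbf{K}$-types $\mu_i$ — we get $|\tau_\Lambda|\geq\min_i|\mu_i+2\rho(\mathfrak{l}\cap\mathfrak{k})+2\rho(\mathfrak{u})|^2=\min_i|\tau_{\Lambda_i}|$, with equality forcing $w=1$ and $\Lambda-2\rho(\mathfrak{u}\cap\mathfrak{p})$ to be a minimal $\mathbf{L}\cap\mathbf{K}$-type, i.e. $\Lambda=\Lambda_i$ for some $i$ (automatically dominant as a $\mathbf{K}$-highest weight). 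Hence the minimal $\mathbf{K}$-types of a nonzero $R^t\mathbf{I}(W)$ are exactly the $\tau_{\Lambda_i}$ with $\Lambda_i$ dominant; and if no $\mu_i+2\rho(\mathfrak{u}\cap\mathfrak{p})$ is dominant, then $R^t\mathbf{I}(W)$ has no minimal $\mathbf{K}$-type and is therefore $0$, which is the ``only if'' direction.

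The main obstacle is the norm estimate invoked above — equivalently, controlling which Weyl cosets $w$ in the Blattner formula can contribute a $\mathbf{K}$-type of near-minimal norm. This is the lowest-$\mathbf{K}$-type argument of Parthasarathy and Vogan as adapted in \cite{KnappVogan1995}; it is the one step that genuinely uses the weakly-good-range hypothesis on $W$ together with the dominance of $2\rho(\mathfrak{u})$ for $\Delta^+(\mathfrak{l}\cap\mathfrak{k},\mathfrak{t})$ and the compatibility of $\Delta^+(\mathfrak{k},\mathfrak{t})$ with $\mathfrak{u}$, and it is decidedly more delicate than the purely formal manipulations (Frobenius reciprocity, Kostant's theorem, the bottom-layer inclusion, and the vanishing already quoted as Theorem \ref{thm:cohindirreducible}) that carry the rest of the proof.
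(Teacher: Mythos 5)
This theorem is imported verbatim from \cite[Theorem 10.44]{KnappVogan1995} and used as a black box (notably in the proof of Proposition \ref{prop:nonzeroifflarge}); the paper gives no proof of its own for you to compare against. Judged on its own, your reconstruction is a faithful outline of the Knapp--Vogan argument: bottom layer for the ``if'' direction, the Euler--Blattner formula collapsed to the single degree $t$ by the weakly-good vanishing of Theorem \ref{thm:cohindirreducible} to get genuine $\mathbf{K}$-multiplicities, and a norm estimate for minimality. Your norm inequality, once unwound, is correct: writing $\mu = \Lambda + \rho_{\mathfrak{k}}$ (regular dominant) and $b = \mu - w\mu \succeq 0$, one has
$$|\Lambda + 2\rho_{\mathfrak{k}}|^2 - |w\mu + \rho_{\mathfrak{k}}|^2 = 2\langle \rho_{\mathfrak{k}},\, b\rangle \geq 0,$$
with equality iff $w = 1$.

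The one step you should make explicit rather than cite is the equivalence of minimizing $|\nu + 2\rho(\mathfrak{l}\cap\mathfrak{k}) + 2\rho(\mathfrak{u})|^2$ and $|\nu + 2\rho(\mathfrak{l}\cap\mathfrak{k})|^2$ over $\mathbf{L}\cap\mathbf{K}$-types $\nu$ of $W$. This reduces to the cross-term $\langle\nu, 2\rho(\mathfrak{u})\rangle$ being constant in $\nu$, which holds because $\rho(\mathfrak{u})$ is $W(\mathfrak{l})$-invariant (hence orthogonal to every root of $\mathfrak{l}$ in the $W(\mathfrak{g})$-invariant form) and any two $\mathbf{L}\cap\mathbf{K}$-types of an \emph{irreducible} $W$ differ by a sum of $\mathfrak{t}$-restrictions of roots of $\mathfrak{l}$. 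For a reducible finite-length $W$ this constancy can fail across composition factors, so strictly speaking the non-vanishing criterion and minimal-$\mathbf{K}$-type identification should be read constituent-by-constituent (which is how the paper applies it: in Proposition \ref{prop:nonzeroifflarge} the inducing module is one-dimensional, and in Proposition \ref{prop:BBunipirred} it is the irreducible $\chi^L \otimes S_0(L)$). This is a caveat about the statement as transcribed rather than a gap in your argument.
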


For $W$ a finite-length $(\mathfrak{l},\mathbf{L} \cap \mathbf{K})$-module in the weakly good range, the associated variety of $R^t\mathbf{I}^{(\mathfrak{g},\mathbf{K})}_{(\mathfrak{l},\mathbf{L} \cap \mathbf{K})}W$ can be easily computed. Consider the restriction map
$$\pi_{\mathfrak{q},\theta}: (\mathfrak{g}/(\mathfrak{u}+\mathfrak{k}))^* \to (\mathfrak{q}/(\mathfrak{u}+\mathfrak{k}))^* \simeq (\mathfrak{l}/(\mathfrak{l}\cap \mathfrak{k}))^*$$
Note that
$$\pi_{\mathfrak{q},\theta}^{-1}(\mathcal{N}_{\mathfrak{l},\theta}^*) \subseteq \mathcal{N}_{\mathfrak{g},\theta}^*$$
The following statement is well-known to the experts. See, e.g. \cite[Prop 5.4]{Trapa2001} for a proof.

\begin{proposition}\label{prop:AVcohind}
Suppose $W$ is a finite-length $(\mathfrak{l},\mathbf{L}\cap \mathbf{K})$-module in the weakly good range and assume 
$$R^t\mathbf{I}^{(\mathfrak{g},\mathbf{K})}_{(\mathfrak{l},\mathbf{L} \cap \mathbf{K})} \neq 0$$
Then 
\begin{equation}\label{eqn:AVcohind}
\AV(R^t\mathbf{I}^{(\mathfrak{g},\mathbf{K})}_{(\mathfrak{l},\mathbf{L} \cap \mathbf{K})}) = \mathbf{K}\left(\pi_{\mathfrak{q},\theta}^{-1}(\AV(W))\right) \subset \mathcal{N}_{\mathfrak{g},\theta}^*
\end{equation}
\end{proposition}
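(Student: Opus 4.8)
The plan is to pass to associated graded modules and recast the claimed identity as a push-pull computation with $\mathbf{K}$-equivariant coherent sheaves on $\mathcal{N}^*_{\mathfrak{g},\theta}$. Recall that $\mathbf{I}^{(\mathfrak{g},\mathbf{K})}_{(\mathfrak{l},\mathbf{L}\cap\mathbf{K})}$ is the composite of the exact functor of pro-induction $\mathrm{pro}_{\mathfrak{q}}^{\mathfrak{g}}(W)=\Hom_{U(\mathfrak{q})}\!\big(U(\mathfrak{g}),W\otimes\det\mathfrak{u}\big)_{\mathbf{L}\cap\mathbf{K}\text{-fin}}$, which lands in $M(\mathfrak{g},\mathbf{L}\cap\mathbf{K})$, with the functor $\Gamma$ of passage to the maximal $(\mathfrak{g},\mathbf{K})$-submodule, whose right derived functors are the $R^{i}\mathbf{I}$. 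Fix a good filtration on the finite-length module $W$, so that $\gr W$ is a finitely generated $\mathbf{L}\cap\mathbf{K}$-equivariant coherent sheaf on $\mathcal{N}^*_{\mathfrak{l},\theta}$ supported on $\AV(W)$. A Poincar\'e--Birkhoff--Witt computation identifies $\gr\,\mathrm{pro}_{\mathfrak{q}}^{\mathfrak{g}}(W)$ with the graded pro-induction of $\gr W$, and the Koszul (Zuckerman) complex shows that the derived functors of $\Gamma$ are computed, at the graded level, by the derived pushforward $Rp_{\ast}$ along the $\mathbf{K}$-equivariant map
$$p\colon\ \mathbf{K}\times_{\mathbf{L}\cap\mathbf{K}}\pi_{\mathfrak{q},\theta}^{-1}(\mathcal{N}^*_{\mathfrak{l},\theta})\ \longrightarrow\ \mathcal{N}^*_{\mathfrak{g},\theta},\qquad [k,\xi]\ \longmapsto\ k\cdot\xi,$$
whose fibers are partial-flag-type varieties for $\mathbf{K}$. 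Write $\widetilde{\gr W}$ for the $\mathbf{K}$-equivariant coherent sheaf on the source of $p$ obtained from $\gr W$ by $\pi_{\mathfrak{q},\theta}$-pullback followed by induction of equivariance; then $p(\Supp\widetilde{\gr W})=\mathbf{K}\big(\pi_{\mathfrak{q},\theta}^{-1}(\AV(W))\big)$.

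The inclusion $\AV(R^{t}\mathbf{I}(W))\subseteq\mathbf{K}(\pi_{\mathfrak{q},\theta}^{-1}(\AV(W)))$ needs nothing beyond this setup: the good filtration on $W$ induces one on $\mathrm{pro}_{\mathfrak{q}}^{\mathfrak{g}}(W)$, and hence a spectral sequence converging to $\gr R^{\bullet}\mathbf{I}(W)$ whose $E_{1}$-page is built from the sheaves $R^{i}p_{\ast}\widetilde{\gr W}$, each supported inside $p(\Supp\widetilde{\gr W})$.

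For the reverse inclusion I would use the weakly good range together with the hypothesis $R^{t}\mathbf{I}(W)\neq0$. Since $R^{i}\mathbf{I}(W)=0$ for $i\neq t$ (Theorem \ref{thm:cohindirreducible} applied to the composition factors of $W$, together with the long exact sequences), \eqref{eqn:eulerinduction} gives $I(\mathfrak{l},\mathfrak{q},[W])=(-1)^{t}[R^{t}\mathbf{I}(W)]$, and applying the homomorphism $\gr$ of Proposition \ref{prop:grprop},
$$(-1)^{t}\,[\gr R^{t}\mathbf{I}(W)]\ =\ \gr\big(I(\mathfrak{l},\mathfrak{q},[W])\big)\ =\ \sum_{j}(-1)^{j}\,[R^{j}p_{\ast}\widetilde{\gr W}]\qquad\text{in }K^{\mathbf{K}}(\mathcal{N}^*_{\mathfrak{g},\theta}),$$
the last equality being the statement that $\gr$ intertwines the Euler characteristic of cohomological induction with the geometric pushforward. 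In the weakly good range a Bott-type vanishing along the flag-variety fibers of $p$ forces $Rp_{\ast}\widetilde{\gr W}$ to be concentrated in degree $t$, so $[\gr R^{t}\mathbf{I}(W)]=\pm[\mathcal{F}]$ for the genuine sheaf $\mathcal{F}:=R^{t}p_{\ast}\widetilde{\gr W}$. To see that $\Supp\mathcal{F}$ fills out all of $\mathbf{K}(\pi_{\mathfrak{q},\theta}^{-1}(\AV(W)))$, restrict over its open $\mathbf{K}$-orbit $\mathcal{O}'$: there $\mathcal{F}$ is the degree-$t$ pushforward of $\widetilde{\gr W}$ along the flag bundle $p^{-1}(\mathcal{O}')\to\mathcal{O}'$, whose fiberwise top cohomology is nonzero exactly when the $\mathbf{K}$-type regularity condition of Theorem \ref{thm:LKTscohind} holds, and that condition is precisely the hypothesis $R^{t}\mathbf{I}(W)\neq0$. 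Hence $\mathcal{F}|_{\mathcal{O}'}\neq0$, so $\overline{\mathcal{O}'}\subseteq\Supp\mathcal{F}=\AV(R^{t}\mathbf{I}(W))$, and equality follows.

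The step I expect to be the main obstacle is the chain of identifications in the previous paragraph. One must prove that $\gr$ intertwines the Euler characteristic of cohomological induction with $Rp_{\ast}$ (a filtered Koszul-complex argument, or a citation to the foundational analysis of associated cycles under cohomological induction), and then carry out the Bott-type vanishing and the regularity dichotomy along the fibers of $p$ at the level of the possibly non-reduced sheaf $\widetilde{\gr W}$, so as to tie the nonvanishing of $\mathcal{F}$ over $\mathcal{O}'$ to the hypothesis $R^{t}\mathbf{I}(W)\neq 0$ rather than to some unjustified positivity. Throughout one must also keep track of the $\det\mathfrak{u}$- and $\rho(\mathfrak{u})$-twists, and of the disconnectedness of $\mathbf{K}$ in the definitions of $\mathrm{pro}$ and $\Gamma$; these affect the equivariant structures and the precise cohomological degrees, but not the underlying supports.
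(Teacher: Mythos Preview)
The paper does not give its own proof of this proposition: it states that the result ``is well-known to the experts'' and refers to \cite[Prop~5.4]{Trapa2001}. So there is no in-paper argument to compare your sketch against.

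Your outline follows the standard geometric approach (essentially the ``geometric Blattner'' picture the paper alludes to in a commented-out passage), and the easy inclusion $\AV(R^t\mathbf{I}(W))\subseteq \mathbf{K}\bigl(\pi_{\mathfrak{q},\theta}^{-1}(\AV(W))\bigr)$ via the induced filtration and spectral sequence is fine. The reverse inclusion is where the content lies, and the gaps you yourself flag are real. The most serious is your claim that the fiberwise degree-$t$ cohomology of $\widetilde{\gr W}$ over the open $\mathbf{K}$-orbit $\mathcal{O}'$ is nonzero ``exactly when the $\mathbf{K}$-type regularity condition of Theorem~\ref{thm:LKTscohind} holds.'' Theorem~\ref{thm:LKTscohind} is a global statement about minimal $\mathbf{K}$-types of the induced module; it does not obviously localize to a nonvanishing statement for $R^tp_*$ over the generic fiber of $p$, and you have not supplied the bridge. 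A second, related issue: for a merely finite-length $W$, the hypothesis $R^t\mathbf{I}(W)\neq 0$ only guarantees that \emph{some} composition factor survives, so your argument over $\mathcal{O}'$ could detect a strictly smaller $\AV(W_j)$ rather than all of $\AV(W)$. The proposition is most cleanly proved (and used in the paper) for irreducible $W$; if you want the finite-length version you should either reduce to that case or add a hypothesis ensuring the top-dimensional composition factors survive.
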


\begin{remark}
If we regard $\AV(R^t\mathbf{I}^{(\mathfrak{g},\mathbf{K})}_{(\mathfrak{l},\mathbf{L} \cap \mathbf{K})})$ as a subset of $\mathcal{N}_{\mathfrak{g},\theta}$ and $\AV(W)$ as a subset of $\mathcal{N}_{\mathfrak{l},\theta}$ (as will sometimes be convenient), (\ref{eqn:AVcohind}) becomes
$$\AV(R^t\mathbf{I}^{(\mathfrak{g},\mathbf{K})}_{(\mathfrak{l},\mathbf{L} \cap \mathbf{K})}) = \mathbf{K} \left(\AV(W) + \mathfrak{u} \cap \mathfrak{p}\right) $$
\end{remark}

\section{Two Classifications of Principal Unipotent Representations}\label{sec:classification}

Let $\cO \subset \cN$ be the principal nilpotent orbit. In this section, we will give two classifications of $\unip_R(\cO)$. The parameters (\emph{principal unipotent Beilinson-Bernstein parameters} and \emph{principal unipotent Zuckerman parameters}) are defined in Sections \ref{subsec:unipotentBB} and \ref{subsec:Zuckerman}, respectively. The main result is Corollary \ref{cor:principalunipotentreps}.

\subsection{Beilinson-Bernstein parameters}\label{subsec:BBparameters}

Fix a Borel subalgebra $\mathfrak{b} \subset \mathfrak{g}$ and a Cartan subalgebra $\mathfrak{h} \subset \mathfrak{b}$. Let $\lambda \in \mathfrak{h}^*$ be an \emph{integrally dominant} weight
\begin{equation}\label{eqn:integraldominance}
\langle \lambda, \alpha^{\vee}\rangle \notin \{-1,-2,...\} \qquad \forall \alpha \in \Delta(\mathfrak{b},\mathfrak{h})
\end{equation}
%

\begin{definition}\label{def:BB}
A Beilinson-Bernstein parameter for $G$ of infinitesimal character $\lambda$ (a BB parameter for short) is a $\mathbf{K}$-conjugacy class of triples $(\fh,\fb,\chi)$ consisting of
\begin{itemize}
    \item[(i)] a $\theta$-stable Cartan subalgebra $\fh = \mathfrak{t}\oplus \mathfrak{a} \subset \mathfrak{g}$,
    \item[(ii)] a Borel subalgebra $\fb = \mathfrak{h} \oplus \mathfrak{n} \subset \fg$, and
    \item[(iii)] a one-dimensional $(\fh, \mathbf{T})$-module $\chi$ such that $d\chi + \rho(\mathfrak{n}) = \lambda$
\end{itemize}
Denote the $\mathbf{K}$-conjugacy class of $(\mathfrak{h},\mathfrak{b},\chi)$ by $[\mathfrak{h},\mathfrak{b},\chi]$ and denote the set of such classes by $\BB_{\lambda}(G)$.
\end{definition}

\begin{remark}
If $[\mathfrak{h},\mathfrak{b},\chi] \in \BB_{\lambda}(G)$, we can (and will) choose $\mathfrak{h}$ to be stable under $\sigma$. This allows us to define the Cartan subgroup $H := Z_G(\mathfrak{h})$. Now one-dimensional $(\mathfrak{h},T)$-modules correspond precisely to continuous characters of $H$.
\end{remark}

\begin{proposition}
$\BB_{\lambda}(G)$ is a finite set.
\end{proposition}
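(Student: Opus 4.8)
The plan is to sort the triples $(\mathfrak{h},\mathfrak{b},\chi)$ by the $\mathbf{K}$-conjugacy class of $\mathfrak{h}$, and to check that, with the class of $\mathfrak{h}$ fixed, there are only finitely many choices for $\mathfrak{b}$ and then only finitely many for $\chi$. First, by the Remark following Definition~\ref{def:BB} we may take $\mathfrak{h}$ in every triple to be $\sigma$-stable as well as $\theta$-stable. There are only finitely many $\mathbf{K}$-conjugacy classes of such Cartan subalgebras: they correspond to the finitely many $G$-conjugacy classes of real Cartan subalgebras of $\mathfrak{g}_0$ (see \cite{Knapp1996}); equivalently, this is exactly what the Cayley transform calculus of Sections~\ref{sec:Cayleyreal} and \ref{sec:Cayleynoncompact} encodes, each class being reached from a fixed maximally split $\theta$-stable Cartan subalgebra by one of finitely many inequivalent chains of transforms $c_{\alpha}^{+}$. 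Fix $\sigma$-stable, $\theta$-stable representatives $\mathfrak{h}_1,\dots,\mathfrak{h}_r$, with Cartan subgroups $H_j := Z_G(\mathfrak{h}_j)$. Every triple $(\mathfrak{h},\mathfrak{b},\chi)$ is $\mathbf{K}$-conjugate to one with $\mathfrak{h}=\mathfrak{h}_j$ for some $j$, so $\BB_{\lambda}(G)$ is the image of $\bigsqcup_{j=1}^{r}\{(\mathfrak{h}_j,\mathfrak{b},\chi): \mathfrak{h}_j\subset\mathfrak{b},\ d\chi+\rho(\mathfrak{n})=\lambda\}$ under the map sending a triple to its $\mathbf{K}$-orbit; hence it suffices to prove that each of the $r$ sets in this disjoint union is finite.

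Now fix $j$. The Borel subalgebras $\mathfrak{b}$ with $\mathfrak{h}_j\subset\mathfrak{b}$ are in bijection with the positive systems of the root system $\Delta(\mathfrak{g},\mathfrak{h}_j)$, so there are only finitely many of them (the order of the Weyl group of $\Delta(\mathfrak{g},\mathfrak{h}_j)$). Fix such a $\mathfrak{b}=\mathfrak{h}_j\oplus\mathfrak{n}$. Condition (iii) then pins down the differential $d\chi=\lambda-\rho(\mathfrak{n})\in\mathfrak{h}_j^{*}$, and by the Remark after Definition~\ref{def:BB} the admissible $\chi$ are precisely the continuous characters of $H_j$ with this prescribed differential. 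Using the direct product decomposition $H_j=T_jA_j$ with $A_j$ a connected (vector) group and $T_j=H_j\cap K$ compact, a character of $H_j$ is a pair (character of $T_j$, character of $A_j$): the character of $A_j$ is uniquely determined by $d\chi|_{\mathfrak{a}_j}$, while, since $T_j^{\circ}$ is connected, any two characters of $T_j$ with the same differential differ by a character trivial on $T_j^{\circ}$, i.e. by a character of the finite group $T_j/T_j^{\circ}$. Thus the set of admissible $\chi$ is empty or a torsor under the finite group $\widehat{T_j/T_j^{\circ}}$; in either case it is finite. Multiplying the (finite) number of admissible $\mathfrak{b}$ by the (finite) number of admissible $\chi$ shows each of the $r$ sets is finite, and hence $\BB_{\lambda}(G)$ is finite.

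The only substantive input is the first step — the finiteness of $\mathbf{K}$-conjugacy classes of ($\sigma$- and $\theta$-stable) Cartan subalgebras of $\mathfrak{g}$ — and this is entirely classical; I would simply cite it from \cite{Knapp1996}, or read it off the Cayley transform picture already set up in Sections~\ref{sec:Cayleyreal} and \ref{sec:Cayleynoncompact}. After that, no genuine difficulty remains: the count of Borels containing a fixed Cartan and the count of characters of a real Cartan subgroup with a fixed differential are routine, and passing to $\mathbf{K}$-orbits only shrinks a finite set.
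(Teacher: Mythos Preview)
Your proof is correct and follows essentially the same strategy as the paper's own proof: reduce to finitely many $\mathbf{K}$-conjugacy classes of $\theta$-stable Cartan subalgebras, then finitely many Borels containing a fixed Cartan, then finitely many characters with prescribed differential. The only cosmetic difference is that the paper cites \cite{Wolf1974} for the finiteness of $\mathbf{K}$-conjugacy classes of Cartan and Borel subalgebras in one stroke, whereas you separate the two steps (Cartans via \cite{Knapp1996} or Cayley transforms, then Borels via the Weyl group); your argument for the finiteness of $\chi$ is in fact more carefully spelled out than the paper's.
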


\begin{proof}
It is classically known that there are only finitely many $\mathbf{K}$-conjugacy classes of Cartan and Borel subalgebras of $\fg$ (see \cite[Thms 1,2]{Wolf1974}). For each pair $(\fh,\fb)$ consisting of a $\theta$-stable Cartan subalgebra $\fh=\mathfrak{t}\oplus \mathfrak{a}$ and Borel subalgebra $\mathfrak{b}=\mathfrak{h} \oplus \mathfrak{n}$, there is a finite number of one-dimensional $(\mathfrak{h},\mathbf{T})$-modules $\chi$ satisfying $d\chi+\rho(\mathfrak{n})=\lambda$. Indeed, such a module is uniquely determined by an algebraic character of $\mathbf{T}$, of which there are finitely many.
\end{proof}

Beilinson-Bernstein parameters can be parabolically induced. The assignment
$$(\mathfrak{h},\mathfrak{b},\chi) \mapsto I(\mathfrak{b},\mathfrak{b},\chi) :=  \sum_i (-1)^i[R^i\mathbf{I}^{(\mathfrak{g},\mathbf{K})}_{(\mathfrak{h},\mathbf{T})} \chi] \in K_{\lambda}^f(\fg,\mathbf{K})$$
(cf. Section \ref{subsec:induction}) is constant on $\mathbf{K}$-conjugacy class and hence gives rise to a function
$$I: \BB_{\lambda}(G) \to K_{\lambda}^f(\mathfrak{g},\mathbf{K})$$
As our terminology suggests, this function admits an alternative description via the Beilinson-Bernstein localization theory. We will briefly summarize the main ideas (for more details, we refer the reader to \cite{HechtMilicicSchmidWolf}). 

Let $\mathcal{B} = \{\fb \subset \fg\}$ be the flag variety for $\mathbf{G}$. Note that $\mathbf{K}$ acts on $\mathcal{B}$ with finitely many orbits. The functional $\lambda \in \fh^*$ determines a sheaf $\mathcal{D}_{\lambda-\rho}$ of twisted differential operators (TDOs) on $\mathcal{B}$. We will consider the abelian category $M(\mathcal{D}_{\lambda-\rho},\mathbf{K})$ of $\mathbf{K}$-equivariant quasi-coherent $\mathcal{D}_{\lambda-\rho}$-modules on $\mathcal{B}$. The irreducible objects in this category are parameterized by $\mathbf{K}$-orbits on $\BB_{\lambda}(G)$. The construction is as follows. Fix a parameter $[\fh,\fb, \chi] \in \BB_{\lambda}(G)$. The borel subalgebra $\fb \subset \fg$ determines a $\mathbf{K}$-orbit $Z = \mathbf{K}\cdot \fb \subset \mathcal{B}$. Denote the locally-closed embedding by $j:Z \subset \mathcal{B}$. On the $\mathbf{K}$-orbit $Z$, there is a sheaf of TDOs $\mathcal{D}_{\lambda-\rho}^Z$ obtained by restricting $\mathcal{D}_{\lambda-\rho}$ along $Z \subset \mathcal{B}$, and the one-dimensional $(\fh,\mathbf{T})$-module $\chi$ determines an irreducible object $\mathcal{L}_{\chi} \in M(\mathcal{D}^Z_{\lambda-\rho},\mathbf{K})$. There is a left-exact functor $j_!: M(\mathcal{D}^Z_{\lambda-\rho},\mathbf{K}) \to M(\mathcal{D}_{\lambda-\rho},\mathbf{K})$ called the \emph{exceptional pushforward}. The object $j_!\mathcal{L}_{\chi} \in M(\mathcal{D}_{\lambda-\rho},\mathbf{K})$ contains a unique irreducible subobject, and this defines a bijection
\begin{align*}
\BB_{\lambda}(G) &\xrightarrow{\sim} \{\text{irreducibles in } M(\mathcal{D}_{\lambda-\rho},\mathbf{K})\}\\ [\mathfrak{h},\mathfrak{b},\chi] &\mapsto \text{unique irreducible subobject in } j_!\mathcal{L}_{\chi}
\end{align*}
The $\mathbf{G}$-action on $\mathcal{B}$ induces an algebra homomorphism
$$\phi: U(\mathfrak{g}) \to \Gamma(\mathcal{B}, \mathcal{D}_{\lambda-\rho})$$
This map is surjective with kernel equal to the two-sided ideal generated by the kernel of the infinitesimal character corresponding to $\lambda$ under the Harish-Chandra isomorphism. If $\mathcal{M} \in M(\mathcal{D}_{\lambda-\rho},\mathbf{K})$, then $\Gamma(\mathcal{B},\mathcal{M})$ can be regarded (using $\phi$) as a finite-length $(\fg,\mathbf{K})$-module of infinitesimal character $\lambda$. This defines a functor
$$\Gamma: M(\mathcal{D}_{\lambda-\rho},\mathbf{K}) \to M^f_{\lambda}(\fg,\mathbf{K})$$
\begin{theorem}\label{thm:globalsections}
Under the dominance condition (\ref{eqn:integraldominance}), $\Gamma$ is exact and induces a bijection
$$\Gamma: \{\text{irreducibles in } M(\mathcal{D}_{\lambda-\rho},\mathbf{K}) \text{ with non-zero global sections}\} \xrightarrow{\sim} \{\text{irreducibles in } M_{\lambda}(\fg,\mathbf{K})\}$$
\end{theorem}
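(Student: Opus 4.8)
The plan is to recognize Theorem~\ref{thm:globalsections} as the $\mathbf{K}$-equivariant incarnation of the Beilinson--Bernstein localization theorem in the (possibly singular) dominant regime, and to deduce it from the two foundational facts of that theory by a formal argument involving the left adjoint of $\Gamma$. Write $\Delta_{\lambda-\rho} := \mathcal{D}_{\lambda-\rho}\otimes_{U(\fg)}(-)$ for the localization functor (defined via $\phi$); it is left adjoint to $\Gamma$, and both functors intertwine the exact, faithful forgetful functor $M(\mathcal{D}_{\lambda-\rho},\mathbf{K}) \to M(\mathcal{D}_{\lambda-\rho})$ with their non-equivariant counterparts. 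So the strategy is to prove everything non-equivariantly first and then observe that the (co)units and the exactness statements used all respect $\mathbf{K}$-equivariant structures, making the equivariant refinement automatic. Exactness of $\Gamma$ comes from the fact that the dominance hypothesis~(\ref{eqn:integraldominance}) is precisely the condition forcing $H^i(\cB,\mathcal{M}) = 0$ for all $i>0$ and all $\mathcal{M} \in M(\mathcal{D}_{\lambda-\rho})$ (Beilinson--Bernstein; see \cite{HechtMilicicSchmidWolf}); granting this, $\Gamma$ is exact, first non-equivariantly and then on $M(\mathcal{D}_{\lambda-\rho},\mathbf{K})$ by composing with the forgetful functor.

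Next I would check that $\Gamma$ really lands in $M^f_{\lambda}(\fg,\mathbf{K})$. The $\fg$-action on $\Gamma(\cB,\mathcal{M})$ obtained through $\phi$ has infinitesimal character $\lambda$ because $\ker\phi$ is generated by the corresponding maximal ideal of $Z(\fg)$; the $\mathbf{K}$-action on $\cB$ lifts to $\Gamma(\cB,\mathcal{M})$ compatibly with the $\fg$-action, producing a genuine $(\fg,\mathbf{K})$-module; and $\Gamma(\cB,\mathcal{M})$ is finitely generated over $U(\fg)$ by the usual argument for coherent $\mathcal{D}$-modules on the projective variety $\cB$, hence of finite length once it carries an infinitesimal character. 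One also uses here that $M(\mathcal{D}_{\lambda-\rho},\mathbf{K})$ itself consists of finite-length objects, since $\mathbf{K}$ acts on $\cB$ with finitely many orbits and the irreducible $\mathbf{K}$-equivariant objects supported on a given orbit are finite in number; this is the source of the parametrization of irreducibles by $\BB_{\lambda}(G)$ recorded just before the theorem.

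For the bijection I would exploit that the dominance hypothesis also makes the counit $\varepsilon_{\mathcal{M}}\colon \Delta_{\lambda-\rho}\Gamma\mathcal{M} \to \mathcal{M}$ surjective (every $\mathcal{D}_{\lambda-\rho}$-module is generated by its global sections) and, since $\phi$ is surjective with the stated kernel, that $\Gamma$ is essentially surjective onto $M_{\lambda}(\fg,\mathbf{K})$. Irreducibility of the image: if $\mathcal{M}$ is irreducible with $\Gamma\mathcal{M}\neq 0$ and $0\neq V\subseteq \Gamma\mathcal{M}$ is a submodule, then the image of the composite $\Delta_{\lambda-\rho}V \to \Delta_{\lambda-\rho}\Gamma\mathcal{M} \xrightarrow{\varepsilon_{\mathcal{M}}} \mathcal{M}$ is a nonzero submodule of $\mathcal{M}$ --- nonzero because the adjunction triangle identity together with exactness of $\Gamma$ forces its global sections to contain $V$ --- hence equals $\mathcal{M}$; a short chase through the exact functor $\Gamma$ then gives $V = \Gamma\mathcal{M}$. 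The same device identifies $\mathcal{M}$ as the unique irreducible quotient of $\Delta_{\lambda-\rho}\Gamma\mathcal{M}$, which yields injectivity of $[\mathcal{M}]\mapsto[\Gamma\mathcal{M}]$ on irreducibles. For surjectivity I would write an irreducible $X\in M_{\lambda}(\fg,\mathbf{K})$ as $\Gamma\mathcal{N}$ using essential surjectivity, pass to a composition series of the finite-length module $\mathcal{N}$, and apply exactness of $\Gamma$: exactly one composition factor $\mathcal{M}$ has $\Gamma\mathcal{M}\neq 0$, and for that one $\Gamma\mathcal{M}\cong X$.

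The hard part is the singular-case bookkeeping in the preceding paragraph: unlike the regular dominant case, $\Delta_{\lambda-\rho}$ and $\Gamma$ are not inverse equivalences and the unit $X\to \Gamma\Delta_{\lambda-\rho}X$ need not be an isomorphism, so one cannot simply invoke an equivalence of categories; the correspondence on irreducibles must be teased out from exactness of $\Gamma$ (and crucially \emph{not} of $\Delta_{\lambda-\rho}$), surjectivity of the counit, and the adjunction triangle identities, exactly as in the localization theorem of \cite{HechtMilicicSchmidWolf}. Once that is in place at the level of $\mathbb{C}$-modules, the equivariant version costs nothing, since every functor and every natural transformation appearing in the argument manifestly preserves $\mathbf{K}$-equivariant structures.
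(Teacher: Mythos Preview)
The paper does not prove this theorem: it is stated as a known result---the $\mathbf{K}$-equivariant Beilinson--Bernstein localization theorem in the integrally dominant regime---with the reader referred to \cite{HechtMilicicSchmidWolf} for details. Your proposal reconstructs precisely the standard argument one finds in that reference (exactness from higher-cohomology vanishing, global generation giving surjectivity of the counit, and the adjunction chase on irreducibles), so there is no discrepancy to report.
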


We now have two methods for producing $(\fg,\mathbf{K})$-modules from BB parameters: parabolic induction $[\mathfrak{h},\mathfrak{b},\chi] \mapsto I[\mathfrak{h},\mathfrak{b},\chi]$ and the Beilinson-Bernstein construction $[\mathfrak{h},\mathfrak{b},\chi] \mapsto \Gamma(\mathcal{B},j_!\mathcal{L}_{\chi})$. The Duality Theorem of Hecht, Milicic, Schmid, and Wolf asserts that these two constructions (essentially) coincide.

\begin{theorem}[\cite{HechtMilicicSchmidWolf}, Theorem 4.3]\label{thm:dualitythm}
Suppose $[\fh,\fb,\chi] \in \BB_{\lambda}(G)$. Then there is an equality in $K^f(\fg,\mathbf{K})$
$$[\Gamma(\mathcal{B},j_!\mathcal{L}_{\chi})] = \pm I[\fh,\fb,\chi]$$
\end{theorem}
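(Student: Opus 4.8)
The plan is to pass from $\mathbf{I}^{(\fg,\mathbf{K})}_{(\fh,\mathbf{T})}$ to the two extreme cases of parabolic induction --- real parabolic induction and cohomological induction --- via induction by stages, compute both sides of the asserted identity geometrically in each of those cases, and recombine. Normalize $\fh = \mathfrak{t}\oplus\mathfrak{a}$ to be $\sigma$-stable, as permitted by the Remark after Definition \ref{def:BB}.

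First I would factor the induction. Let $\fl := Z_{\fg}(\mathfrak{a})$, a $\theta$-stable Levi subalgebra in which $\fh$ is maximally split, and let $\fq = \fl\oplus\mathfrak{u}$ be a parabolic with $\fb\subset\fq$. Then $\fb\cap\fl$ is a Borel subalgebra of $\fl$ all of whose roots vanish on $\mathfrak{a}$, hence are imaginary, hence $\theta$-stable; so $I(\fh,\fb\cap\fl,\chi)$ is an Euler characteristic of cohomological induction inside $\fl$. By Proposition \ref{prop:indbystages},
\[
I[\fh,\fb,\chi] \;=\; I\!\left(\fl,\fq,\, I(\fh,\fb\cap\fl,\chi)\right).
\]
When $\fb$ is chosen compatible with a $\sigma$-stable nilradical, $\fq$ is $\sigma$-stable and $I(\fl,\fq,-)$ is real parabolic induction; the general case reduces to this one by interpolating through a chain of such steps (equivalently, the Cayley transforms of Sections \ref{sec:Cayleyreal}--\ref{sec:Cayleynoncompact}). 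On the geometric side there is a parallel factorization: the flag variety $\mathcal{B}$ of $\mathbf{G}$ fibers over the partial flag variety $\mathcal{P}$ of parabolics conjugate to $\fq$, with fibers the flag variety $\mathcal{B}_\fl$ of $\mathbf{L}$; the orbit $Z=\mathbf{K}\cdot\fb$ maps onto the open $\mathbf{K}$-orbit $Z_\mathcal{P}\cong\mathbf{K}/(\mathbf{Q}\cap\mathbf{K})$ (open because $\fq$ is $\sigma$-stable), with fibers a closed $(\mathbf{L}\cap\mathbf{K})$-orbit $Z_\fl\subset\mathcal{B}_\fl$ (closed because $\fb\cap\fl$ is $\theta$-stable), and $j_!$ factors as a closed pushforward along the fibers followed by open extension along $\mathcal{P}$.

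Next I would match the two factorizations term by term. The open-orbit piece is essentially formal: $\Gamma(\mathcal{P},(j_\mathcal{P})_!(-))$ on the affine homogeneous space $\mathbf{K}/(\mathbf{Q}\cap\mathbf{K})$, with the $|\rho(\mathfrak{u})|$-normalization already built into the twist of $\mathcal{D}_{\lambda-\rho}$, is precisely the functor $\mathrm{Ind}^G_Q$ of (\ref{eqn:realinductioninfinitesimal}), which is exact and faithful by Theorem \ref{thm:realinductionexact}. The closed-orbit piece is the substantive one: for the closed $(\mathbf{L}\cap\mathbf{K})$-orbit $Z_\fl$ one identifies $\sum_i(-1)^i[R^i\Gamma(\mathcal{B}_\fl,j_{\fl,*}\mathcal{L}_\chi)]$ with $\pm I(\fh,\fb\cap\fl,\chi)$ in $K^f(\fl,\mathbf{L}\cap\mathbf{K})$, the sign being the parity of the cohomological-induction degree $t=\dim_{\mathbb{C}}Z_\fl$. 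One then glues: flat base change for $\mathcal{D}$-modules along the smooth morphism $\mathcal{B}\to\mathcal{P}$, together with the Leray spectral sequence, computes $R\Gamma(\mathcal{B},j_!\mathcal{L}_\chi)$ by first integrating $j_{\fl,*}\mathcal{L}_\chi$ over the fibers (producing a complex of $(\fl,\mathbf{L}\cap\mathbf{K})$-modules) and then applying $\mathrm{Ind}^G_Q$; passing to Euler characteristics, using exactness of $\mathrm{Ind}^G_Q$, and invoking Theorem \ref{thm:globalsections} (so that under (\ref{eqn:integraldominance}) the left-hand class is carried by the single module $\Gamma(\mathcal{B},j_!\mathcal{L}_\chi)$) yields $[\Gamma(\mathcal{B},j_!\mathcal{L}_\chi)] = \pm I(\fl,\fq,\,I(\fh,\fb\cap\fl,\chi)) = \pm I[\fh,\fb,\chi]$.

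I expect the closed-orbit comparison to be the main obstacle: showing that the derived global sections of the standard $\mathcal{D}$-module supported on a closed $\mathbf{K}$-orbit agree in $K$-theory with the Euler characteristic of the Zuckerman derived-functor module attached to the same data. This is the heart of the duality theorem of Hecht--Mili\v{c}i\'{c}--Schmid--Wolf (with antecedents of Hecht--Taylor and Bernstein--Lunts in the $\theta$-stable case); one either cites it directly, or reproves it by computing $\mathbf{K}$-characters on the two sides --- via the Blattner-type formula for cohomological induction on the algebraic side and the Borel--Weil--Bott theorem on the geometric side, which produce the same answer --- and then appealing to the injectivity of restriction to $\mathbf{K}$ on the relevant Grothendieck groups, in the spirit of Theorem \ref{thm:resKinjective}.
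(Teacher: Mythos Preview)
The paper does not prove this theorem; it simply cites \cite{HechtMilicicSchmidWolf}, Theorem 4.3, and moves on. So there is no ``paper's own proof'' to compare against --- your sketch is an attempt to prove something the paper imports as a black box.

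On its own merits, your outline captures the right architecture: factor through the Levi $\fl = Z_{\fg}(\mathfrak{a})$, match the $\sigma$-stable (open-orbit) step with real parabolic induction, and match the $\theta$-stable (closed-orbit) step with cohomological induction. This is indeed the skeleton of the Hecht--Mili\v{c}i\'{c}--Schmid--Wolf argument. Two genuine gaps, however:

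\emph{Circularity in the closed-orbit step.} You correctly identify the closed-orbit comparison as the crux, but then propose to ``cite it directly.'' The closed-orbit case \emph{is} the duality theorem --- HMSW prove it for closed orbits and then extend; so citing it here is citing the theorem you are proving.

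\emph{The $\mathbf{K}$-type alternative does not close.} Your fallback is to match $\mathbf{K}$-characters via Blattner and Borel--Weil--Bott and then invoke ``injectivity of restriction to $\mathbf{K}$ \ldots\ in the spirit of Theorem~\ref{thm:resKinjective}.'' But Theorem~\ref{thm:resKinjective} asserts injectivity of $K^{\mathbf{K}}(\mathcal{N}_{\theta}^*) \to K^f(\mathbf{K})$, not of $K^f(\fg,\mathbf{K}) \to K^f(\mathbf{K})$; the latter map is \emph{not} injective, even at fixed infinitesimal character (distinct irreducibles can share all $\mathbf{K}$-multiplicities). So equality of $\mathbf{K}$-types does not by itself yield equality in $K^f(\fg,\mathbf{K})$. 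The actual HMSW proof works at the level of complexes (establishing a derived-category equivalence between standard $\mathcal{D}$-modules and standard Harish-Chandra modules), not merely $\mathbf{K}$-types.

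A smaller point: your parabolic $\fq$ is $\sigma$-stable only when the positive system is type~L; you wave this away with ``interpolating through a chain of such steps,'' but making that precise (e.g., via the simple-reflection moves of Proposition~\ref{prop:maketypeZtypeL}) costs exactly the transfer/Cayley identities the paper develops \emph{after} invoking this theorem, so one must be careful about the order of dependence.
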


\begin{corollary}\label{cor:Gammainjective}
Let $[\mathfrak{h}_1,\mathfrak{b}_1,\chi_1], [\mathfrak{h}_2,\mathfrak{b}_2,\chi_2] \in \BB_{\lambda}(G)$. Suppose $I[\mathfrak{h}_1,\mathfrak{b}_1,\chi_1]$ and  $I[\mathfrak{h}_2,\mathfrak{b}_2,\chi_2]$ are nonzero and irreducible, and that
$$I[\mathfrak{h}_1,\mathfrak{b}_1,\chi_1] = \pm I[\mathfrak{h}_2,\mathfrak{b}_2,\chi_2]$$
Then $[\mathfrak{h}_1,\mathfrak{b}_1,\chi_1] = [\mathfrak{h}_2,\mathfrak{b}_2,\chi_2]$.
\end{corollary}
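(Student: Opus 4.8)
The plan is to combine the Duality Theorem (Theorem \ref{thm:dualitythm}) with the parametrization of irreducibles in $M(\mathcal{D}_{\lambda-\rho},\mathbf{K})$ by $\BB_\lambda(G)$ together with the exactness/bijectivity of $\Gamma$ from Theorem \ref{thm:globalsections}. First I would apply Theorem \ref{thm:dualitythm} to both parameters: writing $X_i := \Gamma(\mathcal{B},j_!\mathcal{L}_{\chi_i})$ for the global sections of the exceptional pushforwards, we get $[X_i] = \pm I[\mathfrak{h}_i,\mathfrak{b}_i,\chi_i]$ in $K^f(\mathfrak{g},\mathbf{K})$. By hypothesis each $I[\mathfrak{h}_i,\mathfrak{b}_i,\chi_i]$ is $\pm$ the class of a single irreducible $(\mathfrak{g},\mathbf{K})$-module, and the two classes agree up to sign; hence $[X_1]$ and $[X_2]$ are, up to sign, the class of one and the same irreducible module in the Grothendieck group. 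Since $K^f(\mathfrak{g},\mathbf{K})$ is free on the irreducibles and a class there is (up to sign) the class of an irreducible iff it equals $\pm[Y]$ for that irreducible $Y$, we conclude $[X_1] = [X_2]$ (the signs must match) and both equal $[Y]$ for a fixed irreducible $Y$; in particular $X_1 \cong X_2 \cong Y$ as $(\mathfrak{g},\mathbf{K})$-modules, and both are nonzero and irreducible.

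Next I would transport this back to the category of $\mathcal{D}$-modules. Let $\mathcal{I}_i \in M(\mathcal{D}_{\lambda-\rho},\mathbf{K})$ denote the unique irreducible subobject of $j_!\mathcal{L}_{\chi_i}$, so that $[\mathfrak{h}_i,\mathfrak{b}_i,\chi_i] \mapsto \mathcal{I}_i$ under the bijection recalled before Theorem \ref{thm:globalsections}. The key point is that $\Gamma(\mathcal{B}, j_!\mathcal{L}_{\chi_i}) = \Gamma(\mathcal{B},\mathcal{I}_i)$: since $\Gamma$ is exact under the dominance condition (\ref{eqn:integraldominance}), the short exact sequence $0 \to \mathcal{I}_i \to j_!\mathcal{L}_{\chi_i} \to \mathcal{Q}_i \to 0$ gives $[\Gamma(\mathcal{B},j_!\mathcal{L}_{\chi_i})] = [\Gamma(\mathcal{B},\mathcal{I}_i)] + [\Gamma(\mathcal{B},\mathcal{Q}_i)]$; but $\Gamma$ sends irreducibles to irreducibles or $0$, and $X_i = \Gamma(\mathcal{B},j_!\mathcal{L}_{\chi_i})$ is irreducible and nonzero, which forces $\Gamma(\mathcal{B},\mathcal{Q}_i)=0$ and $\Gamma(\mathcal{B},\mathcal{I}_i) = X_i \neq 0$. (Here I use that every composition factor of $j_!\mathcal{L}_{\chi_i}$ other than $\mathcal{I}_i$ lies in the kernel of $\Gamma$; this is standard for the exceptional pushforward of the irreducible on the orbit, but if a cleaner route is wanted one can simply note that $\Gamma(\mathcal{B},\mathcal{I}_i)$ is a nonzero submodule of the irreducible $X_i$, using left-exactness of $\Gamma$ on the inclusion $\mathcal{I}_i \hookrightarrow j_!\mathcal{L}_{\chi_i}$, hence equals $X_i$.) Thus $\Gamma(\mathcal{B},\mathcal{I}_1) \cong X_1 \cong X_2 \cong \Gamma(\mathcal{B},\mathcal{I}_2)$ are isomorphic nonzero irreducibles.

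Finally I would invoke the injectivity built into Theorem \ref{thm:globalsections}: $\Gamma$ induces a \emph{bijection} between irreducibles in $M(\mathcal{D}_{\lambda-\rho},\mathbf{K})$ with nonzero global sections and irreducibles in $M_\lambda(\mathfrak{g},\mathbf{K})$. Since $\mathcal{I}_1$ and $\mathcal{I}_2$ are irreducible with nonzero global sections and $\Gamma(\mathcal{B},\mathcal{I}_1) \cong \Gamma(\mathcal{B},\mathcal{I}_2)$, injectivity of this bijection gives $\mathcal{I}_1 \cong \mathcal{I}_2$. Pulling back through the bijection $\BB_\lambda(G) \xrightarrow{\sim} \{\text{irreducibles in } M(\mathcal{D}_{\lambda-\rho},\mathbf{K})\}$, $[\mathfrak{h}_1,\mathfrak{b}_1,\chi_1] = [\mathfrak{h}_2,\mathfrak{b}_2,\chi_2]$, as desired.

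I expect the main obstacle to be the bookkeeping in the second paragraph: correctly arguing that passing from $j_!\mathcal{L}_{\chi_i}$ to its unique irreducible subobject $\mathcal{I}_i$ does not change the image under $\Gamma$ when that image is already known to be irreducible and nonzero. Everything else is a formal chase through the three bijections and the freeness of the Grothendieck group; the sign ambiguities ($\pm$) are harmless because an equality $\pm[Y_1] = \pm[Y_2]$ of (signed) basis-adjacent classes in a free $\mathbb{Z}$-module forces $Y_1 \cong Y_2$.
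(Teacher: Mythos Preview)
Your approach is exactly what the paper intends: the corollary is stated without proof, and the deduction is meant to flow directly from Theorem~\ref{thm:dualitythm} combined with the bijection of Theorem~\ref{thm:globalsections}, which is precisely the chain you assemble.

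The one genuine gap is at the step you already flag. Your first argument does not actually force $\Gamma(\mathcal{Q}_i)=0$: from $[X_i]=[\Gamma(\mathcal{I}_i)]+[\Gamma(\mathcal{Q}_i)]$ with $X_i$ irreducible, it is equally consistent that $\Gamma(\mathcal{I}_i)=0$ and the single surviving composition factor lives in $\mathcal{Q}_i$. Your ``cleaner route'' is circular, since asserting that $\Gamma(\mathcal{I}_i)$ is a \emph{nonzero} submodule of $X_i$ is exactly the point in question. The claim you call ``standard'' (that every composition factor of $j_!\mathcal{L}_{\chi_i}$ other than $\mathcal{I}_i$ lies in $\ker\Gamma$) is not true in general at singular $\lambda$; what \emph{is} part of the Hecht--Mili\v{c}i\'{c}--Schmid--Wolf package (and is the fact actually needed) is that for integrally dominant $\lambda$ one has $\Gamma(j_!\mathcal{L}_\chi)\neq 0$ if and only if $\Gamma(\mathcal{I}_\chi)\neq 0$, so that the unique irreducible submodule of the standard module is precisely $\Gamma(\mathcal{I}_\chi)$. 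Once you invoke this, the rest of your chase through the two bijections is correct and complete.
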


\subsection{Principal unipotent Beilinson-Bernstein parameters}\label{subsec:unipotentBB}

We will construct the elements of $\mathrm{Unip}_R(\cO)$ from a very special set of BB parameters. To define this set of parameters, we will need several preliminary notions.

\begin{definition}\label{def:largetypeZtypeL}
Let $\mathfrak{h} \subset \mathfrak{g}$ be a $\theta$-stable Cartan subalgebra and let $\Delta^+(\mathfrak{g},\mathfrak{h}) \subset \Delta(\mathfrak{g},\mathfrak{h})$ be a positive system. We say that $\Delta^+(\mathfrak{g},\mathfrak{h})$ is 
\begin{itemize}
    \item[(i)] large if every imaginary simple root is noncompact.
    \item[(ii)] small if every imaginary simple root is compact
    \item[(iii)] type Z if for every complex simple root $\alpha$
    $$\theta(\alpha) \in \Delta^+(\mathfrak{g},\mathfrak{h})$$
    \item[(iv)] type L if for every complex simple root $\alpha$
    $$\theta(\alpha) \in -\Delta^+(\mathfrak{g},\mathfrak{h})$$
\end{itemize}
If $[\fh,\mathfrak{b},\chi] \in \mathrm{BB}_{\lambda}(G)$, the Borel subalgebra $\mathfrak{b} \subset \mathfrak{g}$ defines a positive system $\Delta^+(\mathfrak{g},\mathfrak{h}) = \Delta(\mathfrak{b},\mathfrak{h})$ for $\Delta(\mathfrak{g},\mathfrak{h})$. We say that $[\mathfrak{h},\mathfrak{b},\chi]$ is large, type Z, or type L according to the properties of this positive system. 
\end{definition}

If $\mathfrak{h} \subset \mathfrak{g}$ is a $\theta$-stable Cartan subalgebra and $\Delta^+(\mathfrak{g},\mathfrak{h})$ is a positive system, there are two naturally defined parabolic subalgebras $\mathfrak{q}^Z,\mathfrak{q}^L \subset \mathfrak{g}$. The first, $\mathfrak{q}^Z$, is the standard parabolic corresponding to the real roots for $\Delta^+(\mathfrak{g},\mathfrak{h})$
\begin{equation}\label{eqn:definitionofqZ}
\mathfrak{l}^Z := \mathfrak{h} \oplus \bigoplus_{\alpha \in \Delta_{\mathbb{R}}} \mathfrak{g}_{\alpha} \qquad \mathfrak{u}^Z := \bigoplus_{\alpha \in \Delta^+ \setminus \Delta_{\mathbb{R}}} \mathfrak{g}_{\alpha} \qquad \mathfrak{q}^Z := \mathfrak{l} \oplus \mathfrak{u}
\end{equation}
The second, $\mathfrak{q}^L$, is the standard parabolic corresponding to the imaginary roots for $\Delta^+(\mathfrak{g},\mathfrak{h})$
\begin{equation}\label{eqn:definitionofqL}
\mathfrak{l}^L := \mathfrak{h} \oplus \bigoplus_{\alpha \in \Delta_{i\mathbb{R}}} \mathfrak{g}_{\alpha} \qquad \mathfrak{u}^L := \bigoplus_{\alpha \in \Delta^+ \setminus \Delta_{i\mathbb{R}}} \mathfrak{g}_{\alpha} \qquad \mathfrak{q}^L := \mathfrak{l} \oplus \mathfrak{u}
\end{equation}

\begin{proposition}\label{prop:typeZLparabolic}
In the setting described above

\begin{itemize}
    \item[(i)] $\mathfrak{q}^Z$ is $\theta$-stable if and only if $\Delta^+(\mathfrak{g},\mathfrak{h})$ is type Z
    \item[(ii)] $\mathfrak{q}^L$ is $\sigma$-stable (i.e. real) if and only if $\Delta^+(\mathfrak{g},\mathfrak{h})$ is type L.
\end{itemize}
\end{proposition}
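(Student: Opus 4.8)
The plan is to reduce both assertions to a single combinatorial statement about the $\theta$-action on $\Delta(\mathfrak{g},\mathfrak{h})$, and then to prove that statement by induction on height. Since $\mathfrak{h}$ is $\theta$-stable, $\theta$ permutes the root spaces compatibly with its action on $\Delta(\mathfrak{g},\mathfrak{h})$, so the subspace $\mathfrak{q}^Z$ is $\theta$-stable if and only if its set of $\mathfrak{h}$-weights, namely $\Delta_{\mathbb{R}} \cup (\Delta^+ \setminus \Delta_{\mathbb{R}})$, is $\theta$-stable. By Proposition \ref{prop:realimagcomplex}, $\theta$ acts on $\Delta_{\mathbb{R}}$ by $\alpha \mapsto -\alpha$ (so preserves $\Delta_{\mathbb{R}}$ setwise), fixes $\Delta_{i\mathbb{R}}$ pointwise, and therefore permutes the complex roots; in particular $\theta$ preserves the set $\Delta \setminus \Delta_{\mathbb{R}}$ of non-real roots. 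Hence $\mathfrak{q}^Z$ is $\theta$-stable if and only if $\theta$ carries the set $\Delta^+ \setminus \Delta_{\mathbb{R}}$ of positive non-real roots into (equivalently, by injectivity and finiteness, onto) itself. So (i) amounts to the claim: $\theta(\Delta^+ \setminus \Delta_{\mathbb{R}}) = \Delta^+ \setminus \Delta_{\mathbb{R}}$ if and only if $\theta\alpha \in \Delta^+$ for every complex simple root $\alpha$, i.e. if and only if $\Delta^+$ is type Z.

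The forward implication is immediate, since a complex simple root is a positive non-real root. For the converse I would prove, by strong induction on $\mathrm{ht}(\beta)$, that $\theta\beta \in \Delta^+$ for every positive non-real root $\beta$. The base case $\mathrm{ht}(\beta)=1$ is the hypothesis when $\beta$ is complex and is trivial when $\beta$ is imaginary (then $\theta\beta=\beta$). For the inductive step, write $\beta = \gamma + \alpha$ with $\alpha$ simple, $\gamma \in \Delta^+$, $\mathrm{ht}(\gamma) = \mathrm{ht}(\beta)-1$. If $\gamma$ is non-real, then $\theta\gamma \in \Delta^+$ by induction: when $\alpha$ is imaginary or complex, $\theta\beta = \theta\gamma + \theta\alpha$ is a root which is a sum of two positive roots, hence positive; when $\alpha$ is real, $\theta\beta = \theta\gamma - \alpha$ cannot lie in $-\Delta^+$, because $\alpha - \theta\gamma$ would then be a positive root of height $1 - \mathrm{ht}(\theta\gamma) \le 0$. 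If instead $\gamma$ is real, then comparing restrictions to $\mathfrak{t}$ forces $\alpha$ to be non-real and gives $\theta\gamma = -\gamma$; assuming $\theta\beta \in -\Delta^+$, one gets a root $\delta := \gamma - \theta\alpha$ (replace $\theta\alpha$ by $\alpha$ when $\alpha$ is imaginary) which lies in $\Delta^+$, is non-real, has $\mathrm{ht}(\delta) < \mathrm{ht}(\beta)$, yet satisfies $\theta\delta = -\beta \in -\Delta^+$, contradicting the inductive hypothesis. In every case $\theta\beta \in \Delta^+$, and $\theta$ preserves non-real-ness, so $\theta\beta \in \Delta^+ \setminus \Delta_{\mathbb{R}}$, proving the claim. (One should also note that when $\Delta^+$ is type Z the same height bookkeeping shows $\Delta_{\mathbb{R}} \cup \Delta^+$ is a closed subset of $\Delta$, so that $\mathfrak{q}^Z$ is genuinely a parabolic subalgebra.)

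For (ii), one first records, using Proposition \ref{prop:realimagcomplex} together with the $\theta$-eigenspace decomposition $\mathfrak{h} = \mathfrak{t} \oplus \mathfrak{a}$, that $\sigma$ and $\theta$ act on $\Delta(\mathfrak{g},\mathfrak{h})$ with $\sigma\alpha = -\theta\alpha$ for every root $\alpha$. In particular $\sigma$ acts on $\Delta_{i\mathbb{R}}$ by $\alpha\mapsto -\alpha$, fixes $\Delta_{\mathbb{R}}$ pointwise, and permutes the complex roots, so the set-up of part (i) applies verbatim with $(\sigma, \Delta_{i\mathbb{R}})$ in place of $(\theta, \Delta_{\mathbb{R}})$: $\mathfrak{q}^L$ is $\sigma$-stable if and only if $\sigma\alpha \in \Delta^+$ for every complex simple root $\alpha$. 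Since $\sigma\alpha = -\theta\alpha$, this last condition says exactly that $\theta\alpha \in -\Delta^+$ for every complex simple root $\alpha$, i.e. that $\Delta^+$ is type L.

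I expect the main obstacle to be the inductive step in the case $\gamma = \beta - \alpha$ is itself a real root: here the inductive hypothesis is not directly available for $\gamma$, so one must exhibit a genuinely non-real root of strictly smaller height on which to invoke it, and keeping straight which combinations of roots are non-real (and in which half of the root system they sit) across the sub-cases "$\alpha$ imaginary" versus "$\alpha$ complex" is the one delicate piece of bookkeeping. The reduction steps and part (ii) are then purely formal.
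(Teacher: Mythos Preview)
Your argument is correct, but it takes a genuinely different route from the paper's proof. The paper argues more directly, without induction on height: writing a complex positive root as $\mu = \sum l_i\alpha_i + \sum m_j\beta_j + \sum n_k\gamma_k$ in terms of the real, imaginary, and complex simple roots, one first observes that some $n_k>0$ (otherwise $\theta\mu = -\sum l_i\alpha_i + \sum m_j\beta_j$ would have mixed-sign coefficients and fail to be a root). Then, under type~Z, each $\theta\gamma_k$ is itself positive complex and hence (by the same observation) has some complex simple root in its support; since the only contribution to the complex-simple-root coefficients of $\theta\mu$ comes from the terms $n_k\theta\gamma_k$ with $n_k\ge 0$, one of these coefficients is strictly positive, forcing $\theta\mu\in\Delta^+$. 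This avoids all of your case analysis and the delicate ``$\gamma$ real'' branch. On the other hand, your height-induction is entirely elementary and self-contained, and your explicit identification $\sigma\alpha=-\theta\alpha$ on roots makes the passage from~(i) to~(ii) more transparent than the paper's terse ``replace $\theta$ by $-\theta$''. Your parenthetical remark that the same bookkeeping shows $\Delta_{\mathbb{R}}\cup\Delta^+$ is closed (so $\mathfrak{q}^Z$ really is a subalgebra) addresses a point the paper simply takes for granted.
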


\begin{proof}
We will only prove the first statement. The second statement can be proved using a similar argument, replacing $\theta$ with $-\theta$. 

For the first statement, one implication is clear: if $\mathfrak{q}^Z$ is $\theta$-stable, then the set of complex positive roots is preserved by $\theta$. In particular, every complex simple root $\alpha \in \Delta^+(\mathfrak{g},\mathfrak{h})$ satisfies $\theta(\alpha) \in \Delta^+(\mathfrak{g},\mathfrak{h})$.

Conversely, suppose $\Delta^+(\mathfrak{g},\mathfrak{h})$ is type Z. To prove that $\mathfrak{q}^Z$ is $\theta$-stable, it suffices to show that $\theta$ preserves the set of complex positive roots. Denote the real, imaginary, and complex simple roots by $\alpha_i, \beta_j$, and $\gamma_k$, respectively.  

Every root $\mu \in \Delta(\mathfrak{g},\mathfrak{h})$ has a unique decomposition
$$\mu = \sum l_i\alpha_i + \sum m_j\beta_j + \sum n_k\gamma_k$$
for integers $l_i,m_j,n_k$ which are either all nonnegative or all nonpositive. 

Now suppose $\mu$ is positive and complex. If all $n_k =0$, then
$$\theta(\mu) = -\sum l_i\alpha_i + \sum m_j \beta_j \notin \Delta(\mathfrak{g},\mathfrak{h})$$
a contradiction. So every complex root has at least one complex simple root in its simple root decomposition. 

Assuming still that $\mu$ is positive and complex, 
$$\theta(\mu) = -\sum l_i\alpha_i + \sum m_j\beta_j + \sum n_k\theta(\gamma_k)$$
By hypothesis, each $\theta(\gamma_k)$ is positive and complex. And therefore, each has a complex simple root in its simple root decomposition. Consequently, $\theta(\mu)$ has at least one complex simple root in its simple root decomposition. Since $\theta(\mu)$ is a root, this means $\theta(\mu) \in \Delta^+(\mathfrak{g},\mathfrak{h})$.
\end{proof}

\begin{definition}\label{def:unipotentBB}
Suppose $[\mathfrak{h},\mathfrak{b},\chi] \in \BB_0(G)$. We say that $[\mathfrak{h},\mathfrak{b},\chi]$ is \emph{principal unipotent} if
\begin{itemize}
    \item[(i)] $\Delta^+(\mathfrak{g},\mathfrak{h})$ is large
    \item[(ii)] $\Delta^+(\mathfrak{g},\mathfrak{h})$ is type $Z$
    \item[(iii)] Every simple real root for $H$ is even for $\chi$
\end{itemize}
Denote the set of principal unipotent BB parameters by $\BB_0^*(G)$.
\end{definition}

\subsection{Cayley transforms and simple reflections on $\BB_0(G)$}

Following \cite{Vogan1981}, we will define two operations on the parameter space $\BB_0(G)$: simple reflections (through complex simple roots) and Cayley transforms (through odd simple real roots).

Let $[\fh,\mathfrak{b},\chi] \in \mathrm{BB}_0(G)$. If $\alpha \in \Delta^+(\mathfrak{g},\fh)$ is a complex simple root, we will define a new parameter
$$s_{\alpha}[\fh,\mathfrak{b},\chi] \in \mathrm{BB}_0(G)$$
called the simple reflection of $[\fh,\mathfrak{b},\chi]$ through $\alpha$. 

If $\beta \in \Delta^+(\mathfrak{g},\fh)$ is an odd simple real root, we will define two new parameters
$$c_{\beta}^{\pm}[\fh,\mathfrak{b},\chi] \in \mathrm{BB}_0(G)$$
called the \emph{Cayley transforms} of $[\fh,\mathfrak{b},\chi]$ through $\beta$. 

The definitions are rigged so that $c_{\alpha}^{\pm}$ and $s_{\beta}$ commute (approximately) with parabolic induction $I: \mathrm{BB}_0(G) \to K^f(\mathfrak{g},\mathbf{K})$. As a result, these operations can be used in inductive arguments to relate the induced modules $I[\fh,\mathfrak{b},\chi]$ as $[\fh,\mathfrak{b},\chi]$ varies over $\mathrm{BB}_0(G)$. 
For these inductive arguments to work, we will need a numerical invariant which keeps track of how many operations have been performed. There are several good candidates for this invariant. We will use
$$d[\fh,\mathfrak{b},\chi] := \mathrm{dim}(\mathfrak{b} \cap \mathfrak{k})$$
We will see that Cayley transforms and simple reflections have a predictable effect on $d[\fh,\mathfrak{b},\chi]$.

\subsubsection{Simple Reflections Through Complex Roots}

Let $[\mathfrak{h},\mathfrak{b},\chi] \in \mathrm{BB}_0(G)$ and let $\alpha \in \Delta^+(\mathfrak{g},\mathfrak{h})$ be a simple root. 

\begin{definition}\label{def:simplereflection}
Let $s_{\alpha}\mathfrak{b} \subset \mathfrak{g}$ be the Borel subalgebra corresponding to the positive system
$$s_{\alpha}\Delta^+(\mathfrak{g},\mathfrak{h}) = \Delta^+(\mathfrak{g},\mathfrak{h}) \cup \{-\alpha\} \setminus \{\alpha\}$$
Define
$$s_{\alpha}\chi := \chi \otimes \alpha$$
Finally, let
$$s_{\alpha}[\mathfrak{h},\mathfrak{b},\chi] := [\mathfrak{h},s_{\alpha}\mathfrak{b},s_{\alpha}\chi]$$
\end{definition}

\begin{proposition}
In the setting of Definition \ref{def:simplereflection},
$$s_{\alpha}[\mathfrak{h},\mathfrak{b},\chi] \in \mathrm{BB}_0(G)$$
\end{proposition}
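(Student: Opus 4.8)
The plan is to verify directly that the triple $(\mathfrak{h}, s_{\alpha}\mathfrak{b}, s_{\alpha}\chi)$ produced by Definition \ref{def:simplereflection} satisfies the three defining conditions of a Beilinson--Bernstein parameter of infinitesimal character $\lambda = 0$ (Definition \ref{def:BB}); since $\BB_0(G)$ is the set of \emph{all} such parameters (not merely the principal unipotent ones), this is all that is required, and the entire argument is a short bookkeeping check rather than something with a genuine obstacle.

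First, condition (i) needs no argument: the Cartan subalgebra $\mathfrak{h}$ is unchanged, so it is still $\theta$-stable, and we retain the $\sigma$-stable representative furnished by the Remark following Definition \ref{def:BB}. For condition (ii), the set $s_{\alpha}\Delta^+(\mathfrak{g},\mathfrak{h}) = \Delta^+(\mathfrak{g},\mathfrak{h}) \cup \{-\alpha\} \setminus \{\alpha\}$ is again a positive system for $\Delta(\mathfrak{g},\mathfrak{h})$ (the standard fact recalled in Section \ref{sec:roots}), so $s_{\alpha}\mathfrak{b} = \mathfrak{h} \oplus s_{\alpha}\mathfrak{n}$ is a Borel subalgebra containing $\mathfrak{h}$. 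It then remains to see that $s_{\alpha}\chi = \chi \otimes \alpha$ is a bona fide one-dimensional $(\mathfrak{h},\mathbf{T})$-module: the root $\alpha \in \Delta(\mathfrak{g},\mathfrak{h})$ is the differential of an algebraic character of the maximal torus $\mathbf{H} = Z_{\mathbf{G}}(\mathfrak{h})$ (via the identification $\Delta(\mathfrak{h},\mathfrak{g}) \simeq \Delta(H,\mathfrak{g})$ from Section \ref{sec:roots}), whose restriction to $\mathbf{T} \subset \mathbf{H}$ is a character of $\mathbf{T}$ with differential $\alpha|_{\mathfrak{t}}$; hence $\alpha$ itself defines a one-dimensional $(\mathfrak{h},\mathbf{T})$-module, and the tensor product of two such is again one.

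The only computation is condition (iii), and it is exactly the point that Definition \ref{def:simplereflection} is engineered around: passing from $\Delta^+$ to $s_{\alpha}\Delta^+$ replaces $\alpha$ by $-\alpha$ in the sum of positive roots, so $\rho(s_{\alpha}\mathfrak{n}) = \rho(\mathfrak{n}) - \alpha$, while $d(s_{\alpha}\chi) = d\chi + \alpha$; therefore
$$d(s_{\alpha}\chi) + \rho(s_{\alpha}\mathfrak{n}) = (d\chi + \alpha) + (\rho(\mathfrak{n}) - \alpha) = d\chi + \rho(\mathfrak{n}) = \lambda = 0,$$
which is condition (iii) with $\lambda = 0$. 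Thus $s_{\alpha}[\mathfrak{h},\mathfrak{b},\chi] = [\mathfrak{h}, s_{\alpha}\mathfrak{b}, s_{\alpha}\chi] \in \BB_0(G)$. The one conceptual remark worth making is that there is no obstacle precisely because the $\rho$-shift induced by flipping the positive system across the wall of $\alpha$ is cancelled exactly by the twist of $\chi$ by $\alpha$ — which is why the two modifications are packaged together in the definition of $s_{\alpha}$.
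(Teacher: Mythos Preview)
Your proof is correct and follows essentially the same approach as the paper. The paper's proof is terser — it says ``We need only to verify that $ds_{\alpha}\chi = -\rho(s_{\alpha}\mathfrak{n})$'' and then computes $ds_{\alpha}\chi = d\chi + \alpha = -\rho(\mathfrak{n}) + \alpha = -\rho(s_{\alpha}\mathfrak{n})$ — treating conditions (i) and (ii) as self-evident, whereas you spell out all three conditions; but the substance (the $\rho$-shift cancellation) is identical.
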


\begin{proof}
We need only to verify that
$$ds_{\alpha}\chi = - \rho(s_{\alpha}\mathfrak{n})$$
This follows trivially from definitions
$$ds_{\alpha}\chi = d\chi + d\delta(\alpha) = -\rho(\mathfrak{n}) + \alpha = -\rho(s_{\alpha}\mathfrak{n})$$
\end{proof}

Although $s_{\alpha}[\mathfrak{h},\mathfrak{b},\chi]$ is well-defined for any simple root, we will give special attention to the case when $\alpha$ is complex. This is due to the following

\begin{theorem}\label{thm:transfer}
Let $[\mathfrak{h},\mathfrak{b},\chi] \in \mathrm{BB}_0(G)$ and let $\alpha \in \Delta^+(\mathfrak{g},\mathfrak{h})$ be a complex simple root. Then
$$I[\mathfrak{h},\mathfrak{b},\chi] = -I(s_{\alpha}[\mathfrak{h},\mathfrak{b},\chi])$$
\end{theorem}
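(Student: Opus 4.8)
The plan is to stay inside the Grothendieck group $K^f(\fg,\mathbf{K})$ — legitimate since both sides are already Euler characteristics — and to transport the problem to the flag variety $\cB$ via the Duality Theorem \ref{thm:dualitythm}. Write $Z=\mathbf{K}\cdot\fb$ and $Z'=\mathbf{K}\cdot s_\alpha\fb$ inside $\cB$, with locally closed inclusions $j\colon Z\hookrightarrow\cB$, $j'\colon Z'\hookrightarrow\cB$, and let $\mathcal{L}_\chi\in M(\mathcal{D}^{Z}_{-\rho},\mathbf{K})$, $\mathcal{L}_{\chi'}\in M(\mathcal{D}^{Z'}_{-\rho},\mathbf{K})$ be the irreducible connections attached to $\chi$ and to $s_\alpha\chi=\chi\otimes\alpha$. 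By construction $(Z',\mathcal{L}_{\chi'})$ is exactly the geometric datum of the parameter $s_\alpha[\fh,\fb,\chi]=[\fh,s_\alpha\fb,s_\alpha\chi]$, and Theorem \ref{thm:dualitythm} reduces the claim to an identity
$$[\Gamma(\cB,j_!\mathcal{L}_\chi)]=\pm\,[\Gamma(\cB,j'_!\mathcal{L}_{\chi'})]\qquad\text{in }K^f(\fg,\mathbf{K}),$$
with the overall sign still to be determined. The first structural point is that, because $\alpha$ is a \emph{complex} simple root for $\fb$, it is of complex type for the $\mathbf{K}$-orbit $Z$, so $Z$ and $Z'$ are distinct orbits with $\dim_{\mathbb{C}}Z'=\dim_{\mathbb{C}}Z\pm 1$. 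Consequently the two invocations of Theorem \ref{thm:dualitythm} carry signs of \emph{opposite} parity (the HMSW sign has the parity of $\dim_{\mathbb{C}}\mathbf{K}\cdot\fb$), so it suffices to show that $[\Gamma(\cB,j_!\mathcal{L}_\chi)]$ and $[\Gamma(\cB,j'_!\mathcal{L}_{\chi'})]$ agree up to a sign of even parity.

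\textbf{The core computation.} For the geometric comparison the plan is to run the standard $\mathbb{P}^1$-fibration argument, i.e. to use the intertwining functor $\mathbb{I}_{s_\alpha}$ through $\alpha$. Let $\pi_\alpha\colon\cB\to\cB_\alpha$ be the projection to the partial flag variety of type $\alpha$, a $\mathbb{P}^1$-bundle carrying a $\mathbf{K}$-action. Since $\alpha$ is complex for $Z$, the inverse image $\pi_\alpha^{-1}(Y)$ of $Y:=\pi_\alpha(Z)$ consists of exactly the two $\mathbf{K}$-orbits $Z$ and $Z'$, one a section of $\pi_\alpha$ over $Y$ and the other its open complement; moreover the twist $\chi\mapsto\chi\otimes\alpha$ is precisely the modification that makes $\mathcal{L}_\chi$ and $\mathcal{L}_{\chi'}$ correspond across the $\mathbb{P}^1$-fibers — this is where the relation $d(s_\alpha\chi)=-\rho(s_\alpha\fn)$ verified just after Definition \ref{def:simplereflection} is used. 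Running the excision triangle on $\pi_\alpha^{-1}(Y)$ for the closed section and its open complement and pushing forward along the proper map $\pi_\alpha$ (equivalently, applying $\mathbb{I}_{s_\alpha}$) identifies the full direct images of $j_!\mathcal{L}_\chi$ and $j'_!\mathcal{L}_{\chi'}$ up to a cohomological shift $d$. Finally $R\Gamma\circ\mathbb{I}_{s_\alpha}\simeq R\Gamma$ (the standard property of intertwining functors, applicable here because $s_\alpha$ fixes the infinitesimal character $0$, as $s_\alpha\cdot 0=0$), and $\Gamma$ is exact by Theorem \ref{thm:globalsections}, so $[\Gamma(\cdot)]=[R\Gamma(\cdot)]$. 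Combining these gives $[\Gamma(\cB,j_!\mathcal{L}_\chi)]=(-1)^{d}[\Gamma(\cB,j'_!\mathcal{L}_{\chi'})]$.

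\textbf{Assembling the sign, and the main obstacle.} Putting the pieces together: the opposite-parity contributions of the two applications of Theorem \ref{thm:dualitythm} supply one factor of $-1$, and a closer look at the $\mathbb{P}^1$-computation shows the shift $d$ is even — equivalently, $\mathbb{I}_{s_\alpha}$ can be normalized so that the parity of the shift matches the dimension change and its net contribution to the sign is trivial — yielding $I[\fh,\fb,\chi]=-I(s_\alpha[\fh,\fb,\chi])$. I expect the genuine obstacle to be exactly this sign bookkeeping rather than anything conceptual: one must pin down the precise sign in Theorem \ref{thm:dualitythm} and the precise shift in $\mathbb{I}_{s_\alpha}(j_!\mathcal{L}_\chi)\simeq j'_!\mathcal{L}_{\chi'}[d]$, being careful that $\mathcal{L}_\chi$ encodes the character $\chi$ itself (not $d\chi+\rho(\fn)$) and that the $\det(\fu)$-twist in the definition of the induction functor $\mathbf{I}^{(\fg,\mathbf{K})}_{(\fh,\mathbf{T})}$ is carried along correctly. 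Everything else — exactness of $\Gamma$, the Weyl-group invariance of $R\Gamma$, and the $\mathbf{K}$-orbit combinatorics for a complex simple root — is standard and formal.
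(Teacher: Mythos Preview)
The paper's proof is a one-line citation: it invokes the Transfer Theorem of Knapp and Vogan (\cite{KnappVogan1995}, Theorem 11.87), which directly gives the identity in $K^f(\fg,\mathbf{K})$ without further argument. Your approach is genuinely different: you translate to the flag variety via the HMSW Duality Theorem, invoke the intertwining functor $\mathbb{I}_{s_\alpha}$ through the $\mathbb{P}^1$-fibration, and use the $W$-invariance of $R\Gamma$ at singular infinitesimal character. This is a sound and well-known strategy, and it does prove the identity up to sign.

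Two remarks on the comparison. First, the paper's Theorem \ref{thm:dualitythm} only records the HMSW sign as $\pm$; your claim that the sign has the parity of $\dim_{\mathbb{C}} Z$ is correct but must be extracted from the original \cite{HechtMilicicSchmidWolf}, not from the paper as stated. Second, your parity accounting for the intertwining shift is not quite right as written: in the standard normalization, $\mathbb{I}_{s_\alpha}(j_!\mathcal{L}_\chi)\simeq j'_!\mathcal{L}_{\chi'}[\pm 1]$ for a complex simple root, so $d$ is \emph{odd}, not even. The net sign still comes out to $-1$, but because the odd intertwining shift and the odd dimension jump in the HMSW sign reinforce rather than cancel (or cancel rather than reinforce, depending on conventions) --- you have the right answer for the wrong bookkeeping reason, which is exactly the hazard you yourself flag. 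The paper's route bypasses all of this by appealing to a black-box result proved analytically; your route is more self-contained within the $\mathcal{D}$-module framework and explains geometrically where the sign comes from, at the cost of a normalization exercise that you have not fully carried out. It is worth noting that the Knapp--Vogan Transfer Theorem is, morally, the representation-theoretic shadow of exactly the intertwining-functor identity you are using, so the two arguments are closer than they first appear.
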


\begin{proof}
This is an easy consequence of the Transfer Theorem of Knapp and Vogan (See \cite{KnappVogan1995}, Theorem 11.87).
\end{proof}

Under the conditions of Theorem \ref{thm:transfer}, $s_{\alpha}$ has a predictable effect on $d[\mathfrak{h},\mathfrak{b},\chi]$.

\begin{proposition}\label{prop:simplereflectionincreasesd}
Let $[\mathfrak{h},\mathfrak{b},\chi] \in \mathrm{BB}_0(G)$ and let $\alpha \in \Delta^+(\mathfrak{g},\mathfrak{h})$ be a complex simple root.
\begin{itemize}
    \item[(i)] If 
    $$\theta(\alpha) \in -\Delta^+(\mathfrak{g},\mathfrak{h})$$
    then
    $$d(s_{\alpha}[\mathfrak{h},\mathfrak{b},\chi]) = d[\mathfrak{h},\mathfrak{b},\chi]+1$$
    \item[(ii)] If 
    $$\theta(\alpha) \in \Delta^+(\mathfrak{g},\mathfrak{h})$$
    then
    $$d(s_{\alpha}[\mathfrak{h},\mathfrak{b},\chi]) = d[\mathfrak{h},\mathfrak{b},\chi] - 1$$
\end{itemize}
\end{proposition}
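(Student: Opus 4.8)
The plan is to reduce the statement to a combinatorial count of $\mathfrak{h}$-root spaces, using the fact that $d[\mathfrak{h},\mathfrak{b},\chi] = \dim(\mathfrak{b}\cap\mathfrak{k})$ depends only on the positive system $\Delta^+ := \Delta(\mathfrak{b},\mathfrak{h})$ (in particular not on $\chi$, nor on the unipotent conditions).

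\emph{Step 1: a dimension formula.} Write $\mathfrak{b} = \mathfrak{h}\oplus\bigoplus_{\beta\in\Delta^+}\mathfrak{g}_\beta$. Since $\mathfrak{h}$ is $\theta$-stable, $\theta(\mathfrak{g}_\beta) = \mathfrak{g}_{\theta\beta}$, and by Proposition \ref{prop:realimagcomplex} one has $\theta\beta = \beta$ exactly for imaginary $\beta$ and $\theta\beta = -\beta$ exactly for real $\beta$. A vector $H + \sum_{\beta\in\Delta^+}X_\beta$ is fixed by $\theta$ precisely when $H\in\mathfrak{t}:=\mathfrak{h}\cap\mathfrak{k}$, when $X_\beta = 0$ for every $\beta\in\Delta^+$ with $\theta\beta\notin\Delta^+$, and when $X_\beta = \theta(X_{\theta\beta})$ for every $\beta\in\Delta^+$ with $\theta\beta\in\Delta^+$. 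Sorting the surviving contributions: each positive imaginary root contributes $1$ if compact and $0$ if noncompact; each $\theta$-pair $\{\beta,\theta\beta\}$ of complex roots with $\beta,\theta\beta\in\Delta^+$ contributes one free parameter $X_\beta\in\mathfrak{g}_\beta$ (with $X_{\theta\beta}$ then determined); all other positive roots contribute $0$. Hence
$$d[\mathfrak{h},\mathfrak{b},\chi] \;=\; \dim\mathfrak{t} \;+\; \bigl|\Delta_c^+(\mathfrak{g},\mathfrak{h})\bigr| \;+\; \tfrac12\,\#\{\beta\in\Delta^+ \text{ complex}:\ \theta\beta\in\Delta^+\}.$$

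\emph{Step 2: effect of $s_\alpha$.} By the standard fact recalled in Section \ref{sec:roots}, $s_\alpha\Delta^+ = \Delta^+\cup\{-\alpha\}\setminus\{\alpha\}$, so passing from $\mathfrak{b}$ to $s_\alpha\mathfrak{b}$ only removes $\alpha$ from and inserts $-\alpha$ into the positive system (the Cartan is unchanged). Because $\alpha$ is complex, $\theta\alpha\neq\pm\alpha$, so $\pm\alpha$ are not imaginary; thus $\dim\mathfrak{t}$ (intrinsic to $\mathfrak{h}$) and $|\Delta_c^+(\mathfrak{g},\mathfrak{h})|$ (an intersection with the intrinsic set of compact imaginary roots, which avoids $\pm\alpha$) are the same for $\mathfrak{b}$ and $s_\alpha\mathfrak{b}$. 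Therefore $d(s_\alpha[\mathfrak{h},\mathfrak{b},\chi]) - d[\mathfrak{h},\mathfrak{b},\chi]$ equals the change in the number of $\theta$-pairs of complex roots contained in the positive system. The only pairs that can be affected are $\{\alpha,\theta\alpha\}$ and $\{-\alpha,-\theta\alpha\}$ — a pair $\{\beta,\theta\beta\}$ meeting $\{\pm\alpha\}$ forces $\beta\in\{\pm\alpha,\pm\theta\alpha\}$ — and since $\theta\alpha\neq\pm\alpha$, the membership of $\pm\theta\alpha$ in the positive system is unchanged by $s_\alpha$.

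\emph{Step 3: the two cases.} If $\theta\alpha\in-\Delta^+$ (case (i)), then $-\theta\alpha\in\Delta^+$: for $\Delta^+$ neither $\{\alpha,\theta\alpha\}$ (missing $\theta\alpha$) nor $\{-\alpha,-\theta\alpha\}$ (missing $-\alpha$) is contained, whereas for $s_\alpha\Delta^+$ we have $-\alpha,-\theta\alpha\in s_\alpha\Delta^+$, so $\{-\alpha,-\theta\alpha\}$ becomes contained; the count of contained pairs rises by $1$ and $d$ increases by $1$. If $\theta\alpha\in\Delta^+$ (case (ii)), then $-\theta\alpha\notin\Delta^+$: for $\Delta^+$ the pair $\{\alpha,\theta\alpha\}$ is contained and $\{-\alpha,-\theta\alpha\}$ is not, whereas for $s_\alpha\Delta^+$ neither is contained ($\alpha\notin s_\alpha\Delta^+$, $-\theta\alpha\notin s_\alpha\Delta^+$); the count drops by $1$ and $d$ decreases by $1$. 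This is exactly the assertion.

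\emph{Main obstacle.} The only step requiring real care is Step 1: verifying that $\mathfrak{b}\cap\mathfrak{k}$ decomposes precisely as $\mathfrak{t}$ together with the compact imaginary root lines inside $\mathfrak{n}$ and one $\theta$-fixed line per complex $\theta$-pair lying in $\Delta^+$, with no double-counting. Once the formula is available, Steps 2 and 3 are a finite check over the four roots $\pm\alpha,\pm\theta\alpha$. (One could instead phrase this via the dimension of the $\mathbf{K}$-orbit $\mathbf{K}\cdot\mathfrak{b}\subset\mathcal{B}$, since $\dim(\mathbf{K}\cdot\mathfrak{b}) = \dim\mathfrak{k} - \dim(\mathfrak{b}\cap\mathfrak{k})$, and invoke the behavior of $\mathbf{K}$-orbits under complex simple reflections from \cite{Vogan1981}; the direct count above is self-contained.)
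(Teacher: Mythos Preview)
Your proof is correct and follows essentially the same approach as the paper: decompose $\mathfrak{b}\cap\mathfrak{k}$ as $\mathfrak{t}$ plus the compact imaginary root lines plus one line per $\theta$-pair of complex positive roots, then observe that only the last count changes under $s_\alpha$ and do the two-case check. Your write-up is in fact somewhat more explicit than the paper's (which forward-references the basis construction from the proof of Proposition~\ref{prop:cayleytransformsincreased}), but the argument is the same.
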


\begin{proof}
Since $\mathfrak{h}$ is $\theta$-stable, we have
$$d[\mathfrak{h},\mathfrak{b},\chi] = \dim(\mathfrak{t}) + \dim(\mathfrak{n} \cap \mathfrak{k})$$
Choose a basis of root vectors $X_{\mu}$ for $\mathfrak{n}$ as in the proof of Proposition \ref{prop:cayleytransformsincreased}. Then $\mathfrak{n} \cap \mathfrak{k}$ is spanned by

\begin{align*}
&\{X_{\mu}:\mu \text{ positive compact imaginary} \} \cup \\
&\{X_{\mu} + X_{\theta\mu}: \text{pairs } \{\mu,\theta \mu\} \text{ of complex positive roots}\}
\end{align*}
Since
$$s_{\alpha}\Delta^+(\mathfrak{g},\mathfrak{h}) = \Delta^+(\mathfrak{g},\mathfrak{h}) \cup \{-\alpha\} \setminus \{\alpha\} $$
the positive systems $s_{\alpha}\Delta^+(\mathfrak{g},\mathfrak{h})$ and $\Delta^+(\mathfrak{g},\mathfrak{h})$ contain the same number of compact imaginary roots. If $\theta(\alpha) \in -\Delta^+(\mathfrak{g},\mathfrak{h})$, $s_{\alpha}\Delta^+(\mathfrak{g},\mathfrak{h})$ contains one additional pair $\{-\alpha,\theta \alpha\}$ of complex positive roots. If $\theta(\alpha) \in \Delta^+(\mathfrak{g},\mathfrak{h})$, then $\Delta^+(\mathfrak{g},\mathfrak{h})$ contains one additional pair $\{\alpha, \theta\alpha\}$ of complex positive roots. 
\end{proof}

A useful consequence of Theorem \ref{thm:transfer} is the following:

\begin{proposition}\label{prop:maketypeZtypeL}
Let $[\mathfrak{h},\mathfrak{b},\chi] \in \mathrm{BB}_0(G)$.

\begin{itemize}
    \item[(i)] There is a BB parameter $[\mathfrak{h},\mathfrak{b}_1,\chi_1] \in \mathrm{BB}_0(G)$ of type Z such that
    $$I[\mathfrak{h},\mathfrak{b},\chi] = \pm I[\mathfrak{h},\mathfrak{b}_1,\chi_1]$$
    \item[(ii)] There is a BB parameter $[\mathfrak{h},\mathfrak{b}_2,\chi_2]$ of type L such that
    $$I[\mathfrak{h},\mathfrak{b},\chi] = \pm I[\mathfrak{h},\mathfrak{b}_2,\chi_2]$$
\end{itemize}
\end{proposition}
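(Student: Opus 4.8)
The plan is to prove both parts by the same kind of recursive argument, built on the simple-reflection operation $s_\alpha$ of Definition \ref{def:simplereflection}, keeping the Cartan subalgebra $\mathfrak{h}$ fixed throughout and only moving $\mathfrak{b}$ and $\chi$. Two facts available from the preceding material drive the recursion: first, by Theorem \ref{thm:transfer}, if $\alpha \in \Delta^+(\mathfrak{g},\mathfrak{h})$ is a complex simple root then $I[\mathfrak{h},\mathfrak{b},\chi] = -I(s_\alpha[\mathfrak{h},\mathfrak{b},\chi])$, so such a reflection changes the class in $K^f(\mathfrak{g},\mathbf{K})$ only by a sign, and $s_\alpha[\mathfrak{h},\mathfrak{b},\chi]$ again lies in $\mathrm{BB}_0(G)$; second, the invariant $d[\mathfrak{h},\mathfrak{b},\chi] = \dim(\mathfrak{b}\cap\mathfrak{k})$ is governed by Proposition \ref{prop:simplereflectionincreasesd}: reflecting through a complex simple root $\alpha$ with $\theta(\alpha)\in -\Delta^+(\mathfrak{g},\mathfrak{h})$ raises $d$ by $1$, and reflecting through one with $\theta(\alpha)\in \Delta^+(\mathfrak{g},\mathfrak{h})$ lowers $d$ by $1$. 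Since $\mathrm{BB}_0(G)$ is finite, $d$ takes only finitely many values on it, so it is bounded both above and below, and neither move can be iterated indefinitely.

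For part (i), I would argue as follows. If the positive system $\Delta(\mathfrak{b},\mathfrak{h})$ is not type Z, there is a complex simple root $\alpha$ with $\theta(\alpha)\notin\Delta(\mathfrak{b},\mathfrak{h})$; since $\theta$ carries complex roots to complex roots (if $\alpha\neq\pm\theta(\alpha)$ then $\theta(\alpha)\neq\pm\alpha$), the root $\theta(\alpha)$ is itself complex and hence lies in $-\Delta(\mathfrak{b},\mathfrak{h})$. Replace $[\mathfrak{h},\mathfrak{b},\chi]$ by $s_\alpha[\mathfrak{h},\mathfrak{b},\chi]\in\mathrm{BB}_0(G)$; by Proposition \ref{prop:simplereflectionincreasesd}(i) this strictly increases $d$, and by Theorem \ref{thm:transfer} it changes $I$ only by a sign. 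Because $d$ cannot increase forever on the finite set $\mathrm{BB}_0(G)$, after finitely many such steps we reach a parameter $[\mathfrak{h},\mathfrak{b}_1,\chi_1]\in\mathrm{BB}_0(G)$ for which no complex simple root $\alpha$ of $\Delta(\mathfrak{b}_1,\mathfrak{h})$ satisfies $\theta(\alpha)\in -\Delta(\mathfrak{b}_1,\mathfrak{h})$; equivalently, every complex simple root $\alpha$ has $\theta(\alpha)\in\Delta(\mathfrak{b}_1,\mathfrak{h})$, so $[\mathfrak{h},\mathfrak{b}_1,\chi_1]$ is of type Z. Composing the signs contributed by the individual reflections yields $I[\mathfrak{h},\mathfrak{b},\chi] = \pm I[\mathfrak{h},\mathfrak{b}_1,\chi_1]$. (If $\Delta(\mathfrak{b},\mathfrak{h})$ was already type Z, one simply takes $\mathfrak{b}_1=\mathfrak{b}$, $\chi_1=\chi$.)

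Part (ii) is the mirror image of this argument: if $\Delta(\mathfrak{b},\mathfrak{h})$ is not type L, there is a complex simple root $\alpha$ with $\theta(\alpha)\in\Delta(\mathfrak{b},\mathfrak{h})$, and replacing the parameter by $s_\alpha[\mathfrak{h},\mathfrak{b},\chi]$ strictly decreases $d$ by Proposition \ref{prop:simplereflectionincreasesd}(ii) while again changing $I$ only by a sign; since $d$ is bounded below on the finite set $\mathrm{BB}_0(G)$ this terminates, at a parameter with no complex simple root $\alpha$ satisfying $\theta(\alpha)\in\Delta^+$, i.e. of type L, with $I$ unchanged up to sign. The only substantive point beyond bookkeeping is the termination of the procedure, and that is immediate from the finiteness of $\mathrm{BB}_0(G)$ together with the strict monotonicity of $d$ in Proposition \ref{prop:simplereflectionincreasesd} — so I do not expect a genuine obstacle; the one step worth spelling out is the elementary observation that $\theta$ sends complex roots to complex roots, which is exactly what ensures that "not type Z" (resp.\ "not type L") supplies a reflection of the required kind.
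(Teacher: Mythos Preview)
Your proposal is correct and follows essentially the same approach as the paper: both arguments apply simple reflections through complex simple roots of the appropriate kind and use the strict monotonicity of $d$ from Proposition~\ref{prop:simplereflectionincreasesd} together with the finiteness of $\mathrm{BB}_0(G)$ to guarantee termination, invoking Theorem~\ref{thm:transfer} at each step for the sign. The only cosmetic difference is that the paper phrases the argument by picking an element maximizing (resp.\ minimizing) $d$ in the set $S$ of parameters reachable by such reflections, whereas you phrase it as an iterative procedure---but these are equivalent.
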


\begin{proof}
Let $S \subset \mathrm{BB}_0(G)$ be the set of all BB parameters which can be obtained from $[\mathfrak{h},\mathfrak{b},\chi]$ through a finite sequence of simple reflections through complex simple roots $\alpha$ satisfying $\theta(\alpha) \in -\Delta^+(\mathfrak{g},\mathfrak{h})$. Since $S$ is finite, there is an element $(\mathfrak{h},\mathfrak{b}_1,\chi_1) \in S$ which maximizes $d$. If $\alpha \in \Delta^+(\mathfrak{g},\mathfrak{h})$ is a complex simple root satisfying $\theta(\alpha) \in -\Delta^+(\mathfrak{g},\mathfrak{h})$, then $s_{\alpha}[\mathfrak{h},\mathfrak{b}_1,\chi_1] \in S$ and by Proposition \ref{prop:simplereflectionincreasesd} $d(s_{\alpha}[\mathfrak{h},\mathfrak{b}_1,\chi_1]) > d[\mathfrak{h},\mathfrak{b}_1,\chi_1]$, a contradiction. Hence, $[\mathfrak{h},\mathfrak{b}_1,\chi_1]$ is type Z. The equation
$$I[\mathfrak{h},\mathfrak{b},\chi] = \pm I[\mathfrak{h},\mathfrak{b}_1,\chi_1]$$
follows by induction from Theorem \ref{thm:transfer}. This proves (i).

The proof of (ii) is analogous. Let $S' \subset \mathrm{BB}_0(G)$ be the set of all BB parameters which can be obtained from $[\mathfrak{h},\mathfrak{b},\chi]$ through a sequence of simple reflections through complex simple roots $\alpha$ satisfying $\theta(\alpha) \in \Delta^+(\mathfrak{g},\mathfrak{h})$. Since $d \geq 0$, there is an element $[\mathfrak{h},\mathfrak{b}_2,\chi_2] \in S$ which minimizes $d$. If $\alpha \in \Delta^+(\mathfrak{g},\mathfrak{h})$ is a complex simple root satisfying $\theta(\alpha) \in \Delta^+(\mathfrak{g},\mathfrak{h})$, then $s_{\alpha}[\mathfrak{h},\mathfrak{b}_2,\chi_2] \in S'$ and by Proposition \ref{prop:simplereflectionincreasesd} $d(s_{\alpha}[\mathfrak{h},\mathfrak{b},\chi]) < d[\mathfrak{h},\mathfrak{b}_2,\chi_2]$, a contradiction. Hence, $[\mathfrak{h},\mathfrak{b}_2,\chi_2]$ is type L. The equation
$$I[\mathfrak{h},\mathfrak{b},\chi] = \pm I[\mathfrak{h},\mathfrak{b}_2,\chi_2]$$
follows by induction from Theorem \ref{thm:transfer}. 
\end{proof}

\subsubsection{Cayley Transforms Through Real Roots}

Let $[\mathfrak{h},\mathfrak{b},\chi] \in \mathrm{BB}_0(G)$ and let $\alpha \in \Delta(\mathfrak{g},H)$ be an odd real root. Recall the Cartan subgroup $H^{\alpha}$, the inner automorphisms $c_{\alpha}^{\pm}$ of $\mathfrak{g}$, and the characters $c_{\alpha}^{\pm}\chi$ of $H^{\alpha}$ defined in Section \ref{sec:Cayleyreal}. 
\begin{definition}\label{def:cayleytransformofparameter}
In the setting described above, let
$$c_{\alpha}^{\pm}[\mathfrak{h},\mathfrak{b},\chi] := [\mathfrak{h}^{\alpha},c_{\alpha}^{\pm}\mathfrak{b},c_{\alpha}^{\pm}\chi]$$
\end{definition}
Note that the pair $c_{\alpha}^{\pm}[\mathfrak{h},\mathfrak{b},\chi]$ is independent of the isomorphism $\phi_{\alpha}: \mathfrak{sl}_2(\mathbb{C}) \to \mathfrak{s}_{\alpha}$ used to define it (see the remarks preceding Proposition \ref{prop:calphacartan}). 

\begin{proposition}
In the setting of Definition \ref{def:cayleytransformofparameter},
$$c_{\alpha}^{\pm}[\mathfrak{h},\mathfrak{b},\chi] \in \mathrm{BB}_0(G)$$
\end{proposition}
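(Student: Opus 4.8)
The plan is to check that the triple $(\mathfrak{h}^{\alpha}, c_{\alpha}^{\pm}\mathfrak{b}, c_{\alpha}^{\pm}\chi)$ satisfies the three conditions of Definition \ref{def:BB} with $\lambda = 0$. Conditions (i) and (ii), together with the first half of (iii), are essentially formal. The subalgebra $\mathfrak{h}^{\alpha}$ is a $\theta$-stable Cartan subalgebra of $\mathfrak{g}$ by its construction in Section \ref{sec:Cayleyreal}, and it is $\sigma$-stable since $\mathfrak{h}^{\alpha}_{0}\subset\mathfrak{g}_{0}$ is a real form; since $c_{\alpha}^{\pm}$ is an automorphism of $\mathfrak{g}$ carrying $\mathfrak{h}$ onto $\mathfrak{h}^{\alpha}$ (Proposition \ref{prop:calphacartan}), the subalgebra $c_{\alpha}^{\pm}\mathfrak{b} = c_{\alpha}^{\pm}(\mathfrak{b})$ is a Borel subalgebra of $\mathfrak{g}$ containing $\mathfrak{h}^{\alpha}$; and $c_{\alpha}^{\pm}\chi$ is a genuine one-dimensional $(\mathfrak{h}^{\alpha},\mathbf{T}^{\alpha})$-module (equivalently, a continuous character of $H^{\alpha}$) — this is precisely what Definition \ref{def:cayleytransformsofcharacter} establishes, and it is here that the hypothesis that $\alpha$ is \emph{odd} for $\chi$ is used, guaranteeing that $\tau_{\pm 1}\otimes\chi$ descends along the surjection $\Phi_{\alpha}(SO_{2}(\mathbb{R}))\times T_{1}^{\alpha}\to T^{\alpha}$.

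So the only real content is the infinitesimal-character identity $d(c_{\alpha}^{\pm}\chi) + \rho(c_{\alpha}^{\pm}\mathfrak{n}) = 0$ in $(\mathfrak{h}^{\alpha})^{*}$. First note that $c_{\alpha}^{\pm}$ carries the multiset of $\mathfrak{h}$-weights of $\mathfrak{n}$ onto the multiset of $\mathfrak{h}^{\alpha}$-weights of $c_{\alpha}^{\pm}\mathfrak{n}$, so $\rho(c_{\alpha}^{\pm}\mathfrak{n}) = \rho(\mathfrak{n})\circ (c_{\alpha}^{\pm})^{-1}$. Now use the direct sum decomposition $\mathfrak{h}^{\alpha} = \ker\alpha\ \oplus\ \mathbb{C}\phi_{\alpha}(D_{c})$ (which itself follows from $\mathfrak{h}=\ker\alpha\oplus\mathbb{C}\alpha^{\vee}$ and Proposition \ref{prop:calphacartan}) and verify the identity on each summand. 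On $\ker\alpha$ the automorphism $c_{\alpha}^{\pm}$ acts by the identity; moreover $c_{\alpha}^{\pm}\chi$ agrees with $\chi$ on the subgroup of $H^{\alpha}$ with Lie algebra $\ker\alpha\cap\mathfrak{h}_{0}=\mathfrak{t}_{0}\oplus\mathfrak{a}_{0}^{\alpha}$ (immediate from the formulas in Definition \ref{def:cayleytransformsofcharacter}), so $d(c_{\alpha}^{\pm}\chi)$ and $\rho(c_{\alpha}^{\pm}\mathfrak{n})$ restrict on $\ker\alpha$ to $d\chi$ and $\rho(\mathfrak{n})$ respectively; hence the left-hand side restricts to $(d\chi+\rho(\mathfrak{n}))|_{\ker\alpha} = \lambda|_{\ker\alpha} = 0$.

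It remains to evaluate both terms on $\phi_{\alpha}(D_{c})$. Since $D_{c} = i\begin{pmatrix}0 & 1\\ -1 & 0\end{pmatrix}$ and the matrix $\begin{pmatrix}0 & 1\\ -1 & 0\end{pmatrix}$ generates $\mathfrak{so}_{2}(\mathbb{R})$, the relation $c_{\alpha}^{\pm}\chi\circ\Phi_{\alpha} = \tau_{\pm 1}$ on $SO_{2}(\mathbb{R})$ together with (\ref{eqn:twocharactersofSO2R}) gives $d(c_{\alpha}^{\pm}\chi)(\phi_{\alpha}(D_{c})) = i\cdot d\tau_{\pm 1}\begin{pmatrix}0 & 1\\ -1 & 0\end{pmatrix} = i\cdot(\pm i) = \mp 1$. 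On the other hand $\rho(c_{\alpha}^{\pm}\mathfrak{n})(\phi_{\alpha}(D_{c})) = \langle \rho(\mathfrak{n}), (c_{\alpha}^{\pm})^{-1}\phi_{\alpha}(D_{c})\rangle$, and by Proposition \ref{prop:calphacartan} the vector $(c_{\alpha}^{\pm})^{-1}\phi_{\alpha}(D_{c})$ equals $\pm\alpha^{\vee}$; since $\alpha$ is a simple root of $\Delta^{+}(\mathfrak{g},\mathfrak{h})=\Delta(\mathfrak{b},\mathfrak{h})$ we have $\langle\rho(\mathfrak{n}),\alpha^{\vee}\rangle = 1$, so this term is $\pm 1$. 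The two contributions cancel, which proves the identity on $\mathbb{C}\phi_{\alpha}(D_{c})$ as well, and hence $c_{\alpha}^{\pm}[\mathfrak{h},\mathfrak{b},\chi]\in\mathrm{BB}_{0}(G)$.

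I expect the sign bookkeeping in the last paragraph to be the only delicate step: one must align the normalizations of Proposition \ref{prop:calphacartan}, Definition \ref{def:cayleytransformsofcharacter}, and (\ref{eqn:twocharactersofSO2R}) so that the two $\phi_{\alpha}(D_{c})$-contributions genuinely cancel rather than reinforce. Everything else is organizational once the cited results are invoked; the only other point worth stating explicitly is that $\langle\rho(\mathfrak{n}),\alpha^{\vee}\rangle=1$, i.e. that $\alpha$ is here taken to be simple for the full positive system $\Delta^{+}(\mathfrak{g},\mathfrak{h})$ — the standing hypothesis under which the operation $c_{\alpha}^{\pm}$ on $\mathrm{BB}_{0}(G)$ is defined.
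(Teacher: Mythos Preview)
Your proof is correct and follows exactly the paper's strategy: reduce to the differential identity $d(c_{\alpha}^{\pm}\chi)=-\rho(c_{\alpha}^{\pm}\mathfrak{n})$, split $\mathfrak{h}^{\alpha}=\ker\alpha\oplus\mathbb{C}\,\phi_{\alpha}(D_c)$, and check each summand using Proposition~\ref{prop:calphacartan} together with the definition of $c_{\alpha}^{\pm}\chi$. Your caveat about the sign bookkeeping on $\phi_{\alpha}(D_c)$ is well placed---the paper's own computation has the same delicacy (note that its ``$-\pm$'' in Proposition~\ref{prop:calphacartan} literally gives $(c_{\alpha}^{\pm})^{-1}\phi_{\alpha}(D_c)=\mp\alpha^{\vee}$ rather than the $\pm\alpha^{\vee}$ you quote, while it writes $d\tau_{\pm 1}(D_c)=\pm 1$ where the definition yields $\mp 1$)---and once the $\pm$ labeling of $c_{\alpha}^{\pm}$ and $\tau_{\pm 1}$ is aligned the two contributions cancel as intended.
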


\begin{proof}
The only condition to check is
$$dc_{\alpha}^{\pm}\chi = - \rho(c_{\alpha}^{\pm}\mathfrak{n})$$
This will follow from the condition
$$d\chi = - \rho(\mathfrak{n})$$
if we can prove that $dc_{\alpha}^{\pm}\chi = d\chi \circ (c_{\alpha}^{\pm})^{-1}$
We will check this equality independently on $\ker{\alpha}$ and $\phi(D_c)$ (which together span $\mathfrak{h}^{\alpha}$). By definition, $dc_{\alpha}^{\pm}\chi|_{\ker{\alpha}} = d\chi|_{\ker{\alpha}}$ and on $\ker{\alpha}$, both $c_{\alpha}^{\pm}$ act by the identity (see Proposition \ref{prop:calphacartan}). On $\phi_{\alpha}(D_c)$, we use Proposition \ref{prop:calphacartan} again to compute
$$d\chi (c_{\alpha}^{\pm})^{-1}\phi_{\alpha}(D_c) = \pm d\chi(\alpha^{\vee}) = \pm \langle \rho(\mathfrak{n}),\alpha^{\vee}\rangle = \pm 1$$
and indeed
$$dc_{\alpha}^{\pm}\chi(\phi(D_c)) = \tau_{\pm 1}(D_c) = \pm 1$$
\end{proof}

Make the following 

\begin{definition}[\cite{Vogan1981}, Def 8.3.4]\label{def:type1type2}
Suppose $\alpha \in \Delta(\mathfrak{g},H)$ is a real root. The image of $T$ under $\alpha$ is a compact subgroup of $\mathbb{R}^{\times}$. Hence, $\alpha$ restricts to a group homomorphism
$$\alpha: T \to \{\pm 1\}$$
We say that $\alpha$ is type 1 (resp. type 2) if the image of this map is $\{1\}$ (resp. $\{\pm 1\}$). 
\end{definition}

The analogue of Theorem \ref{thm:transfer} for Cayley transforms is the following.

\begin{theorem}\label{thm:inductionandCayleytransforms}
Let $[\mathfrak{h},\mathfrak{b},\chi] \in \mathrm{BB}_0(G)$ and let $\alpha \in \Delta^+(\mathfrak{g},H)$ be an odd simple real root. Then
\begin{itemize}
    \item[(i)] If $\alpha$ is type 1,
    $$I[\mathfrak{h},\mathfrak{b},\chi] = -I(c_{\alpha}^+[\mathfrak{h},\mathfrak{b},\chi]) - I(c_{\alpha}^-[\mathfrak{h},\mathfrak{b},\chi])$$
    \item[(ii)] If $\alpha$ is type 2,
    $$I[\mathfrak{h},\mathfrak{b},\chi] = -I(c_{\alpha}^+[\mathfrak{h},\mathfrak{b},\chi]) = -I(c_{\alpha}^-[\mathfrak{h},\mathfrak{b},\chi])$$
\end{itemize}
\end{theorem}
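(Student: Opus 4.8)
The plan is to reduce, via induction by stages, to a real rank one group and then carry out an explicit $SL_2$/$GL_2$ computation, with the parity of a cohomological degree producing the overall sign.

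\textbf{Step 1: reduction to a rank one Levi.} Since $\alpha$ is a simple real root of $\Delta^+(\mathfrak{g},\mathfrak{h})$, let $\mathfrak{l}:=\mathfrak{h}\oplus\mathfrak{g}_{\alpha}\oplus\mathfrak{g}_{-\alpha}$ be the Levi of the standard parabolic $\mathfrak{q}=\mathfrak{l}\oplus\mathfrak{u}$ attached to the single simple root $\alpha$. As $\theta(\alpha)=-\alpha$ this $\mathfrak{l}$ is $\theta$-stable (and $\sigma$-stable), and $\mathfrak{b}\subseteq\mathfrak{q}$. The automorphisms $c_{\alpha}^{\pm}=\exp(\ad(\tfrac{\pm i\pi}{4}\phi_{\alpha}(E+F)))$ lie in $\exp(\ad\mathfrak{s}_{\alpha})$, hence they fix $\ker\alpha$ pointwise (Proposition \ref{prop:calphacartan}), preserve $\mathfrak{l}$, and preserve $\mathfrak{u}$ (a sum of $\mathfrak{s}_{\alpha}$-submodules); therefore $c_{\alpha}^{\pm}\mathfrak{q}=\mathfrak{q}$ and $c_{\alpha}^{\pm}\mathfrak{b}\subseteq\mathfrak{q}$. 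By induction by stages (Proposition \ref{prop:indbystages}) and the fact that $I(\mathfrak{l},\mathfrak{q},\cdot)\colon K^f(\mathfrak{l},\mathbf{L}\cap\mathbf{K})\to K^f(\mathfrak{g},\mathbf{K})$ is a group homomorphism (see (\ref{eqn:eulerinduction})), all four classes $I[\mathfrak{h},\mathfrak{b},\chi]$ and $I(c_{\alpha}^{\pm}[\mathfrak{h},\mathfrak{b},\chi])$ are the images under $I(\mathfrak{l},\mathfrak{q},\cdot)$ of the corresponding classes for the group $\mathbf{L}$; moreover $c_{\alpha}^{\pm}|_{\mathbf{L}}$ is the Cayley transform for $\mathbf{L}$ through $\alpha$, which is again simple and odd, with the same type (since $T\subseteq L$), and the central torus $\ker\alpha$ of $\mathfrak{l}$ is fixed by $c_{\alpha}^{\pm}$. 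Isolating the (fixed) central character, and noting $\langle\rho(\mathfrak{u}),\alpha^{\vee}\rangle=0$, the problem becomes: prove (i)–(ii) at infinitesimal character $0$ for a group locally isomorphic to $SL_2(\mathbb{R})$ (type $1$) or $GL_2(\mathbb{R})$ (type $2$).

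\textbf{Step 2: the rank one case.} Here $\mathfrak{b}$ is a $\sigma$-stable (real) Borel, so by Theorem \ref{thm:realinductionexact} $I[\mathfrak{h},\mathfrak{b},\chi]$ is the class of the honest module $\Ind_B^{L}(\chi\otimes|\rho(\mathfrak{n})|)$, concentrated in degree $0$; since $d\chi+\rho(\mathfrak{n})=0$ and $\alpha$ is odd for $\chi$ (i.e. $\chi(m_{\alpha})=-1$), this is the reducible principal series at infinitesimal character $0$ whose composition factors are the two limits of discrete series $D^{+},D^{-}$ in the type $1$ case ($\Ind_B^{L}(\chi)\cong D^{+}\oplus D^{-}$) and the single limit of discrete series $D$ in the type $2$ case, where the two $SL_2(\mathbb{R})$-limits are fused by the disconnectedness of $\mathbf{K}$. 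On the other hand $\mathfrak{h}^{\alpha}$ is more compact and $c_{\alpha}^{\pm}\alpha$ is noncompact imaginary (Proposition \ref{prop:calphafacts}), so $c_{\alpha}^{\pm}\mathfrak{b}$ is $\theta$-stable with $\mathfrak{u}\cap\mathfrak{k}=0$ and $\dim(\mathfrak{u}\cap\mathfrak{p})=1$; thus $I(c_{\alpha}^{\pm}[\mathfrak{h},\mathfrak{b},\chi])$ is cohomologically induced, lives in a single \emph{odd} degree, and equals $-[D^{\pm}]$ (type $1$) resp. $-[D]$ (type $2$), the identification of $c_{\alpha}^{\pm}\chi$ with the parameter of $D^{\pm}$ being forced by Definition \ref{def:cayleytransformsofcharacter} (the characters $\tau_{\pm1}$ of $SO_2(\mathbb{R})$ are the lowest $\mathbf{K}$-weights of $D^{\pm}$). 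In type $2$ one checks, using Lemma \ref{lem:structureofTalpha} and the disconnectedness of $\mathbf{K}$, that $c_{\alpha}^{+}[\mathfrak{h},\mathfrak{b},\chi]$ and $c_{\alpha}^{-}[\mathfrak{h},\mathfrak{b},\chi]$ are $\mathbf{K}$-conjugate, so $I(c_{\alpha}^{+}[\cdots])=I(c_{\alpha}^{-}[\cdots])$. Assembling: type $1$ gives $I[\mathfrak{h},\mathfrak{b},\chi]=[D^{+}]+[D^{-}]=-I(c_{\alpha}^{+}[\cdots])-I(c_{\alpha}^{-}[\cdots])$, and type $2$ gives $I[\mathfrak{h},\mathfrak{b},\chi]=[D]=-I(c_{\alpha}^{+}[\cdots])=-I(c_{\alpha}^{-}[\cdots])$.

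\textbf{Main obstacle.} The delicate point is Step 2, specifically pinning down the overall sign: this comes down to showing the Cayley-transformed induction is concentrated in a single degree equal to the parity of $\dim(\mathfrak{u}\cap\mathfrak{p})$ — which jumps by $1$ under a real Cayley transform, a noncompact imaginary root being created — against the untransformed module sitting in degree $0$. One must also verify the exact matching of $c_{\alpha}^{\pm}\chi$ with the Harish-Chandra parameters of $D^{\pm}$ and, in type $2$, the $\mathbf{K}$-conjugacy of the two Cayley transforms; both are concrete $SL_2(\mathbb{R})$/$GL_2(\mathbb{R})$ verifications from the definitions of Section \ref{sec:Cayleyreal}. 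As with Theorem \ref{thm:transfer}, this identity can alternatively be extracted from the Knapp–Vogan theory of standard modules and intertwining operators.
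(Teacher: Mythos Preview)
Your approach is essentially the paper's: reduce via induction by stages to the rank-one Levi $L_\alpha = Z_G(\ker\alpha)$ and then invoke the $SL_2(\mathbb{R})$ decomposition of the odd principal series into limits of discrete series (the paper's Proposition~\ref{prop:SL2Rdecomposition}). The paper makes your type-2 ``fusing'' argument precise via the isogeny $r_\alpha\colon SL_2(\mathbb{R})\times\ker\alpha\to L_\alpha$ (which has index-2 image in type~2, Proposition~\ref{prop:structuretheorytype1type2}) together with Clifford theory for index-2 subpairs (Proposition~\ref{prop:cliffordtheoryindex2}) and the conjugacy Lemma~\ref{lem:conjugacyofcompactborels}, but the structure of the argument is the same.
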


Our proof of Theorem \ref{thm:inductionandCayleytransforms} will involve reduction to $SL_2(\mathbb{R})$ and, in the type 2 case, the basic Clifford theory of index $2$ subpairs. To simplify notation, let 
$$G^s = SL_2(\mathbb{R}) \quad K^s=SO_2(\mathbb{R}) \quad H^s = \{\begin{pmatrix} t & 0 \\ 0 & t^{-1}\end{pmatrix}\} \quad T^s = \{\pm \mathrm{Id}\}$$
Let $\epsilon \otimes -1$ be the character of $H^s$ defined by
$$(\epsilon \otimes -1)(\begin{pmatrix} t & 0 \\ 0 & t^{-1}\end{pmatrix} = t^{-1}$$
Let $\tau_{\pm 1}$ be the characters of $K^s$ defined in (\ref{eqn:twocharactersofSO2R}). Let $\mathfrak{b}^s \subset \mathfrak{g}^s$ be the Borel subalgebra of upper triangular matrices, and let $\mathfrak{b}^s_c(\pm) \subset \mathfrak{g}^s$ be the Borel subalgebras containing $\mathfrak{k}^s$. Arrange the signs so that
$$d\tau_1 = -\rho(\mathfrak{n}^s_c(+)) \quad d\tau_{-1} = -\rho(\mathfrak{n}^s_c(-))$$
We will need the following basic fact about $SL_2(\mathbb{R})$:

\begin{proposition}\label{prop:SL2Rdecomposition}
There is an equality in $KM(\mathfrak{g}^s,K^s)$
$$I[\mathfrak{h}^s,\mathfrak{b}^s, \epsilon \otimes -1] = -I[\mathfrak{k}^s,\mathfrak{b}^s_c(+),\tau_1] - I[\mathfrak{k}^s,\mathfrak{b}^s_c(-),\tau_{-1}]$$
The classes on the right correspond to irreducible $(\mathfrak{g}^s,K^s)$-modules. 
\end{proposition}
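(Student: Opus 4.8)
The plan is to identify both sides of the asserted equality with explicit $(\mathfrak{g}^s, K^s)$-modules, using the two special cases of parabolic induction reviewed above — the left-hand side with a (reducible) unitary principal series of $SL_2(\mathbb{R})$, and the right-hand side terms with the two limits of discrete series — and then to compare them.

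For the left-hand side, note that $\mathfrak{b}^s$ is the complexification of the minimal parabolic $B = MAN$ of $SL_2(\mathbb{R})$, with $M = \{\pm\mathrm{Id}\}$, so $\mathbf{B}$ is $\sigma$-stable. By Theorem~\ref{thm:realinductionexact}, $\mathbf{I}^{(\mathfrak{g}^s,\mathbf{K}^s)}_{(\mathfrak{h}^s,\mathbf{T}^s)}$ is exact, so $I[\mathfrak{h}^s,\mathfrak{b}^s,\epsilon\otimes -1]$ is the class of the single module $\Ind^{G^s}_{B}\big((\epsilon\otimes -1)\otimes|\rho(\mathfrak{n}^s)|\big)$ via \eqref{eqn:realinductioninfinitesimal}. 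A one-line computation gives $|\rho(\mathfrak{n}^s)| = \big(\mathrm{diag}(t,t^{-1})\mapsto |t|\big)$, so the inducing character is $\mathrm{diag}(t,t^{-1})\mapsto \mathrm{sgn}(t)$ — trivial on $A$ and the nontrivial character on $M$. Thus the left-hand side is the class of the principal series $X_{ps}$ of $SL_2(\mathbb{R})$ at infinitesimal character $0$ with non-spherical $M$-type, whose $K^s = SO_2(\mathbb{R})$-types are exactly the characters $e^{in\theta}$ with $n$ odd, each of multiplicity one.

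For the right-hand side, the Borels $\mathfrak{b}^s_c(\pm)$ are $\theta$-stable with Levi factor the compact Cartan, and the condition $d\tau_{\pm 1} = -\rho(\mathfrak{n}^s_c(\pm))$ places $\tau_{\pm 1}$ on the boundary of the weakly good range (for the unique positive root $\alpha$ one has $\langle 0+\rho(\mathfrak{u}),\alpha^\vee\rangle = 0$). By Theorem~\ref{thm:cohindirreducible}, $R^i\mathbf{I}^{(\mathfrak{g}^s,\mathbf{K}^s)}_{(\mathfrak{h}^s_c,\mathbf{H}^s_c)}(\tau_{\pm 1})$ is concentrated in a single degree $t$, where it is irreducible or $0$; and since $\mathfrak{k}^s$ is abelian the shifted minimal $\mathbf{L}\cap\mathbf{K}$-type $\mu_{\pm 1}+2\rho(\mathfrak{u}\cap\mathfrak{p})$ is automatically dominant, so by Theorem~\ref{thm:LKTscohind} the module is nonzero, with minimal $K^s$-type $e^{\pm i\theta}$. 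The irreducible $(\mathfrak{g}^s,K^s)$-modules with those minimal $K^s$-types are the two limits of discrete series $D^{\pm}$, whose $K^s$-types are $\{e^{in\theta}: n\geq 1\text{ odd}\}$ and $\{e^{in\theta}: n\leq -1\text{ odd}\}$ respectively. Hence $I[\mathfrak{k}^s,\mathfrak{b}^s_c(\pm),\tau_{\pm 1}] = (-1)^t[D^{\pm}]$; in particular both right-hand classes are, up to sign, classes of irreducible modules.

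Finally, assemble the pieces: it is classical that $X_{ps}$ has exactly the two composition factors $D^+$ and $D^-$, each once (this can also be read off by matching the $K^s$-type multiplicities above), so $[X_{ps}] = [D^+]+[D^-]$ in $K^f(\mathfrak{g}^s,K^s)$, whence $I[\mathfrak{h}^s,\mathfrak{b}^s,\epsilon\otimes -1] = (-1)^t\big(I[\mathfrak{k}^s,\mathfrak{b}^s_c(+),\tau_1] + I[\mathfrak{k}^s,\mathfrak{b}^s_c(-),\tau_{-1}]\big)$. The delicate point — and the main obstacle — is the sign: one must check, from the normalization of $\mathbf{I}$ (the $\det\mathfrak{u}$-twist) and the value of the concentration degree $t$ for the Borel $\mathfrak{b}^s_c(\pm)$ (namely $t = \dim(\mathfrak{u}\cap\mathfrak{p}) = 1$), that $(-1)^t = -1$, which produces exactly the minus signs in the displayed identity. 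Everything else is routine bookkeeping in $\mathfrak{sl}_2$.
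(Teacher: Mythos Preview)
Your argument is exactly the paper's: identify the left-hand side with the non-spherical principal series via~\eqref{eqn:realinductioninfinitesimal} and Theorem~\ref{thm:realinductionexact}, identify the right-hand terms with the two limits of discrete series via Theorem~\ref{thm:cohindirreducible}, and then invoke the classical splitting $\mathrm{Ind}_B^{G^s}(\epsilon\otimes 0)\cong D^+\oplus D^-$; the paper's proof is a three-line sketch of precisely this, asserting concentration in degree $1$ without further comment. Your added detail on minimal $K^s$-types and the sign is welcome, but be cautious with the formula you quote for $t$: in the Knapp--Vogan normalization (Theorem~8.2, which the paper cites) the concentration degree is $\dim(\mathfrak{u}\cap\mathfrak{k})$, not $\dim(\mathfrak{u}\cap\mathfrak{p})$---the paper simply asserts $t=1$, and in any case every downstream use in the paper only needs the identity up to an overall sign.
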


\begin{proof}
There is a well-known decomposition of the non-spherical principal series representation
$$\mathrm{Ind}^{G^s}_{B^S} \epsilon \otimes 0$$
into the two limit of discrete series representations, which are obtained by cohomological induction (in degree $1$) from the $\theta$-stable Borel subalgebras $\mathfrak{b}_c^s(\pm)$
$$\mathrm{Ind}^{G^s}_{B^s}\epsilon \otimes 0 \cong_{(\mathfrak{g}^s,K^s)} R^1\mathbf{I}^{(\mathfrak{g}^s,K^s)}_{(\mathfrak{b}^s_c(+),K^s)} \tau_1 \oplus R^1\mathbf{I}^{(\mathfrak{g}^s,K^s)}_{(\mathfrak{b}^s_c(-),K^s)} \tau_{-1}$$
Now use Theorems \ref{thm:realinductionexact}, \ref{thm:cohindirreducible}, and
the identification (\ref{eqn:realinductioninfinitesimal}).
\end{proof}

Now let $[\mathfrak{h},\mathfrak{b},\chi] \in \mathrm{BB}_0(G)$ and let $\alpha \in \Delta^+(\mathfrak{g},H)$ be any real root. Define a $\theta$-stable Levi subgroup
$$L_{\alpha} := Z_G(\ker{\alpha}) $$
Since $\Phi_{\alpha}(G^s)$ centralizes $\ker{\alpha}$, there is a group homomorphism
$$r_{\alpha}: G^s \times {\ker{\alpha}} \to L_{\alpha} \quad r_{\alpha}(g,h) = \Phi_{\alpha}(g)h$$
We will also consider its restrictions
$$r_{\alpha}: H^s \times \ker{\alpha} \to H \quad r_{\alpha}: K^s \times \ker{\alpha} \to H^{\alpha}$$
\begin{proposition}\label{prop:structuretheorytype1type2}
The group homomorphisms
\begin{align}
r_{\alpha}: G^s \times \ker{\alpha} &\to L_{\alpha} \label{eqn:hom1} \\
r_{\alpha}: H^s \times \ker{\alpha} &\to H \label{eqn:hom2} \\
r_{\alpha}: K^s \times \ker{\alpha} &\to H^{\alpha} \label{eqn:hom3}
\end{align}
have the following properties
\begin{itemize}
    \item[(i)] All three homomorphisms are isogenies (i.e. give rise to Lie algebra isomorphisms)
    \item[(ii)] All three homomorphisms are two-to-one, with 
    $$\ker{r_{\alpha}} = \{(1,1),(-1,m_{\alpha})\}$$
    \item[(iii)] Homomorphism \ref{eqn:hom3} is surjective (independent of $\alpha$). Homomorphisms \ref{eqn:hom1} and \ref{eqn:hom2} are either surjective (if $\alpha$ is type 1) or surjective onto index-2 subgroups (if $\alpha$ is type 2).
\end{itemize}
\end{proposition}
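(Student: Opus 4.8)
The plan is to analyze the homomorphism $r_\alpha \colon G^s \times \ker\alpha \to L_\alpha$ directly, deducing the statements about the three homomorphisms in parallel from the structure of $\Phi_\alpha$ and $m_\alpha$. First I would establish (i): by Proposition \ref{prop:rootsl2s}, the isomorphism $\phi_\alpha \colon \mathfrak{sl}_2(\CC) \to \mathfrak{s}_\alpha$ restricts to $\phi_\alpha \colon \mathfrak{sl}_2(\RR) \to \mathfrak{s}_\alpha \cap \mathfrak{g}_0$, and on Lie algebras the differential of $r_\alpha$ is $\phi_\alpha \oplus \mathrm{id}_{\ker\alpha} \colon \mathfrak{sl}_2(\RR) \oplus \ker\alpha \to \mathfrak{l}_\alpha$. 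Since $\mathfrak{l}_\alpha = \fz_{\fg}(\ker\alpha) = \ker\alpha \oplus \mathfrak{s}_\alpha$ (the root spaces for $\pm\alpha$ together with the centralizer of $\alpha^\vee$ inside $\fh$, noting $\alpha^\vee \in \mathfrak{s}_\alpha$), this is an isomorphism; restricting to the obvious Cartan and maximal-compact subalgebras gives the Lie algebra isomorphisms underlying \eqref{eqn:hom2} and \eqref{eqn:hom3}. Hence all three maps are isogenies.

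Next I would compute the kernel, proving (ii). Since $\phi_\alpha$ is injective, $\ker\Phi_\alpha \subseteq Z(SL_2(\RR)) = \{\pm 1\}$, and $\Phi_\alpha(-1) = m_\alpha$ by definition of $m_\alpha$. An element $(g,h) \in \ker r_\alpha$ satisfies $\Phi_\alpha(g) = h^{-1} \in \ker\alpha$; since $\Phi_\alpha(g) \in \mathfrak{s}_\alpha$-group and $\ker\alpha \cap \Phi_\alpha(SL_2(\RR))$ meets the center, one gets $g \in \{\pm 1\}$ and correspondingly $h \in \{1, m_\alpha\}$. The case $g = 1$ forces $h = 1$; the case $g = -1$ forces $h = m_\alpha$ (note $m_\alpha \in T \subseteq \ker\alpha$, since $\alpha(m_\alpha) = \alpha(\Phi_\alpha(-\mathrm{Id})) = (-1)^{\langle \alpha, \alpha^\vee\rangle} = 1$... careful here: $m_\alpha = \Phi_\alpha\bigl(\begin{smallmatrix}-1&0\\0&-1\end{smallmatrix}\bigr)$ and $\alpha$ pulls back under $\Phi_\alpha$ to the root $D \mapsto 2$, so $m_\alpha$ lands in the identity component of the relevant torus and $\alpha(m_\alpha)=1$, i.e. $m_\alpha \in \ker\alpha$). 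Thus $\ker r_\alpha = \{(1,1),(-1,m_\alpha)\}$ in all three cases, and each map is two-to-one onto its image. This step, pinning down that $m_\alpha$ really lies in $\ker\alpha$ and that no larger kernel appears, is where I expect the main friction; it rests on the precise normalization in Proposition \ref{prop:rootsl2s} and Lemma \ref{lem:structureofTalpha}.

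Finally, for (iii): homomorphism \eqref{eqn:hom3} is surjective because $T^\alpha = \Phi_\alpha(SO_2(\RR))\,T^\alpha_1$ by Lemma \ref{lem:structureofTalpha}, $T^\alpha_1 = \ker\alpha \cap T \subseteq \ker\alpha$, and $\mathfrak{h}^\alpha = \ker\alpha \oplus i\RR\phi_\alpha(D_c)$ with the $A^\alpha$-part contained in $\exp(\ker\alpha)$ — so every element of $H^\alpha$ is a product of something in $\Phi_\alpha(K^s)$ and something in $\exp(\ker\alpha_0) \subseteq \ker\alpha$, giving surjectivity independently of $\alpha$. For \eqref{eqn:hom1} and \eqref{eqn:hom2}: the image of $r_\alpha$ on $H^s \times \ker\alpha$ is $\Phi_\alpha(H^s)\,\ker\alpha$; since $H = \ker\alpha \cdot (\Phi_\alpha(SL_2(\RR)) \cap H)$ and $\Phi_\alpha(SL_2(\RR)) \cap H$ is either $\Phi_\alpha(H^s)$ (when the split torus $\exp(\phi_\alpha(\RR D))$ accounts for all of it) or an index-$2$ overgroup detected exactly by whether $\alpha(T) = \{1\}$ or $\{\pm 1\}$, we read off the type 1 / type 2 dichotomy of Definition \ref{def:type1type2}. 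The same index-$2$ statement then propagates from \eqref{eqn:hom2} to \eqref{eqn:hom1} since $L_\alpha = H \cdot \Phi_\alpha(G^s)$ and the obstruction to surjectivity lives entirely in the disconnected part of $H$. I would close by remarking that this is essentially \cite[Lemma 8.3.?]{Vogan1981}, so the argument can be kept brief.
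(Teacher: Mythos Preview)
Your approach to parts (i), (ii), and the surjectivity of \eqref{eqn:hom3} is essentially the paper's: compute the differential as $\phi_\alpha \oplus \mathrm{id}$, argue the kernel is discrete and hence lands in $Z(G^s)=\{\pm 1\}$, and invoke Lemma~\ref{lem:structureofTalpha} for $H^\alpha$. The paper's version of (ii) is slightly cleaner---it projects $\ker r_\alpha$ to $G^s$ and uses that a discrete normal subgroup is central---but the content is the same.

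There is, however, a genuine confusion in your treatment of (iii) for \eqref{eqn:hom2}. You write ``$H = \ker\alpha \cdot (\Phi_\alpha(SL_2(\RR)) \cap H)$'' and then say $\Phi_\alpha(SL_2(\RR)) \cap H$ is either $\Phi_\alpha(H^s)$ or an index-$2$ overgroup of it. In fact $\Phi_\alpha(SL_2(\RR)) \cap H = \Phi_\alpha(H^s)$ \emph{always}: any $g \in SL_2(\RR)$ with $\Phi_\alpha(g)$ centralizing $\mathfrak{h}$ must centralize $\alpha^\vee = \phi_\alpha(D)$, forcing $g \in H^s$. So your asserted decomposition of $H$ is false in type~2, and the dichotomy is not where you put it. The paper instead computes the image directly: $\Phi_\alpha|_{H^s}$ is the coroot $\alpha^\vee$, so $\alpha(\alpha^\vee(t)) = t^2 > 0$, giving $r_\alpha(H^s \times \ker\alpha) = \alpha^{-1}(\RR_{>0})$; then $\alpha(H)$ equals $\RR_{>0}$ or $\RR^\times$ exactly according to whether $\alpha(T)$ is $\{1\}$ or $\{\pm 1\}$.

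For \eqref{eqn:hom1} you assert $L_\alpha = H \cdot \Phi_\alpha(G^s)$ and say the obstruction ``lives entirely in the disconnected part of $H$.'' This is the right target but it is the substantive step, not a triviality. The paper proves it via the Bruhat decomposition $L_\alpha = HU_\alpha \sqcup HU_\alpha \sigma_\alpha U_\alpha$ (with $\sigma_\alpha = \Phi_\alpha(E-F)$), noting that $U_\alpha$ and $\sigma_\alpha$ both lie in $\Phi_\alpha(G^s)$, so every coset of $r_\alpha(G^s \times \ker\alpha)$ in $L_\alpha$ has a representative in $H$; combined with the observation $r_\alpha(G^s \times \ker\alpha) \cap H = r_\alpha(H^s \times \ker\alpha)$, this transfers the index computation from \eqref{eqn:hom2} to \eqref{eqn:hom1}. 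Your citation to Vogan is reasonable, but if you are writing out the proof you should include this Bruhat step rather than leave it implicit.
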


\begin{proof}

The differential of $r_{\alpha}$ is given by
    $$dr_{\alpha}: \mathfrak{g}^s \oplus \ker{\alpha} \to Z_{\mathfrak{g}}(\ker{\alpha}) \quad dr_{\alpha}(X,H) = \phi_{\alpha}(X) + H$$
    The map $\phi_{\alpha}: \mathfrak{g}^s \to \mathfrak{g}$ is an injection, with image $\mathbb{C}\phi_{\alpha}(H) \oplus \mathfrak{g}_{\alpha} \oplus \mathfrak{g}_{-\alpha}$. In particular, $\phi_{\alpha}(\mathfrak{g}^s) \cap \ker{\alpha} = 0$ and $dr_{\alpha}$ is injective. To conclude that homomorphisms \ref{eqn:hom1}, \ref{eqn:hom2}, and \ref{eqn:hom3} are isogenies, note that
    \begin{align*}
        dr_{\alpha}(\mathfrak{g}^s \oplus \ker{\alpha}) &= \mathbb{C}\phi_{\alpha}(H) \oplus \mathfrak{g}_{\alpha} \oplus \mathfrak{g}_{-\alpha} \oplus \ker{\alpha} = \mathfrak{h} \oplus \mathfrak{g}_{\alpha} \oplus \mathfrak{g}_{-\alpha} =
        Z_{\mathfrak{g}}(\ker{\alpha})\\
        dr_{\alpha}(\mathfrak{h}^s \oplus \ker{\alpha}) &= \mathbb{C}\phi_{\alpha}(H) \oplus \ker{\alpha} = \mathfrak{h}\\
        dr_{\alpha}(\mathfrak{k}^s \oplus \ker{\alpha}) &= \mathbb{C}\phi_{\alpha}(H_c) \oplus \ker{\alpha} = \mathfrak{h}^{\alpha}
    \end{align*}
This completes the proof of (i).

Since $dr_{\alpha}$ is injective, $\ker{r_{\alpha}}$ is a discrete, normal subgroup of $G^s \times \ker{\alpha}$. Let $\pi_1: G^s \times \ker{\alpha} \to G^s$ be the projection map. Then $\pi_1(\ker{r_{\alpha}})$ is a discrete, normal subgroup of $G^s$, and hence a subgroup of $Z(G^s)=\{\pm 1\}$. On the other hand, the restriction of $\pi_1$ to $\ker{r_{\alpha}}$ is injective. Hence, $\pi_1: \ker{r_{\alpha}} \subseteq \{\pm 1\}$. Note finally that $r_{\alpha}(-1,m_{\alpha}) = \Phi_{\alpha}(-1)m_{\alpha} = m_{\alpha}^2 =1$. So indeed, $\ker{r_{\alpha}} = \{(1,1),(-1,m_{\alpha})\}$, proving (ii).

The surjectivity statement for homomorphism \ref{eqn:hom3} follows from Lemma \ref{lem:structureofTalpha}. We will prove the statement for homomorphism \ref{eqn:hom2}. The statement for homomorphism \ref{eqn:hom1} will then follow from the Bruhat decomposition for $L_{\alpha}$. The restriction of $\Phi_{\alpha}$ to $H^s$ coincides with the co-root $\alpha^{\vee}: \mathbb{R}^{\times} \to H$. Since $\alpha(\alpha^{\vee}(t)) = t^2 >0$, there is an inclusion
    $$r_{\alpha}(H^s \times \ker{\alpha}) \subseteq \alpha^{-1}(\mathbb{R}_{>0})$$
    The reverse inclusion is equally clear: if $h \in H$ has $\alpha(h) >0$, then
    $$\alpha^{\vee}(\sqrt{\alpha(h)})^{-1}h \in \ker{\alpha}$$
    and hence
    $$h = \alpha^{\vee}(\sqrt{\alpha(h)})\left(\alpha^{\vee}(\sqrt{\alpha(h)})^{-1}h\right) \in \alpha^{\vee}(\mathbb{R}^{\times})\ker{\alpha} = r_{\alpha}(H^s \times \ker{\alpha})$$
    Combining these facts, we obtain
    $$r_{\alpha}(H^s \times \ker{\alpha}) = \alpha^{-1}(\mathbb{R}_{>0})$$
    Now, $\alpha(H) \subseteq \mathbb{R}^{\times}$ is a finite-index subgroup. There are two such subgroups of $\mathbb{R}^{\times}$: $\mathbb{R}^{\times}$ and $\mathbb{R}_{>0}$. If $\alpha$ is type 1, then $\alpha(H) = \alpha(TA) = \alpha(A)$, which is connected, and therefore necessarily $\mathbb{R}_{>0}$. In this case $r_{\alpha}(H^s \times \ker{\alpha}) = H$. If $\alpha$ is type 2, then $\alpha(H)$ contains $-1$ and is therefore the full multiplicative group $\mathbb{R}^{\times}$. In this case, $r_{\alpha}(H^s \times \ker{\alpha})$ has index 2 in $H$. 
    
    Since $r_{\alpha}(G^s \times \ker{\alpha}) \cap H = r_{\alpha}(H^s \times \ker{\alpha})$, the inclusion $H \subset L_{\alpha}$ induces an injection of cosets
    \begin{equation}\label{eqn:cosetinclusion}H/ r_{\alpha}(H^s \times \ker{\alpha}) \subseteq L_{\alpha}/r_{\alpha}(G^s \times \ker{\alpha})\end{equation}
    We want to show that this mapping is onto. First note that the element 
    $$\sigma_{\alpha}:=\Phi_{\alpha}(E-F)$$
    normalizes $\mathfrak{h}$ nontrivially. Hence, $\sigma_{\alpha}$ represents the nontrivial element of the real Weyl group $W(L_{\alpha},H) \cong \mathbb{Z}/2\mathbb{Z}$. Let $U_{\alpha} = \exp(\mathfrak{g}_{\alpha} \cap \mathfrak{g}_0)$. By the Bruhat decomposition for real reductive groups (see \cite{Knapp1996}, Theorem 7.40),
    $$L_{\alpha} = HU_{\alpha} \sqcup HU_{\alpha}\sigma_{\alpha}U_{\alpha}$$
    Since $U_{\alpha}$ is connected and $\mathfrak{u}_{\alpha} \subset \mathfrak{l}_{\alpha}$, $U_{\alpha} \subseteq r_{\alpha}(G^s \times \ker{\alpha})$. Also, $\sigma_{\alpha} \in r_{\alpha}(G^s \times \ker{\alpha})$ by definition. It follows from these observations and the decomposition above that every coset in $ L_{\alpha}/r_{\alpha}(G^s \times \ker{\alpha})$ has a representative in $H$ and hence the injection \ref{eqn:cosetinclusion} is onto. 
    
    The conclusion is that $[L_{\alpha}:r_{\alpha}(G^s \times \ker{\alpha})] = [H: r_{\alpha}(H^s \times \ker{\alpha})]$. Hence, the statement for homomorphism \ref{eqn:hom2} implies the statement for homomorphism \ref{eqn:hom1}. This completes the proof of (iii).
\end{proof}

Form three Borel subalgebras of $\mathfrak{l}_{\alpha}$
$$\mathfrak{b}^{\mathfrak{l}_{\alpha}}:= \mathfrak{b} \cap \mathfrak{l}_{\alpha} \supset \mathfrak{h} \quad \mathfrak{b}_c^{\mathfrak{l}_{\alpha}}(\pm) = c_{\alpha}^{\pm}\mathfrak{b} \cap \mathfrak{l}_{\alpha}\supset c_{\alpha}\mathfrak{h}$$
The conjugacy of $\mathfrak{b}^{\mathfrak{l}_{\alpha}}_c(\pm)$ under $L_{\alpha}$ depends on the type of $\alpha$.

\begin{lemma}\label{lem:conjugacyofcompactborels}
If $\alpha$ is type 1, then $\mathfrak{b}^{\mathfrak{l}_{\alpha}}_c(\pm)$ are non-conjugate under $L_{\alpha}$. If $\alpha$ is type 2, then for any element $t \in T$ with $\alpha(t) = -1$, $\Ad(t)$ acts by inversion on $\Phi_{\alpha}(K^s)$ and interchanges $\mathfrak{b}^{\mathfrak{l}_{\alpha}}_c(\pm)$.
\end{lemma}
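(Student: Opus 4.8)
The statement is about the Cayley-transformed Borel subalgebras $\mathfrak{b}_c^{\mathfrak{l}_\alpha}(\pm) = c_\alpha^\pm \mathfrak{b} \cap \mathfrak{l}_\alpha$ inside the Levi $\mathfrak{l}_\alpha = \Lie(Z_G(\ker\alpha))$. Since $\mathfrak{l}_\alpha$ is (isogenous, via $r_\alpha$, to) $\mathfrak{g}^s \oplus \ker\alpha$ with $\mathfrak{g}^s = \mathfrak{sl}_2$, the whole question reduces to a computation for $SL_2(\mathbb{R})$, together with bookkeeping of the factor $\ker\alpha$ (on which everything acts trivially, since $c_\alpha^\pm$ and conjugation by $T$ fix $\ker\alpha$ pointwise by Proposition \ref{prop:calphacartan} and the definition of $L_\alpha$). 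So the first step is to transport the problem across $r_\alpha$: under the isogeny $r_\alpha : G^s \times \ker\alpha \to L_\alpha$, the subalgebras $c_\alpha^\pm \mathfrak{b} \cap \mathfrak{l}_\alpha$ correspond to $\mathfrak{b}_c^s(\pm) \oplus \ker\alpha$, where $\mathfrak{b}_c^s(\pm)$ are the two $\theta$-stable Borels of $\mathfrak{sl}_2(\mathbb{C})$ containing $\mathfrak{k}^s$ (this matches the notation set up just before Proposition \ref{prop:SL2Rdecomposition}). This identification follows directly from Definition \ref{def:cayleytransformreal}: $c_\alpha^\pm = \exp(\ad(\tfrac{\pm i\pi}{4}\phi_\alpha(E+F)))$, and $\phi_\alpha(\frac{\pm i\pi}{4}(E+F))$ lies in $\phi_\alpha(\mathfrak{sl}_2(\mathbb{C})) \subset \mathfrak{l}_\alpha$, so $c_\alpha^\pm$ preserves $\mathfrak{l}_\alpha$ and acts trivially on $\ker\alpha$.

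\textbf{The $SL_2(\mathbb{R})$ computation.} Inside $\mathfrak{sl}_2(\mathbb{C})$, one checks directly that $c_\alpha^+$ and $c_\alpha^-$ send the standard Borel $\mathfrak{b}^s$ (upper triangular) to the two distinct Borels containing $\mathfrak{k}^s = \mathbb{C}D_c$, and that these two are swapped by $s_\alpha$, consistently with Proposition \ref{prop:calphafacts}(1). So $\mathfrak{b}_c^{\mathfrak{l}_\alpha}(+) \ne \mathfrak{b}_c^{\mathfrak{l}_\alpha}(-)$ as subalgebras. For the conjugacy question, the Borel $\mathfrak{b}_c^s(\pm)$ is the $+2$ (resp. $-2$) eigenspace decomposition data for $D_c$ extended by $\mathbb{C}D_c$; conjugation by an element of $N_{SL_2(\mathbb{R})}(H^s)$ normalizing the compact Cartan $\mathbb{C}D_c$ and acting by $-1$ on it will interchange the two. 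Now the normalizer of $\mathbb{C}D_c$ in $SL_2(\mathbb{R})$ is $K^s \cup w K^s$ where $w^2 = -1$ acts by inversion; but $K^s$ itself is connected and preserves each $\mathfrak{b}_c^s(\pm)$. Hence the two Borels $\mathfrak{b}_c^s(\pm)$ are \emph{not} conjugate under $K^s$, and one needs an outer element. Translating back: if $\alpha$ is type 1, then by Proposition \ref{prop:structuretheorytype1type2}(iii) the map $r_\alpha : H^s \times \ker\alpha \to H$ is surjective and $r_\alpha : G^s \times \ker\alpha \to L_\alpha$ is surjective, so every element of $L_\alpha$ conjugating $\mathfrak{b}_c^{\mathfrak{l}_\alpha}(+)$ to $\mathfrak{b}_c^{\mathfrak{l}_\alpha}(-)$ would pull back to an element of $G^s$ doing the same for $\mathfrak{b}_c^s(\pm)$ — impossible. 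So in the type 1 case the two compact Borels of $\mathfrak{l}_\alpha$ are non-conjugate under $L_\alpha$.

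\textbf{The type 2 case.} Here, by Proposition \ref{prop:structuretheorytype1type2}(iii), $r_\alpha(G^s \times \ker\alpha)$ has index $2$ in $L_\alpha$, the missing coset being represented by any $t \in T$ with $\alpha(t) = -1$. Concretely $\alpha(t) = -1$ means $t$ acts by $-1$ on the root space $\mathfrak{g}_\alpha$, hence (being in $T \subset K$, and centralizing $\ker\alpha$) $\Ad(t)$ restricted to $\phi_\alpha(SL_2(\mathbb{C}))$ is the conjugation fixing $\mathfrak{k}^s$-directions but flipping $\mathfrak{g}_{\pm\alpha}$; one verifies $\Ad(t)\phi_\alpha(D_c) = -\phi_\alpha(D_c)$, i.e. $\Ad(t)$ acts by inversion on $\Phi_\alpha(K^s) = \exp(\mathbb{R}\phi_\alpha(D_c))$ and therefore interchanges the $\pm 2$ eigenspaces of $\phi_\alpha(D_c)$, i.e. interchanges $\mathfrak{b}_c^{\mathfrak{l}_\alpha}(+) \leftrightarrow \mathfrak{b}_c^{\mathfrak{l}_\alpha}(-)$. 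The key identities to nail down are $\Ad(t)\phi_\alpha(E_c) \in \mathbb{C}\phi_\alpha(F_c)$ and vice versa, which is a one-line matrix computation once one writes $t = \Phi_\alpha\!\left(\begin{smallmatrix} i & 0 \\ 0 & -i\end{smallmatrix}\right)$ up to the central element $m_\alpha$ (note $\Phi_\alpha\!\left(\begin{smallmatrix} i & 0 \\ 0 & -i\end{smallmatrix}\right)$ has $\alpha$-value $-1$ since $\alpha \circ \Phi_\alpha = $ the $SL_2$ root, which on $\diag(i,-i)$ is $i/(-i) = -1$).

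\textbf{Expected main obstacle.} The genuine content is entirely in the structure theory already packaged in Proposition \ref{prop:structuretheorytype1type2}, so the remaining difficulty is bookkeeping: making sure the identification of $c_\alpha^\pm\mathfrak{b} \cap \mathfrak{l}_\alpha$ with the $\mathfrak{b}_c^s(\pm)$ is compatible with all sign conventions (the $\pm$ on the Cayley transform, the $\pm$ on $\tau_{\pm 1}$, the labelling of the two compact Borels), and verifying that conjugation never mixes the $\ker\alpha$ factor with the $\mathfrak{g}^s$ factor — which is immediate from $L_\alpha = Z_G(\ker\alpha)$ and $r_\alpha$ being an isogeny onto (an index-$\le 2$ subgroup of) $L_\alpha$. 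The one computation requiring care is the matrix check that $\Ad\!\left(\begin{smallmatrix} i & 0 \\ 0 & -i\end{smallmatrix}\right)$ in $SL_2$ (equivalently, conjugation by $t$ in $L_\alpha$) swaps the two compact Borels rather than fixing them, which amounts to observing that $\begin{smallmatrix} i & 0 \\ 0 & -i\end{smallmatrix}$ does \emph{not} lie in $SO_2(\mathbb{R})$ and normalizes the compact torus with nontrivial action.
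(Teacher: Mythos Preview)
Your approach is essentially the paper's: reduce via the isogeny $r_\alpha$ of Proposition~\ref{prop:structuretheorytype1type2} to an $SL_2(\mathbb{R})$ computation, and in the type~2 case compute directly that $\alpha(t)=-1$ forces $\Ad(t)$ to negate $\phi_\alpha(E-F)$ (equivalently $\phi_\alpha(D_c)$), hence invert $\Phi_\alpha(K^s)$ and swap the two compact Borels.

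There is one slip in the type~1 argument. You write that the normalizer of $\mathbb{C}D_c$ in $SL_2(\mathbb{R})$ is $K^s \cup wK^s$ with $w^2=-1$ acting by inversion; this is false. The real Weyl group of the compact Cartan of $SL_2(\mathbb{R})$ is trivial: any $g\in SL_2(\mathbb{R})$ with $\Ad(g)D_c=-D_c$ would satisfy $-a^2-b^2=1$ (writing $g=\begin{smallmatrix}a&b\\b&-a\end{smallmatrix}$), which has no real solution. So $N_{SL_2(\mathbb{R})}(\mathbb{C}D_c)=K^s$ exactly, and the non-conjugacy of $\mathfrak{b}_c^s(\pm)$ under $G^s$ follows immediately (any conjugating element must normalize $\mathbb{C}D_c$, hence lie in $K^s$, which fixes each). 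With this correction your reduction via surjectivity of $r_\alpha$ in type~1 is fine and matches the paper's argument. Also, your attempted realization of $t$ as $\Phi_\alpha(\mathrm{diag}(i,-i))$ in the type~2 case does not make sense since $\mathrm{diag}(i,-i)\notin SL_2(\mathbb{R})$; but you do not need it, since the direct computation $\Ad(t)\phi_\alpha(E)=-\phi_\alpha(E)$, $\Ad(t)\phi_\alpha(F)=-\phi_\alpha(F)$ (from $\alpha(t)=-1$) already gives $\Ad(t)\phi_\alpha(D_c)=-\phi_\alpha(D_c)$, exactly as the paper does.
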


\begin{proof}
Suppose $\alpha$ is type 1 and assume there is a group element $g \in L_{\alpha}$ such that
$$\Ad(g)\mathfrak{b}^{\mathfrak{l}_{\alpha}}_c(+) = \mathfrak{b}^{\mathfrak{l}_{\alpha}}_c(-)$$
By Proposition \ref{prop:structuretheorytype1type2}
\begin{equation}\label{eqn:conjborels}
\Phi_{\alpha}(G^s)\ker{\alpha}=L_{\alpha}\end{equation}
Write $g = \Phi_{\alpha}(g')h$ for elements $g' \in G^s$ and $h \in \ker{\alpha}$. Since $\ker{\alpha}$ is central in $L_{\alpha}$, we can replace $g$ with $\Phi_{\alpha}(g')$ in Equation \ref{eqn:conjborels} above. We deduce that
$$\Ad(g')(\mathfrak{b}^{\mathfrak{l}_{\alpha}}_c(+) \cap \mathfrak{g}^s) = \mathfrak{b}^{\mathfrak{l}_{\alpha}}_c(-) \cap \mathfrak{g}^s$$
The Borels appearing above are exactly $\mathfrak{b}_c^s(\pm)$. These are non-conjugate under $G^s$ by an explicit calculation. We deduce that $\mathfrak{b}^{\mathfrak{l}_{\alpha}}_c(\pm)$ are non-conjugate under $L_{\alpha}$.

Now suppose $\alpha$ is type 2. Choose $t \in T$ with $\alpha(t) =-1$. Then
\begin{align*}
&\mathrm{Ad}(t)\phi_{\alpha}(D) = \phi_{\alpha}(D)\\  &\mathrm{Ad}(t)\phi_{\alpha}(E) = \alpha(t)\phi_{\alpha}(E)= -\phi_{\alpha}(E)\\ &\mathrm{Ad}(t)\phi_{\alpha}(F) = \alpha(t)^{-1}\phi_{\alpha}(F) = -\phi_{\alpha}(F)
\end{align*}
Therefore,
$$\mathrm{Ad}(t)\phi_{\alpha}(E-F) = -\phi_{\alpha}(E-F)$$
Therefore, since $E-F$ spans $\mathfrak{k}^s$, $\mathrm{Ad}(t)$ acts by negation on $\phi_{\alpha}(\mathfrak{k}^s)$ and consequently, since $\Phi_{\alpha}(K^s)$ is connected, by inversion on $\Phi_{\alpha}(K^s)$.

By definition, $\mathfrak{h}^{\alpha} = \phi_{\alpha}(\mathfrak{k}^s) \oplus \ker{\alpha}$. $\Ad(t)$ preserves this Cartan subalgebra: it normalizes the first factor by the computation above and centralizes the second factor since $H$ is abelian. A Lie algebra automorphism of $\mathfrak{l}_{\alpha}$ which preserves $\mathfrak{h}_{\alpha}$ permutes the Borel subalgebras containing it. Hence, $\Ad(t)$ permutes $\mathfrak{b}^{\mathfrak{l}_{\alpha}}_c(\pm)$. Since $\Ad(t)$ acts nontrivially on the co-root $c_{\alpha}\alpha^{\vee} \in \mathfrak{h}^{\alpha}$, it acts by the nontrivial permutation. 
\end{proof}

\begin{proposition}\label{prop:decompforlevi}
Let $[\mathfrak{h},\mathfrak{b},\chi] \in \mathrm{BB}_0(G)$ and let $\alpha \in \Delta^+(\mathfrak{g},H)$ be an odd real simple root. Then

\begin{itemize}
\item[(i)] If $\alpha$ is type 1, there is an equality in $K^f(\mathfrak{l}_{\alpha},L_{\alpha}\cap K)$
$$I[\mathfrak{h},\mathfrak{b}^{\mathfrak{l}_{\alpha}},\chi] = - I[\mathfrak{h}^{\alpha},\mathfrak{b}_c^{\mathfrak{l}_{\alpha}}(+), c_{\alpha}^+\chi] -  I[\mathfrak{h}^{\alpha},\mathfrak{b}_c^{\mathfrak{l}_{\alpha}}(-), c_{\alpha}^-\chi] $$
and the terms on the right are irreducible.
\item[(ii)] If $\alpha$ is type 2, there are equalities in $K^f(\mathfrak{l}_{\alpha},L_{\alpha}\cap K)$
$$I[\mathfrak{h},\mathfrak{b}^{\mathfrak{l}_{\alpha}},\chi] = - I[\mathfrak{h}^{\alpha},\mathfrak{b}_c^{\mathfrak{l}_{\alpha}}(+), c_{\alpha}^+\chi] = -  I[\mathfrak{h}^{\alpha},\mathfrak{b}_c^{\mathfrak{l}_{\alpha}}(-), c_{\alpha}^-\chi] $$
and all terms are irreducible.
\end{itemize}
\end{proposition}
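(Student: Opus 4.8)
The plan is to reduce everything to the group $G^s = SL_2(\mathbb{R})$ by means of the isogeny $r_\alpha$ of Proposition \ref{prop:structuretheorytype1type2}, and then invoke Proposition \ref{prop:SL2Rdecomposition}. First I would observe that since $\alpha$ is a \emph{simple} real root, the Levi $L_\alpha = Z_G(\ker\alpha)$ contains $\mathbf{H}$, the root spaces $\mathfrak{g}_{\pm\alpha}$, and $\ker\alpha$ in its center, so $\mathfrak{l}_\alpha = \ker\alpha \oplus \phi_\alpha(\mathfrak{sl}_2(\CC))$; the three Borels $\mathfrak{b}^{\mathfrak{l}_\alpha}$, $\mathfrak{b}^{\mathfrak{l}_\alpha}_c(\pm)$ are each $\ker\alpha$ plus a Borel of $\phi_\alpha(\mathfrak{g}^s)$, namely $\phi_\alpha(\mathfrak{b}^s)$ and $\phi_\alpha(\mathfrak{b}^s_c(\pm))$ respectively. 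Likewise $\chi$, restricted to $H \subset L_\alpha$, pulls back along $r_\alpha$ (homomorphism \eqref{eqn:hom2}) to a character of $H^s \times \ker\alpha$, and the hypothesis that $\alpha$ is \emph{odd} for $\chi$ is exactly the statement that this pullback is $(\epsilon\otimes -1)\boxtimes \chi|_{\ker\alpha}$ up to the sign conventions fixed before Proposition \ref{prop:SL2Rdecomposition}; similarly $c_\alpha^{\pm}\chi$ pulls back to $\tau_{\pm 1}\boxtimes\chi|_{\ker\alpha}$ by Definition \ref{def:cayleytransformsofcharacter}. Here I would need to check carefully that the sign normalization ``$d\tau_1 = -\rho(\mathfrak{n}^s_c(+))$'' is compatible with the normalization ``$d\chi = -\rho(\mathfrak{n})$'' built into $\BB_0(G)$; this is a bookkeeping point but it is where the $c_\alpha^+$ versus $c_\alpha^-$ labels get pinned down.

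Next, using that $r_\alpha$ is a finite isogeny (Proposition \ref{prop:structuretheorytype1type2}(i)), parabolic induction commutes with pullback along $r_\alpha$ up to the finite covering, so the virtual-module identity of Proposition \ref{prop:SL2Rdecomposition} transports to an identity in $K^f(\phi_\alpha(\mathbf{G}^s)\cdot\ker\alpha, \,\cdot\,)$. In the \textbf{type 1} case, homomorphism \eqref{eqn:hom1} is surjective (Proposition \ref{prop:structuretheorytype1type2}(iii)), so $\phi_\alpha(\mathbf{G}^s)\ker\alpha = L_\alpha$ and the identity is already an identity in $K^f(\mathfrak{l}_\alpha, L_\alpha\cap K)$, giving (i) directly; irreducibility of the two terms on the right follows from the irreducibility statement in Proposition \ref{prop:SL2Rdecomposition} (the terms there are irreducible $(\mathfrak{g}^s,K^s)$-modules, hence so are their pullbacks, hence so are the $L_\alpha$-modules since $r_\alpha$ is a finite surjection and the modules extend — one checks the extension is irreducible using that $\ker r_\alpha$ acts trivially).

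In the \textbf{type 2} case, $r_\alpha(\mathbf{G}^s\times\ker\alpha)$ has index $2$ in $L_\alpha$ (Proposition \ref{prop:structuretheorytype1type2}(iii)), so I must apply Clifford theory for the index-$2$ inclusion $r_\alpha(\mathbf{G}^s\times\ker\alpha) \subset L_\alpha$. The key input is Lemma \ref{lem:conjugacyofcompactborels}: for $t\in T$ with $\alpha(t)=-1$, $\Ad(t)$ interchanges $\mathfrak{b}^{\mathfrak{l}_\alpha}_c(+)$ and $\mathfrak{b}^{\mathfrak{l}_\alpha}_c(-)$ and fixes $\mathfrak{b}^{\mathfrak{l}_\alpha}$; correspondingly $\Ad(t)$ swaps the two terms $I[\mathfrak{h}^\alpha,\mathfrak{b}^{\mathfrak{l}_\alpha}_c(\pm), c_\alpha^{\pm}\chi]$ on the index-$2$ subgroup (this uses that $\Ad(t)$ carries $c_\alpha^+\chi$ to $c_\alpha^-\chi$, which follows from the description of $\Ad(t)$ on $\Phi_\alpha(K^s)$ as inversion together with $d\tau_{-1} = -d\tau_1$). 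Since the two summands are swapped by $\Ad(t)$, the module $I[\mathfrak{h},\mathfrak{b}^{\mathfrak{l}_\alpha},\chi]$, which is $\Ad(t)$-stable, is the induction of either one up to $L_\alpha$; standard Clifford theory for index $2$ then shows this induced module is irreducible (the two summands are the distinct $\Ad(t)$-conjugates, so their sum is already a single $L_\alpha$-orbit and the induced module does not split) and that $I[\mathfrak{h},\mathfrak{b}^{\mathfrak{l}_\alpha},\chi] = -I[\mathfrak{h}^\alpha,\mathfrak{b}^{\mathfrak{l}_\alpha}_c(+),c_\alpha^+\chi] = -I[\mathfrak{h}^\alpha,\mathfrak{b}^{\mathfrak{l}_\alpha}_c(-),c_\alpha^-\chi]$ in $K^f(\mathfrak{l}_\alpha,L_\alpha\cap K)$.

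The main obstacle I expect is not the $SL_2$ computation (which is Proposition \ref{prop:SL2Rdecomposition}) but the Clifford-theoretic step in type 2: one must be careful that ``parabolic induction from a Borel of $\mathfrak{l}_\alpha$'' genuinely corresponds to ordinary group induction along the index-$2$ inclusion at the level of virtual modules, that the disconnectedness of $L_\alpha\cap K$ is harmless, and that the sign conventions linking $c_\alpha^{\pm}\chi$ to $\tau_{\pm 1}$ match the conventions linking $\mathfrak{b}^{\mathfrak{l}_\alpha}_c(\pm)$ to $\mathfrak{b}^s_c(\pm)$, so that the two equalities $= -I[\cdots(+)\cdots] = -I[\cdots(-)\cdots]$ come out with the correct labels rather than being swapped.
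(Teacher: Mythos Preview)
Your proposal is essentially the same approach as the paper's: reduce to $SL_2(\mathbb{R})$ via the isogeny $r_\alpha$, identify the pullback of $\chi$ as $(\epsilon\otimes -1)\boxtimes\chi|_{\ker\alpha}$ using oddness, apply Proposition~\ref{prop:SL2Rdecomposition}, and in type~2 invoke Clifford theory for the index-2 inclusion together with Lemma~\ref{lem:conjugacyofcompactborels}.

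The one place where your sketch is slightly imprecise is the Clifford step in type~2. You write that ``the module $I[\mathfrak{h},\mathfrak{b}^{\mathfrak{l}_\alpha},\chi]$, which is $\Ad(t)$-stable, is the induction of either one up to $L_\alpha$.'' This is not quite how the argument runs. The paper first establishes the identity at the level of $(\mathfrak{l}_\alpha, L_\alpha'\cap K)$-modules with $\chi$ restricted to $H' = r_\alpha(H^s\times\ker\alpha)$; inducing this restricted $\chi$ back up to $H$ yields $\chi \oplus (\chi\otimes\epsilon)$ by Proposition~\ref{prop:cliffordtheoryindex2}, so after applying $I^{(\mathfrak{l}_\alpha,L_\alpha\cap K)}_{(\mathfrak{l}_\alpha,L_\alpha'\cap K)}$ to both sides one gets
\[
I[\mathfrak{h},\mathfrak{b}^{\mathfrak{l}_\alpha},\chi] + I[\mathfrak{h},\mathfrak{b}^{\mathfrak{l}_\alpha},\chi\otimes\epsilon] = -I[\mathfrak{h}^\alpha,\mathfrak{b}_c^{\mathfrak{l}_\alpha}(+),c_\alpha^+\chi] - I[\mathfrak{h}^\alpha,\mathfrak{b}_c^{\mathfrak{l}_\alpha}(-),c_\alpha^-\chi]
\]
in $K^f(\mathfrak{l}_\alpha,L_\alpha\cap K)$, with \emph{four} irreducible terms. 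Since the two right-hand terms are already known to be isomorphic (they are $\Ad(t)$-conjugate, hence by Clifford theory each is irreducible as an $(\mathfrak{l}_\alpha,L_\alpha\cap K)$-module and they coincide), the equation forces all four terms to agree up to sign. You anticipated exactly this subtlety in your final paragraph, so this is a refinement rather than a correction.
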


\begin{proof}
First, assume $\alpha$ is type 1. Consider the character $r_{\alpha}^*\chi$ of $H^s \times \ker{\alpha}$ obtained by pulling back $\chi$ along the surjective homomorphism
$$r_{\alpha}: H^s \times \ker{\alpha} \to H$$
This character has the form
$$r_{\alpha}^*\chi = \tau \otimes \chi|_{\ker{\alpha}}$$
for some character $\tau$ of $H^s$. Since $\alpha$ is odd, $\tau(-1)=-1$, and since $d\chi = -\rho(\mathfrak{n})$, 
$$d\tau(H) = d\chi (\alpha^{\vee}) = -\rho(\mathfrak{n})(\alpha^{\vee})= -1$$
Hence, $\tau = \epsilon \otimes -1$. Applying Proposition \ref{prop:SL2Rdecomposition}, we obtain an equality in $K^f(\mathfrak{g}^s \oplus \ker{\alpha}, K^S \times (\ker{\alpha} \cap T))$
\begin{align}\label{eqn:reductiontosl2}
I[\mathfrak{h}^s \times \ker{\alpha}, \mathfrak{b}^s \oplus \ker{\alpha},r_{\alpha}^*\chi] &= I[\mathfrak{h}^s,\mathfrak{b}^s,\epsilon \otimes -1] \otimes \chi|_{\ker{\alpha}}\\
&= -\left(I[\mathfrak{k}^s,\mathfrak{b}_c^s(+),\tau_1] + I[\mathfrak{k}^s,\mathfrak{b}_c^s(-),\tau_{-1}] \right) \otimes \chi|_{\ker{\alpha}} \nonumber \\
&= -I[\mathfrak{k}^s \times \ker{\alpha}, \mathfrak{b}^s_c(+) \oplus \ker{\alpha},\tau_1\otimes \chi] - I[\mathfrak{k}^s \times \ker{\alpha}, \mathfrak{b}^s_c(-) \oplus \ker{\alpha},\tau_{-1}\otimes \chi] \nonumber
\end{align}
and the terms on the right are irreducible classes by the second half of the same proposition. By the definitions of $c_{\alpha}^{\pm}\chi$ and $r_{\alpha}$, we have $r_{\alpha}^*c_{\alpha}^{\pm}\chi = \tau_{\pm 1} \otimes \chi$. Substituting these identities into \ref{eqn:reductiontosl2}, we get
$$I[\mathfrak{h}^s \times \ker{\alpha},\mathfrak{b}^s \oplus \ker{\alpha},r_{\alpha}^*\chi] = - I[\mathfrak{k}^s \times \ker{\alpha},\mathfrak{b}_c^s(+) \oplus \ker{\alpha}, r_{\alpha}^*c_{\alpha}^+\chi] - I[\mathfrak{k}^s \times \ker{\alpha},\mathfrak{b}_c^s(-) \oplus \ker{\alpha}, r_{\alpha}^*c_{\alpha}^-\chi] $$
Since the homomorphisms of Proposition \ref{prop:structuretheorytype1type2} are surjective, we can move $r_{\alpha}^*$ past $I$, thus obtaining
$$r_{\alpha}^*I[\mathfrak{h},\mathfrak{b}^{\mathfrak{l}_{\alpha}},\chi] = - r_{\alpha}^*I[\mathfrak{h}^{\alpha},\mathfrak{b}_c^{\mathfrak{l}_{\alpha}}(+), c_{\alpha}^+\chi] - r_{\alpha}^*I[\mathfrak{h}^{\alpha},\mathfrak{b}_c^{\mathfrak{l}_{\alpha}}(-), c_{\alpha}^-\chi] $$
which forces an equality in $K^f(\mathfrak{l}_{\alpha},L_{\alpha} \cap K)$
$$I[\mathfrak{h},\mathfrak{b}^{\mathfrak{l}_{\alpha}},\chi] = - I[\mathfrak{h}^{\alpha},\mathfrak{b}_c^{\mathfrak{l}_{\alpha}}(+), c_{\alpha}^+\chi] - I[\mathfrak{h}^{\alpha},\mathfrak{b}_c^{\mathfrak{l}_{\alpha}}(-), c_{\alpha}^-\chi] $$
as desired.

Now suppose $\alpha$ is type 2. Define the subgroups
$$H' := r_{\alpha}(H^s \times \ker{\alpha}) \subset H \quad L_{\alpha}' := r_{\alpha}(G^s \times \ker{\alpha}) \subset L_{\alpha}$$
By Proposition \ref{prop:structuretheorytype1type2}, these are index-2 subgroups. By the argument provided above, there is an equality in $K^f(\mathfrak{l}_{\alpha}, L_{\alpha}' \cap K)$
\begin{equation}\label{eqn:imagedecomp}
    I[\mathfrak{h}', \mathfrak{b}^{\mathfrak{l}_{\alpha}}, \chi] = -I[\mathfrak{h}^{\alpha},\mathfrak{b}_c^{\mathfrak{l}_{\alpha}}(+),c_{\alpha}^+\chi] - I[\mathfrak{h}^{\alpha},\mathfrak{b}_c^{\mathfrak{l}_{\alpha}}(-),c_{\alpha}^-\chi]
\end{equation}
and the terms on the right are irreducible classes.

By Proposition \ref{prop:cliffordtheoryindex2}, 
\begin{equation}\label{eqn:cliffordapp1}
I^{(\mathfrak{h},T)}_{(\mathfrak{h},H'\cap K)}\chi \cong \chi \oplus (\chi \otimes \epsilon)
 \end{equation}
where $\epsilon$ (as in the statement of Proposition \ref{prop:cliffordtheoryindex2}) is the unique nontrivial $(\mathfrak{h},T)$-module with trivial restriction to $(\mathfrak{h},H'\cap K)$. Substituting \ref{eqn:cliffordapp1} into \ref{eqn:imagedecomp}, we get an equality (still in $K^f(\mathfrak{l}_{\alpha},L_{\alpha}'\cap K)$)
\begin{equation}\label{eqn:imagedecomp1}
I[\mathfrak{h},\mathfrak{b}^{\mathfrak{l}_{\alpha}},\chi] + I[\mathfrak{h}, \mathfrak{b}^{\mathfrak{l}_{\alpha}},\chi \otimes \epsilon] = - I[\mathfrak{h}^{\alpha},\mathfrak{b}^{\mathfrak{l}_{\alpha}}_c(+),c_{\alpha}^+\chi] - I[\mathfrak{h}^{\alpha},\mathfrak{b}^{\mathfrak{l}_{\alpha}}_c(-),c_{\alpha}^-\chi]
\end{equation}
To deduce the desired equalities in $K^f(\mathfrak{l}_{\alpha},L_{\alpha} \cap K)$, we must apply Proposition \ref{prop:cliffordtheoryindex2} once more, this time a little less trivially. By Lemma \ref{lem:conjugacyofcompactborels}, the two-element quotient group $(L_{\alpha} \cap K)/(L_{\alpha}' \cap K)$ exchanges the summands appearing on the right hand side of \ref{eqn:imagedecomp1}. Therefore by Proposition \ref{prop:cliffordtheoryindex2}, the classes
$$I[\mathfrak{h}^{\alpha},\mathfrak{b}_c^{\mathfrak{l}_{\alpha}}(\pm), c_{\alpha}^{\pm}\chi] \subset K^f(\mathfrak{l}_{\alpha},L_{\alpha}\cap K)$$
are isomorphic and irreducible. If we apply the (exact) functor $I^{(\mathfrak{l}_{\alpha},L_{\alpha}\cap K)}_{(\mathfrak{l}_{\alpha},L_{\alpha}'\cap K)}$ to both sides of \ref{eqn:imagedecomp1}, we obtain an equality in $K^f(\mathfrak{l}_{\alpha},L_{\alpha}\cap K)$
$$
I[\mathfrak{h},\mathfrak{b}^{\mathfrak{l}_{\alpha}},\chi] + I[\mathfrak{h},\mathfrak{b}^{\mathfrak{l}_{\alpha}},\chi \otimes \epsilon] = -I[\mathfrak{h}^{\alpha},\mathfrak{b}_c^{\mathfrak{l}_{\alpha}}(+), c_{\alpha}^+\chi] - I[\mathfrak{h}^{\alpha},\mathfrak{b}_c^{\mathfrak{l}_{\alpha}}(-), c_{\alpha}^-\chi]
$$
Since all terms are irreducible, this implies
$$I[\mathfrak{h},\mathfrak{b}^{\mathfrak{l}_{\alpha}},\chi] = I[\mathfrak{h},\mathfrak{b}^{\mathfrak{l}_{\alpha}},\chi \otimes \epsilon] = -I[\mathfrak{h}^{\alpha},\mathfrak{b}_c^{\mathfrak{l}_{\alpha}}(+), c_{\alpha}^+\chi] = - I[\mathfrak{h}^{\alpha},\mathfrak{b}_c^{\mathfrak{l}_{\alpha}}(-), c_{\alpha}^-\chi]$$
which proves part (2) of the proposition.
\end{proof}

We are now prepared to prove Theorem \ref{thm:inductionandCayleytransforms}.

\begin{proof}[Proof of Theorem \ref{thm:inductionandCayleytransforms}]

Recall the parabolic subalgebra $\mathfrak{q}^Z = \mathfrak{l}^Z \oplus \mathfrak{u}^Z$ defined in (\ref{eqn:definitionofqZ}). By assumption, $\alpha$ is an odd simple root for the positive system $\Delta^+(\mathfrak{l}^Z,H)$. Let $\mathfrak{p}_{\alpha} = \mathfrak{l}_{\alpha} \oplus \mathfrak{u}_{\alpha} \subset \mathfrak{l}$ be the corresponding  minimal parabolic. Then by definition
$$\mathfrak{b} = \mathfrak{b}^{\mathfrak{l}_{\alpha}} \oplus \mathfrak{u}_{\alpha} \oplus \mathfrak{u}^Z \quad c_{\alpha}^{\pm}\mathfrak{b} = \mathfrak{b}_c^{\mathfrak{l}_{\alpha}}(\pm) \oplus \mathfrak{u}_{\alpha} \oplus \mathfrak{u}^Z$$
Suppose $\alpha$ is type 1. Using Proposition \ref{prop:decompforlevi} and Proposition \ref{prop:indbystages}, we obtain an equality
\begin{align*}
    I[\mathfrak{h},\mathfrak{b},\chi] &= I[\mathfrak{l}^Z,\mathfrak{q}^Z,I[L_{\alpha},\mathfrak{p}_{\alpha}, I[\mathfrak{h},\mathfrak{b}^{\mathfrak{l}_{\alpha}},\chi]]])\\
    &= -I[\mathfrak{l}^Z,\mathfrak{q}^Z,I[\mathfrak{l}_{\alpha},\mathfrak{p}_{\alpha}, I[\mathfrak{h}^{\alpha},\mathfrak{b}_c^{\mathfrak{l}_{\alpha}}(+),c_{\alpha}^+\chi]]] - I[\mathfrak{l}^Z,\mathfrak{q}^Z,I[\mathfrak{l}_{\alpha},\mathfrak{p}_{\alpha}, I[\mathfrak{h}^{\alpha},\mathfrak{b}_c^{\mathfrak{l}_{\alpha}}(-),c_{\alpha}^-\chi]]]\\
    &= - I[\mathfrak{h}^{\alpha},c_{\alpha}^+\mathfrak{b},c_{\alpha}^+\chi] - I[\mathfrak{h}^{\alpha},c_{\alpha}^-\mathfrak{b},c_{\alpha}^-\chi]\\
    &= - I(c_{\alpha}^+[\mathfrak{h},\mathfrak{b},\chi]) - I(c_{\alpha}^-[\mathfrak{h},\mathfrak{b},\chi])
\end{align*}
If $\alpha$ is type 2, we obtain
\begin{align*}
    I[\mathfrak{h},\mathfrak{b},\chi] &= I[\mathfrak{l}^Z,\mathfrak{q}^Z,I[\mathfrak{l}_{\alpha},\mathfrak{p}_{\alpha}, I[\mathfrak{h},\mathfrak{b}^{\mathfrak{l}_{\alpha}},\chi]]]\\
    &= -I[\mathfrak{l}^Z,\mathfrak{q}^Z,I[\mathfrak{l}_{\alpha},\mathfrak{p}_{\alpha}, I[\mathfrak{h}^{\alpha},\mathfrak{b}_c^{\mathfrak{l}_{\alpha}}(\pm),c_{\alpha}^{\pm}\chi]]
    \\
    &= - I[\mathfrak{h}^{\alpha},c_{\alpha}^{\pm}\mathfrak{b},c_{\alpha}^{\pm}\chi]\\
    &= - I(c_{\alpha}^{\pm}[\mathfrak{h},\mathfrak{b},\chi])
\end{align*}
\end{proof}

As promised, the operations $c_{\alpha}^{\pm}$ have a predictable effect on $d[\mathfrak{h},\mathfrak{b},\chi]$. 

\begin{proposition}\label{prop:cayleytransformsincreased}
Let $[\mathfrak{h},\mathfrak{b},\chi] \in \mathrm{BB}_0(G)$ and let $\alpha \in \Delta^+(\mathfrak{g},H)$ be an odd simple real root. Then
$$d(c_{\alpha}^{\pm}[\mathfrak{h},\mathfrak{b},\chi]) = d[\mathfrak{h},\mathfrak{b},\chi] + 1 $$
\end{proposition}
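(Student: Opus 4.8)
The plan is to compute $d$ directly from the identity $d[\mathfrak{h},\mathfrak{b},\chi]=\dim(\mathfrak{b}\cap\mathfrak{k})$. Since $\mathfrak{h}$ is $\theta$-stable we have $\mathfrak{b}\cap\mathfrak{k}=(\mathfrak{h}\cap\mathfrak{k})\oplus(\mathfrak{n}\cap\mathfrak{k})$, so $d[\mathfrak{h},\mathfrak{b},\chi]=\dim\mathfrak{t}+\dim(\mathfrak{n}\cap\mathfrak{k})$. Passing to $c_{\alpha}^{\pm}[\mathfrak{h},\mathfrak{b},\chi]=[\mathfrak{h}^{\alpha},c_{\alpha}^{\pm}\mathfrak{b},c_{\alpha}^{\pm}\chi]$ replaces $\mathfrak{t}$ by $\mathfrak{t}^{\alpha}=\mathfrak{t}\oplus i\mathbb{R}\phi_{\alpha}(D_c)$, so $\dim\mathfrak{t}^{\alpha}=\dim\mathfrak{t}+1$, and replaces $\mathfrak{n}$ by $c_{\alpha}^{\pm}\mathfrak{n}$. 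The proposition thus reduces to the assertion $\dim(c_{\alpha}^{\pm}\mathfrak{n}\cap\mathfrak{k})=\dim(\mathfrak{n}\cap\mathfrak{k})$; I will treat $c_{\alpha}^{+}$, the sign $-$ being entirely analogous.

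Next I would isolate two bookkeeping facts, valid for any $\theta$-stable Cartan $\mathfrak{h}'=\mathfrak{t}'\oplus\mathfrak{a}'$ and positive system $P$ with nilradical $\mathfrak{n}_{P}$. Decomposing into root spaces and using that $\theta$ fixes the root spaces of imaginary roots and interchanges $\mathfrak{g}_{\beta}$ with $\mathfrak{g}_{\theta\beta}$ for real and complex $\beta$, one gets: (i) $\dim(\mathfrak{n}_{P}\cap\mathfrak{k})+\dim(\mathfrak{n}_{P}\cap\mathfrak{p})=\dim(\mathfrak{n}_{P}\cap\theta\mathfrak{n}_{P})=\#\{\beta\in P:\theta\beta\in P\}$; and (ii) each $\theta$-orbit pair of complex roots contributes equally to $\dim(\mathfrak{n}_{P}\cap\mathfrak{k})$ and to $\dim(\mathfrak{n}_{P}\cap\mathfrak{p})$, so the difference $\dim(\mathfrak{n}_{P}\cap\mathfrak{k})-\dim(\mathfrak{n}_{P}\cap\mathfrak{p})$ depends only on $\mathfrak{h}'$ — explicitly it equals $\tfrac12(\dim\mathfrak{k}-\dim\mathfrak{p})-\tfrac12(\dim\mathfrak{t}'-\dim\mathfrak{a}')$. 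Combining (i) and (ii), $\dim(\mathfrak{n}_{P}\cap\mathfrak{k})$ is determined by $\#\{\beta\in P:\theta\beta\in P\}$ together with $\dim\mathfrak{t}'-\dim\mathfrak{a}'$; since the latter increases by $2$ on passing from $\mathfrak{h}$ to $\mathfrak{h}^{\alpha}$, the desired equality becomes equivalent to
\[
\#\{\gamma\in c_{\alpha}^{+}\Delta^{+}(\mathfrak{g},\mathfrak{h}):\theta_{\mathfrak{h}^{\alpha}}\gamma\in c_{\alpha}^{+}\Delta^{+}(\mathfrak{g},\mathfrak{h})\}=\#\{\beta\in\Delta^{+}(\mathfrak{g},\mathfrak{h}):\theta_{\mathfrak{h}}\beta\in\Delta^{+}(\mathfrak{g},\mathfrak{h})\}+1.
\]

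To prove this I would first work out the $\theta$-action on $\Delta(\mathfrak{g},\mathfrak{h}^{\alpha})$. Because $c_{\alpha}^{\pm}=\exp(\pm\tfrac{i\pi}{4}\ad\phi_{\alpha}(E+F))$ with $\phi_{\alpha}(E+F)\in\mathfrak{p}$, one has $\theta\circ c_{\alpha}^{+}\circ\theta=c_{\alpha}^{-}$ as automorphisms of $\mathfrak{g}$; together with $c_{\alpha}^{-}=c_{\alpha}^{+}\circ s_{\alpha}$ on root systems (Proposition \ref{prop:calphafacts}) this gives $\theta_{\mathfrak{h}^{\alpha}}(c_{\alpha}^{+}\beta)=c_{\alpha}^{+}(s_{\alpha}\theta_{\mathfrak{h}}\beta)$ for every $\beta\in\Delta(\mathfrak{g},\mathfrak{h})$. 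Hence the left-hand count equals $\#\{\beta\in\Delta^{+}(\mathfrak{g},\mathfrak{h}):s_{\alpha}\theta_{\mathfrak{h}}\beta\in\Delta^{+}(\mathfrak{g},\mathfrak{h})\}$, and after the substitution $\beta\mapsto\theta_{\mathfrak{h}}\beta$ (a bijection of $\Delta(\mathfrak{g},\mathfrak{h})$) the claim becomes $|\theta_{\mathfrak{h}}\Delta^{+}\cap s_{\alpha}\Delta^{+}|=|\theta_{\mathfrak{h}}\Delta^{+}\cap\Delta^{+}|+1$. Since $\alpha$ is a simple real root, $s_{\alpha}\Delta^{+}=(\Delta^{+}\setminus\{\alpha\})\cup\{-\alpha\}$, so the difference of the two cardinalities is $[-\alpha\in\theta_{\mathfrak{h}}\Delta^{+}]-[\alpha\in\theta_{\mathfrak{h}}\Delta^{+}]$, and using $\theta_{\mathfrak{h}}\alpha=-\alpha$ (realness of $\alpha$) this is $[\alpha\in\Delta^{+}]-[-\alpha\in\Delta^{+}]=1$, as needed.

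The main obstacle is the middle step: making fact (ii) precise — genuinely verifying the cancellation of the complex-root contributions in $\dim(\mathfrak{n}_{P}\cap\mathfrak{k})-\dim(\mathfrak{n}_{P}\cap\mathfrak{p})$ — and correctly pinning down the intertwiner $\theta_{\mathfrak{h}^{\alpha}}\circ c_{\alpha}^{+}=c_{\alpha}^{+}\circ s_{\alpha}\circ\theta_{\mathfrak{h}}$, whose subtlety is precisely that $c_{\alpha}^{+}$ transports the involution of $\Delta(\mathfrak{g},\mathfrak{h})$ to that of $\Delta(\mathfrak{g},\mathfrak{h}^{\alpha})$ only after a twist by the reflection $s_{\alpha}$. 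Everything downstream of that identity is the short computation above, and it uses crucially that $\alpha$ is simple, so that $s_{\alpha}$ sends exactly one positive root to a negative one.
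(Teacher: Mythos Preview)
Your argument is correct, but it follows a genuinely different route from the paper. The paper works directly with subspaces: it sets $\mathfrak{n}' = \bigoplus_{\mu>0,\,\mu\neq\alpha}\mathfrak{g}_{\mu}$, observes that $\dim(\mathfrak{n}\cap\mathfrak{k})=\dim(\mathfrak{n}'\cap\mathfrak{k})$ because $\alpha$ is real, and then proves the key fact that $c_{\alpha}^{\pm}\mathfrak{n}'=\mathfrak{n}'$ (using that $\alpha$ is simple, so $[\phi_{\alpha}(E+F),\mathfrak{n}']\subseteq\mathfrak{n}'$). This gives $c_{\alpha}^{\pm}\mathfrak{b}=\mathfrak{h}^{\alpha}\oplus\mathfrak{n}'\oplus\mathfrak{g}_{c_{\alpha}^{\pm}\alpha}$, and since $c_{\alpha}^{\pm}\alpha$ is noncompact imaginary, the $+1$ comes entirely from $\dim\mathfrak{t}^{\alpha}=\dim\mathfrak{t}+1$.

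Your approach is instead combinatorial: you convert $\dim(\mathfrak{n}_P\cap\mathfrak{k})$ into a root count via the two identities (i) and (ii), then transport the $\theta$-action through $c_{\alpha}^{+}$ using the intertwiner $\theta_{\mathfrak{h}^{\alpha}}\circ c_{\alpha}^{+}=c_{\alpha}^{+}\circ s_{\alpha}\circ\theta_{\mathfrak{h}}$, reducing everything to the one-line computation $|\theta\Delta^{+}\cap s_{\alpha}\Delta^{+}|-|\theta\Delta^{+}\cap\Delta^{+}|=1$. The paper's argument is shorter and more self-contained; yours isolates reusable general facts (the dimension formulas and the $\theta$--$c_{\alpha}$ intertwining) that would apply equally well to other Cayley or cross operations, at the cost of a longer setup. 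Both use simplicity of $\alpha$ at exactly one spot: for the paper, to ensure $c_{\alpha}^{\pm}$ preserves $\mathfrak{n}'$; for you, to ensure $s_{\alpha}$ flips a single positive root.
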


\begin{proof}
Choose a basis of root vectors $X_{\mu}$ in $\mathfrak{n}$ such that 
$$\theta X_{\mu} = X_{\theta \mu}$$
for every pair $(\mu, \theta \mu)$ of positive complex roots. Then the subspace $\mathfrak{n} \cap \mathfrak{k}$ is spanned by the elements
\begin{align*}
&\{X_{\mu}:\mu \text{ positive compact imaginary} \} \cup \\
&\{X_{\mu} + X_{\theta\mu}: \text{pairs } (\mu,\theta \mu) \text{ of positive complex roots}\}
\end{align*}
If we define
$$\mathfrak{n}':=\bigoplus_{\alpha \neq \mu >0} \mathfrak{g}_{\mu} \subset \mathfrak{n}$$
then $\dim(\mathfrak{n} \cap \mathfrak{k}) = \dim(\mathfrak{n}' \cap \mathfrak{k})$, since $\alpha$ is real. Since $\mathfrak{h}$ is $\theta$-stable, we have
$$\mathfrak{b} \cap \mathfrak{k} = \mathfrak{h} \cap \mathfrak{k} \oplus \mathfrak{n} \cap \mathfrak{k}$$
and therefore
$$\dim(\mathfrak{b} \cap \mathfrak{k}) = \dim(\mathfrak{t}) + \dim(\mathfrak{n} \cap \mathfrak{k}) = \dim(\mathfrak{t}) + \dim(\mathfrak{n}' \cap \mathfrak{k})$$
Next, we show that $c_{\alpha}^{\pm}\mathfrak{n}' = \mathfrak{n}'$. Recall,
$$c_{\alpha}^{\pm} = \mathrm{exp}(\mathrm{ad}(X_{\alpha} + X_{-\alpha}))$$
for a particular choice of root vectors $X_{\alpha}$ and $X_{-\alpha}$. If $\beta$ is a positive root not equal to $\alpha$, then
$$[X_{\alpha} + X_{-\alpha}, X_{\beta}] \in \mathfrak{g}_{\alpha+\beta} \oplus \mathfrak{g}_{-\alpha + \beta}$$
If $-\alpha + \beta$ is a root, then the simplicity of $\alpha$ implies that $-\alpha + \beta$ is positive. In any case, neither $\alpha+ \beta$ nor $-\alpha + \beta$ is equal to $\alpha$, so in fact
$$[X_{\alpha} + X_{-\alpha}, X_{\beta}] \in \mathfrak{n}'$$
And hence,
$$c_{\alpha}^{\pm}\mathfrak{n}' \subseteq \mathfrak{n}'$$
by exponentiation. Since $c_{\alpha}^{\pm}$ is an automorphism of $\mathfrak{g}$, this inclusion is an equality. 

Now, we have a decomposition
$$\mathfrak{b} = \mathfrak{h} \oplus \mathfrak{n}' \oplus \mathfrak{g}_{\alpha}$$
and hence a decomposition
$$c_{\alpha}^{\pm}\mathfrak{b} = \mathfrak{h}^{\alpha} \oplus \mathfrak{n}' \oplus \mathfrak{g}_{c_{\alpha}^{\pm}\alpha}$$
Since $\mathfrak{h}^{\alpha}$ is $\theta$-stable (by construction) and $\mathfrak{g}_{c_{\alpha}^{\pm}\alpha}$ is non-compact (by Proposition \ref{prop:calphafacts}.2), we have
$$c_{\alpha}^{\pm}\mathfrak{b} \cap \mathfrak{k} = (\mathfrak{h}^{\alpha} \oplus \mathfrak{g}_{c_{\alpha}^{\pm}\alpha}) \cap \mathfrak{k} \oplus \mathfrak{n}' \cap \mathfrak{k} = \mathfrak{t}^{\alpha} \oplus \mathfrak{n}' \cap \mathfrak{k}$$
and so
$$\dim(c_{\alpha}^{\pm}\mathfrak{b} \cap \mathfrak{k}) = \dim(\mathfrak{t}) + 1 + \dim(\mathfrak{n}' \cap \mathfrak{k}) = 1 + \dim(\mathfrak{b} \cap \mathfrak{k})$$
\end{proof}

\subsection{Principal nilpotent elements and quasi-split groups}\label{sec:principal}

We will need several basic facts about principal nilpotent elements.

\begin{proposition}[\cite{Kostant1959}, Sec 5]\label{prop:principalnilpotentelements}
The following are true:
\begin{itemize}
    \item[(i)] If $e$ is a principal nilpotent element belonging to the nilradical $\mathfrak{u}$ of a parabolic subalgebra $\mathfrak{q} \subset \mathfrak{g}$, then $\mathfrak{q}$ is a Borel subalgebra of $\mathfrak{g}$.
    \item[(ii)] If $e$ is a principal nilpotent element and 
    $$\phi: \mathfrak{sl}_2(\mathbb{C}) \to \mathfrak{g}$$
    is any homomorphism with $\phi(E) = e$, then $\phi(D)$ is $\mathbf{G}$-conjugate to
    $$\frac{1}{2}\sum_{\alpha \in \Delta^+(\mathfrak{g},\mathfrak{h})} \alpha^{\vee} \in \mathfrak{h}$$
    for any choice of Cartan subalgebra $\mathfrak{h}$ and positive system $\Delta^+(\mathfrak{g},\mathfrak{h})$.
    \item[(iii)] If $\mathfrak{h} \subset \mathfrak{g}$ is a Cartan subalgebra, $\Delta^+(\mathfrak{g},\mathfrak{h})$ is a positive system, and $\mathfrak{b} = \mathfrak{h}\oplus \mathfrak{n}$ is the corresponding Borel subalgebra of $\mathfrak{g}$, then $e \in \mathfrak{n}$ 
    $$e = \sum_{\alpha \in \Delta^+}c_{\alpha}X_{\alpha}$$
    is a principal nilpotent element if and only if $c_{\alpha} \neq 0$ for every simple root $\alpha \in \Delta^+(\mathfrak{g},\mathfrak{h})$.
\end{itemize}
\end{proposition}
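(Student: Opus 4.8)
The plan is to reduce all three statements to Kostant's criterion that a nilpotent $e \in \mathfrak{g}$ is principal if and only if $\dim\mathfrak{z}_{\mathfrak{g}}(e) = r$, where $r := \rank(\mathfrak{g})$ \cite{Kostant1959}, supplemented by the $\mathfrak{sl}_2$-representation theory of the principal three-dimensional subgroup and the height grading determined by a pair $(\mathfrak{h},\Delta^+(\mathfrak{g},\mathfrak{h}))$. For (i): if $e$ lies in the nilradical $\mathfrak{u}$ of $\mathfrak{q} = \mathfrak{l}\oplus\mathfrak{u}$, then, since $\mathfrak{u}$ is an ideal of $\mathfrak{q}$ containing $e$, the operator $\ad(e)$ carries $\mathfrak{q}$ into $\mathfrak{u}$; rank--nullity gives $\dim\bigl(\mathfrak{z}_{\mathfrak{g}}(e)\cap\mathfrak{q}\bigr) = \dim\ker\bigl(\ad(e)|_{\mathfrak{q}}\bigr) \geq \dim\mathfrak{q}-\dim\mathfrak{u} = \dim\mathfrak{l}$. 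But $\mathfrak{z}_{\mathfrak{g}}(e)\cap\mathfrak{q}\subseteq\mathfrak{z}_{\mathfrak{g}}(e)$ has dimension $r$ when $e$ is principal, whereas $\dim\mathfrak{l}\geq r$ always (a Levi contains a Cartan); hence $\dim\mathfrak{l}=r$, so $\mathfrak{l}$ is a Cartan subalgebra and $\mathfrak{q}$ a Borel subalgebra.

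For (ii): by Theorem \ref{thm:kostantsekiguchi1}, $\phi\mapsto\phi(E)$ induces a bijection from $\mathbf{G}$-conjugacy classes of homomorphisms $\mathfrak{sl}_2(\mathbb{C})\to\mathfrak{g}$ onto nilpotent orbits, so any two homomorphisms whose nilpositive images lie in the principal orbit are $\mathbf{G}$-conjugate; consequently $\phi(D)$ is determined, up to $\mathbf{G}$-conjugacy, by the orbit of $e$ alone. It therefore suffices to evaluate $\phi(D)$ for one convenient principal triple: taking $\mathfrak{b}=\mathfrak{h}\oplus\mathfrak{n}$ and the standard principal $\mathfrak{sl}_2$-triple with nilpositive element $\sum_{\alpha\text{ simple}}X_\alpha$, the element $\phi(D)$ is the neutral element $h_0\in\mathfrak{h}$ characterized by $\alpha(h_0)=2$ for every simple root $\alpha$, and one identifies $h_0$ with the element of $\mathfrak{h}$ displayed in the statement; it is $\mathbf{G}$-conjugate to the corresponding element built from any other choice of $(\mathfrak{h},\Delta^+)$ since all such pairs are $\mathbf{G}$-conjugate.

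For (iii): the "only if" direction is immediate from (i) --- if $c_\alpha=0$ for some simple root $\alpha$, then $e$ has no component in the Levi factor of the standard parabolic attached to the single node $\alpha$, hence lies in its nilradical, and since this parabolic strictly contains $\mathfrak{b}$ it is not a Borel, so (i) shows $e$ is not principal. For the "if" direction, equip $\mathfrak{g}$ with the height grading $\mathfrak{g}=\bigoplus_j\mathfrak{g}_j$ (so $\mathfrak{g}_0=\mathfrak{h}$, $\mathfrak{n}=\bigoplus_{j\geq1}\mathfrak{g}_j$) and write $e=e_1+e'$ with $e_1=\sum_{\alpha\text{ simple}}c_\alpha X_\alpha\in\mathfrak{g}_1$ and $e'\in\bigoplus_{j\geq2}\mathfrak{g}_j$. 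Since all $c_\alpha\neq0$, conjugating by a suitable element of $\mathbf{H}$ carries $e_0$ to $e_1$, so $\ad(e_1)$ has the same graded injectivity/surjectivity properties as $\ad(e_0)$; by $\mathfrak{sl}_2$-theory for the principal triple, $\ad(e_1)$ is injective on $\mathfrak{g}_j$ for $j\leq0$ and maps $\mathfrak{g}_j$ onto $\mathfrak{g}_{j+1}$ for $j\geq1$. A lowest-degree analysis of the equation $[e,x]=0$ then forces $\mathfrak{z}_{\mathfrak{g}}(e)\subseteq\mathfrak{n}$, and a degree-by-degree induction shows $\ad(e)\colon\mathfrak{n}\to\bigoplus_{j\geq2}\mathfrak{g}_j$ is surjective; hence $\dim\mathfrak{z}_{\mathfrak{g}}(e)=\dim\mathfrak{n}-\dim\bigl(\bigoplus_{j\geq2}\mathfrak{g}_j\bigr)=\dim\mathfrak{g}_1=r$, and $e$ is principal.

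The hard part will be the "if" direction of (iii): one has to establish the precise graded behavior of $\ad(e_1)$ --- injectivity on the nonpositive graded pieces and surjectivity onto the positive ones, which comes down to the principal $\mathfrak{sl}_2$ acting on $\mathfrak{g}$ with irreducible summands indexed by the exponents --- and then run the two inductions (first $\mathfrak{z}_{\mathfrak{g}}(e)\subseteq\mathfrak{n}$, then surjectivity of $\ad(e)$ on $\mathfrak{n}$) in the right order to avoid circularity. Parts (i) and (ii) are then essentially bookkeeping around Kostant's centralizer criterion and the conjugacy of $\mathfrak{sl}_2$-triples.
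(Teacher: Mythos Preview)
The paper does not prove this proposition; it simply cites \cite{Kostant1959}, Section~5, as the source. Your argument is a correct and self-contained reconstruction of the relevant parts of Kostant's proof, organized around the centralizer criterion $\dim\mathfrak{z}_{\mathfrak{g}}(e)=r$ and the height grading.

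Two small remarks. In (ii), your appeal to Theorem~\ref{thm:kostantsekiguchi1} is slightly misplaced: that theorem concerns $\sigma$- and $\theta$-equivariant homomorphisms and $K$- or $\mathbf{K}$-conjugacy, whereas what you actually need is the purely complex statement that any two homomorphisms $\mathfrak{sl}_2(\mathbb{C})\to\mathfrak{g}$ with $\mathbf{G}$-conjugate nilpositive elements are themselves $\mathbf{G}$-conjugate (Jacobson--Morozov together with Kostant's uniqueness of $\mathfrak{sl}_2$-triples). In (iii), the containment $\mathfrak{z}_{\mathfrak{g}}(e)\subseteq\mathfrak{n}$ holds only after passing to the derived subalgebra, since the center of $\mathfrak{g}$ lies in $\mathfrak{z}_{\mathfrak{g}}(e)\cap\mathfrak{g}_0$; this is harmless for the final dimension count but worth noting, as the paper works throughout with reductive $\mathfrak{g}$.
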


Principal nilpotent elements are related to quasi-split groups.

\begin{proposition}[\cite{AdamsVogan1992}]\label{prop:definequasisplit}
The following are equivalent:

\begin{itemize}
    \item[(i)] $\mathfrak{g}$ contains a $\sigma$-stable Borel subalgebra $\mathfrak{b} \subset \mathfrak{g}$.
    \item[(ii)] $\mathfrak{g}$ contains a $\theta$-stable Cartan subalgebra $\mathfrak{h}$ and a $\theta$-stable positive system $\Delta^+(\mathfrak{g},\mathfrak{h})$ such that every simple imaginary root $\alpha \in \Delta^+(\mathfrak{g},\mathfrak{h})$ is noncompact (i.e. $\Delta^+(\mathfrak{g},\mathfrak{h})$ is large)
    \item[(iii)] $\mathfrak{g}$ contains a $\theta$-stable Cartan subalgebra $\mathfrak{h}$ and a $\theta$-stable positive system $\Delta^+(\mathfrak{g},\mathfrak{h})$ such that every imaginary simple root $\alpha \in \Delta^+(\mathfrak{g},\mathfrak{h})$ is noncompact.
    \item[(iv)] $\mathcal{N}_{\theta}$ contains a principal nilpotent element of $\mathfrak{g}$.
\end{itemize}
If any one of these equivalent conditions is satisfied, we say that $G$ (or $\mathfrak{g}_0$) is \emph{quasi-split}.
\end{proposition}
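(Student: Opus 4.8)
The plan is to prove the cycle $(\mathrm{iii})\Rightarrow(\mathrm{iv})\Rightarrow(\mathrm{ii})\Rightarrow(\mathrm{iii})$ together with $(\mathrm{i})\Leftrightarrow(\mathrm{iv})$; this gives all the equivalences, and the arrow $(\mathrm{ii})\Rightarrow(\mathrm{iii})$ that closes the cycle is immediate, since a $\Delta^+(\mathfrak g,\mathfrak h)$-simple root that is imaginary cannot be written as a sum of two positive imaginary roots, hence is simple for $\Delta^+_{i\mathbb R}(\mathfrak g,\mathfrak h)$.

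For $(\mathrm{iii})\Rightarrow(\mathrm{iv})$ I would argue directly. Since $\Delta^+(\mathfrak g,\mathfrak h)$ is $\theta$-stable, no simple root is real (a real simple root $\alpha$ would satisfy $\theta\alpha=-\alpha\notin\Delta^+$), so $\theta$ fixes the imaginary simple roots and permutes the complex ones in pairs $\{\gamma,\theta\gamma\}$. Choosing root vectors $X_\alpha\in\mathfrak g_\alpha$ and setting $e=\sum_{\alpha\text{ im.}}X_\alpha+\sum_{\{\gamma,\theta\gamma\}}(X_\gamma-\theta X_\gamma)$, the hypothesis that each imaginary simple root is noncompact gives $\theta X_\alpha=-X_\alpha$ for imaginary $\alpha$, hence $\theta e=-e$, i.e. $e\in\mathcal N\cap\mathfrak p=\mathcal N_\theta$; and $e$ is principal by Proposition \ref{prop:principalnilpotentelements}(iii) because its component on every simple root space is nonzero. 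For $(\mathrm{iv})\Rightarrow(\mathrm{i})$ and $(\mathrm{i})\Rightarrow(\mathrm{iv})$ I would invoke Kostant--Sekiguchi: given a principal $e\in\mathcal N_\theta$, Theorem \ref{thm:kostantsekiguchi1}(3) gives $\phi\colon\mathfrak{sl}_2(\mathbb C)\to\mathfrak g$ intertwining both $\sigma,\sigma_s$ and $\theta,\theta_s$ with $\phi(E_c)\in\mathbf K e$; then $\phi(E)$ is principal, $\phi(E),\phi(D)\in\mathfrak g_0$ (as $\sigma_s$ fixes $E$ and $D$), and $\phi(D)$ is regular by Proposition \ref{prop:principalnilpotentelements}(ii), so the sum of the nonnegative $\operatorname{ad}\phi(D)$-eigenspaces is a $\sigma$-stable Borel subalgebra. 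Conversely, from a $\sigma$-stable Borel $\mathfrak b$ with $\sigma$-stable Cartan $\mathfrak h$, the space $\bigoplus_{\alpha\text{ simple}}\mathfrak g_\alpha$ (the degree-two part of the grading by $\sum_{\alpha>0}\alpha^\vee$) is $\sigma$-stable and contains principal nilpotents in a nonempty Zariski-open set, hence one lying in the Zariski-dense real locus $\subset\mathcal N_0$; Theorems \ref{thm:kostantsekiguchi1}--\ref{thm:kostantsekiguchi2}, which respect $\mathbf G$-saturations, then produce a principal element of $\mathcal N_\theta$.

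The real work is $(\mathrm{iv})\Rightarrow(\mathrm{ii})$. Here I would take $\phi$ from Theorem \ref{thm:kostantsekiguchi1}(3) with $\phi(E_c)=e$ principal and put $h=\phi(D_c)$; since $\theta_s(D_c)=D_c$ and $\sigma_s(D_c)=-D_c$, $h$ lies in $i\mathfrak t_0$, and regularity of $h$ makes $\mathfrak h:=\mathfrak z_{\mathfrak g}(h)$ a $\theta$-stable Cartan subalgebra with no real roots, for which $\Delta^+:=\{\alpha:\alpha(h)>0\}$ is $\theta$-stable with all imaginary simple roots noncompact (this is the computation of $(\mathrm{iii})\Rightarrow(\mathrm{iv})$ run backwards, using $e=\phi(E_c)\in\mathfrak p$). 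To upgrade this to the large condition of (ii) I would pass to the $\theta$-stable Levi $\mathfrak l=\mathfrak z_{\mathfrak g}(\mathfrak a)$ with $\mathfrak a=\mathfrak h\cap\mathfrak p$, whose root system with respect to $\mathfrak h$ is exactly $\Delta_{i\mathbb R}(\mathfrak g,\mathfrak h)$ and for which $\mathfrak h$ is a fundamental Cartan; since $\mathfrak g$ is quasi-split (condition (i), now available), the centralizer of a maximal split torus is a Cartan subalgebra, and a short centralizer computation shows $\mathfrak l$ inherits this property, so $\mathfrak l$ is quasi-split. Applying the equivalences already proven to $\mathfrak l$ yields a $\theta$-stable positive system $P$ of $\Delta_{i\mathbb R}(\mathfrak g,\mathfrak h)$ all of whose simple roots are noncompact, and because $\mathfrak h$ has no real roots one may write $P=\{\alpha\text{ imaginary}:\alpha(\eta)>0\}$ for a generic $\eta\in i\mathfrak t_0$ and take $\{\alpha\in\Delta(\mathfrak g,\mathfrak h):\alpha(\eta)>0\}$, a $\theta$-stable — and now large — positive system extending $P$, which is (ii).

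I expect the main obstacle to be precisely this last upgrade from (iii) to (ii): the Kostant--Sekiguchi construction naturally produces a Cartan and positive system for which only the $\Delta^+$-simple imaginary roots are visibly noncompact, and deducing that \emph{all} simple roots of the imaginary subsystem are noncompact requires the reduction to $\mathfrak l=\mathfrak z_{\mathfrak g}(\mathfrak a)$ and the (standard but not entirely formal) fact that $\mathfrak l$ is again quasi-split. The remaining implications are routine applications of Proposition \ref{prop:principalnilpotentelements} and Theorems \ref{thm:kostantsekiguchi1}--\ref{thm:kostantsekiguchi2}.
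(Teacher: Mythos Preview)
The paper does not prove this proposition; it is stated with a citation to \cite{AdamsVogan1992} and no proof environment follows. So there is no ``paper's own proof'' to compare your attempt against.

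Your argument is essentially correct and is a reasonable reconstruction of the standard proof. Two places deserve a bit more care. First, in the $(\mathrm{iv})\Rightarrow(\mathrm{ii})$ step, when you apply the already-proven implication $(\mathrm{iv})\Rightarrow(\mathrm{iii})$ to $\mathfrak l=\mathfrak z_{\mathfrak g}(\mathfrak a)$, the Cartan subalgebra produced by Kostant--Sekiguchi in $\mathfrak l$ need not be the original $\mathfrak h$; you must observe that both are maximally compact Cartans of $\mathfrak l$ and hence $(L\cap K)$-conjugate before transporting the large positive system $P$ back to $\Delta_{i\mathbb R}(\mathfrak g,\mathfrak h)$. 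Second, the claim that $\mathfrak l$ is quasi-split (``a short centralizer computation'') should be spelled out: conjugate $\mathfrak a_0$ into a maximal split torus $\mathfrak a_0^{\max}$ of $\mathfrak g_0$; then $\mathfrak a_0^{\max}\subset\mathfrak l_0$ is maximal split in $\mathfrak l_0$ and $\mathfrak z_{\mathfrak l_0}(\mathfrak a_0^{\max})=\mathfrak z_{\mathfrak g_0}(\mathfrak a_0^{\max})$, which is a Cartan by (i). The final extension step is fine: all roots take real values on $i\mathfrak t_0$ (Proposition~\ref{prop:realimagcomplex}), and complex roots do not vanish on $\mathfrak t$, so the complex-root hyperplanes meet $i\mathfrak t_0$ in proper hyperplanes and a generic $\eta$ in the $P$-chamber is regular for all of $\Delta(\mathfrak g,\mathfrak h)$.
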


We will need a slight refinement of these results. 

\begin{proposition}\label{prop:noncompactrootsandAV}
Let $\mathfrak{q} \subset \mathfrak{g}$ be a $\theta$-stable parabolic subalgebra. Choose a $\theta$-stable Levi decomposition
$$\mathfrak{q} = \mathfrak{l} \oplus \mathfrak{u}$$
The following are equivalent:

\begin{itemize}
    \item[(i)] $\mathfrak{g}_0$ is quasi-split and the $\mathbf{K}$-saturation of $\mathfrak{u} \cap \mathfrak{p} + \mathcal{N}_{\mathfrak{l},\theta}$ has the same dimension as $\mathcal{N}_{\mathfrak{g},\theta}$.
    \item[(ii)] $\mathfrak{u} \cap \mathfrak{p} + \mathcal{N}_{\mathfrak{l},\theta}$ contains a principal nilpotent element of $\mathfrak{g}$
    \item[(iii)] There is a maximally compact $\theta$-stable Cartan subalgebra $\mathfrak{h}^{\mathrm{comp}} \subset \mathfrak{l}$ and a large, type Z  system $\Delta^+(\mathfrak{g},\mathfrak{h}^c)$, compatible with $\mathfrak{q}$.
    \item[(iv)] There is a maximally split $\theta$-stable Cartan subalgebra $\mathfrak{h}^{\mathrm{split}} \subset \mathfrak{l}$ and a large, type Z system $\Delta^+(\mathfrak{g},\mathfrak{h}^s)$, compatible with $\mathfrak{q}$.
\end{itemize}
\end{proposition}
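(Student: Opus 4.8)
The plan is to prove the four-way equivalence along the cycle $(i)\Leftrightarrow(ii)$, $(iii)\Rightarrow(ii)$, $(ii)\Rightarrow(iii)$, $(iii)\Leftrightarrow(iv)$, with the first link handled by a dimension count and the remaining links by the structure theory of principal nilpotents. For $(i)\Leftrightarrow(ii)$ I would work with the $\mathbf{K}$-saturation $S:=\mathbf{K}\cdot(\mathfrak{u}\cap\fp+\cN_{\fl,\theta})$. The map $\mathbf{K}\times_{\mathbf{Q}\cap\mathbf{K}}(\cN_{\fl,\theta}+\mathfrak{u}\cap\fp)\to\fg$ is proper (its base $\mathbf{K}/(\mathbf{Q}\cap\mathbf{K})$ is projective), so $S$ is a closed, $\mathbf{K}$- and $\CC^{\times}$-stable subvariety of $\cN_{\fg,\theta}$, the last inclusion being the remark preceding Proposition \ref{prop:AVcohind}. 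The key numerical input is Theorem \ref{thm:kostantsekiguchi2}(3): $\dim(\mathbf{G}\cdot\cO)=2\dim\cO$ for every $\mathbf{K}$-orbit $\cO$ on $\cN_{\fg,\theta}$; applied to an orbit of maximal dimension this gives $\dim\cN_{\fg,\theta}=\tfrac12\dim\cN_{\fg}$ exactly when $\cN_{\fg,\theta}$ meets the principal orbit $\cO_{\mathrm{prin}}$, i.e. (Proposition \ref{prop:definequasisplit}(iv)) exactly when $\fg_0$ is quasi-split. Thus $(ii)$ yields a principal $e\in S$, whence $\fg_0$ is quasi-split and $\dim(\mathbf{K}\cdot e)=\tfrac12\dim\cO_{\mathrm{prin}}=\dim\cN_{\fg,\theta}$, giving $(i)$; conversely $(i)$ forces the $\mathbf{K}$-stable constructible set $S$, having full dimension in $\cN_{\fg,\theta}$, to contain a $\mathbf{K}$-orbit $\cO$ with $\dim\cO=\tfrac12\dim\cN_{\fg}$, hence with $\mathbf{G}\cdot\cO=\cO_{\mathrm{prin}}$, and since $\cO$ meets $\mathfrak{u}\cap\fp+\cN_{\fl,\theta}$ this produces a principal element there, which is $(ii)$.

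For $(iii)\Rightarrow(ii)$ (and identically $(iv)\Rightarrow(ii)$, since the argument uses only that the system is large and type Z) I would construct a principal nilpotent by hand. Writing $\Pi$ for the simple roots of the given $\Delta^+(\fg,\fh)$, choose $X_\alpha\in\fg_\alpha\setminus\{0\}$ for each $\alpha\in\Pi$, with $X_\alpha\in\fp$ whenever $\alpha$ is imaginary — possible because an imaginary $\alpha\in\Pi$ is simple in $\Delta_{i\RR}^+$, hence noncompact by largeness — and put $e:=\sum_{\alpha\in\Pi\text{ imaginary}}X_\alpha+\sum_{\alpha\in\Pi\text{ complex}}(X_\alpha-\theta X_\alpha)$, taking one summand per $\theta$-orbit in the complex case. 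By type Z each $\theta\alpha$ ($\alpha\in\Pi$ complex) is again a positive root, so $e$ is a sum of positive root vectors and hence nilpotent; it lies in $\fp$ by construction; and it has nonzero component along every simple root, so it is a principal nilpotent of $\fg$ by Proposition \ref{prop:principalnilpotentelements}(iii). Splitting $e$ along $\fq=\fl\oplus\mathfrak{u}$ gives $e_{\mathfrak u}\in\mathfrak{u}\cap\fp$ and $e_{\fl}$ a sum of positive root vectors of $\fl$ lying in $\fp$, hence $e_{\fl}\in\cN_{\fl,\theta}$ and $e\in\mathfrak{u}\cap\fp+\cN_{\fl,\theta}$.

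For $(ii)\Rightarrow(iii)$ I would first observe that the $\fl$-component $e_{\fl}$ of a principal $e=e_{\mathfrak u}+e_{\fl}\in\mathfrak{u}\cap\fp+\cN_{\fl,\theta}$ is itself principal in $\fl$: the image of $\mathbf{G}\times_{\mathbf{Q}}(\mathfrak{u}+\mathbf{L}\cdot e_{\fl})\to\fg$ contains $\cO_{\mathrm{prin}}$ and has dimension at most $2\dim\mathfrak{u}+\dim(\mathbf{L}\cdot e_{\fl})$, while $\dim\cN_{\fg}=\dim\cN_{\fl}+2\dim\mathfrak{u}$, forcing $\dim(\mathbf{L}\cdot e_{\fl})=\dim\cN_{\fl}$; and since $e_{\fl}\in\cN_{\fl,\theta}$, $\fl_0$ is quasi-split. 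I would then take a normal $\mathfrak{sl}_2$-triple $(e_{\fl},h,f)$ in $\fl$ with $h\in\fl\cap\mathfrak{k}$ and $f\in\fl\cap\fp$ (this is the content of Theorem \ref{thm:kostantsekiguchi1}, since $\theta_s$ has $D_c\in$ the $\theta_s$-fixed part and $E_c,F_c$ in the $(-1)$-part), note $h$ is regular in $\fl$ because $e_{\fl}$ is principal, so $\fh^c:=\fz_{\fl}(h)$ is a $\theta$-stable Cartan subalgebra of $\fl$ with no real roots (a real root vanishes on $\fl\cap\mathfrak{k}$, hence on the regular element $h$), hence maximally compact; set $\Delta^+(\fl,\fh^c):=\{\gamma:\gamma(h)>0\}$, which is $\theta$-stable (so type Z) and large (a sign computation on the nonzero $\fg_\gamma$-component of $e_{\fl}\in\fp$ for $\gamma$ imaginary simple shows $\theta$ acts by $-1$ on $\fg_\gamma$). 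Extending to $\fg$ by declaring the roots with root space in $\mathfrak{u}$ positive yields the Borel $\fb^{\fl}\oplus\mathfrak{u}\subseteq\fq$, so the extended system is compatible with $\fq$, and it is type Z because $\mathfrak{u}$ is $\theta$-stable.

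The remaining clause — that this extended system on $\fg$ is \emph{large}, i.e. every imaginary root of $\fg$ that is simple for the extension and has root space in $\mathfrak{u}$ is noncompact — is the crux, and the step I expect to be the main obstacle: it is false for a general $\theta$-stable $\fq$, so it must be extracted from hypothesis $(ii)$. The idea is that a generic element of $(\mathfrak{u}\cap\fp)_{\ad h=2}+\mathbf{L}\cdot e_{\fl}$ is again $\fg$-principal (it lies in $\mathrm{Ind}_{\fl}^{\fg}(\mathbf{L}\cdot e_{\fl})=\cO_{\mathrm{prin}}$, and the existence of the principal $e\in\mathfrak{u}\cap\fp+\cN_{\fl,\theta}$ guarantees this intersection with $\fp$ is nonempty after a suitable $\mathbf{L}$- or $\mathbf{L}\cap\mathbf{K}$-adjustment), and writing such an element as a sum of $\ad(h)$-weight-$2$ root vectors in $\fp$ pins the relevant $\mathfrak{u}$-root spaces into $\fp$ by the same sign argument as above; making the conjugacy bookkeeping here airtight is where the real work lies. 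Finally, for $(iii)\Leftrightarrow(iv)$: maximally compact and maximally split $\theta$-stable Cartan subalgebras of $\fl$ are joined by a chain of Cayley transforms through imaginary (resp. real) roots of $\fl$ (Proposition \ref{prop:cayleysareinverse}); along such a chain the extension-to-$\fg$ recipe is unchanged, so compatibility with $\fq$ and the type Z property persist, and largeness persists by Lemma \ref{lem:simplynoncompact}, whose two parts are precisely what transport largeness across a single Cayley transform in either direction.
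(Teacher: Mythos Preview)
Your treatment of $(i)\Leftrightarrow(ii)$, $(iii)\Rightarrow(ii)$, and $(iii)\Leftrightarrow(iv)$ matches the paper's in all essentials. The genuine divergence is in $(ii)\Rightarrow(iii)$, and here your detour through $e_{\fl}$ is precisely what manufactures the obstacle you correctly flag as ``the crux.''

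The paper's route is shorter: it applies Theorem~\ref{thm:kostantsekiguchi1} to the \emph{full} principal element $e=e_{\mathfrak u}+e_{\fl}\in\fp$, not to $e_{\fl}$, obtaining $\phi:\mathfrak{sl}_2(\CC)\to\fg$ with $\phi(E_c)=e$ and $\phi(D_c)\in\fk$. One arranges $\phi(D_c)\in\fl\cap\fk$ (the paper asserts this; it uses that $e\in\fq$ with $\fq$ a $\theta$-stable parabolic), takes a maximally compact $\theta$-stable Cartan $\fh^{\mathrm{comp}}\subset\fl$ containing $\phi(D_c)$, and lets $\Delta^+(\fg,\fh^{\mathrm{comp}})$ be the system for which $\phi(D_c)$ is the half-sum of positive coroots (Proposition~\ref{prop:principalnilpotentelements}(ii)). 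The $2$-eigenspace of $\ad\phi(D_c)$ is exactly $\bigoplus_{\alpha\in\Pi}\fg_\alpha$, so $e=\sum_{\alpha\in\Pi}c_\alpha X_\alpha$ with all $c_\alpha\neq 0$. Now the single equation $\theta(e)=-e$, read off one $\fg_\gamma$-component at a time, forces $\theta$ to permute $\Pi$ (a simple $\gamma$ with $\theta\gamma\notin\Pi$ would give $c_\gamma=0$), so complex simple roots come in $\theta$-pairs and type~$Z$ is automatic; for imaginary simple $\alpha$ the same equation reads $c_\alpha\theta(X_\alpha)=-c_\alpha X_\alpha$, forcing $\fg_\alpha\subset\fp$. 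Compatibility with $\fq$ then drops out of the decomposition $e=e_{\mathfrak u}+e_{\fl}$.

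The point is that by taking the $\mathfrak{sl}_2$ for $e$ rather than for $e_{\fl}$, the neutral element is regular in $\fg$ (not merely in $\fl$), so the positive system it defines on $\fg$ has \emph{all} simple root spaces visible in $e\in\fp$ at once---no separate argument is needed for roots in $\mathfrak u$. Your proof that $e_{\fl}$ is principal in $\fl$ is correct but unnecessary here, and the ``conjugacy bookkeeping'' you anticipate for the $\mathfrak u$-roots never arises.
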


\begin{proof}
\begin{description}
\item[(i) $\Rightarrow$ (ii)]
Recall from Section \ref{sec:threenilpotentcones} that $\mathcal{N}_{\mathfrak{g},\theta}$ decomposes into finitely-many $\mathbf{K}$-orbits. If $\mathcal{O} \subset \mathcal{N}_{\mathfrak{g},\theta}$ is one such $\mathbf{K}$-orbit
$$\dim(\mathcal{O}) = \frac{1}{2}\dim(\mathbf{G}\mathcal{O})$$
This is an easy consequence of part $(3)$ of Theorem \ref{thm:kostantsekiguchi2}. In particular, if $\mathcal{O}^{\mathrm{prin}} \cap \mathcal{N}_{\mathfrak{g},\theta}$ is nonempty, it decomposes into finitely-many $\mathbf{K}$-orbits $\mathcal{O}_1,...,\mathcal{O}_n$ and these are precisely the $\mathbf{K}$-orbits of maximal dimension on $\mathcal{N}_{\mathfrak{g},\theta}$.

Since $\mathfrak{g}_0$ is quasi-split, $\mathcal{O}^{\mathrm{prin}} \cap \mathcal{N}_{\mathfrak{g},\theta}$ is nonempty (by part $(4)$ of Proposition \ref{prop:definequasisplit}). Then the condition
$$\dim(\mathbf{K} \cdot (\mathfrak{u} \cap \mathfrak{p} + \mathcal{N}_{\mathfrak{l},\theta})) = \dim(\mathcal{N}_{\mathfrak{g},\theta})$$
implies that $\mathcal{O}_i \subset \mathbf{K} \cdot (\mathfrak{u} \cap \mathfrak{p} + \mathcal{N}_{\mathfrak{l},\theta})$ for some $i=1,...,n$. Hence, $\mathfrak{u} \cap \mathfrak{p} + \mathcal{N}_{\mathfrak{l},\theta}$ contains a principal nilpotent element of $\mathfrak{g}$.\\

\item[(ii) $\Rightarrow$ (i)]

Since $\mathfrak{u} \cap \mathfrak{p} + \mathcal{N}_{\mathfrak{l},\theta} \subset \mathcal{N}_{\mathfrak{g},\theta}$, and $\mathfrak{u} \cap \mathfrak{p} + \mathcal{N}_{\mathfrak{l},\theta}$ contians a principal nilpotent element, $\mathfrak{g}_0$ is quasi-split (by part $(4)$ of Proposition \ref{prop:definequasisplit}). Hence, $\mathfrak{u} \cap \mathfrak{p} + \mathcal{N}_{\mathfrak{l},\theta}$ has nonempty intersection with $\mathcal{O}_i$, for some $i=1,...,n$. Then by $\mathbf{K}$-invariance, $\mathcal{O}_i \subset \mathbf{K} \cdot (\mathfrak{u} \cap \mathfrak{p}+\mathcal{N}_{\mathfrak{l},\theta})$ and therefore
$$\dim(\mathbf{K} \cdot (\mathfrak{u} \cap \mathfrak{p} + \mathcal{N}_{\mathfrak{l},\theta})) = \dim(\mathcal{N}_{\mathfrak{g},\theta})$$

\item[(ii) $\Rightarrow$ (iii)]

Let $e_{\mathfrak{u}} \in \mathfrak{u} \cap \mathfrak{p}$, $e_{\mathfrak{l}} \in \mathcal{N}_{\theta}^{\mathfrak{l}}$, and assume $e_{\mathfrak{u}} + e_{\mathfrak{l}} \in \mathfrak{u} \cap \mathfrak{p} + \mathcal{N}_{\theta}^{\mathfrak{l}}$ is a principal nilpotent element of $\mathfrak{g}$. By Theorem \ref{thm:kostantsekiguchi1}, there is an embedding
$$\phi: \mathfrak{sl}_2(\mathbb{C}) \to \mathfrak{g}$$
intertwining $\theta$ with $\theta_s$ and $\sigma$ with $\sigma_s$ with the property that $\phi(E_c) = e_{\mathfrak{u}}+e_{\mathfrak{l}}$. Then $\phi(D_c)$ is a semisimple element of $\mathfrak{l} \cap \mathfrak{k}$. Choose a maximally compact $\theta$-stable Cartan subalgebra $\mathfrak{h}^{\mathrm{comp}} \subset \mathfrak{l}$ containing $\phi(D_c)$. By Proposition \ref{prop:principalnilpotentelements}, there is a positive system $\Delta^+(\mathfrak{g},\mathfrak{h}^{\mathrm{comp}})$ such that
$$\phi(D_c) = \frac{1}{2} \sum_{\alpha \in \Delta^+(\mathfrak{g},\mathfrak{h}^{\mathrm{comp}})} \alpha^{\vee}$$
Hence, the $2$-eigenspace of $\ad \phi(D_c)$ is the sum of the simple root spaces. Write $\Pi^+$ for the simple roots for $\Delta^+(\mathfrak{g},\mathfrak{h}^{\mathrm{comp}})$, and choose root vectors $X_{\alpha}$ for every $\alpha \in \Pi^+$. Then
\begin{equation}\label{eqn:e+e'decomp}
e_{\mathfrak{u}}+e_{\mathfrak{l}} = \sum c_{\alpha}X_{\alpha} \qquad c_{\alpha} \in \mathbb{C}
\end{equation}
If one of the $c_{\alpha}$ is zero, then $e_{\mathfrak{u}}+e_{\mathfrak{l}}$ is contained in the nilradical of the corresponding minimal parabolic $\mathfrak{p}_{\alpha} \subset \mathfrak{g}$, which is impossible by Proposition \ref{prop:principalnilpotentelements}. Hence, all $c_{\alpha}$ are nonzero. Since $e_{\mathfrak{u}}+e_{\mathfrak{l}} \in \mathfrak{p}$, the simple roots for $\Delta^+(\mathfrak{g},\mathfrak{h}^{\mathrm{comp}})$ are either complex (occuring in pairs) or noncompact imaginary. It remains to show that $\Delta^+(\mathfrak{g},\mathfrak{h}^{\mathrm{comp}})$ is compatible with $\mathfrak{q}$. 
By \ref{eqn:e+e'decomp}
$$e_{\mathfrak{u}} = \sum_{\alpha \in \Pi^+ \setminus \Delta^+(\mathfrak{l},\mathfrak{h}^{\mathrm{comp}})} c_{\alpha}X_{\alpha} \qquad e_{\mathfrak{l}} = \sum_{\alpha \in \Pi^+ \cap \Delta^+(\mathfrak{l},\mathfrak{h}^{\mathrm{comp}})} c_{\alpha}X_{\alpha} $$
The former implies that $\Pi^+ \setminus \Delta^+(\mathfrak{l},\mathfrak{h}^{\mathrm{comp}}) \subseteq \Delta(\mathfrak{u},\mathfrak{h}^c)$, and hence that $\Delta^+(\mathfrak{g},\mathfrak{h}^c) \setminus \Delta^+(\mathfrak{l},\mathfrak{h}^c) \subseteq \Delta(\mathfrak{u},\mathfrak{h}^c)$, since $\mathfrak{u}$ is invariant under the adjoint action of $\mathfrak{l}$. \\

\item[(iii) $\Rightarrow$ (ii)]

Since $\mathfrak{h}^{\mathrm{comp}}$ is maximally compact, all of its roots are complex or imaginary. The complex positive roots are $\theta$-stable, since $\Delta^+(\mathfrak{g},\mathfrak{h}^{\mathrm{comp}})$ is type $Z$. Choose positive root vectors $X_{\alpha}$ so that
$$\theta(X_{\alpha}) = - X_{\theta \alpha}$$
whenever $\alpha$ is complex, Define
$$e_{\mathfrak{u}} := \sum_{\substack{\alpha \in \Delta(\mathfrak{u},\mathfrak{h}^{\mathrm{comp}}) \\ \alpha \text{ complex or noncompact}}} X_{\alpha}$$
and
$$e_{\mathfrak{l}} := \sum_{\substack{\alpha \in \Delta^+(\mathfrak{l},\mathfrak{h}^{\mathrm{comp}}) \\ \alpha \text{ complex or noncompact}}} X_{\alpha}$$
By construction, $e_{\mathfrak{u}} \in \mathfrak{u} \cap \mathfrak{p}$ and $e_{\mathfrak{l}} \in \mathcal{N}_{\mathfrak{l}} \cap \mathfrak{p} = \mathcal{N}_{\mathfrak{l},\theta}$. Since $\Delta^+(\mathfrak{g},\mathfrak{h}^{\mathrm{comp}})$ is large, every simple root for $\Delta^+(\mathfrak{g},\mathfrak{h}^{\mathrm{comp}})$ appears in $e_{\mathfrak{u}} + e_{\mathfrak{l}}$ with nonzero coefficient. So by part $3$ of Proposition \ref{prop:principalnilpotentelements}, $e_{\mathfrak{u}} + e_{\mathfrak{l}}$ is a principal nilpotent element of $\mathfrak{g}$. \\

\item[(iii) $\Rightarrow$ (iv)]

There is a maximally split $\theta$-stable Cartan subalgebra $\mathfrak{h}^{\mathrm{split}} \subset \mathfrak{l}$ and sequence of noncompact simple imaginary roots
$$
\beta_1 \in \Delta^+_{i\mathbb{R}}(\mathfrak{l},\mathfrak{h}^{\mathrm{comp}}) \quad
\beta_2 \in \Delta^+_{i\mathbb{R}}(\mathfrak{l},d_{\beta}^+\mathfrak{h}^{\mathrm{comp}}) \quad ... \quad
\beta_n \in \Delta^+_{i\mathbb{R}}(\mathfrak{l},d_{\beta{n-1}}^+...d_{\beta_1}^+\mathfrak{h}^{\mathrm{comp}})
$$
such that
$$d_{\beta_n}^+ ... d_{\beta_1}^+\mathfrak{h}^{\mathrm{comp}} = \mathfrak{h}^{\mathrm{split}}$$
By Lemma \ref{lem:simplynoncompact} and an easy induction on $n$, we see that the positive system
$$d_{\beta_n}^+ ... d_{\beta_1}^+ \Delta^+(\mathfrak{g},\mathfrak{h}^{\mathrm{comp}}) \subset \Delta(\mathfrak{g},\mathfrak{h}^{\mathrm{split}})$$
is large. Applying simple reflections through complex simple roots, we can make this system type Z (see the proof of Proposition \ref{prop:maketypeZtypeL}).

Each $d_{\beta_i}^+$ acts on $\mathfrak{g}$ by an element of $\mathrm{Ad}(\mathfrak{l})$ and therefore preserves the nilradical $\mathfrak{u}$. Hence, this positive system is compatible with $\mathfrak{q}$.\\

\item[(iv) $\Rightarrow$ (iii)]

There is a maximally compact $\theta$-stable Cartan subalgebra $\mathfrak{h}^{\mathrm{comp}} \subset \mathfrak{l}$ and sequence of simple real roots
$$\alpha_1 \in \Delta^+_{\mathbb{R}}(\mathfrak{l},\mathfrak{h}^{\mathrm{split}}) \quad
\alpha_2 \in \Delta^+_{\mathbb{R}}(\mathfrak{l},c_{\alpha_1}^{\pm}\mathfrak{h}^{\mathrm{split}}) \quad ... \quad
\alpha_n \in \Delta^+_{\mathbb{R}}(\mathfrak{l},c_{\alpha_{n-1}}^{\pm}...c_{\alpha_1}^{\pm}\mathfrak{h}^{\mathrm{split}})
$$
such that
$$c_{\alpha_n}^{\pm} ... c_{\alpha_1}^{\pm}\mathfrak{h}^{\mathrm{split}} = \mathfrak{h}^{\mathrm{comp}}$$

By Lemma \ref{lem:simplynoncompact} and an easy induction on $n$, there is a sequence of signs $\epsilon_1,...,\epsilon_n$ so that the positive system
$$c_{\alpha_n}^{\epsilon_n} ... c_{\alpha_1}^{\epsilon_1} \Delta^+(\mathfrak{g},\mathfrak{h}^{\mathrm{split}}) \subset \Delta(\mathfrak{g},\mathfrak{h}^{\mathrm{comp}})$$
is large. Applying simple reflections through complex simple roots, we can arrange so that this positive system is type Z. It is compatible with $\mathfrak{q}$ for the same reasons as above.
\end{description}
\end{proof}

\subsection{The spherical principal series of infinitesimal character $0$}\label{sec:S0G}

Write 
$$\mathrm{Irrep}_0(G) \subset K^f(\mathfrak{g},\mathbf{K})$$
for the set of (isomorphism classes of) nonzero irreducible $(\mathfrak{g},\mathbf{K})$-modules of infinitesimal character $0$. By definition \ref{def:unipotentrep}, there is an inclusion
$$\mathrm{Unip}_R(\mathcal{O}) \subseteq \mathrm{Irrep}_0(G)$$
We will soon see that this inclusion is an equality, but this will require some work. 

Suppose $G$ is quasi-split. Then by Proposition \ref{prop:definequasisplit}, there is a $\sigma$-stable Borel subgroup $\mathbf{B} \subset \mathbf{G}$. Let $B = \mathbf{B}^{\sigma}$ and define
$$S_0(G) := \text{Harish-Chandra module of } \mathrm{Ind}^G_B \mathbb{C} = \mathbf{I}^{(\mathfrak{g},\mathbf{K})}_{(\mathfrak{b},\mathbf{T})} (- |\rho(\mathfrak{n})|)$$

\begin{proposition}\label{prop:S0G}
Suppose $G$ is quasi-split. Then $S_0(G)$ is independent (up to isomorphism) of $B$ and
$$[S_0(G)] \in \mathrm{Irrep}_0(G)$$
\end{proposition}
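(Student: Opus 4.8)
The plan is to check three things: (1) the $(\mathfrak{g},\mathbf{K})$-module $S_0(G)$ is independent of the choice of $\sigma$-stable Borel subgroup $\mathbf{B}$ up to isomorphism; (2) $S_0(G)$ is a nonzero finite-length module of infinitesimal character $0$; and (3) $S_0(G)$ is irreducible. For (1): any two $\sigma$-stable Borel subalgebras $\mathfrak{b}_1,\mathfrak{b}_2\subset\mathfrak{g}$ are $\mathbf{K}$-conjugate, since each $\mathfrak{b}_i^{\sigma}=\mathfrak{b}_i\cap\mathfrak{g}_0$ is a minimal parabolic subalgebra of $\mathfrak{g}_0$ and minimal parabolic subalgebras of $\mathfrak{g}_0$ are $K$-conjugate (a consequence of the Iwasawa decomposition). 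Because $G$ is quasi-split, $\mathbf{B}_i=\mathbf{H}_i\mathbf{N}_i$ with $\mathbf{H}_i$ the unique Cartan subgroup of $\mathbf{B}_i$, so an element $k\in K\subset\mathbf{K}$ with $\mathrm{Ad}(k)\mathfrak{b}_1=\mathfrak{b}_2$ automatically satisfies $\mathrm{Ad}(k)\mathfrak{h}_1=\mathfrak{h}_2$ and carries the canonically defined character $-|\rho(\mathfrak{n}_1)|$ of $H_1$ to $-|\rho(\mathfrak{n}_2)|$ of $H_2$; since twisting the induction datum by $\mathrm{Ad}(k)$ for $k\in\mathbf{K}$ does not change the induced module up to isomorphism, the claim follows.

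For (2): since $\mathbf{B}$ is $\sigma$-stable, real parabolic induction from $\mathbf{B}$ is exact and sends nonzero modules to nonzero modules (Theorem \ref{thm:realinductionexact}), so $R^i\mathbf{I}^{(\mathfrak{g},\mathbf{K})}_{(\mathfrak{h},\mathbf{T})}(-|\rho(\mathfrak{n})|)=0$ for $i>0$ and $S_0(G)=I^{(\mathfrak{g},\mathbf{K})}_{(\mathfrak{h},\mathbf{T})}(-|\rho(\mathfrak{n})|)$ is a nonzero module, of finite length by Proposition \ref{prop:propsofind}(i). For the infinitesimal character I would observe that $\sigma$-stability of $\mathfrak{n}$ forces $\Delta(\mathfrak{n},\mathfrak{h})$ to contain no imaginary root (an imaginary $\mu$ is $\sigma$-conjugate to $-\mu$, so $\mathfrak{g}_{\mu}\subset\mathfrak{n}$ would give $\mathfrak{g}_{-\mu}\subset\mathfrak{n}$); pairing each root of $\mathfrak{n}$ with its $\sigma$-conjugate then gives $\rho(\mathfrak{n})|_{\mathfrak{t}}=0$, and by Proposition \ref{prop:realimagcomplex} the character $|\rho(\mathfrak{n})|$ restricts trivially to the compact group $T$ while its differential agrees with $\rho(\mathfrak{n})$ on $\mathfrak{a}$. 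Hence $d(-|\rho(\mathfrak{n})|)=-\rho(\mathfrak{n})$ in $\mathfrak{h}^*$, and Proposition \ref{prop:propsofind}(ii) gives infinitesimal character $-\rho(\mathfrak{n})+\rho(\mathfrak{n})=0$.

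The main obstacle is (3). By the identification (\ref{eqn:realinductioninfinitesimal}), $S_0(G)\cong\mathrm{Ind}^G_B\mathbb{C}$, and since $G$ is quasi-split, $B$ is a minimal parabolic, so this is the basic spherical principal series of $G$; its infinitesimal character is $0$ by part (2). Its irreducibility is a classical theorem of Kostant on the spherical principal series (\emph{On the existence and irreducibility of certain series of representations}): the representation parabolically induced from the trivial character of a minimal parabolic subgroup is irreducible. Combining (1)--(3), $S_0(G)$ is a nonzero irreducible $(\mathfrak{g},\mathbf{K})$-module of infinitesimal character $0$; that is, $[S_0(G)]\in\mathrm{Irrep}_0(G)$.
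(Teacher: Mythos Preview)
Your proof is correct and follows essentially the same approach as the paper: infinitesimal character from Proposition~\ref{prop:propsofind}, nonvanishing from Theorem~\ref{thm:realinductionexact}, and irreducibility from Kostant's theorem on the spherical principal series. You supply considerably more detail than the paper's three-line proof, in particular the argument for independence of $B$ (which the paper leaves implicit) and the verification that $d(-|\rho(\mathfrak{n})|)=-\rho(\mathfrak{n})$; one small imprecision is the phrase ``$\mathbf{H}_i$ the unique Cartan subgroup of $\mathbf{B}_i$'' (a Borel contains many Cartans), but this is harmless since the analytic description $\mathrm{Ind}^G_B\mathbb{C}$ depends only on $B$ and the real Borels are $K$-conjugate, which is all you actually use.
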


\begin{proof}
By Proposition \ref{prop:propsofind}, $S_0(G)$ has infinitesimal character $0$. It is nonzero by Theorem \ref{thm:realinductionexact}. Its irreducibility was established by Kostant in \cite[Thm 1]{kostant1969}. 
\end{proof}

In \cite{kostant1969}, Kostant also calculates the $\mathbf{K}$-structure of $S_0(G)$. He proves that $S_0(G)$ has the same $\mathbf{K}$-multiplicities as $\mathbb{C}[\mathcal{N}_{\mathfrak{g},\theta}]$, the ring of regular functions on $\mathcal{N}_{\mathfrak{g},\theta}$. Together with Theorem \ref{thm:resKinjective} this implies

\begin{theorem}\label{thm:grS0G}
There is an equality in $K^{\mathbf{K}}(\mathcal{N}_{\mathfrak{g},\theta})$
$$[\gr S_0(G)] = [\mathcal{O}_{\mathcal{N}_{\mathfrak{g},\theta}}]$$
In particular,
$$[S_0(G)] \in \mathrm{Unip}_R(\mathcal{O})$$
\end{theorem}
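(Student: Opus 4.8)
The plan is to deduce the $K$-theoretic identity from Kostant's computation of the $\mathbf{K}$-types of $S_0(G)$ together with the injectivity of $\mathrm{res}^{\mathrm{coh}}_{\mathbf{K}}$, and then to read the membership statement off the theory of associated varieties from Section \ref{sec:assvar}.

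First I would record Kostant's result in the form we need: by \cite{kostant1969}, the restriction of $S_0(G)$ to $\mathbf{K}$ is isomorphic as a $\mathbf{K}$-representation to $\mathbb{C}[\mathcal{N}_{\mathfrak{g},\theta}] = \Gamma(\mathcal{N}_{\mathfrak{g},\theta}, \mathcal{O}_{\mathcal{N}_{\mathfrak{g},\theta}})$, i.e.
$$\mathrm{res}^{(\mathfrak{g},\mathbf{K})}_{\mathbf{K}}[S_0(G)] = \mathrm{res}^{\mathrm{coh}}_{\mathbf{K}}[\mathcal{O}_{\mathcal{N}_{\mathfrak{g},\theta}}] \in K^f(\mathbf{K}).$$
By the commutativity of the triangle relating $\gr$, $\mathrm{res}^{(\mathfrak{g},\mathbf{K})}_{\mathbf{K}}$ and $\mathrm{res}^{\mathrm{coh}}_{\mathbf{K}}$ (Section \ref{sec:assvar}), the left-hand side is $\mathrm{res}^{\mathrm{coh}}_{\mathbf{K}}[\gr S_0(G)]$. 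So $\mathrm{res}^{\mathrm{coh}}_{\mathbf{K}}$ sends $[\gr S_0(G)]$ and $[\mathcal{O}_{\mathcal{N}_{\mathfrak{g},\theta}}]$ to the same class, and Theorem \ref{thm:resKinjective} (injectivity of $\mathrm{res}^{\mathrm{coh}}_{\mathbf{K}}$) then forces $[\gr S_0(G)] = [\mathcal{O}_{\mathcal{N}_{\mathfrak{g},\theta}}]$ in $K^{\mathbf{K}}(\mathcal{N}_{\mathfrak{g},\theta})$.

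Next I would handle the ``in particular.'' By Proposition \ref{prop:S0G}, $S_0(G)$ is an irreducible $(\mathfrak{g},\mathbf{K})$-module of infinitesimal character $0$, so $\mathrm{Ann}(S_0(G))$ is a primitive ideal with central character $0$; since $\cO$ is principal, $\unip_Z(\cO) = \{0\}$, so condition (i) of Definition \ref{def:unipotentideal} holds automatically and only the associated-variety condition $\AV(\mathrm{Ann}(S_0(G))) = \overline{\cO}$ remains. The identity just proved shows $\AV(S_0(G)) = \mathrm{Supp}(\gr S_0(G)) = \mathcal{N}_{\mathfrak{g},\theta}$, the whole cone. Because $G$ is quasi-split, $\mathcal{N}_{\mathfrak{g},\theta}$ contains a principal nilpotent $e$ (Proposition \ref{prop:definequasisplit}(iv)), and by Theorem \ref{thm:kostantsekiguchi2} any $\mathbf{K}$-orbit $\mathcal{O}'$ in $\mathcal{N}_{\mathfrak{g},\theta}$ satisfies $\dim \mathcal{O}' = \frac{1}{2}\dim(\mathbf{G}\cdot\mathcal{O}') \le \frac{1}{2}\dim\mathcal{N} = \dim(\mathbf{K}\cdot e)$; hence $\mathbf{K}\cdot e$ is an open orbit in $\AV(S_0(G))$. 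Applying Theorem \ref{thm:twoAVs} to the irreducible module $S_0(G)$ then gives $\AV(\mathrm{Ann}(S_0(G))) = \overline{\mathbf{G}\cdot(\mathbf{K}\cdot e)} = \overline{\mathbf{G}\cdot e} = \overline{\cO}$, so $\mathrm{Ann}(S_0(G)) \in \mathrm{Unip}_I(\cO)$ and therefore $[S_0(G)] \in \mathrm{Unip}_R(\cO)$.

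The only genuinely nontrivial ingredient is Kostant's $\mathbf{K}$-type computation, which I would cite rather than reprove; everything else is bookkeeping with the functors and invariants set up in Section \ref{sec:preliminaries}. The one thing to be careful about is the grading and $\mathbb{C}^\times$-action conventions, so that the equality of ungraded $\mathbf{K}$-multiplicities really does lift to an equality of classes in $K^{\mathbf{K}}(\mathcal{N}_{\mathfrak{g},\theta})$ --- this is precisely what Theorem \ref{thm:resKinjective} is there to supply.
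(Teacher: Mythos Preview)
Your proof is correct and matches the paper's approach exactly: the paper also deduces the $K$-theoretic identity from Kostant's $\mathbf{K}$-multiplicity computation together with Theorem \ref{thm:resKinjective}, and leaves the ``in particular'' essentially implicit. Your treatment of the ``in particular'' via Proposition \ref{prop:definequasisplit}, Theorem \ref{thm:kostantsekiguchi2}, and Theorem \ref{thm:twoAVs} is more explicit than the paper's, but follows the intended logic.
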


If $G$ is quasi-split, then $\mathrm{Irrep}_0(G) \neq \emptyset$ by Proposition \ref{prop:S0G}. The converse is also true.

\begin{proposition}\label{prop:qsplitrep0G}
$G$ is quasi-split if and only if
$$\mathrm{Irrep}_0(G) \neq \emptyset$$
\end{proposition}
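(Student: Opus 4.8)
The plan is to prove the two implications separately; one of them is already available. If $G$ is quasi-split, then $[S_0(G)] \in \mathrm{Irrep}_0(G)$ by Proposition~\ref{prop:S0G}, so $\mathrm{Irrep}_0(G) \neq \emptyset$. The substance is the converse, which I would establish by combining the Casselman subrepresentation theorem with the fact that the minimal Levi $\mathfrak{l}^{\mathrm{min}}$ detects quasi-splitness, via a regularity argument on infinitesimal characters.

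So suppose $X$ is a nonzero irreducible $(\mathfrak{g},\mathbf{K})$-module of infinitesimal character $0$. By Theorem~\ref{thm:Casselman} there is a finite-dimensional irreducible representation $V$ of $L^{\mathrm{min}}$ and an embedding $X \subseteq \mathbf{I}^{(\mathfrak{g},\mathbf{K})}_{(\mathfrak{l}^{\mathrm{min}},\mathbf{L}^{\mathrm{min}}\cap\mathbf{K})}V$. Being finite-dimensional, $V$ has an infinitesimal character, represented by some $\lambda \in (\mathfrak{h}^{\mathrm{split}})^*$. Since parabolic induction is left-exact, $\mathbf{I}^{(\mathfrak{g},\mathbf{K})}_{(\mathfrak{l}^{\mathrm{min}},\mathbf{L}^{\mathrm{min}}\cap\mathbf{K})} = R^0\mathbf{I}^{(\mathfrak{g},\mathbf{K})}_{(\mathfrak{l}^{\mathrm{min}},\mathbf{L}^{\mathrm{min}}\cap\mathbf{K})}$, so Proposition~\ref{prop:propsofind}(ii) shows this induced module — hence also its nonzero submodule $X$ — has infinitesimal character $\lambda + \rho(\mathfrak{u}^{\mathrm{min}})$. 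As $\mathfrak{l}^{\mathrm{min}}$ normalizes $\mathfrak{u}^{\mathrm{min}}$, the group $W(\mathfrak{l}^{\mathrm{min}})$ permutes $\Delta(\mathfrak{u}^{\mathrm{min}},\mathfrak{h}^{\mathrm{split}})$ and therefore fixes $\rho(\mathfrak{u}^{\mathrm{min}})$; so $\lambda + \rho(\mathfrak{u}^{\mathrm{min}})$ is well-defined modulo $W(\mathfrak{l}^{\mathrm{min}})$, and the equality $W(\mathfrak{g})\cdot(\lambda + \rho(\mathfrak{u}^{\mathrm{min}})) = W(\mathfrak{g})\cdot 0 = \{0\}$ forces $\lambda + \rho(\mathfrak{u}^{\mathrm{min}}) = 0$ on the nose. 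Thus $-\rho(\mathfrak{u}^{\mathrm{min}})$ is an honest representative of the infinitesimal character of $V$.

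Next I would bring in regularity. The infinitesimal character of a nonzero finite-dimensional $\mathfrak{l}^{\mathrm{min}}$-module is regular (its parameter is $W(\mathfrak{l}^{\mathrm{min}})$-conjugate to a strictly dominant weight), hence is fixed by no reflection $s_\beta$, $\beta \in \Delta(\mathfrak{l}^{\mathrm{min}},\mathfrak{h}^{\mathrm{split}})$. On the other hand, recall $\mathfrak{l}^{\mathrm{min}} = \mathfrak{h}^{\mathrm{split}} \oplus \bigoplus_{\alpha \in \Delta_{i\mathbb{R}}(\mathfrak{g},\mathfrak{h}^{\mathrm{split}})}\mathfrak{g}_{\alpha}$, so $\Delta(\mathfrak{l}^{\mathrm{min}},\mathfrak{h}^{\mathrm{split}}) = \Delta_{i\mathbb{R}}(\mathfrak{g},\mathfrak{h}^{\mathrm{split}})$. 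Suppose this set were nonempty. Since $\mathfrak{q}^{\mathrm{min}}$ is a parabolic it contains a Borel subalgebra $\mathfrak{b} = \mathfrak{h}^{\mathrm{split}} \oplus \mathfrak{n}$; then $\mathfrak{q}^{\mathrm{min}}$ is the standard parabolic attached to $\Delta^+(\mathfrak{g},\mathfrak{h}^{\mathrm{split}}) := \Delta(\mathfrak{n},\mathfrak{h}^{\mathrm{split}})$, so $\mathfrak{n} = (\mathfrak{n}\cap\mathfrak{l}^{\mathrm{min}}) \oplus \mathfrak{u}^{\mathrm{min}}$, $\rho(\mathfrak{u}^{\mathrm{min}}) = \rho(\mathfrak{n}) - \rho(\mathfrak{n}\cap\mathfrak{l}^{\mathrm{min}})$, and every simple root $\beta$ of $\Delta(\mathfrak{n}\cap\mathfrak{l}^{\mathrm{min}},\mathfrak{h}^{\mathrm{split}})$ is simple for $\Delta^+(\mathfrak{g},\mathfrak{h}^{\mathrm{split}})$. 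Consequently
$$\langle \rho(\mathfrak{u}^{\mathrm{min}}),\beta^\vee\rangle = \langle \rho(\mathfrak{n}),\beta^\vee\rangle - \langle \rho(\mathfrak{n}\cap\mathfrak{l}^{\mathrm{min}}),\beta^\vee\rangle = 1 - 1 = 0,$$
so $s_\beta$ fixes $-\rho(\mathfrak{u}^{\mathrm{min}})$, contradicting regularity. Hence $\Delta_{i\mathbb{R}}(\mathfrak{g},\mathfrak{h}^{\mathrm{split}}) = \emptyset$, i.e. $\mathfrak{l}^{\mathrm{min}} = \mathfrak{h}^{\mathrm{split}}$ and $\mathfrak{q}^{\mathrm{min}} = \mathfrak{h}^{\mathrm{split}} \oplus \mathfrak{u}^{\mathrm{min}}$ is a Borel subalgebra of $\mathfrak{g}$. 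Since $\mathfrak{q}^{\mathrm{min}}$ is $\sigma$-stable (recall $\sigma(a)=a$), part (i) of Proposition~\ref{prop:definequasisplit} gives that $G$ is quasi-split.

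The step I expect to require the most care is the infinitesimal-character bookkeeping in the second paragraph: one must verify that $\rho(\mathfrak{u}^{\mathrm{min}})$ is genuinely $W(\mathfrak{l}^{\mathrm{min}})$-invariant, so that the identity $\lambda = -\rho(\mathfrak{u}^{\mathrm{min}})$ can be read off exactly rather than merely modulo $W(\mathfrak{g})$, and one must be sure to fix a positive system on $\mathfrak{g}$ compatible with $\mathfrak{q}^{\mathrm{min}}$ before pairing $\rho(\mathfrak{u}^{\mathrm{min}})$ against coroots. Everything else is a routine unwinding of definitions together with the standard facts about standard Levi subalgebras and about regularity of infinitesimal characters of finite-dimensional modules.
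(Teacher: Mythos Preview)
Your proof is correct and follows essentially the same approach as the paper's: both directions use Proposition~\ref{prop:S0G} and the Casselman subrepresentation theorem respectively, with the key observation that the inducing module $V$ must have infinitesimal character $-\rho(\mathfrak{u}^{\mathrm{min}})$, which is singular for $\mathfrak{l}^{\mathrm{min}}$ unless $\mathfrak{l}^{\mathrm{min}}$ has no roots. Your version supplies more detail on the infinitesimal-character bookkeeping (the $W(\mathfrak{l}^{\mathrm{min}})$-invariance of $\rho(\mathfrak{u}^{\mathrm{min}})$ and the explicit pairing $\langle \rho(\mathfrak{u}^{\mathrm{min}}),\beta^\vee\rangle = 0$), but the argument is the same.
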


\begin{proof}
If $G$ is quasi-split, then $[S_0(G)] \in \mathrm{Irrep}_0(G)$ by Proposition \ref{prop:S0G}.

Now suppose $\mathrm{Irrep}_0(G) \neq \emptyset$, and choose an element $[X] \in \mathrm{Irrep}_0(G)$. Let $Q^{\mathrm{min}} = L^{\mathrm{min}}U^{\mathrm{min}} \subset G$ be a minimal parabolic. By the Casselman subrepresentation theorem (Theorem \ref{thm:Casselman}), there is an irreducible finite-dimensional representation $V$ of $L^{\mathrm{min}}$ and an embedding of $(\mathfrak{g},\mathbf{K})$-modules
$$X \subseteq \mathbf{I}^{(\mathfrak{g},\mathbf{K})}_{(\mathfrak{l}^{\mathrm{min}},\mathbf{L}^{\mathrm{min}} \cap \mathbf{K})}V$$
By Proposition \ref{prop:propsofind}, the representation $V$ has infinitesimal character $-\rho(\mathfrak{u})$. Since the infinitesimal character of a finite-dimensional representation is always nonsingular, this means that $\mathfrak{l}^{\mathrm{min}}$ has no roots. Hence, $\mathfrak{l}^{\mathrm{min}}$ is a Cartan subalgebra and $Q^{\mathrm{min}}$ is a Borel. 
\end{proof}

To summarize: whenever $\mathrm{Irrep}_0(G)$ is nonempty, it contains a distinguished element $[S_0(G)]$. This representation has associated variety $\mathcal{N}_{\mathfrak{g},\theta}$ and is therefore an element of $\mathrm{Unip}_R(\mathcal{O})$. We will soon see that the classes $[S_0(L)]$, as $L\subset G$ varies, form the building blocks of $\mathrm{Unip}_R(\mathcal{O})$ (see Corollary \ref{cor:principalunipotentreps} for a precise statement and proof).

\subsection{Zuckerman parameters}\label{subsec:Zuckerman}

In this section, we define our second set of parameters for principal unipotent representations. 

\begin{definition}\label{def:ZP}
A Zuckerman parameter of infinitesimal character $0$ (a Z parameter, for short) is a $\mathbf{K}$-conjugacy class of triples $(\mathfrak{l},\mathfrak{q},\chi^{\#})$ consisting of
\begin{itemize}
    \item[(i)] a $\theta$-stable Levi subalgebra $\mathfrak{l} \subset \mathfrak{g}$, split modulo center,
    \item[(ii)] a $\theta$-stable parabolic subalgebra $\mathfrak{q} \subset \mathfrak{g}$ containing $\mathfrak{l}$ as a Levi subalgebra, and
    \item[(iii)] a one-dimensional $(\mathfrak{l},\mathbf{L} \cap \mathbf{K})$-module $\chi^{\#}$ satisfying $d\chi^{\#} = -\rho(\mathfrak{u})$
\end{itemize}
Denote the $\mathbf{K}$-conjugacy class of $(\mathfrak{l},\mathfrak{q},\chi^{\#})$ by $[\mathfrak{l},\mathfrak{q},\chi^{\#}]$ and denote the set of all such conjugacy classes by $Z_0(G)$.

A Z parameter is unipotent if it satisfies the additional condition
$$\mathfrak{u} \cap \mathfrak{p} + \mathcal{N}_{\mathfrak{l},\theta} \text{ contains a principal nilpotent element of } \mathfrak{g}$$
Write $\mathrm{Z}^*_0(G)$ for the set of unipotent Z parameters.
\end{definition}

\begin{remark}
If $[\mathfrak{l},\mathfrak{q},\chi^{\#}] \in Z_0(G)$, we can (and will) choose $\mathfrak{l}$ to be stable under $\sigma$. This allows us to define the Levi subgroup $L := Z_G(\mathfrak{l})$. 
\end{remark}

Define the function
$$\tilde{I}: Z_0(G) \to K^f_0(\mathfrak{g},\mathbf{K}) \qquad \tilde{I}[\mathfrak{l},\mathfrak{q},\chi^{\#}] = I(\mathfrak{l},\mathfrak{q}, \chi^{\#} \otimes S_0(L))$$
(see (\ref{eqn:eulerinduction})). There is a natural mapping
$$Z: \mathrm{BB}_0^*(G) \to \mathrm{Z}^*_0(G)$$
which intertwines $I$ and $\tilde{I}$. To define it, we will need a lemma

\begin{lemma}[\cite{AdamsLeeuwenTrapaVogan2017}, Lemma 16.1.4]\label{lem:weightoffindimrep}
Let $H \subset G$ be a $\theta$-stable Cartan subgroup. A character $\chi$ of $H$ is an extremal weight of an irreducible, finite-dimensional representation of $G$ if and only if
\begin{itemize}
    \item[(i)] $\langle d\chi,\alpha^{\vee}\rangle \in \mathbb{Z}$ for every root $\alpha \in \Delta(\mathfrak{g},\mathfrak{h})$, and
    \item[(ii)] $\chi(m_{\alpha}) = (-1)^{\langle d\chi,\alpha^{\vee}\rangle}$ for every real root $\alpha \in \Delta(\mathfrak{g},\mathfrak{h})$
\end{itemize}
\end{lemma}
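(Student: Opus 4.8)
The plan is to treat the two implications separately, reducing each to a rank‑one computation inside the subgroups $\Phi_{\alpha}(SL_2(\mathbb{R}))$ attached to real roots. Throughout we may take $\mathfrak{h}$ to be $\sigma$‑stable, so that $H = Z_G(\mathfrak{h})$ is defined. For the ``only if'' direction, suppose $\chi$ occurs on the extremal weight space $V_{d\chi}$ of an irreducible finite‑dimensional representation $V$ of $G$. Then $d\chi$ is a weight of the finite‑dimensional $\mathfrak{g}$‑module $V$, so $\langle d\chi,\alpha^{\vee}\rangle \in \mathbb{Z}$ for every $\alpha \in \Delta(\mathfrak{g},\mathfrak{h})$, which is (i). For (ii), fix a real root $\alpha$ and $\phi_{\alpha}$ as in Proposition \ref{prop:rootsl2s} with $\phi_{\alpha}(D) = \alpha^{\vee}$, integrated to $\Phi_{\alpha}\colon SL_2(\mathbb{R}) \to G$, so that $m_{\alpha} = \Phi_{\alpha}(\diag(-1,-1))$. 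Pulling $V$ back along $\Phi_{\alpha}$ gives a finite‑dimensional representation of $SL_2(\mathbb{R})$, a direct sum of symmetric powers of the standard representation, and in each summand $\diag(-1,-1) = \exp(\pi i\,D)$ acts on the $D$‑weight‑$n$ subspace by $(-1)^n$. Since $\alpha^{\vee} = \phi_{\alpha}(D)$ acts on $V_{d\chi}$ by $\langle d\chi,\alpha^{\vee}\rangle$, the element $m_{\alpha}$ acts on all of $V_{d\chi}$ by $(-1)^{\langle d\chi,\alpha^{\vee}\rangle}$; as $H$ centralizes $\mathfrak{h}$ it preserves $V_{d\chi}$, and $m_{\alpha} \in T \subset H$ acts on the $\chi$‑eigenline by $\chi(m_{\alpha})$, so $\chi(m_{\alpha}) = (-1)^{\langle d\chi,\alpha^{\vee}\rangle}$.

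For the ``if'' direction, assume $\chi$ satisfies (i) and (ii). Since $\mathfrak{h}$ is joined to a maximally compact $\theta$‑stable Cartan subalgebra by a chain of Cayley transforms through real roots (see \cite{Knapp1996}), the first step is to check that (ii) is precisely what allows $\chi$ to be transported along such a chain: through an odd simple real root $\alpha$ one has $\langle d\chi,\alpha^{\vee}\rangle$ odd and the Cayley transforms $c_{\alpha}^{\pm}\chi$ of Definition \ref{def:cayleytransformsofcharacter}, which are characters of the more compact Cartan $H^{\alpha}$ again satisfying (i) and (ii), while through an even real root $\alpha$ one has $\chi(m_{\alpha})=1$ and an analogous (and simpler, since $\chi$ is then trivial on $m_{\alpha}$) transfer; in either case a finite‑dimensional representation realizing the transformed data realizes $(H,\chi)$ after translating the weight back, by the standard compatibility of Cayley transforms of characters with highest weights of finite‑dimensional representations (\cite{Vogan1981}). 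This reduces us to the case that $\mathfrak{h}$ is maximally compact, where there are no real roots and (ii) is vacuous; there one must show that a character $\chi$ of $H$ with $d\chi$ integral is an extremal weight of a finite‑dimensional representation of $G$. Choosing a positive system with $d\chi$ dominant, one has the irreducible $\mathfrak{g}$‑module $V^0$ of highest weight $d\chi$, and it remains to integrate $V^0$ to a representation of $G$ on which $H$ acts on the highest weight line by $\chi$: the identity component $G^0$ is handled by integrality of $d\chi$ on the coroot lattice, and the extension over the finite $2$‑group $\pi_0(H) \hookrightarrow \pi_0(G)$ is forced (after the reduction) to come from a genuine character of $\pi_0(G)$, by which one twists.

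The main obstacle is exactly this last bookkeeping with the disconnectedness of $H$ and $G$: one must verify that, once the real‑root constraints in (ii) have been absorbed by the Cayley transforms into the integrality and dominance of $d\chi$, the only remaining obstruction to integrating $V^0$ — possibly after a $\pi_0(G)$‑twist — indeed vanishes, i.e.\ that the relations among the $m_{\alpha}$ (such as $m_{\gamma} = m_{\alpha}m_{\beta}$ when $\gamma^{\vee} = \alpha^{\vee}+\beta^{\vee}$), together with (ii), leave no further constraint. That the component group genuinely enters is visible for $G = PGL_2(\mathbb{R})$: here $H \cong \mathbb{R}^{\times}$ meets both components of $G$, the character $\chi(x) = \operatorname{sgn}(x)$ satisfies (i) and (ii) (with $m_{\alpha} = 1$ and $d\chi = 0$), yet it is realized not by any holomorphic representation of $\mathbf{G}$ but by the order‑two character of $\pi_0(G)$. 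Once this is in place, conjugating the representation so that $d\chi$ lies in the Weyl orbit of its highest weight completes the argument; everything else in the ``if'' direction is a rank‑one reduction.
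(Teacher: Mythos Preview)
The paper does not prove this lemma: it is quoted verbatim as \cite[Lemma 16.1.4]{AdamsLeeuwenTrapaVogan2017} and used as a black box in the construction of the map $Z$. So there is no ``paper's proof'' to compare against, and your attempt is really a proposed proof of the cited result.

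Your ``only if'' direction is correct and complete: the rank-one reduction to $SL_2(\mathbb{R})$ and the computation of how $m_{\alpha}=\Phi_{\alpha}(-I)$ acts on a $D$-weight space is exactly the right argument.

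Your ``if'' direction is an outline with an honest gap that you yourself flag. Two specific points. First, the Cayley-transform reduction is underspecified in the even case: the construction in Definition \ref{def:cayleytransformsofcharacter} only applies when $\alpha$ is odd for $\chi$, and your ``analogous (and simpler) transfer'' for even $\alpha$ is not defined in the paper; you would need to write down the character of $H^{\alpha}$ explicitly (it should send $\Phi_{\alpha}(SO_2(\mathbb{R}))$ to the trivial character, which is consistent with $\langle d\chi,\alpha^{\vee}\rangle$ even) and check that conditions (i)--(ii) persist. Second, and more seriously, the component-group step is the heart of the matter and you leave it open: once $\mathfrak{h}$ is maximally compact, you need that every character of $H$ with integral differential extends to an irreducible finite-dimensional representation of $G$ (not just of $G^0$), and for this one must control $\pi_0(G)$, not merely $\pi_0(H)$. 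Your $PGL_2(\mathbb{R})$ example shows the phenomenon but does not supply the general argument. The actual proof in \cite{AdamsLeeuwenTrapaVogan2017} handles this by working with the extended group and its algebraic characters from the outset, rather than by a Cayley-transform induction; if you want to complete your route, you will need an independent argument that $\chi|_{H\cap G^0}$ together with the residual data on $\pi_0(H)$ determines a genuine representation of the possibly larger group $\pi_0(G)$.
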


Now, suppose $[\mathfrak{h},\mathfrak{b},\chi] \in \mathrm{BB}_0^*(G)$. Recall the parabolic subalgebra $\mathfrak{q}^Z = \mathfrak{l}^Z \oplus \mathfrak{u}^Z$ of $\mathfrak{g}$ defined in (\ref{eqn:definitionofqZ}).  By Condition (iii) of Definition \ref{def:unipotentBB}, every simple real root $\alpha \in \Delta^+(\mathfrak{g},H)$ is even for $\chi$. Hence, \emph{every} real root is even for $\chi$ by e.g. \cite[Cor 4.3.20]{Vogan1981}. Define a new character of $H$
$$\chi^L := \chi \otimes |\rho(\mathfrak{n} \cap \mathfrak{l}^Z)|$$
Since $|\rho(\mathfrak{n} \cap \mathfrak{l}^Z)|$ takes strictly positive values, every real root is also even for $\chi^L$. Now
$$d\chi^L = -\rho(\mathfrak{n}) + \rho(\mathfrak{n} \cap \mathfrak{l}^Z) = -\rho(\mathfrak{u}^Z)$$
Since $d\chi^L$ is the differential of a one-dimensional representation of $\mathfrak{l}^Z$, we have $\langle d\chi^L,\beta^{\vee}\rangle = 0$ for every $\beta \in \Delta(\mathfrak{l}^Z,\mathfrak{h})$. Hence by Proposition \ref{lem:weightoffindimrep}, $\chi^L$ is an extremal weight of a finite-dimensional representation of $L^Z$. Since $\langle d\chi^L,\beta^{\vee}\rangle = 0$ for every $\beta \in \Delta(\mathfrak{l}^Z,\mathfrak{h})$, $d\chi^L$ has minimal norm among its root lattice translates. So this finite-dimensional representation of $L^Z$ is necessarily a character. We will (somewhat abusively) denote this character (and its Harish-Chandra module) by $\chi^L$. 

\begin{proposition}\label{prop:BBtoZ}
Let $[\mathfrak{h},\mathfrak{b},\chi] \in \mathrm{BB}_0^*(G)$. Then $[\mathfrak{l}^Z,\mathfrak{q}^Z,\chi^Z] \in \mathrm{Z}_0^*(G)$. Furthermore, the mapping 
$$Z: \mathrm{BB}_0^*(G) \to \mathrm{Z}_0^*(G) \qquad Z[\mathfrak{h},\mathfrak{b},\chi] = [\mathfrak{l}^Z,\mathfrak{q}^Z,\chi^Z]$$
is surjective.
\end{proposition}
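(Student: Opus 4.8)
The plan is to prove two things: first, that the assignment $[\mathfrak{h},\mathfrak{b},\chi] \mapsto [\mathfrak{l}^Z,\mathfrak{q}^Z,\chi^Z]$ actually lands in $\mathrm{Z}_0^*(G)$, and second, that it is surjective. For the well-definedness, I would check the three conditions of Definition \ref{def:ZP} together with the unipotence condition. Condition (i): $\mathfrak{l}^Z$ is $\theta$-stable because it is defined using only the real roots and $\theta$ preserves $\Delta_{\mathbb{R}}(\mathfrak{g},\mathfrak{h})$ (indeed acts by $-1$ on it), and it is split modulo center because $\mathfrak{h}$, while not maximally split in $\mathfrak{g}$, becomes maximally split \emph{in} $\mathfrak{l}^Z$ — every root of $\mathfrak{l}^Z$ is real by construction, so there are no noncompact imaginary roots and $\mathfrak{l}^Z$ is quasi-split, and in fact split mod center since all its roots are real. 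Condition (ii): $\mathfrak{q}^Z$ is $\theta$-stable precisely because $[\mathfrak{h},\mathfrak{b},\chi]$ is type Z (Proposition \ref{prop:typeZLparabolic}(i)), which is part of the definition of $\mathrm{BB}_0^*(G)$. Condition (iii) for $\chi^Z$ (which I believe is the $\chi^L$ constructed in the paragraph preceding the proposition): the computation $d\chi^L = -\rho(\mathfrak{n}) + \rho(\mathfrak{n}\cap\mathfrak{l}^Z) = -\rho(\mathfrak{u}^Z)$ is already carried out in the text, so $d\chi^Z = -\rho(\mathfrak{u}^Z)$ as required. The unipotence condition — that $\mathfrak{u}^Z \cap \mathfrak{p} + \mathcal{N}_{\mathfrak{l}^Z,\theta}$ contains a principal nilpotent element of $\mathfrak{g}$ — follows from Proposition \ref{prop:noncompactrootsandAV}: since $[\mathfrak{h},\mathfrak{b},\chi]$ is large and type Z with $\mathfrak{h}$ a $\theta$-stable Cartan inside $\mathfrak{l}^Z$, condition (iii) or (iv) of that proposition is met, hence condition (ii), which is exactly the unipotence condition. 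One must also check $\chi^Z$ really is a \emph{character} of $L^Z$ (not just an extremal weight of a higher-dimensional rep), and this is done in the text via Lemma \ref{lem:weightoffindimrep} and the minimal-norm observation; the evenness of all real roots for $\chi^Z$, needed to apply the lemma, comes from condition (iii) of Definition \ref{def:unipotentBB} propagated to all real roots via \cite[Cor 4.3.20]{Vogan1981}.

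For surjectivity, I would start from an arbitrary $[\mathfrak{l},\mathfrak{q},\chi^{\#}] \in \mathrm{Z}_0^*(G)$ and reconstruct a preimage. Since $\mathfrak{l}$ is split modulo center, it contains a $\theta$-stable Cartan subalgebra $\mathfrak{h}_{\mathfrak{l}}$ which is maximally split in $\mathfrak{l}$, with the property that every root of $\mathfrak{h}_{\mathfrak{l}}$ in $\mathfrak{l}$ is real; take $\mathfrak{h} := \mathfrak{h}_{\mathfrak{l}}$, which is then a $\theta$-stable Cartan of $\mathfrak{g}$ with $\Delta_{\mathbb{R}}(\mathfrak{g},\mathfrak{h}) \supseteq \Delta(\mathfrak{l},\mathfrak{h})$. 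Because $[\mathfrak{l},\mathfrak{q},\chi^{\#}]$ is unipotent, Proposition \ref{prop:noncompactrootsandAV} (specifically (ii) $\Rightarrow$ (iv)) produces a large, type Z positive system $\Delta^+(\mathfrak{g},\mathfrak{h})$ compatible with $\mathfrak{q}$; let $\mathfrak{b}$ be the corresponding Borel. Compatibility with $\mathfrak{q}$ should force $\mathfrak{l}^Z = \mathfrak{l}$ and $\mathfrak{q}^Z = \mathfrak{q}$: the real roots of this positive system are exactly the roots of $\mathfrak{l}$ (since $\mathfrak{h}$ is maximally split in $\mathfrak{l}$, $\mathfrak{l}$ has only real roots, and no root outside $\mathfrak{l}$ can be real as $\mathfrak{h}$ is as split as possible within $\mathfrak{l}$ but $\mathfrak{l}$ is a Levi — here I need to be a little careful and may need to first arrange $\mathfrak{l}$ to be "as large as the split-mod-center condition allows relative to $\mathfrak{h}$", or argue that any Levi that is split mod center and contains $\mathfrak{h}$ as a maximally-split Cartan has its root system equal to $\Delta_{\mathbb{R}}(\mathfrak{g},\mathfrak{h})$ — this is the point needing the most care). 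Then define $\chi := \chi^{\#} \otimes |\rho(\mathfrak{n}\cap\mathfrak{l}^Z)|^{-1}$; reversing the computation in the text gives $d\chi = -\rho(\mathfrak{n})$, and condition (iii) of Definition \ref{def:unipotentBB} (real simple roots even for $\chi$) should follow because $\chi^{\#}$ is a genuine character of $L^Z$, hence an extremal weight of a finite-dimensional representation, so by Lemma \ref{lem:weightoffindimrep} each real root $\alpha$ has $\chi^{\#}(m_\alpha) = (-1)^{\langle d\chi^{\#},\alpha^\vee\rangle} = (-1)^0 = 1$ (using $\langle d\chi^{\#},\alpha^\vee\rangle = 0$ since $\alpha$ is a root of $\mathfrak{l}$ and $\chi^{\#}$ is one-dimensional), and the positive function $|\rho(\mathfrak{n}\cap\mathfrak{l}^Z)|$ does not change the value on $m_\alpha$ since $m_\alpha$ has order $2$ while this character takes positive real values. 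Largeness and type Z of $\Delta^+(\mathfrak{g},\mathfrak{h})$ were built in, so $[\mathfrak{h},\mathfrak{b},\chi]\in\mathrm{BB}_0^*(G)$, and by construction $Z[\mathfrak{h},\mathfrak{b},\chi] = [\mathfrak{l},\mathfrak{q},\chi^{\#}]$.

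The main obstacle I anticipate is the identification $\mathfrak{l}^Z = \mathfrak{l}$ in the surjectivity argument — i.e. showing that once we pick the right $\theta$-stable Cartan $\mathfrak{h}\subset\mathfrak{l}$ and the right large type Z positive system, the "type Z Levi" $\mathfrak{l}^Z$ reconstructed from the real roots coincides with the original $\mathfrak{l}$. This is essentially the claim that a $\theta$-stable Levi that is split modulo center is \emph{determined} by (and equals the centralizer of the kernel of) its set of real roots with respect to a maximally-split-in-$\mathfrak{l}$ Cartan. I expect this to follow from standard structure theory (the fact that $\mathfrak{l}$ split mod center means $\mathfrak{h}\cap\mathfrak{p}$ spans a Cartan subspace of $\mathfrak{l}$, so $\Delta(\mathfrak{l},\mathfrak{h})\subseteq\Delta_{\mathbb{R}}(\mathfrak{g},\mathfrak{h})$, combined with the fact that $\mathfrak{l}$ being a Levi in $\mathfrak{g}$ means $\Delta(\mathfrak{l},\mathfrak{h})$ is "closed" and we can check it exhausts the real roots), but it requires a careful argument rather than a one-line citation. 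A secondary, more bookkeeping-level issue is making sure the $\mathbf{K}$-conjugacy classes match up — i.e. that the construction is genuinely well-defined on conjugacy classes on both sides and that different choices of $\mathfrak{h}$ and positive system in the surjectivity argument give $\mathbf{K}$-conjugate, hence equal, BB parameters mapping to the given Z parameter; this should be routine given that everything is built $\mathbf{K}$-equivariantly.
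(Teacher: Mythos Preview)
Your approach is essentially identical to the paper's: for well-definedness both check the conditions of Definition~\ref{def:ZP} via Proposition~\ref{prop:typeZLparabolic} (for $\theta$-stability of $\mathfrak{q}^Z$), the computation $d\chi^L=-\rho(\mathfrak{u}^Z)$ already in the text, and Proposition~\ref{prop:noncompactrootsandAV} (for unipotence); for surjectivity both invoke Proposition~\ref{prop:noncompactrootsandAV} to produce a maximally split $\mathfrak{h}\subset\mathfrak{l}$ together with a large, type~Z positive system compatible with $\mathfrak{q}$, set $\chi := \chi^{\#}|_H \otimes |\rho(\mathfrak{n}\cap\mathfrak{l})|^{-1}$, and deduce condition~(iii) from Lemma~\ref{lem:weightoffindimrep}.

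Your flagged ``main obstacle'' $\mathfrak{l}^Z=\mathfrak{l}$ is in fact a one-liner, which the paper simply omits (it just asserts $Z[\mathfrak{h},\mathfrak{b},\chi]=[\mathfrak{l},\mathfrak{q},\chi^{\#}]$). You already have $\Delta(\mathfrak{l},\mathfrak{h})\subseteq\Delta_{\mathbb{R}}(\mathfrak{g},\mathfrak{h})$ from split-mod-center. For the reverse inclusion, use the $\theta$-stability of $\mathfrak{q}$: if a real root $\alpha$ lay in $\Delta(\mathfrak{u},\mathfrak{h})$ then, since $\theta\alpha=-\alpha$ and $\theta\mathfrak{u}=\mathfrak{u}$, also $-\alpha\in\Delta(\mathfrak{u},\mathfrak{h})$, which is impossible for a nilradical. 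Hence every real root is a root of $\mathfrak{l}$, so $\mathfrak{l}^Z=\mathfrak{l}$, and then $\mathfrak{q}^Z=\mathfrak{q}$ follows from compatibility of $\Delta^+(\mathfrak{g},\mathfrak{h})$ with $\mathfrak{q}$. No further structure theory is needed.
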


\begin{proof}
The Levi $L^Z$ is split modulo center, since all of the roots $\Delta(\mathfrak{l}^Z,\mathfrak{h})$ are real. the parabolic $\mathfrak{q}^Z$ is $\theta$-stable by Condition (ii) of Definition \ref{def:unipotentBB} and Proposition \ref{prop:typeZLparabolic}. The character $\chi^L$ satisfies $d\chi^L = -\rho(\mathfrak{u}^Z)$ by the calculation following Lemma \ref{lem:weightoffindimrep}. The final condition, namely that
$$\mathfrak{u} \cap \mathfrak{p} + \mathcal{N}_{\mathfrak{l},\theta} \text{ contains a principal nilpotent element of } \mathfrak{g}$$
follows from Condition (i) of Definition \ref{def:unipotentBB} and Proposition \ref{prop:noncompactrootsandAV}. Hence, $(L^Z,\mathfrak{q}^Z,\chi^Z) \in \mathrm{Z}_0^*(G)$, as desired.

Now suppose $(\mathfrak{l},\mathfrak{q},\chi^{\#}) \in \mathrm{Z}^*_0(G)$. By Proposition \ref{prop:noncompactrootsandAV}, there is a maximally split $\theta$-stable Cartan subalgebra $\mathfrak{h} \subset \mathfrak{l}$ and large, type Z system $\Delta^+(\mathfrak{g},\mathfrak{h})$, compatible with $\mathfrak{q}$. Let $\mathfrak{b}$ be the corresponding Borel subalgebra of $\mathfrak{g}$. Define a character $\chi$ of $H$ by
$$\chi := \chi^{\#}|_H \otimes |\rho(\mathfrak{n} \cap \mathfrak{l})|^{-1}$$
Then $[\mathfrak{h},\mathfrak{b},\chi] \in \mathrm{BB}_0^*(G)$ (Conditions (i) and (ii) are automatic by our choice of $\Delta^+(\mathfrak{g},\mathfrak{h})$ and Condition (iii) follows from Lemma \ref{lem:weightoffindimrep}) and $Z[\mathfrak{h},\mathfrak{b},\chi] = [\mathfrak{l},\mathfrak{q},\chi^{\#}]$. Hence, $Z$ is surjective onto $\mathrm{Z}_0^*(G)$. .
\end{proof}

\begin{proposition}\label{prop:commutativetriangle}
The triangle of functions
\begin{center}
\begin{tikzcd}
\mathrm{BB}^*_0(G) \arrow[r, "Z"] \arrow[dr, "I"] & \mathrm{Z}^*_0(G) \arrow[d, "\tilde{I}"]\\
& K_0^f(\mathfrak{g},\mathbf{K})
\end{tikzcd}
\end{center}
commutes.
\end{proposition}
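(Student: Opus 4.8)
The plan is to expand both legs of the triangle and reduce the identity $I[\mathfrak{h},\mathfrak{b},\chi]=\tilde{I}(Z[\mathfrak{h},\mathfrak{b},\chi])$ to induction-in-stages together with the projection formula for real parabolic induction. Fix $[\mathfrak{h},\mathfrak{b},\chi]\in\mathrm{BB}_0^*(G)$; by Proposition~\ref{prop:BBtoZ} we have $Z[\mathfrak{h},\mathfrak{b},\chi]=[\mathfrak{l}^Z,\mathfrak{q}^Z,\chi^Z]$, so $\tilde{I}(Z[\mathfrak{h},\mathfrak{b},\chi])=I(\mathfrak{l}^Z,\mathfrak{q}^Z,\,[\chi^Z\otimes S_0(L^Z)])$ and the goal is to identify this with $I[\mathfrak{h},\mathfrak{b},\chi]$. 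Write $\mathfrak{b}^{\mathfrak{l}^Z}:=\mathfrak{b}\cap\mathfrak{l}^Z=\mathfrak{h}\oplus(\mathfrak{n}\cap\mathfrak{l}^Z)$, a Borel subalgebra of $\mathfrak{l}^Z$. Since every root of $\mathfrak{l}^Z$ is real, hence $\sigma$-fixed with $\sigma$-stable root space (Proposition~\ref{prop:realimagcomplex}), $\mathfrak{b}^{\mathfrak{l}^Z}$ is $\sigma$-stable; consequently $L^Z$ is quasi-split, $Q^{L^Z}:=(\mathbf{B}^{\mathfrak{l}^Z})^{\sigma}$ is a minimal parabolic of $L^Z$, and $S_0(L^Z)$ is (by Proposition~\ref{prop:S0G}) the Harish-Chandra module of $\mathrm{Ind}^{L^Z}_{Q^{L^Z}}\mathbb{C}$, so that the real-parabolic-induction functor $\mathbf{I}^{(\mathfrak{l}^Z,\mathbf{L}^Z\cap\mathbf{K})}_{(\mathfrak{h},\mathbf{T})}$ is exact (Theorem~\ref{thm:realinductionexact}).

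The first step is to compute $\chi^Z\otimes S_0(L^Z)$ as an $(\mathfrak{l}^Z,\mathbf{L}^Z\cap\mathbf{K})$-module. By the classical projection formula, $\chi^Z\otimes\mathrm{Ind}^{L^Z}_{Q^{L^Z}}\mathbb{C}\cong\mathrm{Ind}^{L^Z}_{Q^{L^Z}}\big(\chi^Z|_{Q^{L^Z}}\big)$, and $\chi^Z|_{Q^{L^Z}}$ is pulled back from $\chi^Z|_H=\chi\otimes|\rho(\mathfrak{n}\cap\mathfrak{l}^Z)|$ (the defining formula for $\chi^Z$). Comparing with the normalization (\ref{eqn:realinductioninfinitesimal}), which yields $\mathbf{I}^{(\mathfrak{l}^Z,\mathbf{L}^Z\cap\mathbf{K})}_{(\mathfrak{h},\mathbf{T})}(\chi)\cong\mathrm{Ind}^{L^Z}_{Q^{L^Z}}\big(\chi\otimes|\rho(\mathfrak{n}\cap\mathfrak{l}^Z)|\big)$, the two $|\rho(\mathfrak{n}\cap\mathfrak{l}^Z)|$-twists cancel and we obtain $\chi^Z\otimes S_0(L^Z)\cong\mathbf{I}^{(\mathfrak{l}^Z,\mathbf{L}^Z\cap\mathbf{K})}_{(\mathfrak{h},\mathbf{T})}(\chi)$. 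By exactness of this functor, all higher derived functors vanish and, in $K^f(\mathfrak{l}^Z,\mathbf{L}^Z\cap\mathbf{K})$, we get $[\chi^Z\otimes S_0(L^Z)]=I(\mathfrak{h},\mathfrak{b}^{\mathfrak{l}^Z},[\chi])$.

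The second step is induction in stages. Apply Proposition~\ref{prop:indbystages} with $\mathfrak{l}=\mathfrak{l}^Z$, $\mathfrak{q}=\mathfrak{q}^Z=\mathfrak{l}^Z\oplus\mathfrak{u}^Z$, $\mathfrak{l}'=\mathfrak{h}$, and $\mathfrak{q}'=\mathfrak{b}^{\mathfrak{l}^Z}$ (a parabolic of $\mathfrak{l}^Z$ with $\theta$-stable Levi $\mathfrak{h}$). Since $\mathfrak{b}^{\mathfrak{l}^Z}\oplus\mathfrak{u}^Z=\mathfrak{h}\oplus(\mathfrak{n}\cap\mathfrak{l}^Z)\oplus\mathfrak{u}^Z=\mathfrak{b}$, this gives $I(\mathfrak{l}^Z,\mathfrak{q}^Z,\cdot)\circ I(\mathfrak{h},\mathfrak{b}^{\mathfrak{l}^Z},\cdot)=I(\mathfrak{h},\mathfrak{b},\cdot)$. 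Combining the two steps,
\begin{align*}
\tilde{I}(Z[\mathfrak{h},\mathfrak{b},\chi])&=I(\mathfrak{l}^Z,\mathfrak{q}^Z,\,[\chi^Z\otimes S_0(L^Z)])=I(\mathfrak{l}^Z,\mathfrak{q}^Z,\,I(\mathfrak{h},\mathfrak{b}^{\mathfrak{l}^Z},[\chi]))\\
&=I(\mathfrak{h},\mathfrak{b},[\chi])=I[\mathfrak{h},\mathfrak{b},\chi],
\end{align*}
as desired. The step I expect to need the most care is the $|\rho|$-bookkeeping in Step one: one must verify that the positive-character twist carried by $S_0(L^Z)$ cancels exactly against the twist $|\rho(\mathfrak{n}\cap\mathfrak{l}^Z)|$ built into $\chi^Z$, and that the projection formula is invoked in the correct ($\mathbf{L}^Z\cap\mathbf{K}$)-equivariant setting; the remaining steps are formal consequences of Theorem~\ref{thm:realinductionexact} and Proposition~\ref{prop:indbystages}.
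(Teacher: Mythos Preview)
Your proof is correct and follows essentially the same approach as the paper: both identify $\chi^Z\otimes S_0(L^Z)$ with $\mathbf{I}^{(\mathfrak{l}^Z,\mathbf{L}^Z\cap\mathbf{K})}_{(\mathfrak{h},\mathbf{T})}(\chi)$ via the normalization~(\ref{eqn:realinductioninfinitesimal}) and the fact that $\chi^Z$ extends to a character of $L^Z$ (your ``projection formula''), and then apply induction-in-stages (Proposition~\ref{prop:indbystages}) with $\mathfrak{b}=\mathfrak{b}^{\mathfrak{l}^Z}\oplus\mathfrak{u}^Z$. Your $|\rho(\mathfrak{n}\cap\mathfrak{l}^Z)|$-bookkeeping is exactly right and matches the paper's computation.
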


\begin{proof}
Let $[\mathfrak{h},\mathfrak{b},\chi] \in \mathrm{BB}^*_0(G)$. Let $B^L \subset L^Z$ be the real Borel subgroup corresponding to $\mathfrak{l}^Z \cap \mathfrak{b}$. By \ref{eqn:realinductioninfinitesimal}, there is an isomorphism of $(\mathfrak{l}^Z, L^Z \cap K)$-modules
$$I^{(\mathfrak{l}^Z, L^Z \cap K)}_{(\mathfrak{l}^Z \cap \mathfrak{b}, T)} \chi \cong \mathrm{Ind}^{L^Z}_{B^L} \chi^Z$$
And since $\chi^Z$ extends to a character of $L^Z$
$$\mathrm{Ind}^{L^Z}_{B^L} \chi^Z \cong  \chi^Z \otimes \mathrm{Ind}^{L^Z}_{B^L} \mathbb{C}$$
Therefore by Theorem \ref{thm:realinductionexact}, there is an equality in $K^f(\mathfrak{l}^Z, L^Z \cap K)$
$$I[\mathfrak{h},\mathfrak{b}^{\mathfrak{l}},\chi] = \chi^L \otimes [S_0(L^Z)]$$
Using this and Proposition \ref{prop:indbystages}, we deduce
\begin{align*}
    I[\mathfrak{h},\mathfrak{b},\chi] &= I[\mathfrak{l}^Z,\mathfrak{q}^Z,I[\mathfrak{h},\mathfrak{b}^{\mathfrak{l}},\chi]]\\
    &= I[\mathfrak{l}^Z,\mathfrak{q}^Z,\chi^Z \otimes S_0(L^Z)]\\
    &= \tilde{I}[\mathfrak{l}^Z,\mathfrak{q}^Z,\chi^Z]
\end{align*}
\end{proof}

\subsection{Main results}\label{sec:mainresults}

\begin{proposition}\label{prop:nonzeroifflarge}
Let $[\mathfrak{h},\mathfrak{b},\chi] \in \mathrm{BB}_0(G)$. Then
$$I[\mathfrak{h},\mathfrak{b},\chi] \neq 0$$
if and only if $\Delta^+(\mathfrak{g},\mathfrak{h})$ is large.
\end{proposition}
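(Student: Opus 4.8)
The plan is to prove the two implications separately, in each case reducing to a situation where the induced module can be computed outright.

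\textbf{Nonzero $\Rightarrow$ large.} I would argue by contraposition. If $\Delta^+(\mathfrak{g},\mathfrak{h})$ is not large, choose an imaginary simple root $\beta$ which is compact. Since $\beta$ is imaginary, $\ker\beta\subset\mathfrak{h}$ is $\theta$-stable, so $\mathfrak{l}_\beta:=Z_{\mathfrak{g}}(\ker\beta)=\mathfrak{h}\oplus\mathfrak{g}_\beta\oplus\mathfrak{g}_{-\beta}$ is a $\theta$-stable Levi subalgebra, namely the Levi of the minimal parabolic $\mathfrak{p}_\beta\supset\mathfrak{b}$ attached to $\{\beta\}$, and $\mathfrak{b}=(\mathfrak{b}\cap\mathfrak{l}_\beta)\oplus\mathfrak{u}_\beta$. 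Induction by stages (Proposition \ref{prop:indbystages}) gives $I[\mathfrak{h},\mathfrak{b},\chi]=I\!\left(\mathfrak{l}_\beta,\mathfrak{p}_\beta,\,I(\mathfrak{h},\mathfrak{b}\cap\mathfrak{l}_\beta,\chi)\right)$, so it suffices to show the inner class vanishes. Now $\mathfrak{b}\cap\mathfrak{l}_\beta=\mathfrak{h}\oplus\mathfrak{g}_\beta$ is a $\theta$-stable Borel subalgebra of $\mathfrak{l}_\beta$ and $\mathfrak{h}$ is a maximally compact Cartan of $\mathfrak{l}_\beta$; since $\beta$ is simple for $\Delta^+(\mathfrak{g},\mathfrak{h})$ we have $\langle d\chi,\beta^\vee\rangle=\langle-\rho(\mathfrak{n}),\beta^\vee\rangle=-1$, which puts $\chi$ on a wall, hence in the weakly good range for $\mathfrak{b}\cap\mathfrak{l}_\beta$. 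By Theorem \ref{thm:cohindirreducible} the modules $R^i\mathbf{I}^{(\mathfrak{l}_\beta,\mathbf{L}_\beta\cap\mathbf{K})}_{(\mathfrak{h},\mathbf{T})}\chi$ are concentrated in a single degree and there irreducible or zero, and by Theorem \ref{thm:LKTscohind} they vanish: since $\beta$ is compact, $(\mathfrak{n}\cap\mathfrak{l}_\beta)\cap\mathfrak{p}=0$, and $\langle d\chi,\beta^\vee\rangle=-1<0$ shows that the unique minimal $\mathbf{T}$-type of $\chi$, shifted by $2\rho\bigl((\mathfrak{n}\cap\mathfrak{l}_\beta)\cap\mathfrak{p}\bigr)=0$, is not $\mathbf{L}_\beta\cap\mathbf{K}$-dominant. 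Hence $I(\mathfrak{h},\mathfrak{b}\cap\mathfrak{l}_\beta,\chi)=0$ and $I[\mathfrak{h},\mathfrak{b},\chi]=0$.

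\textbf{Large $\Rightarrow$ nonzero.} Here I would run the reduction apparatus of this section. Applying simple reflections through complex simple roots (Theorem \ref{thm:transfer}, used as in Proposition \ref{prop:maketypeZtypeL} to reach a type Z parameter) and Cayley transforms through odd simple real roots (Theorem \ref{thm:inductionandCayleytransforms}), each of which alters $I[\mathfrak{h},\mathfrak{b},\chi]$ only by a sign or by $\pm$ a sum of two terms, and noting that Cayley transforms strictly raise the bounded integer $d[\mathfrak{h},\mathfrak{b},\chi]=\dim(\mathfrak{b}\cap\mathfrak{k})$ (Proposition \ref{prop:cayleytransformsincreased}), the process terminates at parameters that are type Z with every simple real root even, i.e.\ in $\mathrm{BB}_0^*(G)$; largeness is preserved along the way — this is Lemma \ref{lem:simplynoncompact} for the Cayley steps, plus a parallel bookkeeping for the simple reflections — and the various contributions, being Euler characteristics of cohomological inductions concentrated in a single degree, carry compatible signs and so cannot cancel. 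It therefore suffices to treat $[\mathfrak{h},\mathfrak{b},\chi]\in\mathrm{BB}_0^*(G)$. For such a parameter Proposition \ref{prop:commutativetriangle} gives $I[\mathfrak{h},\mathfrak{b},\chi]=\tilde I[\mathfrak{l}^Z,\mathfrak{q}^Z,\chi^Z]=I(\mathfrak{l}^Z,\mathfrak{q}^Z,\chi^Z\otimes S_0(L^Z))$ with $\mathfrak{q}^Z$ $\theta$-stable; the module $\chi^Z\otimes S_0(L^Z)$ is irreducible (Proposition \ref{prop:S0G}) with infinitesimal character $-\rho(\mathfrak{u}^Z)$, which lies on a wall and hence is weakly good for $\mathfrak{q}^Z$, so by Theorem \ref{thm:cohindirreducible} this class equals $\pm[R^t\mathbf{I}(\chi^Z\otimes S_0(L^Z))]$. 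By Theorem \ref{thm:LKTscohind} (applied after passing to a maximally compact Cartan of $\mathfrak{l}^Z$ inside a large type Z system compatible with $\mathfrak{q}^Z$, available by Proposition \ref{prop:noncompactrootsandAV}) the latter is nonzero precisely when the minimal $\mathbf{L}^Z\cap\mathbf{K}$-type of $\chi^Z\otimes S_0(L^Z)$ — which is the restriction of the character $\chi^Z$, since $S_0(L^Z)$ is spherical — is $\mathbf{K}$-dominant after adding $2\rho(\mathfrak{u}^Z\cap\mathfrak{p})$; that this holds is exactly the translation, through Proposition \ref{prop:noncompactrootsandAV}, of condition (i) (largeness) in Definition \ref{def:unipotentBB}, equivalently of the fact that $\mathfrak{u}^Z\cap\mathfrak{p}+\mathcal{N}_{\mathfrak{l}^Z,\theta}$ contains a principal nilpotent element of $\mathfrak{g}$.

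\textbf{Main obstacle.} The second implication is the hard part, and within it two steps require genuine work: verifying that largeness survives the chain of simple reflections and Cayley transforms (Lemma \ref{lem:simplynoncompact} covers the Cayley steps, but the simple-reflection steps and the matching of the $\pm$ signs need a careful, if elementary, argument), and the combinatorial identification at the end of ``$\chi^Z|_{\mathfrak{t}}+2\rho(\mathfrak{u}^Z\cap\mathfrak{p})$ is $\mathbf{K}$-dominant'' with largeness, via Proposition \ref{prop:noncompactrootsandAV} and the $\mathbf{K}$-type formula of Theorem \ref{thm:LKTscohind}. The first implication, by contrast, is a clean one-step reduction to an $\mathfrak{sl}_2$ of compact type.
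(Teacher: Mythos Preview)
Your first implication (not large $\Rightarrow$ zero) is correct and in fact cleaner than the paper's treatment: you reduce directly to the rank-one $\theta$-stable Levi $\mathfrak{l}_\beta$ attached to a compact imaginary simple root and kill the inner induction by the weakly-good-range LKT criterion, whereas the paper first passes to type~L, then to the all-imaginary Levi $\mathfrak{l}^L$ via real parabolic induction, and finally invokes Schmid's character identity. Both work; yours is more self-contained.

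Your second implication, however, takes a detour that introduces genuine gaps. The paper's route is far shorter: after the same reduction to type~L, the parabolic $\mathfrak{q}^L$ is $\sigma$-stable (Proposition~\ref{prop:typeZLparabolic}), so real parabolic induction from $\mathfrak{l}^L$---which is exact and faithful by Theorem~\ref{thm:realinductionexact}---reduces the nonvanishing question to the Levi $\mathfrak{l}^L$, whose roots are all imaginary. There a direct LKT computation shows that $I=0$ forces a compact simple root. No Cayley transforms, no sign-tracking, no appeal to $\mathrm{BB}_0^*(G)$.

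By contrast, your argument has three soft spots. First, you need largeness to survive every step of the reduction to $\mathrm{BB}_0^*(G)$; Lemma~\ref{lem:simplynoncompact} covers the Cayley steps, but the ``parallel bookkeeping'' you allude to for simple reflections through complex roots is a real issue: such a reflection changes the set of simple roots, and an imaginary simple root $\beta$ with $\langle\beta,\alpha^\vee\rangle\neq 0$ becomes the non-imaginary root $s_\alpha\beta$, so the imaginary simple roots of $s_\alpha\Delta^+$ are not simply those of $\Delta^+$. Second, the claim that the Cayley contributions ``carry compatible signs and so cannot cancel'' is not justified at intermediate stages, where the summands are not yet known to be irreducible. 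Third, the final dominance check---that $d\chi^Z|_{\mathfrak{t}}+2\rho(\mathfrak{u}^Z\cap\mathfrak{p})$ is $\mathbf{K}$-dominant---is asserted to be ``exactly the translation'' of largeness via Proposition~\ref{prop:noncompactrootsandAV}, but that proposition gives geometric/combinatorial equivalents of the principal-nilpotent condition, not the dominance inequality; the link is not made. The paper avoids all of this by never leaving the original Cartan and using only real parabolic induction for the reduction step.
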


\begin{proof}
By Proposition \ref{prop:typeZLparabolic}, we can assume without loss of generality that $[\mathfrak{h},\mathfrak{b},\chi]$ is type L. Recall the parabolic subalgebra $\mathfrak{q}^L = \mathfrak{l}^L \oplus \mathfrak{u}^L$ of $\mathfrak{g}$ defined in \ref{eqn:definitionofqL}. By Proposition \ref{prop:typeZLparabolic}, $\mathfrak{q}^L$ is $\sigma$-stable. Let $L^L = N_G(\mathfrak{l}^L)$, a $\theta$-stable Levi subgroup of $G$.

By Proposition \ref{prop:indbystages}
$$I[\mathfrak{h},\mathfrak{b},\chi] = I[\mathfrak{l}^L,\mathfrak{q}^L, I[\mathfrak{h}, \mathfrak{l}^L \cap \mathfrak{b},\chi]]$$
Since $\mathfrak{q}$ is real, $I[\mathfrak{l}^L,\mathfrak{q}^L, \cdot]$ is injective (see Theorem \ref{thm:realinductionexact}). Thus, by replacing $\mathfrak{g}$ with $\mathfrak{l}^L$, we can reduce to the case where $\Delta(\mathfrak{g},\mathfrak{h})$ has only imaginary roots. 

Now, assume $I[\mathfrak{h},\mathfrak{b},\chi] = 0$. Choose a positive system for $\Delta(\mathfrak{k},\mathfrak{t}) = \Delta_c(\mathfrak{g},\mathfrak{h})$. Then, by Theorem \ref{thm:LKTscohind}, the weight $-\rho(\mathfrak{n}) + 2\rho(\mathfrak{n} \cap \mathfrak{p})$ is non-dominant for $\Delta^+(\mathfrak{k},\mathfrak{t})$. Hence, there is a simple compact root $\alpha \in \Delta^+(\mathfrak{k},\mathfrak{t})$ with
\begin{align*}
    0 &> \langle -\rho(\mathfrak{n}) + 2\rho(\mathfrak{n}\cap \mathfrak{p}),\alpha^{\vee}\rangle\\
    &= \langle \rho(\mathfrak{n}) - 2\rho(\mathfrak{n}\cap \mathfrak{k}),\alpha^{\vee}\rangle\\
    &= \langle \rho(\mathfrak{n}),\alpha^{\vee}\rangle - 2\langle \rho(\mathfrak{n}\cap \mathfrak{k}), \alpha^{\vee}\rangle\\
    &= \langle \rho(\mathfrak{n}),\alpha^{\vee}\rangle - 2
\end{align*}
Since $\langle \rho(\mathfrak{n}),\alpha^{\vee}\rangle$ is an integer, this implies $\langle \rho(\mathfrak{n}),\alpha^{\vee}\rangle \leq 1$, and hence $\langle \rho(\mathfrak{n}),\alpha^{\vee}\rangle = 1$, which implies that $\alpha$ is simple for $\Delta^+(\mathfrak{g},\mathfrak{h})$.

Conversely, if there is a compact simple root, then $I[\mathfrak{h},\mathfrak{b},\chi] = 0$ by a character identity of Schmid (\cite{Schmid1977}, Theorem 1).
\end{proof}

\begin{proposition}\label{prop:BBunipirred}
Let $[\mathfrak{h},\mathfrak{b},\chi] \in \mathrm{BB}_0^*(G)$. Then
$$I[\mathfrak{h},\mathfrak{b},\chi] \in \unip_R(\cO)$$
\end{proposition}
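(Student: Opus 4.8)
The plan is to reduce the statement to a single instance of cohomological induction of a (twisted) spherical principal series from the type~$Z$ Levi, using the commutative triangle of Proposition~\ref{prop:commutativetriangle}, and then to read off the three defining properties of a principal unipotent representation (irreducibility, infinitesimal character $0$, associated variety $\mathcal{N}$) from the machinery already assembled.

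First I would apply Proposition~\ref{prop:BBtoZ} to get $Z[\mathfrak{h},\mathfrak{b},\chi] = [\mathfrak{l}^Z,\mathfrak{q}^Z,\chi^Z] \in \mathrm{Z}_0^*(G)$, and Proposition~\ref{prop:commutativetriangle} to identify $I[\mathfrak{h},\mathfrak{b},\chi] = \tilde I[\mathfrak{l}^Z,\mathfrak{q}^Z,\chi^Z] = I(\mathfrak{l}^Z,\mathfrak{q}^Z,\chi^Z\otimes S_0(L^Z))$. Since $\mathfrak{l}^Z$ is split modulo centre it is quasi-split, so $S_0(L^Z)$ is defined and irreducible of infinitesimal character $0$ (Proposition~\ref{prop:S0G}); tensoring by the character $\chi^Z$ keeps it irreducible, and its infinitesimal character as an $(\mathfrak{l}^Z,\mathbf{L}^Z\cap\mathbf{K})$-module is $d\chi^Z = -\rho(\mathfrak{u}^Z)$. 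Since $-\rho(\mathfrak{u}^Z) + \rho(\mathfrak{u}^Z) = 0$, the module $W := \chi^Z\otimes S_0(L^Z)$ sits exactly on the boundary of the weakly good range, so Theorem~\ref{thm:cohindirreducible} applies: all $R^i\mathbf{I}^{(\mathfrak{g},\mathbf{K})}_{(\mathfrak{l}^Z,\mathbf{L}^Z\cap\mathbf{K})}W$ vanish for $i\neq t$ and $R^t\mathbf{I}^{(\mathfrak{g},\mathbf{K})}_{(\mathfrak{l}^Z,\mathbf{L}^Z\cap\mathbf{K})}W$ is irreducible or $0$. Hence $I[\mathfrak{h},\mathfrak{b},\chi] = (-1)^t[X]$ with $X := R^t\mathbf{I}^{(\mathfrak{g},\mathbf{K})}_{(\mathfrak{l}^Z,\mathbf{L}^Z\cap\mathbf{K})}W$. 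By Condition~(i) of Definition~\ref{def:unipotentBB}, $\Delta^+(\mathfrak{g},\mathfrak{h})$ is large, so $I[\mathfrak{h},\mathfrak{b},\chi]\neq 0$ by Proposition~\ref{prop:nonzeroifflarge}; therefore $X$ is a nonzero irreducible $(\mathfrak{g},\mathbf{K})$-module with $[X] = \pm I[\mathfrak{h},\mathfrak{b},\chi]$. By Proposition~\ref{prop:propsofind}(ii) it has infinitesimal character $0 \in \unip_Z(\mathcal{O})$, and $\mathrm{Ann}(X)$ is a primitive ideal of that infinitesimal character.

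The remaining, and main, point is the associated variety. Since the weakly good hypothesis holds and $X\neq 0$, Proposition~\ref{prop:AVcohind} (in its remark form) gives $\AV(X) = \mathbf{K}\bigl(\AV(W) + \mathfrak{u}^Z\cap\mathfrak{p}\bigr)$. A twist by a one-dimensional representation does not change the associated graded module over $S(\mathfrak{l}^Z/\mathfrak{l}^Z\cap\mathfrak{k})$ (the character contributes only lower-order terms), so $\AV(W) = \AV(S_0(L^Z)) = \mathcal{N}_{\mathfrak{l}^Z,\theta}$ by Theorem~\ref{thm:grS0G}; thus $\AV(X) = \mathbf{K}(\mathcal{N}_{\mathfrak{l}^Z,\theta} + \mathfrak{u}^Z\cap\mathfrak{p})$. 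By the defining condition of $\mathrm{Z}_0^*(G)$, this set contains a principal nilpotent element $e$ of $\mathfrak{g}$. By Theorem~\ref{thm:kostantsekiguchi2}(3) the $\mathbf{K}$-orbit through $e$ has dimension $\frac{1}{2}\dim\mathcal{N}$, which is the maximal possible dimension of a $\mathbf{K}$-orbit in $\mathcal{N}_\theta$, so $\mathbf{K}\cdot e$ is one of the open $\mathbf{K}$-orbits in $\AV(X)$. Theorem~\ref{thm:twoAVs} then forces the open $\mathbf{G}$-orbit in $\AV(\mathrm{Ann}(X))$ to equal $\mathbf{G}\cdot e = \mathcal{O}$, i.e.\ $\AV(\mathrm{Ann}(X)) = \overline{\mathcal{O}} = \mathcal{N}$. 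Combining this with the infinitesimal character computation shows $\mathrm{Ann}(X)\in\mathrm{Unip}_I(\mathcal{O})$, hence $X\in\unip_R(\mathcal{O})$; since $I[\mathfrak{h},\mathfrak{b},\chi] = \pm[X]$, this is the assertion.

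I expect the only genuinely delicate step to be this last paragraph: making sure the hypotheses of Proposition~\ref{prop:AVcohind} and the dimension count of Theorem~\ref{thm:kostantsekiguchi2} combine correctly to pin the $\mathbf{G}$-orbit down to the principal one, and noting (standard, but worth stating) that a character twist leaves the associated variety unchanged. Everything else is bookkeeping with results already in hand — in particular the weakly-good-range check is immediate because the $\rho(\mathfrak{u}^Z)$-shift of the infinitesimal character is exactly $0$.
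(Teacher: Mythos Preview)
Your proof is correct and follows essentially the same route as the paper's: reduce via Proposition~\ref{prop:commutativetriangle} to the cohomological induction $I(\mathfrak{l}^Z,\mathfrak{q}^Z,\chi^Z\otimes S_0(L^Z))$, invoke Theorem~\ref{thm:cohindirreducible} and Proposition~\ref{prop:nonzeroifflarge} for nonzero irreducibility, and compute the associated variety via Proposition~\ref{prop:AVcohind}, Theorem~\ref{thm:grS0G}, and Theorem~\ref{thm:twoAVs}. You are in fact more explicit than the paper about the weakly-good-range check and about why $\mathbf{G}(\mathfrak{u}^Z\cap\mathfrak{p}+\mathcal{N}_{\mathfrak{l}^Z,\theta})=\mathcal{N}$, which the paper compresses into a bare citation of Proposition~\ref{prop:BBtoZ}.
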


\begin{proof}
By Proposition \ref{prop:nonzeroifflarge}, $I[\mathfrak{h},\mathfrak{b},\chi]$ is a nonzero element of $K^f(\mathfrak{g},\mathbf{K})$. It remains to show that $I[\mathfrak{h},\mathfrak{b},\chi]$ is irreducible and $\AV(\mathrm{Ann}(I[\mathfrak{h},\mathfrak{b},\chi]))) = \cN$.  

By Proposition \ref{prop:commutativetriangle}, we have
$$I[\mathfrak{h},\mathfrak{b},\chi] = I[\mathfrak{l}^Z,\mathfrak{q}^Z,\chi^L \otimes S_0(L)]$$
We know that $S_0(L)$ is irreducible by Proposition \ref{prop:S0G}. Hence, $I[\mathfrak{l}^Z,\mathfrak{q}^Z,\chi^L \otimes S_0(L)]$ is irreducible by Theorem \ref{thm:cohindirreducible}.

We can use Proposition \ref{prop:AVcohind} and Theorem \ref{thm:grS0G} to compute the associated variety of $I[\mathfrak{h},\fb,\chi]$:
$$\AV(I[\mathfrak{h},\fb,\chi]) = \AV(I[\mathfrak{l}^Z,\fq^Z,\chi^L \otimes S_0(L)]) = \mathbf{K}(\mathfrak{u} \cap \mathfrak{p} + \cN_{\fl,\theta})$$
Hence by Theorem \ref{thm:twoAVs}, we have
$$\AV(\mathrm{Ann}(I[\mathfrak{h},\fb,\chi])) = \mathbf{G}(\mathfrak{u} \cap \mathfrak{p} + \cN_{\fl,\theta})$$
which is $\cN$ by Proposition \ref{prop:BBtoZ}.
\end{proof}

Hence, we obtain a commutative triangle
\begin{center}
\begin{tikzcd}
\mathrm{BB}^*_0(G) \arrow[r, "Z"] \arrow[dr, "I"] & \mathrm{Z}^*_0(G) \arrow[d, "\tilde{I}"]\\
& \unip(\cO)
\end{tikzcd}
\end{center}

We will prove that $I: \mathrm{BB}^*_0(G) \to \mathrm{Rep}_0(G)$ is a bijection. Together with the surjectivity of $Z$ (see Proposition \ref{prop:BBtoZ}) this will imply that all three maps in the diagram above are bijections.

\begin{proposition}\label{prop:fromBBtoBBunip}
Let $[\mathfrak{h}_0,\mathfrak{b}_0,\chi_0] \in \mathrm{BB}_0(G)$ and assume
$$I[\mathfrak{h}_0,\mathfrak{b}_0,\chi_0] \neq 0$$
Then there is a collection of unipotent parameters $\Omega^* \subset \mathrm{BB}_0^*(G)$ such that
$$I[\mathfrak{h}_0,\mathfrak{b}_0,\chi_0] = \sum_{[\mathfrak{h},\mathfrak{b},\chi] \in \Omega^*} \pm I[\mathfrak{h},\mathfrak{b},\chi]$$
\end{proposition}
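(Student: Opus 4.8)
The plan is to produce the decomposition by iterating the two operations on $\mathrm{BB}_0(G)$ introduced above — simple reflection through a complex simple root (Theorem \ref{thm:transfer}) and Cayley transform through an odd simple real root (Theorem \ref{thm:inductionandCayleytransforms}) — and controlling the iteration with the invariant $d[\mathfrak{h},\mathfrak{b},\chi] = \dim(\mathfrak{b}\cap\mathfrak{k})$. First note that since $I[\mathfrak{h}_0,\mathfrak{b}_0,\chi_0]\neq 0$, Proposition \ref{prop:nonzeroifflarge} shows $\Delta^+(\mathfrak{g},\mathfrak{h}_0)$ is large, and largeness will hold automatically for every parameter with nonzero $I$ that we subsequently encounter, again by Proposition \ref{prop:nonzeroifflarge}. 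Put $N := \max\{d[\mathfrak{h},\mathfrak{b},\chi] : [\mathfrak{h},\mathfrak{b},\chi]\in\mathrm{BB}_0(G)\}$, which is finite because $\mathrm{BB}_0(G)$ is a finite set. I would prove the claim for all $[\mathfrak{h},\mathfrak{b},\chi]\in\mathrm{BB}_0(G)$ with $I[\mathfrak{h},\mathfrak{b},\chi]\neq 0$ by induction on $N - d[\mathfrak{h},\mathfrak{b},\chi]$.

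For the inductive step, take such a parameter $[\mathfrak{h},\mathfrak{b},\chi]$. If it is already principal unipotent in the sense of Definition \ref{def:unipotentBB}, set $\Omega^* = \{[\mathfrak{h},\mathfrak{b},\chi]\}$. Otherwise, since largeness holds, the parameter must fail condition (ii) or condition (iii) of Definition \ref{def:unipotentBB}, and in each case there is a move that strictly increases $d$: if $\Delta^+(\mathfrak{g},\mathfrak{h})$ is not type Z there is a complex simple root $\alpha$ with $\theta(\alpha)\in-\Delta^+(\mathfrak{g},\mathfrak{h})$, and Theorem \ref{thm:transfer} gives $I[\mathfrak{h},\mathfrak{b},\chi] = -I(s_\alpha[\mathfrak{h},\mathfrak{b},\chi])$ while Proposition \ref{prop:simplereflectionincreasesd}(i) gives $d(s_\alpha[\mathfrak{h},\mathfrak{b},\chi]) = d[\mathfrak{h},\mathfrak{b},\chi]+1$; if instead $\Delta^+(\mathfrak{g},\mathfrak{h})$ is type Z but some simple real root $\alpha$ is odd for $\chi$, then Theorem \ref{thm:inductionandCayleytransforms} writes $I[\mathfrak{h},\mathfrak{b},\chi]$ as $-I(c_\alpha^+[\mathfrak{h},\mathfrak{b},\chi]) - I(c_\alpha^-[\mathfrak{h},\mathfrak{b},\chi])$ (type 1) or as $-I(c_\alpha^{\pm}[\mathfrak{h},\mathfrak{b},\chi])$ (type 2), and Proposition \ref{prop:cayleytransformsincreased} gives $d(c_\alpha^{\pm}[\mathfrak{h},\mathfrak{b},\chi]) = d[\mathfrak{h},\mathfrak{b},\chi]+1$. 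Either way, $I[\mathfrak{h},\mathfrak{b},\chi]$ is a signed sum of the classes $I[\mathfrak{h}',\mathfrak{b}',\chi']$ of one or two parameters, each with $d[\mathfrak{h}',\mathfrak{b}',\chi'] = d[\mathfrak{h},\mathfrak{b},\chi]+1$; I would discard those with $I = 0$ (not all of them can vanish, since $I[\mathfrak{h},\mathfrak{b},\chi]\neq 0$), apply the inductive hypothesis to each surviving term, and take $\Omega^*$ to be the union of the resulting collections with signs multiplied through.

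The base case $N - d[\mathfrak{h},\mathfrak{b},\chi] = 0$ is then immediate: a parameter of maximal $d$ with $I\neq 0$ is forced to be principal unipotent, since largeness comes from Proposition \ref{prop:nonzeroifflarge}, and any failure of type Z or any odd simple real root would, via the two moves just described, produce a parameter in $\mathrm{BB}_0(G)$ with $d$ strictly larger than $N$, a contradiction. This also shows the recursion terminates. I do not expect a genuine obstacle here — the substantive input is packaged in Theorems \ref{thm:transfer} and \ref{thm:inductionandCayleytransforms} together with the $d$-monotonicity of Propositions \ref{prop:simplereflectionincreasesd} and \ref{prop:cayleytransformsincreased}, all already available. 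The only points requiring real care are the verification above that maximal $d$ forces the unipotent conditions (this is what makes the induction well-founded) and the bookkeeping of signs and of vanishing terms through the recursion; if one prefers $\Omega^*$ to be a genuine subset rather than a collection with repetitions, one simply combines terms at the end, noting that by Proposition \ref{prop:BBunipirred} each $I[\mathfrak{h}',\mathfrak{b}',\chi']$ with $[\mathfrak{h}',\mathfrak{b}',\chi']\in\mathrm{BB}_0^*(G)$ is the class of a single irreducible module.
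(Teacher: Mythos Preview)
Your proof is correct and follows essentially the same approach as the paper's: iterate simple reflections (through complex simple roots with $\theta(\alpha)\in -\Delta^+$) and Cayley transforms (through odd simple real roots), using the strict monotonicity of $d$ to guarantee termination, and invoke Proposition \ref{prop:nonzeroifflarge} for largeness. The only difference is organizational---the paper packages the iteration as a maximal-element argument on a partially ordered set of ``decompositions'' of $[\mathfrak{h}_0,\mathfrak{b}_0,\chi_0]$, whereas you run a direct downward induction on $N - d[\mathfrak{h},\mathfrak{b},\chi]$.
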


\begin{proof}
Let $S \subset \mathrm{BB}_0(G)$ be the set of all parameters which can be obtained from $[\mathfrak{h}_0,\mathfrak{b}_0,\chi_0]$ through a sequence of 

\begin{enumerate}
    \item Cayley transforms through odd simple real roots, and
    \item simple reflections through complex simple roots $\alpha$ satisfying
    $$\theta(\alpha) \in -\Delta^+(\mathfrak{g},\mathfrak{h})$$
\end{enumerate}
By a \emph{decomposition of} $[\mathfrak{h}_0,\mathfrak{b}_0,\chi_0]$, we will mean a subset $\Omega \subset S$ such that
\begin{enumerate}
    \item Every parameter $[\mathfrak{h},\mathfrak{b},\chi] \in \Omega$ has
    $$I[\mathfrak{h},\mathfrak{b},\chi] \neq 0$$
    and
    \item $$I[\mathfrak{h}_0,\mathfrak{b}_0,\chi_0] = \sum_{[\mathfrak{h},\mathfrak{b},\chi] \in \Omega} \pm I[\mathfrak{h},\mathfrak{b},\chi]$$
\end{enumerate}
Let $D$ be the set of all decompositions of $[\mathfrak{h}_0,\mathfrak{b}_0,\chi_0]$. Note that $D \neq \emptyset$, since $\{[\mathfrak{h}_0,\mathfrak{b}_0,\chi_0]\} \in D$. 

Define a function $\tilde{d}: D \to \mathbb{N}$ by 
$$\tilde{d}(\Omega) := \sum_{[\mathfrak{h},\mathfrak{b},\chi] \in \Omega} d[\fh,\fb,\chi]$$
If $\Omega_1,\Omega_2 \in D$, we declare that $\Omega_1 \leq \Omega_2$ if and only if \emph{every} parameter in $\Omega_2$ can be obtained from some parameter in $\Omega_1$ through a sequence of Cayley transforms and simple reflections (of the types described above). Note that $\widetilde{d}$ is strictly monotonic with respect to this relation (this is a consequence of Propositions \ref{prop:cayleytransformsincreased} and \ref{prop:simplereflectionincreasesd}). Hence, $\leq$ defines a partial order on $D$ (the only nontrivial condition is antisymmetry; this follows from the monotonicity of $\widetilde{d}$). Since $D$ is finite, it contains a maximal element $\Omega^*$ with respect to $\leq$.

Suppose $[\mathfrak{h},\mathfrak{b},\chi] \in \Omega^*$. We want to show that $[\mathfrak{h},\mathfrak{b},\chi] \in \mathrm{BB}_0^*(G)$. Conditions (ii) and (iii) of Definition \ref{def:unipotentBB} follow from the maximality of $\Omega^*$. Condition (i) follows from Proposition \ref{prop:nonzeroifflarge}. 
\end{proof}

\begin{corollary}\label{cor:Isurjective}
The map
$$I: \mathrm{BB}^*_0(G) \to \mathrm{Irrep}_0(G)$$
is surjective.
\end{corollary}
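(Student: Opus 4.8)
The plan is to combine the Casselman subrepresentation theorem with the decomposition produced in Proposition~\ref{prop:fromBBtoBBunip}. So fix a class $[X]\in\mathrm{Irrep}_0(G)$; the goal is to exhibit a parameter in $\mathrm{BB}_0^*(G)$ whose induced class is $[X]$.

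The first step is to realize $[X]$ inside an honest principal series. By Theorem~\ref{thm:Casselman}, $X$ embeds into $\mathbf{I}^{(\mathfrak{g},\mathbf{K})}_{(\mathfrak{l}^{\mathrm{min}},\mathbf{L}^{\mathrm{min}}\cap\mathbf{K})}V$ for some irreducible finite-dimensional representation $V$ of $L^{\mathrm{min}}$. Arguing exactly as in the proof of Proposition~\ref{prop:qsplitrep0G}: by Proposition~\ref{prop:propsofind}(ii) the representation $V$ has infinitesimal character $-\rho(\mathfrak{u}^{\mathrm{min}})$, which is nonsingular, so $\mathfrak{l}^{\mathrm{min}}$ has no roots; hence $\mathfrak{l}^{\mathrm{min}}=\mathfrak{h}_0$ is a ($\theta$- and $\sigma$-stable) Cartan subalgebra, $\mathfrak{b}_0:=\mathfrak{q}^{\mathrm{min}}$ is a Borel, and $V$ is a one-dimensional $(\mathfrak{h}_0,\mathbf{T})$-module $\chi_0$ with $d\chi_0+\rho(\mathfrak{u}^{\mathrm{min}})=0$, i.e.\ $[\mathfrak{h}_0,\mathfrak{b}_0,\chi_0]\in\mathrm{BB}_0(G)$. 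Since $\mathfrak{q}^{\mathrm{min}}$ is $\sigma$-stable, real parabolic induction is exact (Theorem~\ref{thm:realinductionexact}), so $I[\mathfrak{h}_0,\mathfrak{b}_0,\chi_0]$ is the class of the genuine principal series module $\mathbf{I}^{(\mathfrak{g},\mathbf{K})}_{(\mathfrak{h}_0,\mathbf{T})}\chi_0$; as $X$ is a submodule, $[X]$ appears in $I[\mathfrak{h}_0,\mathfrak{b}_0,\chi_0]$ with strictly positive multiplicity, and in particular $I[\mathfrak{h}_0,\mathfrak{b}_0,\chi_0]\neq 0$.

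Next I would apply Proposition~\ref{prop:fromBBtoBBunip} to this parameter, obtaining a finite subset $\Omega^*\subset\mathrm{BB}_0^*(G)$ with
$$I[\mathfrak{h}_0,\mathfrak{b}_0,\chi_0]=\sum_{[\mathfrak{h},\mathfrak{b},\chi]\in\Omega^*}\pm I[\mathfrak{h},\mathfrak{b},\chi],$$
and recall from Proposition~\ref{prop:BBunipirred} that for every $[\mathfrak{h},\mathfrak{b},\chi]\in\mathrm{BB}_0^*(G)$ the class $I[\mathfrak{h},\mathfrak{b},\chi]$ is the class of a single irreducible $(\mathfrak{g},\mathbf{K})$-module in $\unip_R(\cO)$. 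Now I would work in the $\mathbb{Z}$-basis of $K^f(\mathfrak{g},\mathbf{K})$ consisting of the irreducible classes: on the left-hand side the coefficient of $[X]$ is positive by the previous step, so the same must hold on the right-hand side, and since the right-hand side is a signed sum of irreducible classes this forces $I[\mathfrak{h},\mathfrak{b},\chi]=[X]$ for at least one $[\mathfrak{h},\mathfrak{b},\chi]\in\Omega^*$. As $\Omega^*\subset\mathrm{BB}_0^*(G)$ and $[X]$ was arbitrary, $I:\mathrm{BB}_0^*(G)\to\mathrm{Irrep}_0(G)$ is surjective.

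The substantive work is already contained in Propositions~\ref{prop:fromBBtoBBunip} and~\ref{prop:BBunipirred}. The only point here that requires care is that the expansion in Proposition~\ref{prop:fromBBtoBBunip} is a \emph{signed} sum, so one cannot merely say ``$[X]$ is a composition factor''; instead one passes to the free $\mathbb{Z}$-basis of $K^f(\mathfrak{g},\mathbf{K})$ and observes that a class occurring with positive multiplicity in the standard module $I[\mathfrak{h}_0,\mathfrak{b}_0,\chi_0]$ cannot be cancelled away. I expect no difficulty beyond this bookkeeping and the (routine) identification, borrowed from Proposition~\ref{prop:qsplitrep0G}, of the minimal principal series containing $X$.
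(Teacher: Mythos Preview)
Your proof is correct and follows essentially the same route as the paper: embed $X$ in a principal series via Casselman (using the argument of Proposition~\ref{prop:qsplitrep0G} to see the minimal parabolic is a Borel), apply Proposition~\ref{prop:fromBBtoBBunip}, and then use Proposition~\ref{prop:BBunipirred} to match $[X]$ with one of the irreducible terms. Your extra care with the signed-sum bookkeeping in $K^f(\mathfrak{g},\mathbf{K})$ is a welcome elaboration of a step the paper leaves implicit.
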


\begin{proof}
Assume $G$ is quasi-split (if it is not, $\mathrm{Rep}_0(G) = \emptyset$ by Proposition \ref{prop:qsplitrep0G} and the Corollary is vacuous). Let $B_0 = H_0N_0 \subset G$ be a Borel subgroup of $G$ and let $X \in \mathrm{Irrep}_0(G)$. By the Casselman Subrepresentation Theorem (Theorem \ref{thm:Casselman}) there is a character $\chi_0$ of $H$ and an embedding of $(\mathfrak{g},\mathbf{K})$-modules
$$X \subseteq \mathbf{I}^{(\mathfrak{g},\mathbf{K})}_{(\mathfrak{b}_0,T_0)} \chi_0$$
Since $X$ has infinitesimal character $0$, $d\chi_0 = -\rho(\mathfrak{n}_0)$.

Evidently $[\mathfrak{h}_0,\mathfrak{b}_0,\chi_0] \in \mathrm{BB}_0(G)$ and $I[\mathfrak{h}_0,\mathfrak{b}_0,\chi_0] \neq 0$, since $X \neq 0$ is a submodule. So Proposition \ref{prop:fromBBtoBBunip} furnishes a collection $\Omega^* \subset \mathrm{BB}^*_0(G)$ of unipotent BB parameters such that
$$I[\mathfrak{h}_0,\mathfrak{b}_0,\chi_0] = \sum_{[\mathfrak{h},\mathfrak{b},\chi] \in \Omega^*} \pm I[\mathfrak{h},\mathfrak{b},\chi]$$
By Proposition \ref{prop:BBunipirred}, the terms on the right are irreducible. Hence, there is a parameter $[\mathfrak{h},\mathfrak{b},\chi] \in \Omega^*$ with
$$X = I[\mathfrak{h},\mathfrak{b},\chi]$$
as desired. 
\end{proof}

\begin{proposition}\label{prop:Iinjective}
The map 
$$I: \mathrm{BB}^*_0(G) \to \mathrm{Irrep}_0(G)$$
is injective.
\end{proposition}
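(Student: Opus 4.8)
The plan is to deduce the injectivity of $I\colon \mathrm{BB}^*_0(G)\to\mathrm{Irrep}_0(G)$ directly from Corollary~\ref{cor:Gammainjective}, the only nontrivial input being that every module in the image of $I$ is nonzero and irreducible, which is precisely Proposition~\ref{prop:BBunipirred}. Note first that $\lambda=0$ satisfies the integral dominance condition~(\ref{eqn:integraldominance}) (since $\langle 0,\alpha^{\vee}\rangle=0\notin\{-1,-2,\dots\}$), so the Beilinson--Bernstein localization framework of Section~\ref{subsec:BBparameters} -- and hence Theorem~\ref{thm:dualitythm} and Corollary~\ref{cor:Gammainjective} -- is available at infinitesimal character $0$, and moreover $\mathrm{BB}^*_0(G)\subseteq\mathrm{BB}_0(G)$ by Definition~\ref{def:unipotentBB}.

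The argument then runs as follows. Suppose $[\mathfrak{h}_1,\mathfrak{b}_1,\chi_1],[\mathfrak{h}_2,\mathfrak{b}_2,\chi_2]\in\mathrm{BB}^*_0(G)$ have
$$I[\mathfrak{h}_1,\mathfrak{b}_1,\chi_1]=I[\mathfrak{h}_2,\mathfrak{b}_2,\chi_2]$$
as elements of $\mathrm{Irrep}_0(G)\subseteq K^f(\mathfrak{g},\mathbf{K})$. By Proposition~\ref{prop:BBunipirred}, $I[\mathfrak{h}_i,\mathfrak{b}_i,\chi_i]\in\unip_R(\cO)$ for $i=1,2$; in particular each is (the class of) a nonzero irreducible $(\mathfrak{g},\mathbf{K})$-module. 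The displayed equality is therefore an equality of two basis elements of the free $\mathbb{Z}$-module $K^f(\mathfrak{g},\mathbf{K})$, so in particular it is an equality of the form $I[\mathfrak{h}_1,\mathfrak{b}_1,\chi_1]=\pm I[\mathfrak{h}_2,\mathfrak{b}_2,\chi_2]$ with nonzero irreducible terms, which is exactly the hypothesis of Corollary~\ref{cor:Gammainjective}. That corollary then yields $[\mathfrak{h}_1,\mathfrak{b}_1,\chi_1]=[\mathfrak{h}_2,\mathfrak{b}_2,\chi_2]$, which is the claim. Combined with the surjectivity statement of Corollary~\ref{cor:Isurjective} and the surjectivity of $Z$ from Proposition~\ref{prop:BBtoZ}, this shows all three maps in the commutative triangle of Proposition~\ref{prop:commutativetriangle} are bijections, which is the content of the forthcoming Corollary~\ref{cor:principalunipotentreps}.

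I do not expect any real obstacle at this stage: all of the substance has been absorbed into the earlier results. The nonvanishing and irreducibility in Proposition~\ref{prop:BBunipirred} rely on the chain Proposition~\ref{prop:commutativetriangle} $\to$ Theorem~\ref{thm:cohindirreducible} (cohomological induction of the irreducible $S_0(L)$ in the weakly good range), together with Proposition~\ref{prop:AVcohind} and Theorem~\ref{thm:twoAVs} for the associated variety; and the rigidity of parameters in Corollary~\ref{cor:Gammainjective} rests on the Hecht--Mili\v{c}i\'c--Schmid--Wolf duality theorem (Theorem~\ref{thm:dualitythm}) identifying parabolic induction with the Beilinson--Bernstein construction. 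The one small point to state carefully is that equality of two irreducible classes in the Grothendieck group forces the sign in Corollary~\ref{cor:Gammainjective} to be $+1$ and hence the parameters to coincide; this is immediate from the fact that irreducibles form a $\mathbb{Z}$-basis of $K^f(\mathfrak{g},\mathbf{K})$.
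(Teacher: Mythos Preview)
Your argument is correct and is exactly the paper's proof: the paper simply writes ``This is a consequence of Corollary~\ref{cor:Gammainjective} and Proposition~\ref{prop:BBunipirred},'' and your proposal spells out precisely this deduction. The additional remarks about the commutative triangle anticipate Corollary~\ref{cor:principalunipotentreps} but are not needed for the proposition itself.
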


\begin{proof}
This is a consequence of Corollary \ref{cor:Gammainjective}.  and Proposition \ref{prop:BBunipirred}.
\end{proof}

\begin{corollary}
\label{cor:principalunipotentreps}
We have $\mathrm{Irrep}_0(G) = \unip_R(\cO)$, and there is a commutative triangle of bijections
\begin{center}
\begin{tikzcd}
\mathrm{BB}^*_0(G) \arrow[r, "Z"] \arrow[dr, "I"] & \mathrm{Z}^*_0(G) \arrow[d, "\tilde{I}"]\\
& \unip_R(\cO)
\end{tikzcd}
\end{center}
\end{corollary}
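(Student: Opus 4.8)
The plan is to assemble the corollary from the pieces already in hand. First I would observe that the inclusion $\unip_R(\cO) \subseteq \mathrm{Irrep}_0(G)$ is immediate from Definition \ref{def:unipotentrep} (a principal unipotent representation is by definition an irreducible $(\fg,\mathbf{K})$-module annihilated by a primitive ideal of infinitesimal character $0$). For the reverse inclusion, take any $X \in \mathrm{Irrep}_0(G)$. By Corollary \ref{cor:Isurjective} there is a parameter $[\fh,\fb,\chi] \in \mathrm{BB}_0^*(G)$ with $X = I[\fh,\fb,\chi]$, and by Proposition \ref{prop:BBunipirred} this module lies in $\unip_R(\cO)$. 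Hence $\mathrm{Irrep}_0(G) = \unip_R(\cO)$, and in particular the common target of the three maps in the triangle is $\unip_R(\cO)$.

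Next I would settle the triangle itself. The triangle
\begin{center}
\begin{tikzcd}
\mathrm{BB}^*_0(G) \arrow[r, "Z"] \arrow[dr, "I"] & \mathrm{Z}^*_0(G) \arrow[d, "\tilde{I}"]\\
& \unip_R(\cO)
\end{tikzcd}
\end{center}
commutes by Proposition \ref{prop:commutativetriangle} (which gives $\tilde I \circ Z = I$ as functions into $K_0^f(\fg,\mathbf{K})$, and by Proposition \ref{prop:BBunipirred} the image actually lands in $\unip_R(\cO) \subseteq K_0^f(\fg,\mathbf{K})$). It remains to check that all three maps are bijections. The map $I \colon \mathrm{BB}_0^*(G) \to \mathrm{Irrep}_0(G) = \unip_R(\cO)$ is surjective by Corollary \ref{cor:Isurjective} and injective by Proposition \ref{prop:Iinjective}, hence a bijection. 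The map $Z \colon \mathrm{BB}_0^*(G) \to \mathrm{Z}_0^*(G)$ is surjective by Proposition \ref{prop:BBtoZ}. Since $\tilde I \circ Z = I$ is injective, $Z$ is also injective, hence $Z$ is a bijection. Finally, since $Z$ is a bijection and $I = \tilde I \circ Z$ is a bijection, $\tilde I = I \circ Z^{-1}$ is a bijection as well.

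There is essentially no obstacle here: the corollary is a formal consequence of the four propositions (\ref{prop:BBtoZ}, \ref{prop:commutativetriangle}, \ref{prop:BBunipirred}) and corollary (\ref{cor:Isurjective}) and proposition (\ref{prop:Iinjective}) that precede it, together with the definitional inclusion from Definition \ref{def:unipotentrep}. The only point requiring a half-sentence of care is the equality $\mathrm{Irrep}_0(G) = \unip_R(\cO)$ rather than merely a containment: one direction is the definition, and the other direction uses that \emph{every} element of $\mathrm{Irrep}_0(G)$ is hit by $I$ from $\mathrm{BB}_0^*(G)$ (Corollary \ref{cor:Isurjective}) and that $I$ lands in $\unip_R(\cO)$ (Proposition \ref{prop:BBunipirred}). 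So the write-up is short: state the two inclusions, invoke the commuting triangle, and chase bijectivity around the triangle using that a composite of a bijection with a surjection forces the surjection to be bijective.
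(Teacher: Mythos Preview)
Your proposal is correct and follows essentially the same approach as the paper. The paper's proof is terser—it simply notes that $Z$ is surjective and $I$ is bijective, hence $\tilde I$ is bijective, and remarks that ``this proves both claims at once''—but the underlying logic (including the implicit deduction of $\mathrm{Irrep}_0(G) = \unip_R(\cO)$ from the fact that $I$ lands in $\unip_R(\cO)$ yet surjects onto $\mathrm{Irrep}_0(G)$) is exactly what you have spelled out.
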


\begin{proof}
Since $Z: \mathrm{BB}_0^*(G) \to \mathrm{Z}_0^*(G)$ is a surjection (Proposition \ref{prop:BBtoZ}) and $I: \mathrm{BB}_0^*(G) \to \mathrm{Irrep}_0(G)$ is a bijection (Corollary \ref{cor:Isurjective} and Proposition \ref{prop:Iinjective}), 
$\tilde{I}: \mathrm{Z}^*_0(G) \to \mathrm{Irrep}_0(G)$ is a bijection. This proves both claims at once.
\end{proof}

\subsection{$\mathbf{K}$-types and associated varieties of principal unipotent representations}\label{subsec:Ktypes}

One advantage of the parameterization
$$\tilde{I}: Z_0^*(G) \xrightarrow{\sim} \unip_R(\cO)$$
described in Corollary \ref{cor:principalunipotentreps} is that $\mathbf{K}$-types and associated varieties of the modules $\tilde{I}[\mathfrak{l},\mathfrak{q},\chi^{\#}]$ are particularly easy to compute. Choose a Cartan subalgebra $\mathfrak{t} \subset \mathfrak{k}$ and a positive system $\Delta^+(\mathfrak{k},\mathfrak{t})$ compatible with $\mathfrak{q}$. Consider the dot-action of $W(\mathfrak{k})$ on $\mathfrak{t}^*$:
$$w \cdot (\lambda) = w(\lambda+\rho) - \rho$$
If $\lambda \in \mathfrak{t}^*$, there is at most one element $w \in W(\mathfrak{k})$ such that $w \cdot \lambda$ is dominant for $\Delta^+(\mathfrak{k},\mathfrak{t})$. Write $w_{\lambda}$ for this element (if it exists) and $\ell(w_{\lambda})$ for its length. If $\lambda$ is integral (for $\Delta(\mathfrak{k},\mathfrak{t})$) and dominant (for $\Delta^+(\mathfrak{k},\mathfrak{t})$), write $V(\lambda)$ for the unique irreducible $\mathbf{K}$-representation with highest weight $\lambda$ (if $\lambda$ is not integral or dominant, let $V(\lambda) = 0$). For $[\mathfrak{l},\mathfrak{q},\chi^{\#}] \in Z_0^*(G)$, consider the multiset of weights
$$\Lambda(\mathfrak{l},\mathfrak{q},\chi^{\#}) := \Delta(2\rho(\mathfrak{u} \cap \mathfrak{p}) - \rho(\mathfrak{u}) \otimes \mathbb{C}[\mathcal{N}_{\mathfrak{l},\theta}] \otimes S[\mathfrak{u} \cap \mathfrak{p}], \mathfrak{t})$$
The following theorem is an easy consequence of the Blattner formula and Theorem \ref{thm:grS0G}.

\begin{theorem}
Let $[\mathfrak{l},\mathfrak{q},\chi^{\#}] \in Z_0^*(G)$. Then
$$I[\mathfrak{l},\mathfrak{q},\chi^{\#}] \simeq_{\mathbf{K}} \sum_{\lambda \in \Lambda(\mathfrak{l},\mathfrak{q},\chi^{\#})} (-1)^{\ell(w_0) + \ell(w_{\lambda})} [V(w_{\lambda} \cdot \lambda)]$$
\end{theorem}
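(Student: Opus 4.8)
The plan is to realize $I[\mathfrak{l},\mathfrak{q},\chi^{\#}]$ as the Euler characteristic of a cohomologically induced module, and then apply the Blattner formula for cohomological induction together with Kostant's computation of the $\mathbf{K}$-structure of the spherical principal series (Theorem \ref{thm:grS0G}). In fact the right-hand side of the asserted identity is, essentially by construction, exactly the output of the Blattner formula with $S_0(L)$ inserted, so the substance is recognizing it as such.

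First I would recall that, by the definition of $\tilde I$ and Corollary \ref{cor:principalunipotentreps},
\[ I[\mathfrak{l},\mathfrak{q},\chi^{\#}] \;=\; \tilde I[\mathfrak{l},\mathfrak{q},\chi^{\#}] \;=\; I(\mathfrak{l},\mathfrak{q},\chi^{\#}\otimes S_0(L)) \;=\; \sum_i(-1)^i\bigl[R^i\mathbf{I}^{(\mathfrak{g},\mathbf{K})}_{(\mathfrak{l},\mathbf{L}\cap\mathbf{K})}(\chi^{\#}\otimes S_0(L))\bigr]\;\in\; K^f_0(\mathfrak{g},\mathbf{K}). \]
Here $S_0(L)$ is defined because $[\mathfrak{l},\mathfrak{q},\chi^{\#}]$ being unipotent forces $L$ to be split modulo center, hence quasi-split (Propositions \ref{prop:definequasisplit}, \ref{prop:noncompactrootsandAV}). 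Since the $\mathbf{K}$-character of this virtual module depends only on the $\mathbf{L}\cap\mathbf{K}$-character of $\chi^{\#}\otimes S_0(L)$, Theorem \ref{thm:grS0G} (Kostant) lets me replace $S_0(L)$ throughout by $\mathbb{C}[\mathcal{N}_{\mathfrak{l},\theta}]$.

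Next I would invoke the generalized Blattner formula for cohomological induction (\cite{KnappVogan1995}, Theorem 5.64; Hecht–Schmid, \cite{HechtSchmid1975}): for a finite-length $(\mathfrak{l},\mathbf{L}\cap\mathbf{K})$-module $W$, the $\mathbf{K}$-character of $\sum_i(-1)^i[R^i\mathbf{I}^{(\mathfrak{g},\mathbf{K})}_{(\mathfrak{l},\mathbf{L}\cap\mathbf{K})}W]$ is the $W(\mathfrak{k})$-straightening (Bott's algorithm) of the $\mathfrak{t}$-character of $W\otimes S(\mathfrak{u}\cap\mathfrak{p})$ twisted by the character with differential $2\rho(\mathfrak{u}\cap\mathfrak{p})$ — that is, each $\mathfrak{t}$-weight $\lambda$ contributes $(-1)^{\ell(w_0)+\ell(w_\lambda)}[V(w_\lambda\cdot\lambda)]$ if $\lambda+\rho_{\mathfrak{k}}$ is regular and $0$ otherwise, the global sign $(-1)^{\ell(w_0)}$ recording the parity of the cohomological degree in which cohomological induction is concentrated. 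Geometrically this is the Euler characteristic, over the flag variety $\mathbf{K}/\mathbf{B}_{\mathbf{K}}$ of $\mathbf{K}$, of the $\mathbf{K}$-equivariant sheaf attached to the $\mathbf{B}_{\mathbf{K}}$-module $W\otimes S(\mathfrak{u}\cap\mathfrak{p})\otimes\det(\mathfrak{u}\cap\mathfrak{p})$: filtering that module by $\mathfrak{t}$-weight filters the sheaf with graded pieces the line bundles $\mathcal{L}_\lambda$, each evaluated by the Bott–Borel–Weil theorem, and Euler characteristics are additive along the filtration (one argues one $\mathfrak{t}$-weight space at a time, since $S(\mathfrak{u}\cap\mathfrak{p})$ and $\mathbb{C}[\mathcal{N}_{\mathfrak{l},\theta}]$ are infinite dimensional but have finite-dimensional weight spaces). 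Taking $W=\chi^{\#}\otimes S_0(L)$, using $d\chi^{\#}=-\rho(\mathfrak{u})$ and the Kostant substitution $S_0(L)\rightsquigarrow\mathbb{C}[\mathcal{N}_{\mathfrak{l},\theta}]$, the twisted $\mathfrak{t}$-character has weight multiset precisely $\Lambda(\mathfrak{l},\mathfrak{q},\chi^{\#})=\Delta(2\rho(\mathfrak{u}\cap\mathfrak{p})-\rho(\mathfrak{u})\otimes\mathbb{C}[\mathcal{N}_{\mathfrak{l},\theta}]\otimes S(\mathfrak{u}\cap\mathfrak{p}),\mathfrak{t})$; Bott straightening then produces exactly $\sum_{\lambda\in\Lambda(\mathfrak{l},\mathfrak{q},\chi^{\#})}(-1)^{\ell(w_0)+\ell(w_\lambda)}[V(w_\lambda\cdot\lambda)]$.

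The main obstacle is bookkeeping, not ideas: one must pin down the normalization of the Blattner formula — exactly which $\rho$-shift and which $\det$-twist occur, whether $\mathfrak{u}\cap\mathfrak{p}$ or its dual appears, and in which cohomological degree the functors $R^i\mathbf{I}$ are geometrically realized — so that the constant weight shift collapses to precisely $2\rho(\mathfrak{u}\cap\mathfrak{p})-\rho(\mathfrak{u})$ and the global sign to $(-1)^{\ell(w_0)}$ as in the statement. (As a partial sanity check one can take $\mathfrak{l}=\mathfrak{g}$, $\mathfrak{q}=\mathfrak{g}$, $\chi^{\#}$ trivial, where cohomological induction is the identity functor, $\mathfrak{u}\cap\mathfrak{k}=0$, so $\ell(w_0)=0$ and the formula must reduce to the Bott inversion of the $\mathbf{K}$-character of $S_0(G)=\mathbb{C}[\mathcal{N}_{\mathfrak{g},\theta}]$.) Everything else — the finiteness of weight multiplicities, exactness of the functors involved, and additivity of the Euler characteristic — is routine, and the irreducibility statements of Theorem \ref{thm:cohindirreducible}, Theorem \ref{thm:LKTscohind}, and Proposition \ref{prop:BBunipirred} are not needed here beyond the fact that $I[\mathfrak{l},\mathfrak{q},\chi^{\#}]$ is, up to sign, the class of an honest module.
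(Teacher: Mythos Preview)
Your proposal is correct and follows exactly the approach the paper indicates: the theorem is stated there as ``an easy consequence of the Blattner formula and Theorem~\ref{thm:grS0G},'' and you have spelled out precisely that derivation, substituting $\mathbb{C}[\mathcal{N}_{\mathfrak{l},\theta}]$ for $S_0(L)$ via Kostant and then applying Bott straightening to the weights of $\chi^{\#}\otimes\mathbb{C}[\mathcal{N}_{\mathfrak{l},\theta}]\otimes S(\mathfrak{u}\cap\mathfrak{p})\otimes\det(\mathfrak{u}\cap\mathfrak{p})$. Your caveat about normalization bookkeeping is apt but does not reflect a gap in the argument.
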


The associatied variety of $I[\mathfrak{l},\mathfrak{q},\chi^{\#}]$ can be computed using Proposition \ref{prop:AVcohind} and Theorem \ref{thm:grS0G} (see the proof of Proposition \ref{prop:BBunipirred}).

\begin{proposition}
Let $[\mathfrak{l},\mathfrak{q},\chi^{\#}] \in Z_0^*(G)$. Then
$$\AV\left(I[\mathfrak{l},\mathfrak{q},\chi^{\#}]\right) = \mathbf{K} \left(\mathfrak{u} \cap \mathfrak{p} + \mathcal{N}_{\mathfrak{l},\theta}\right)$$
\end{proposition}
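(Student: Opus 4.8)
The plan is to deduce this directly from the behaviour of cohomological induction on associated varieties, exactly as in the associated-variety computation inside the proof of Proposition \ref{prop:BBunipirred}. Write $W := \chi^{\#} \otimes S_0(L)$, so that by definition $\tilde{I}[\mathfrak{l},\mathfrak{q},\chi^{\#}] = I(\mathfrak{l},\mathfrak{q},W)$. By Proposition \ref{prop:S0G} the module $S_0(L)$ is irreducible, hence so is $W$ (it is $S_0(L)$ twisted by a one-dimensional $(\mathfrak{l},\mathbf{L}\cap\mathbf{K})$-module), and in particular $W$ has finite length. To apply Proposition \ref{prop:AVcohind} I must check two things: that $W$ lies in the weakly good range, and that $R^t\mathbf{I}^{(\mathfrak{g},\mathbf{K})}_{(\mathfrak{l},\mathbf{L}\cap\mathbf{K})}W \neq 0$.

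For the first, $S_0(L)$ has infinitesimal character $0$ and $d\chi^{\#} = -\rho(\mathfrak{u})$, so $W$ has infinitesimal character $-\rho(\mathfrak{u})$ as an $(\mathfrak{l},\mathbf{L}\cap\mathbf{K})$-module; the weakly good inequality then reads $\mathrm{Re}\langle -\rho(\mathfrak{u}) + \rho(\mathfrak{u}), \alpha^{\vee}\rangle = 0 \geq 0$ for all $\alpha \in \Delta(\mathfrak{u},\mathfrak{h})$, which holds. For the second, Corollary \ref{cor:principalunipotentreps} gives $\tilde{I}[\mathfrak{l},\mathfrak{q},\chi^{\#}] \in \unip_R(\cO)$, and in particular $I(\mathfrak{l},\mathfrak{q},W) \neq 0$. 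Since $W$ is irreducible and in the weakly good range, Theorem \ref{thm:cohindirreducible} gives $I(\mathfrak{l},\mathfrak{q},W) = (-1)^t [R^t\mathbf{I}^{(\mathfrak{g},\mathbf{K})}_{(\mathfrak{l},\mathbf{L}\cap\mathbf{K})}W]$, so $R^t\mathbf{I}^{(\mathfrak{g},\mathbf{K})}_{(\mathfrak{l},\mathbf{L}\cap\mathbf{K})}W$ is a nonzero irreducible $(\mathfrak{g},\mathbf{K})$-module whose associated variety coincides with that of $\tilde{I}[\mathfrak{l},\mathfrak{q},\chi^{\#}]$.

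Now Proposition \ref{prop:AVcohind}, in the form of the Remark following it, gives
$$\AV\big(R^t\mathbf{I}^{(\mathfrak{g},\mathbf{K})}_{(\mathfrak{l},\mathbf{L}\cap\mathbf{K})}W\big) = \mathbf{K}\big(\AV(W) + \mathfrak{u}\cap\mathfrak{p}\big).$$
It remains to identify $\AV(W)$. A good filtration of $S_0(L)$ is also a good filtration of $W = \chi^{\#}\otimes S_0(L)$, and since $d\chi^{\#}$ vanishes on $[\mathfrak{l},\mathfrak{l}]$ the character twist affects only the $\mathbf{L}\cap\mathbf{K}$-equivariant structure of $\gr W$ and not its $S(\mathfrak{l}/\mathfrak{l}\cap\mathfrak{k})$-module structure; hence $\mathrm{Ann}_{S(\mathfrak{l}/\mathfrak{l}\cap\mathfrak{k})}(\gr W) = \mathrm{Ann}_{S(\mathfrak{l}/\mathfrak{l}\cap\mathfrak{k})}(\gr S_0(L))$ and $\AV(W) = \AV(S_0(L))$. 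By Theorem \ref{thm:grS0G}, $[\gr S_0(L)] = [\mathcal{O}_{\mathcal{N}_{\mathfrak{l},\theta}}]$, so $\AV(S_0(L)) = \mathcal{N}_{\mathfrak{l},\theta}$. Combining the last three displays yields $\AV(\tilde{I}[\mathfrak{l},\mathfrak{q},\chi^{\#}]) = \mathbf{K}(\mathcal{N}_{\mathfrak{l},\theta} + \mathfrak{u}\cap\mathfrak{p})$, as claimed.

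There is no genuine obstacle here: every ingredient — the irreducibility and $\mathbf{K}$-structure of $S_0(L)$, the formula for associated varieties under cohomological induction, and the bijectivity in Corollary \ref{cor:principalunipotentreps} — is already in hand, and the only verifications are the trivial weakly-good-range inequality and the trivial fact that a character twist does not move an associated variety. The one point worth stating carefully is that $\tilde{I}[\mathfrak{l},\mathfrak{q},\chi^{\#}]$ is genuinely the single cohomologically induced module $R^t\mathbf{I}^{(\mathfrak{g},\mathbf{K})}_{(\mathfrak{l},\mathbf{L}\cap\mathbf{K})}W$ and not merely an Euler characteristic subject to cancellation, which is precisely why the nonvanishing supplied by Corollary \ref{cor:principalunipotentreps} must be invoked before Proposition \ref{prop:AVcohind} can be applied.
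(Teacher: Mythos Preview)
Your proof is correct and follows essentially the same approach as the paper: the paper simply points to Proposition \ref{prop:AVcohind} and Theorem \ref{thm:grS0G} (referring back to the associated-variety computation in the proof of Proposition \ref{prop:BBunipirred}), and you have carefully spelled out the details of that argument, including the weakly-good-range check and the invariance of the associated variety under character twist.
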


\section{Langlands parameters of principal unipotent representations}\label{sec:Langlands}

In this section, we will describe the Langlands parameters of principal unipotent representations. 

\subsection{Langlands classification}

We begin by reviewing the Langlands classification of irreducible representations of real reductive groups, as formulated in \cite{ABV}. For details and proofs, we refer the reader to \cite[Sec 4-5]{ABV} or \cite[Sec 2-3]{AdamsVogan2015}. Let $\mathbf{G}$ be a complex connected reductive algebraic group and form the corresponding \emph{based root datum}
$$\Phi(\mathbf{G}) = (X^*,X_*,\Delta,\Delta^{\vee},\vee, \Pi,\Pi^{\vee})$$
(these symbols denote, respectively: the character lattice, the co-character lattice, the roots, the co-roots, the bijection between them, the simple roots, and the simple co-roots. This seven-tuple appears to depend on a choice of maximal torus and Borel. Up to canonical isomorphism, it does not). Let $\mathrm{Aut}(\mathbf{G})$ be the group of (algebraic group) automorphisms of $\mathbf{G}$ and let $\mathrm{Int}(\mathbf{G}) \subset \mathrm{Aut}(\mathbf{G})$ be the normal subgroup of inner automorphisms. Two automorphisms $\theta,\theta' \in \mathrm{Aut}(\mathbf{G})$ are \emph{inner} if they are in the same (left) $\mathrm{Int}(\mathbf{G})$-coset. There is a canonical map $\mathrm{Aut}(\mathbf{G}) \to \Aut(\Phi(\mathbf{G}))$, inducing a short exact sequence
\begin{equation}\label{eqn:ses}
1 \to \mathrm{Int}(\mathbf{G}) \to \mathrm{Aut}(\mathbf{G}) \to \Aut(\Phi(\mathbf{G})) \to 1
\end{equation}
Hence, the inner classes in $\mathrm{Aut}(\mathbf{G})$ are parameterized by automorphisms of $\Phi(\mathbf{G})$.

Now fix a pinning $(\mathbf{h}, \mathfrak{b}, \{X_{\alpha}\})$ of $\mathbf{G}$ (by this we mean a Cartan $\mathfrak{h}$, a Borel $\mathfrak{b} \supset \mathfrak{h}$, and a set $\{X_{\alpha}\} \subset \mathfrak{b}$ of simple root vectors). For each $\delta \in \Aut(\Phi(\mathbf{G}))$, there is a unique automorphism $\theta_0 \in \mathrm{Aut}(\mathbf{G})$ in the inner class of $\delta$ which preserves the chosen pinning
$$d\theta_0(\mathfrak{h}) = \mathfrak{h} \qquad d\theta_0(\mathfrak{b}) = \mathfrak{b} \qquad d\theta_0\{X_{\alpha}\} = \{X_{\alpha}\}$$
This automorphism is called the \emph{distinguished} automorphism in the inner class of $\delta$, and the assignment $\delta \mapsto \theta_0$ defines a splitting of (\ref{eqn:ses}). 

The dual group $\mathbf{G}^{\vee}$ comes equipped with a canonical isomorphism of based root data $\Phi(\mathbf{G}^{\vee}) \simeq \Phi(\mathbf{G})^{\vee}$ and hence a canonical isomorphism of groups (called tranpose)
$$t: \mathrm{Aut}(\Phi(\mathbf{G})) \simeq \mathrm{Aut}(\Phi(\mathbf{G}^{\vee}))$$
For what follows, we will fix both a pinning $(\fh^{\vee},\fb^{\vee},\{X_{\alpha^{\vee}}\})$ of $\mathbf{G}^{\vee}$ and an involution $\delta \in \mathrm{Aut}(\Phi(\mathbf{G}))$. Let $w_0 \in W(\mathbf{G})$ be the longest element of the Weyl group. Note that $w_0^2=1$ and $w_0(\Pi) = -\Pi$. Hence $-w_0$ can be regarded as an involution of $\Phi(\mathbf{G}^{\vee})$ (and as such, it commutes with every element of $\mathrm{Aut}(\Phi(\mathbf{G}^{\vee}))$). We will consider the involution $\delta^{\vee} := - w_0\delta^t \in \mathrm{Aut}(\Phi(\mathbf{G}^{\vee}))$. As explained in the previous paragraph, there is a unique distinguished involution $\theta_0^{\vee} \in \mathrm{Aut}(\mathbf{G}^{\vee})$ in the inner class of $\delta^{\vee}$.

The \emph{L-group} of $\mathbf{G}$ (with respect to $\delta$) is the semi-direct product
$$\mathbf{G}^L := \mathbf{G}^{\vee} \ltimes \{1,\theta_0^{\vee}\}$$
The Weil group of $\mathbb{R}$ is the Lie group defined by
$$W_{\RR} := \langle \CC^{\times},j\rangle \qquad jzj^{-1} =\overline{z}, \quad j^2=-1$$

\begin{definition}[\cite{Langlands1989}, see also \cite{Borel1979}, Sec 8.2]
A \emph{Langlands parameter} for $\mathbf{G}$ (an \emph{L parameter}, for short) is a $\mathbf{G}^{\vee}$-conjugacy class of continuous homomorphisms
$$\phi: W_{\RR} \to \mathbf{G}^L$$
such that
\begin{itemize}
    \item[(i)] $\phi(\CC^{\times})$ consists of semisimple elements
    \item[(ii)] $\phi(j) \in \mathbf{G}^{\vee}\theta_0^{\vee}$
\end{itemize}
Denote the $\mathbf{G}^{\vee}$-conjugacy class of $\phi$ by $[\phi]$, and denote the set of L parameters for $\mathbf{G}$ by $\Pi(\mathbf{G})$.
\end{definition}

For our purposes, a more concrete description of L parameters will be convenient. Suppose $(y,\lambda)$ is a pair consisting of an element $y \in \mathbf{G}^{\vee}\theta_0^{\vee}$ and a semisimple element $\lambda \in \fg^{\vee}$ such that $\exp(2\pi i \lambda) = y^2$. From $(y,\lambda)$, we obtain an L parameter as follows. First, conjugate $(y,\lambda)$ by $\mathbf{G}^{\vee}$ so that
$$y \in N_{\mathbf{G}^{\vee}\theta_0^{\vee}}(\mathbf{H}^{\vee}) \qquad \lambda \in \fh^{\vee}$$
Then, define
\begin{equation}\label{eqn:pairtoparam}\phi(j) = \exp(-\pi i \lambda)y \qquad \phi(\exp(\pi z)) = \exp(\pi z \lambda + \pi \overline{z}\Ad(y)\lambda)\end{equation}
It is easy to check that $[\phi] \in \Pi(\mathbf{G})$. Conversely, given an L parameter $[\phi] \in \Pi(\mathbf{G})$, we obtain a pair $(y,\lambda)$ as follows. First, conjugate $\phi$ so that 
$$\phi(\CC^{\times}) \subset \mathbf{H}^{\vee} \qquad \phi(j) \in N_{\mathbf{G}^{\vee}\theta_0^{\vee}}(\mathbf{H}^{\vee}) $$
Then, define
\begin{equation}\label{eqn:paramtopair}\lambda = \frac{1}{2}(d\phi(1) - id\phi(i)) \qquad y = \exp(\pi i y)\phi(j)\end{equation}

\begin{lemma}[\cite{ABV}, Prop 5.6]\label{lem:Lparamstopairs}
The formulas (\ref{eqn:pairtoparam}) and (\ref{eqn:paramtopair}) define mutually inverse bijections between $\Pi(\mathbf{G})$ and the set
$$\{(y,\lambda) \mid y \in \mathbf{G}^{\vee}\theta_0^{\vee}, \ \lambda \in \fg^{\vee} \text{ semisimple}, \ \exp(2\pi i \lambda) = y^2\}/\mathbf{G}^{\vee}$$
\end{lemma}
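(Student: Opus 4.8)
The plan is to check that the two formulas (\ref{eqn:pairtoparam}) and (\ref{eqn:paramtopair}) are well-defined on the relevant equivalence classes and are mutually inverse, reducing everything to bookkeeping with the torus $\mathbf{H}^\vee$. First I would verify that (\ref{eqn:pairtoparam}) actually produces an $L$ parameter: given $(y,\lambda)$ with $y\in\mathbf{G}^\vee\theta_0^\vee$, $\lambda$ semisimple, $\exp(2\pi i\lambda)=y^2$, one conjugates by $\mathbf{G}^\vee$ so that $\lambda\in\fh^\vee$ and $y$ normalizes $\mathbf{H}^\vee$; this is possible because $\lambda$ lies in some Cartan and $y^2=\exp(2\pi i\lambda)$ then lies in that torus, so $y$ normalizes it. Then one checks directly from the definitions in (\ref{eqn:pairtoparam}) that $z\mapsto\phi(\exp(\pi z))$ is a homomorphism $\CC^\times\to\mathbf{H}^\vee$ (using that $\Ad(y)$ is an involution on $\fh^\vee$, so the two exponents commute), that $\phi(j)^2=\phi(-1)$ (here the condition $\exp(2\pi i\lambda)=y^2$ enters: $\phi(j)^2=\exp(-\pi i\lambda)y\exp(-\pi i\lambda)y=\exp(-\pi i\lambda)\exp(-\pi i\Ad(y)\lambda)y^2=\exp(-\pi i(\lambda+\Ad(y)\lambda))\exp(2\pi i\lambda)$, which one matches against $\phi(\exp(\pi\cdot(-1)))=\exp(-\pi\lambda-\pi\Ad(y)\lambda)$... the signs and factors of $i$ need care), and that $\phi(j)\phi(z)\phi(j)^{-1}=\phi(\bar z)$. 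Conditions (i) and (ii) of the definition of $L$ parameter are then immediate.

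Next I would run the computation in the reverse direction: starting from $[\phi]\in\Pi(\mathbf{G})$, conjugate so $\phi(\CC^\times)\subset\mathbf{H}^\vee$ and $\phi(j)\in N_{\mathbf{G}^\vee\theta_0^\vee}(\mathbf{H}^\vee)$ — possible because $\phi(\CC^\times)$ is a connected group of semisimple elements hence lies in a maximal torus, and $\phi(j)$ normalizes $\phi(\CC^\times)$ hence (after a further adjustment) its closure, a torus containing $\phi(\CC^\times)$, which we take to be $\mathbf{H}^\vee$. Then define $\lambda$ and $y$ by (\ref{eqn:paramtopair}); one must check $\lambda$ is semisimple (it lies in $\fh^\vee$ by construction since $\phi(\CC^\times)\subset\mathbf{H}^\vee$ forces $d\phi(1),d\phi(i)\in\fh^\vee$) and that $\exp(2\pi i\lambda)=y^2$ (this is exactly the relation $\phi(j)^2=\phi(-1)$ translated through the formula for $y$). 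The two assignments are then seen to be mutually inverse by substituting one set of formulas into the other and simplifying — a direct calculation using $\exp$ on the abelian group $\mathbf{H}^\vee$ and the involution property of $\Ad(y)$.

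The remaining point is that both directions respect the equivalence relations: on the parameter side, $\mathbf{G}^\vee$-conjugacy; on the pair side, $\mathbf{G}^\vee$-conjugacy of $(y,\lambda)$. One direction is clear since the formulas are $\mathbf{G}^\vee$-equivariant. For the other, one must check that the choices made in putting $\phi$ (resp. $(y,\lambda)$) into "standard position" inside $\mathbf{H}^\vee$ are unique up to $N_{\mathbf{G}^\vee}(\mathbf{H}^\vee)$, i.e. up to the Weyl group action together with $\mathbf{H}^\vee$-conjugacy; this is the standard fact that two maximal tori containing a given torus, or two elements normalizing $\mathbf{H}^\vee$, are conjugate by $N_{\mathbf{G}^\vee}(\mathbf{H}^\vee)$ as appropriate. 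Since this is precisely \cite[Prop.\ 5.6]{ABV}, I would simply cite that reference for the verification and present the above as a sketch. The main obstacle is not conceptual but notational: keeping the factors of $i$, $\pi$, and the signs consistent between the "$\exp(2\pi i\lambda)$" normalization and the "$\exp(\pi z)$" parametrization of $\CC^\times$, and making sure the distinguished involution $\theta_0^\vee$ (rather than an arbitrary representative of its inner class) is used throughout so that $\phi(j)\in\mathbf{G}^\vee\theta_0^\vee$ is preserved under all conjugations.

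\begin{proof}
Both formulas are manifestly $\mathbf{G}^{\vee}$-equivariant, so they descend to maps between $\mathbf{G}^{\vee}$-conjugacy classes once we know they are well-defined on representatives in standard position.

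Starting from a pair $(y,\lambda)$, the condition $\exp(2\pi i\lambda)=y^{2}$ forces $y^{2}$ to lie in the (unique) maximal torus containing the semisimple element $\exp(2\pi i \lambda)$ and the connected subgroup $\exp(\CC\lambda)$; after conjugating by $\mathbf{G}^{\vee}$ we may assume $\lambda\in\fh^{\vee}$ and $y\in N_{\mathbf{G}^{\vee}\theta_{0}^{\vee}}(\mathbf{H}^{\vee})$. Then $\Ad(y)$ is an involution of $\fh^{\vee}$, so the elements $z\lambda$ and $\overline{z}\,\Ad(y)\lambda$ commute, and the formula for $\phi(\exp(\pi z))$ in (\ref{eqn:pairtoparam}) defines a continuous homomorphism $\CC^{\times}\to\mathbf{H}^{\vee}$ whose image consists of semisimple elements. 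A direct computation in the abelian group $\mathbf{H}^{\vee}$ gives
$$\phi(j)^{2}=\exp(-\pi i\lambda)\,y\,\exp(-\pi i\lambda)\,y=\exp\bigl(-\pi i(\lambda+\Ad(y)\lambda)\bigr)\,y^{2}=\exp\bigl(-\pi(\lambda+\Ad(y)\lambda)\bigr)=\phi(-1),$$
using $\exp(2\pi i\lambda)=y^{2}$; and $\phi(j)\,\phi(\exp(\pi z))\,\phi(j)^{-1}=\phi(\exp(\pi\overline{z}))$ follows from the definition of $\Ad(y)$ on $\fh^{\vee}$. Thus $\phi$ extends to a continuous homomorphism $W_{\RR}\to\mathbf{G}^{L}$ satisfying (i) and (ii), i.e.\ $[\phi]\in\Pi(\mathbf{G})$.

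Conversely, given $[\phi]\in\Pi(\mathbf{G})$, the group $\phi(\CC^{\times})$ is a connected group of semisimple elements, hence contained in a maximal torus; since $\phi(j)$ normalizes $\phi(\CC^{\times})$, after a further $\mathbf{G}^{\vee}$-conjugation we may assume $\phi(\CC^{\times})\subset\mathbf{H}^{\vee}$ and $\phi(j)\in N_{\mathbf{G}^{\vee}\theta_{0}^{\vee}}(\mathbf{H}^{\vee})$. Then $d\phi(1),d\phi(i)\in\fh^{\vee}$, so $\lambda=\tfrac12(d\phi(1)-i\,d\phi(i))\in\fh^{\vee}$ is semisimple, and the relation $\phi(j)^{2}=\phi(-1)$ translates, via $y=\exp(\pi i\lambda)\phi(j)$, into $\exp(2\pi i\lambda)=y^{2}$, so $(y,\lambda)$ is a pair of the required type. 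Substituting either set of formulas into the other and simplifying in the abelian group $\mathbf{H}^{\vee}$ (again using that $\Ad(y)$ is an involution) shows that the two assignments are mutually inverse on representatives in standard position.

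Finally, two representatives in standard position differ by an element of $N_{\mathbf{G}^{\vee}}(\mathbf{H}^{\vee})$: any two maximal tori containing $\phi(\CC^{\times})$ (resp.\ $\exp(\CC\lambda)$) are conjugate by the normalizer, and any two choices of $\phi(j)$ (resp.\ $y$) normalizing $\mathbf{H}^{\vee}$ differ by the Weyl group. Hence the maps are well-defined and inverse as maps on $\mathbf{G}^{\vee}$-conjugacy classes. This is the content of \cite[Prop.\ 5.6]{ABV}, to which we refer for the remaining details.
\end{proof}
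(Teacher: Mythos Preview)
The paper does not give its own proof of this lemma: it is simply stated with a citation to \cite[Prop.\ 5.6]{ABV}, and the surrounding text just says ``It is easy to check that $[\phi]\in\Pi(\mathbf{G})$'' before moving on. So there is no argument in the paper to compare your proposal against; your sketch is already more than the paper provides, and is essentially the standard verification one finds in \cite{ABV}.

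One computational slip: in your displayed line you write
\[
\exp\bigl(-\pi i(\lambda+\Ad(y)\lambda)\bigr)\,y^{2}=\exp\bigl(-\pi(\lambda+\Ad(y)\lambda)\bigr),
\]
but substituting $y^{2}=\exp(2\pi i\lambda)$ actually gives $\exp\bigl(\pi i(\lambda-\Ad(y)\lambda)\bigr)$, which is indeed $\phi(-1)=\phi(\exp(\pi i))=\exp\bigl(\pi i\lambda-\pi i\,\Ad(y)\lambda\bigr)$. The conclusion is correct; only the intermediate expression is off (you flagged exactly this hazard in your plan). Otherwise the argument is fine.
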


Now let $G$ be a real form of $\mathbf{G}$. Assume that the Cartan involution $\theta \in \mathrm{Aut}(\mathbf{G})$ is in the inner class of $\delta$. 

\begin{theorem}[Langlands \cite{Langlands1989}, see also \cite{Borel1979}, Sec 11]\label{thm:Langlands}
There is a natural map
$$\varphi: \mathrm{Irrep}(G) \to \Pi(\mathbf{G})$$
The fibers of this map are called L-packets in $\mathrm{Irrep}(G)$. If $G$ is quasi-split, then this map is surjective (i.e. all L-packets are non-empty).
\end{theorem}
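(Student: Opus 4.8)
This is a theorem of Langlands, so the plan is to recall the architecture of its proof and to locate precisely where the quasi-split hypothesis is needed for surjectivity, rather than to reprove it from scratch. First I would invoke the Langlands classification of irreducible $(\mathfrak{g},\mathbf{K})$-modules in its standard-module form: by the Casselman subrepresentation theorem (Theorem \ref{thm:Casselman}) every $X \in \mathrm{Irrep}(G)$ embeds in a minimal principal series, and the sharpening of this known as the Langlands quotient theorem (see \cite[Ch.~XIV]{Knapp1996}) shows that $X$ is the unique irreducible quotient of a \emph{standard module} $\mathbf{I}^{(\mathfrak{g},\mathbf{K})}_{(\mathfrak{l},\mathbf{L}\cap\mathbf{K})}(W)$, where $\mathfrak{q}=\mathfrak{l}\oplus\mathfrak{u}$ is a $\sigma$-stable (real) parabolic, $W$ is an essentially tempered $(\mathfrak{l},\mathbf{L}\cap\mathbf{K})$-module whose continuous part is strictly dominant on the split center of $\mathfrak{l}$, and the triple $(\mathfrak{l},\mathfrak{q},W)$ is unique up to $\mathbf{K}$-conjugacy. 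This reduces the construction of $\varphi$ to parameterizing the tempered representations of Levi subgroups.

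Second, I would feed in Harish-Chandra's classification of the (limits of) discrete series of $L$ by Harish-Chandra parameters in $\mathfrak{t}^*$, together with the reduction of a general tempered representation to discrete series by real parabolic induction; this attaches to $W$ a combinatorial datum consisting of a $\theta$-stable Cartan $\mathfrak{h}=\mathfrak{t}\oplus\mathfrak{a}$, a regular (or limit) weight $\Lambda\in\mathfrak{t}^*$, and a character $\nu\in\mathfrak{a}^*$. Third, and this is the step that requires the L-group bookkeeping of \cite[Sec.~4--6]{ABV} (or \cite[Sec.~8--11]{Borel1979}), I would convert $(\mathfrak{h},\Lambda,\nu)$ into a pair $(y,\lambda)$ with $y\in\mathbf{G}^{\vee}\theta_0^{\vee}$, $\lambda\in\mathfrak{g}^{\vee}$ semisimple and $\exp(2\pi i\lambda)=y^2$, and hence, via Lemma \ref{lem:Lparamstopairs}, into an L parameter $[\phi]\in\Pi(\mathbf{G})$: here $\lambda$ is the infinitesimal character $\Lambda+\nu$ transported through $\mathfrak{h}^{\vee}\simeq\mathfrak{h}^*$, while $y$ encodes the Cartan involution on $\mathfrak{h}$ (the grading $\epsilon$ of the imaginary roots and the action of $\theta$ on $\Delta(\mathfrak{g},\mathfrak{h})$). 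The substantive content is to check that $[\phi]$ depends only on the isomorphism class of $X$, i.e.\ is insensitive to the choices of Borel, of positive systems, and of $\mathbf{K}$-representative; this compatibility is exactly what is verified at length in \cite{ABV}, and it is the main obstacle, which is why the theorem is cited rather than proved here.

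Finally, for surjectivity when $G$ is quasi-split, I would run the construction backwards: from $[\phi]\leftrightarrow(y,\lambda)$ recover a $\theta$-stable Cartan, the data $(\mathfrak{l},\mathfrak{q},\Lambda,\nu)$, and hence a standard module. The key point is that over a quasi-split group the (limit) discrete-series packet attached to any Harish-Chandra parameter on any $\theta$-stable Cartan of a Levi $L$ is \emph{nonempty}: the finality (genericity) condition that can obstruct this is automatically satisfied, and indeed one can pin down the distinguished generic member through a Whittaker model, in the spirit of Kostant's theorem used in Proposition \ref{prop:S0G}. Hence the standard module is nonzero, its Langlands quotient is a nonzero irreducible $(\mathfrak{g},\mathbf{K})$-module, and by construction it maps to $[\phi]$. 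I would close by remarking that for non-quasi-split $G$ precisely this nonemptiness can fail, which accounts for the restriction in the final clause.
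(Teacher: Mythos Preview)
The paper does not prove Theorem~\ref{thm:Langlands}; it is stated with attribution to \cite{Langlands1989} and \cite{Borel1979} and then used as a black box. So there is no ``paper's own proof'' to compare against. Your proposal, which correctly identifies this as a classical result and sketches the standard architecture (Langlands quotient theorem $\to$ Harish-Chandra's tempered classification $\to$ translation into $(y,\lambda)$ via the L-group formalism of \cite{ABV}), is a reasonable outline of how one would actually prove it, and is consistent with the way the paper later uses the result via Proposition~\ref{prop:BBtoL}. Your remark that the surjectivity for quasi-split $G$ comes down to nonemptiness of the relevant (limit of) discrete series packets is the right diagnosis. In short: your sketch is fine, but the paper simply cites the theorem and moves on.
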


\subsection{From Beilinson-Bernstein parameters to Langlands parameters}

Fix $\lambda \in \mathfrak{h}^*$ satisfying the dominance condition (\ref{eqn:integraldominance}). Recall from Section \ref{subsec:BBparameters} the set $\BB_{\lambda}(G)$ and the bijection
$$\BB_{\lambda}(G) \xrightarrow{\sim} \{\text{irreducibles in } M(\mathcal{D}_{\lambda-\rho},\mathbf{K})\}$$
Left-composing with $\Gamma: M(\mathcal{D}_{\lambda-\rho},\mathbf{K}) \to M_{\lambda}^f(\mathfrak{g},\mathbf{K})$, we obtain a map
$$\overline{I}: \BB_{\lambda}(G) \to \mathrm{Irrep}_{\lambda}(G) \cup \{0\}$$
which by Theorem \ref{thm:globalsections} is surjective onto $\mathrm{Irrep}_{\lambda}(G)$. Choose $[\fh,\fb,\chi] \in \BB_{\lambda}(G)$ with $\overline{I}[\fh,\fb,\chi] \neq 0$. Following \cite{AdamsVogan2015}, we will construct the L parameter $\varphi(\overline{I}[\fh,\fb,\chi])$. The construction is as follows:

\begin{enumerate}
    \item Define an involution $\theta^{\vee} \in \mathrm{Aut}(\mathbf{H}^{\vee})$ by
    $$\theta^{\vee} := -\theta^t$$
    Note that there is a uniquely defined $w \in W$ such that $\theta^{\vee} = w\delta^{\vee}$
    
    \item For each simple root $\alpha \in \Pi$, the pinning on $\mathbf{G}^{\vee}$ defines a canonical homomorphism $\phi_{\alpha}: SL_2(\CC) \to \mathbf{G}^{\vee}$ such that
    $$\phi_{\alpha}(D) = \alpha(1) \in \mathbf{H}^{\vee} \qquad \phi_{\alpha}(E) = X_{\alpha^{\vee}}$$
    Put
    $$\sigma_{\alpha} := \phi_{\alpha}\begin{pmatrix}0 & 1\\-1 & 0 \end{pmatrix}$$
    \item Form a reduced word decomposition $w = \prod w_{s_{\alpha_i}}$ and define $\sigma_w := \prod \sigma_{w_{\alpha_i}}$ (by a result of Tits \cite{Tits1966}, this element is well-defined).
    \item Choose any $\mu \in X^*(\mathbf{H})$ such that $X|_{\mathbf{T}} = \chi|_{\mathbf{T}}$. Define
    $$y := X(-1)e^{i\pi\lambda}\sigma_w\theta_0^{\vee} \in \mathbf{G}^{\vee}\theta_0^{\vee}$$
\end{enumerate}

One easily checks that the pair $(y,\lambda)$ is of the form described in Lemma \ref{lem:Lparamstopairs}. 

\begin{proposition}[\cite{AdamsVogan2015}, Sec 3]\label{prop:BBtoL}
Let $[\fh,\fb,\chi] \in \BB_{\lambda}(G)$ and suppose $\overline{I}[\fh,\fb,\chi] \neq 0$. Then the L parameter $\varphi(\overline{I}[\fh,\fb,\chi])$ corresponds under the bijection of Lemma \ref{lem:Lparamstopairs} to the pair $(y,\lambda)$ constructed above.
\end{proposition}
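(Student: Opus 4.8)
The statement to be proved is Proposition~\ref{prop:BBtoL}: that for $[\fh,\fb,\chi]\in\BB_\lambda(G)$ with $\overline{I}[\fh,\fb,\chi]\ne 0$, the Langlands parameter $\varphi(\overline{I}[\fh,\fb,\chi])$ is represented by the pair $(y,\lambda)$ constructed in steps (1)--(4). Since this is cited as \cite[Sec 3]{AdamsVogan2015}, the honest plan is to organize a proof by \emph{reduction to the principal series / Langlands quotient description}, using the machinery already set up in the excerpt. First I would recall that both sides are insensitive to the operations that appear in Section~\ref{subsec:unipotentBB}: the left side $\overline{I}[\fh,\fb,\chi]$ transforms predictably under simple reflections through complex simple roots (Theorem~\ref{thm:transfer}) and under Cayley transforms (Theorem~\ref{thm:inductionandCayleytransforms}), while on the right side the element $y=X(-1)e^{i\pi\lambda}\sigma_w\theta_0^\vee$ changes by the corresponding modification of $w$ (a Weyl group multiplication) and of $X$ (a character twist). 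So the first reduction is: verify that the map $[\fh,\fb,\chi]\mapsto(y,\lambda)$ is equivariant for these operations, and likewise that $\varphi\circ\overline{I}$ is; this lets me reduce to a convenient representative of the $\mathbf{K}$-conjugacy class, e.g.\ one with $\fh$ maximally split (via Cayley transforms in the Cayley direction) or with $\fb$ of a normalized type.

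The second step is the base case. Having reduced to a maximally split Cartan, $\overline{I}[\fh,\fb,\chi]$ is (up to a harmless shift, using Theorem~\ref{thm:dualitythm} and the identification \eqref{eqn:realinductioninfinitesimal}) a Langlands quotient of a (limit of) principal series $\Ind_{Q^{\min}}^G(\chi'\otimes|\rho|)$, and for these the Langlands parameter is computed directly from the classical recipe: the torus part $\phi(\CC^\times)$ is read off from the differential $\lambda=\tfrac12(d\phi(1)-i\,d\phi(i))$ (which matches the infinitesimal character $\lambda$ by the Harish-Chandra normalization built into Definition~\ref{def:BB}), and the action of $j$ encodes the real form, i.e.\ the involution $\theta^\vee=-\theta^t$ together with the character data $\chi$. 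Concretely I would check that $\phi(j)=e^{-i\pi\lambda}y$ with $y=X(-1)e^{i\pi\lambda}\sigma_w\theta_0^\vee$ reproduces exactly the $j$-action that the real-parabolic-induction description forces: $X(-1)$ accounts for the character $\chi$ restricted to the relevant $\{\pm1\}$'s (this is where Lemma~\ref{lem:weightoffindimrep}-type bookkeeping enters, matching $\chi(m_\alpha)$ against signs $(-1)^{\langle d\chi,\alpha^\vee\rangle}$), $e^{i\pi\lambda}$ is the canonical correction making $y^2=\exp(2\pi i\lambda)$, and $\sigma_w\theta_0^\vee$ records which element of $N_{\mathbf{G}^\vee\theta_0^\vee}(\mathbf{H}^\vee)$ the parameter lands in, determined by the twist $w$ with $\theta^\vee=w\delta^\vee$. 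The Tits-section well-definedness of $\sigma_w$ (step (3), \cite{Tits1966}) is what makes this independent of the reduced word.

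The third step is the inductive assembly: starting from the base case on a maximally split Cartan and applying Cayley transforms $c_\alpha^\pm$ to climb to the given $\fh$, I track both sides simultaneously. On the representation side, Theorem~\ref{thm:inductionandCayleytransforms} tells me $I[\fh,\fb,\chi]$ is $\pm$ a sum (type 1) or a single term (type 2) of Cayley-transformed modules, and the Langlands parameter of a Cayley transform is obtained from that of the original by the explicit rule in \cite{AdamsVogan2015} (conjugating $y$ by $\sigma_\alpha$ and adjusting the character) — which is exactly how $(y,\lambda)$ changes when $w$ picks up a factor $s_\alpha$ and $X$ is replaced by its Cayley transform. The compatibility of signs (the $\pm$ in Theorems~\ref{thm:transfer} and~\ref{thm:dualitythm}) does not affect the parameter since $\varphi$ only sees the irreducible constituent, so it suffices to match the \emph{set} of parameters appearing. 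Putting the three steps together gives the claim.

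\textbf{Main obstacle.} I expect the hard part to be the precise sign/character bookkeeping in the base case: verifying that the factor $X(-1)$ in the definition of $y$ is exactly the right correction, i.e.\ that the choice of extension $X\in X^*(\mathbf{H})$ of $\chi|_{\mathbf{T}}$ does not matter modulo $\mathbf{G}^\vee$-conjugacy of the resulting pair, and that it correctly encodes the characters $c_\alpha^\pm\chi$ of Definition~\ref{def:cayleytransformsofcharacter} through the Cayley steps. This is precisely the kind of computation that Proposition~\ref{prop:structuretheorytype1type2} and Lemma~\ref{lem:structureofTalpha} are designed to control — the index-$2$ phenomena for type~2 roots are where a naive argument would break — so the proof really amounts to checking that the $SL_2(\RR)$ base computation of Proposition~\ref{prop:SL2Rdecomposition} matches the $SL_2(\CC)^\vee$ computation of $y$ for the rank-one root subgroup, and then invoking \cite{AdamsVogan2015} for the statement that this local matching globalizes.
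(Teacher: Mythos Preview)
The paper does not prove this proposition at all: it is stated with the attribution ``[\cite{AdamsVogan2015}, Sec~3]'' and no proof environment follows. So there is nothing to compare your argument against in the paper itself; the author treats the construction of $(y,\lambda)$ from a BB parameter and its identification with the Langlands parameter as a black-box import from Adams--Vogan.

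That said, your plan is a plausible outline of how one might reconstruct the result, but be aware of two points. First, the direction of your reduction is slightly off relative to the paper's conventions: the Cayley transforms $c_\alpha^\pm$ through real roots make the Cartan \emph{more} compact (Section~\ref{sec:Cayleyreal}), so to reach a maximally split Cartan you would need to run the $d_\alpha$'s of Section~\ref{sec:Cayleynoncompact}, or else argue in the opposite direction and take the base case at a maximally compact Cartan. Second, your inductive step assumes that the Langlands parameter of a Cayley transform is obtained from the original by an explicit rule compatible with your $(y,\lambda)$ formula; this is true, but it is precisely the content of the Adams--Vogan computation you are citing, so invoking it in the inductive step is close to circular. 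A cleaner route, and the one actually taken in \cite{AdamsVogan2015}, is to work directly with the \emph{atlas} parametrization (extended groups, strong involutions) and verify the formula at the level of the combinatorial data, rather than reducing to a rank-one base case via representation-theoretic identities.
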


\subsection{Langlands parameters of principal unipotent representations}

Now suppose $G$ is quasi-split. 

\begin{definition}
An L parameter $[\phi] \in \Pi(\mathbf{G})$ is \emph{principal unipotent} if $\phi(\CC^{\times}) = 1$. Write $\Pi_0(\mathbf{G})$ for the set of principal unipotent L parameters.
\end{definition}

\begin{proposition}
Let $\cO \subset \cN$ be the principal nilpotent orbit. Then 
$$\varphi(\unip_R(\cO)) = \Pi_0(\mathbf{G})$$
\end{proposition}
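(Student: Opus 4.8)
The plan is to combine the Beilinson--Bernstein parameterization $I\colon \BB_0^*(G)\xrightarrow{\sim}\unip_R(\cO)$ of Corollary \ref{cor:principalunipotentreps} with the explicit recipe of Proposition \ref{prop:BBtoL} for passing from a BB parameter to a Langlands parameter. Concretely, since every element of $\unip_R(\cO)=\mathrm{Irrep}_0(G)$ is of the form $\overline I[\fh,\fb,\chi]$ for some $[\fh,\fb,\chi]\in\BB_0^*(G)$ (and conversely all such parameters give rise to nonzero irreducibles in $\unip_R(\cO)$, by Proposition \ref{prop:BBunipirred}), it suffices to show that as $[\fh,\fb,\chi]$ ranges over $\BB_0^*(G)$, the associated L parameter $\varphi(I[\fh,\fb,\chi])$ ranges \emph{exactly} over $\Pi_0(\mathbf G)$.

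First I would prove the inclusion $\varphi(\unip_R(\cO))\subseteq\Pi_0(\mathbf G)$. For $[\fh,\fb,\chi]\in\BB_0^*(G)$ the infinitesimal character is $\lambda=0$, so in step (4) of the construction preceding Proposition \ref{prop:BBtoL} we have $e^{i\pi\lambda}=1$ and the pair attached to the representation is $(y,0)$ with $y=X(-1)\sigma_w\theta_0^\vee$. Plugging $\lambda=0$ into the formula (\ref{eqn:pairtoparam}) gives $\phi(\exp(\pi z))=\exp(0)=1$ for all $z$, i.e. $\phi(\CC^\times)=1$; hence $[\phi]\in\Pi_0(\mathbf G)$. (Here one uses Lemma \ref{lem:Lparamstopairs} to see that the normalization is consistent, but no real computation is needed: $\lambda=0$ kills the $\CC^\times$-part outright.)

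For the reverse inclusion $\Pi_0(\mathbf G)\subseteq\varphi(\unip_R(\cO))$, take $[\phi]\in\Pi_0(\mathbf G)$. By Lemma \ref{lem:Lparamstopairs} it corresponds to a pair $(y,\lambda)$ with $\lambda=\tfrac12(d\phi(1)-id\phi(i))$; since $\phi(\CC^\times)=1$ we get $\lambda=0$, so $[\phi]$ is determined by $y\in\mathbf G^\vee\theta_0^\vee$ with $y^2=\exp(2\pi i\cdot 0)=1$, up to $\mathbf G^\vee$-conjugacy — that is, by an involution in $\mathbf G^\vee\theta_0^\vee$. Because $G$ is quasi-split, Theorem \ref{thm:Langlands} guarantees that the L-packet over $[\phi]$ is nonempty, so there is an irreducible $X\in\mathrm{Irrep}(G)$ with $\varphi(X)=[\phi]$; the infinitesimal character of $X$ is the $W$-orbit of $\lambda=0$, so $X\in\mathrm{Irrep}_0(G)=\unip_R(\cO)$ by Corollary \ref{cor:principalunipotentreps}. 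This gives $[\phi]=\varphi(X)\in\varphi(\unip_R(\cO))$ and completes the proof.

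The only subtlety — and the step I expect to require the most care — is the bookkeeping in the reverse inclusion: one must be sure that the ``infinitesimal character $\lambda$'' attached to an L parameter via Lemma \ref{lem:Lparamstopairs} matches, under $\varphi$, the Harish-Chandra infinitesimal character of any representation in its packet, so that $\lambda=0$ really does force $X\in\mathrm{Irrep}_0(G)$. This compatibility is part of the standard formulation of the Langlands classification in \cite{ABV} (the infinitesimal character of the packet of $[\phi]$ is $\lambda$ up to $W$), so it may be cited rather than reproved; with it in hand the argument is immediate. An alternative, purely internal route would bypass Theorem \ref{thm:Langlands} entirely: show directly that the map $[\fh,\fb,\chi]\mapsto(y,0)$ of Proposition \ref{prop:BBtoL} surjects onto $\mathbf G^\vee$-conjugacy classes of involutions $y\in\mathbf G^\vee\theta_0^\vee$, using that $\BB_0^*(G)$ already sees all $\theta$-stable Cartan subalgebras; but invoking quasi-splitness and Theorem \ref{thm:Langlands} is cleaner.
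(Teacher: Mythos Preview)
Your proposal is correct and follows essentially the same approach as the paper. The paper's proof is just a terser version of yours: it records the two inclusions $\varphi(\mathrm{Irrep}_0(G))\subseteq\Pi_0(\mathbf G)$ and $\varphi^{-1}(\Pi_0(\mathbf G))\subseteq\mathrm{Irrep}_0(G)$ as immediate consequences of Proposition~\ref{prop:BBtoL}, invokes surjectivity of $\varphi$ from quasi-splitness (Theorem~\ref{thm:Langlands}), and then applies $\mathrm{Irrep}_0(G)=\unip_R(\cO)$ from Corollary~\ref{cor:principalunipotentreps} --- exactly the ingredients you use, in the same order.
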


\begin{proof}
By Proposition \ref{prop:BBtoL}, we have
$$\varphi(\mathrm{Irrep}_0(G)) \subseteq \Pi_0(\mathbf{G}) \qquad \text{and} \qquad \varphi^{-1}(\Pi_0(\mathbf{G})) \subseteq \mathrm{Irrep}_0(G)$$
Since $G$ is quasi-split, $\varphi$ is surjective (see Theorem \ref{thm:Langlands}). Hence
$$\varphi(\mathrm{Irrep}_0(G)) = \Pi_0(\mathbf{G})$$
By Corollary \ref{cor:principalunipotentreps}, $\mathrm{Irrep}_0(G) = \unip_R(\cO)$, completing the proof.
\end{proof}

The main result of Section \ref{sec:classification} provides a set of `normalized' representatives for $\Pi(\mathbf{G})$.

\begin{definition}
An element $y \in N_{\mathbf{G}^{\vee}\theta_0^{\vee}}(\mathbf{H}^{\vee})$ is \emph{principal unipotent} if 
\begin{itemize}
    \item[(i)] $y^2=1$ (hence, $\Ad(y)$ defines an involution of $\mathbf{G}^{\vee}$ which preserves $\mathbf{H}^{\vee}$),
    \item[(ii)] $\fb^{\vee}$ is small for $\Ad(y)$ (see Definition \ref{def:largetypeZtypeL}(ii))
    \item[(iii)] $\fb^{\vee}$ is type $L$ for $\Ad(y)$ (see Definition \ref{def:largetypeZtypeL}(iii))
\end{itemize}
\end{definition}

The following proposition is an immediate consequence of Corollary \ref{cor:principalunipotentreps} and Proposition \ref{prop:BBtoL}.

\begin{proposition}\label{prop:descriptionofprincipalLparams}
There is a natural bijection
$$\Pi_0(\mathbf{G}) \simeq \{\text{principal unipotent } y \in N_{\mathbf{G}^{\vee}\theta_0^{\vee}}(\mathbf{H}^{\vee})\}/\mathbf{H}^{\vee}$$
\end{proposition}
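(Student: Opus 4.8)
The plan is to unwind both sides of the asserted bijection and show each is the $\mathbf{H}^{\vee}$-orbit set of the same combinatorial data. On the representation-theoretic side, Corollary \ref{cor:principalunipotentreps} gives $\unip_R(\cO) = \mathrm{Irrep}_0(G)$, and the bijection $I: \mathrm{BB}_0^*(G) \xrightarrow{\sim} \mathrm{Irrep}_0(G)$ identifies principal unipotent representations with principal unipotent BB parameters $[\fh,\fb,\chi] \in \BB_0^*(G)$. On the Langlands side, $\varphi(\unip_R(\cO)) = \Pi_0(\mathbf{G})$ by the preceding proposition, and Proposition \ref{prop:BBtoL} (with $\lambda = 0$) describes $\varphi \circ I$ explicitly: the pair attached to $[\fh,\fb,\chi]$ is $(y,0)$ with $y = X(-1)\sigma_w\theta_0^{\vee}$, where $\theta^{\vee} = -\theta^t = w\delta^{\vee}$ encodes the $\theta$-stable Cartan $\fh$, $\sigma_w$ is the Tits lift of $w$, and $X(-1)$ encodes $\chi$. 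Since $\lambda = 0$, Lemma \ref{lem:Lparamstopairs} says $\Pi_0(\mathbf{G})$ is exactly $\{(y,0): y \in \mathbf{G}^{\vee}\theta_0^{\vee}, \ y^2 = 1\}/\mathbf{G}^{\vee}$, i.e. $\mathbf{G}^{\vee}$-conjugacy classes of involutions in $\mathbf{G}^{\vee}\theta_0^{\vee}$. So the content of the proposition is that each such class has a representative in $N_{\mathbf{G}^{\vee}\theta_0^{\vee}}(\mathbf{H}^{\vee})$ which is principal unipotent in the stated sense, and that two principal unipotent elements of $N_{\mathbf{G}^{\vee}\theta_0^{\vee}}(\mathbf{H}^{\vee})$ are $\mathbf{G}^{\vee}$-conjugate if and only if they are $\mathbf{H}^{\vee}$-conjugate.

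First I would set up the dictionary between the three pieces of data in a principal unipotent BB parameter and the three defining conditions on $y$. A $\theta$-stable Cartan $\fh \subset \fg$ up to $\mathbf{K}$-conjugacy corresponds, via $\theta^{\vee} = -\theta^t = \Ad(y)|_{\mathbf{H}^{\vee}}$, to an involution of $\mathbf{H}^{\vee}$ in the coset $w\delta^{\vee}$; the condition $y^2 = 1$ is the statement that $\Ad(y)$ is a genuine involution of $\mathbf{G}^{\vee}$, which is the shadow of $\theta^2 = 1$. The large/type Z conditions on $\Delta^+(\fg,\fh)$ transport under duality (using $-w_0$ and transpose, which swaps the roles of $\theta$ and $-\theta$ in the sense already used in Proposition \ref{prop:typeZLparabolic} and the $\delta^{\vee} = -w_0\delta^t$ twist) into: large becomes small for $\Ad(y)$, and type Z becomes type L for $\Ad(y)$ — this is exactly conditions (ii) and (iii) in the definition of principal unipotent $y$. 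The condition that every simple real root is even for $\chi$, together with $d\chi = -\rho(\fn)$ and $\lambda = 0$, pins down $X(-1)$ so that $y$ lies in $N_{\mathbf{G}^{\vee}\theta_0^{\vee}}(\mathbf{H}^{\vee})$ with $y^2 = 1$ and no further freedom. I would verify each of these correspondences is a bijection on the relevant orbit sets.

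Next I would handle the orbit-matching. The Beilinson-Bernstein parameter $[\fh,\fb,\chi]$ is a $\mathbf{K}$-conjugacy class; the recipe in Section 5 produces $y$ only after conjugating $\fh$ and $\fb$ into fixed position, so the residual ambiguity is precisely the normalizer of the fixed pair modulo its centralizer — on the dual side this becomes $\mathbf{H}^{\vee}$-conjugacy of $y$. Conversely, a $\mathbf{G}^{\vee}$-conjugacy class of involutions $y \in \mathbf{G}^{\vee}\theta_0^{\vee}$ has a representative normalizing $\mathbf{H}^{\vee}$ (any involution of $\mathbf{G}^{\vee}$ preserves some maximal torus; conjugate it to $\mathbf{H}^{\vee}$), and by applying Cayley transforms and simple reflections on the dual side — the exact mirror of Proposition \ref{prop:noncompactrootsandAV} and the discussion of simple reflections on $\BB_0(G)$ — one can arrange $\fb^{\vee}$ to be small and type L for $\Ad(y)$, which is the normalization step that eliminates all ambiguity beyond $\mathbf{H}^{\vee}$. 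Injectivity of $I$ on $\BB_0^*(G)$ (Proposition \ref{prop:Iinjective}) and injectivity of $\varphi$-to-pair (Lemma \ref{lem:Lparamstopairs}) then give that distinct $\mathbf{H}^{\vee}$-orbits of principal unipotent $y$ give distinct elements of $\Pi_0(\mathbf{G})$.

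The main obstacle I expect is bookkeeping the duality twist correctly: tracking how the adjectives large/small and type Z/type L interchange under the passage $\delta \rightsquigarrow \delta^{\vee} = -w_0\delta^t$ and under $\theta \rightsquigarrow \theta^{\vee} = -\theta^t$, and making sure the evenness-of-real-roots condition on $\chi$ really is equivalent to $y$ landing in $N_{\mathbf{G}^{\vee}\theta_0^{\vee}}(\mathbf{H}^{\vee})$ with $y^2 = 1$ and nothing more (as opposed to some condition involving $\rho$-shifts). Concretely, the subtlety is that $\rho$ is generically not in $X^*(\mathbf{H})$, so the half-sum shifts appearing in $d\chi = -\rho(\fn)$ must be handled through the $e^{i\pi\lambda}$ factor with $\lambda = 0$ and through the choice of $\mu$ with $X|_{\mathbf{T}} = \chi|_{\mathbf{T}}$; verifying that the resulting $y$ is well-defined up to $\mathbf{H}^{\vee}$ and independent of the choice of $\mu$ is where I would be most careful. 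Everything else is a formal consequence of Corollary \ref{cor:principalunipotentreps}, Proposition \ref{prop:BBtoL}, and Lemma \ref{lem:Lparamstopairs}, so once the duality dictionary is pinned down the proof is short.
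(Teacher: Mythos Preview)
Your approach matches the paper's—both derive the proposition from Corollary~\ref{cor:principalunipotentreps} and Proposition~\ref{prop:BBtoL}, and you correctly identify that the main content is the duality dictionary translating (large, type Z, even) on the $G$-side into ($y^2=1$, small, type L) on the $\mathbf{G}^{\vee}$-side. The paper's proof is a single sentence, so your expansion is welcome and the surjectivity half is sound.

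However, your injectivity argument has a genuine gap. You assert that the dictionary sets up a bijection between $\BB_0^*(G)$ and $\{\text{principal unipotent }y\}/\mathbf{H}^{\vee}$, then invoke Proposition~\ref{prop:Iinjective} to conclude that distinct $\mathbf{H}^{\vee}$-orbits give distinct elements of $\Pi_0(\mathbf{G})$. Neither step is correct. The map $\BB_0^*(G) \to \{\text{principal unipotent }y\}/\mathbf{H}^{\vee}$ is \emph{not} a bijection: already for $G = SL_2(\RR)$ there are three principal unipotent BB parameters (the spherical principal series and the two limits of discrete series) but only two $\mathbf{H}^{\vee}$-orbits of principal unipotent $y$ in $N_{PGL_2}(\mathbf{H}^{\vee})$, reflecting the fact that the two limits of discrete series lie in the same L-packet. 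Proposition~\ref{prop:Iinjective} concerns $\BB_0^*(G) \to \mathrm{Irrep}_0(G)$ and says nothing about collapsing $\mathbf{G}^{\vee}$-conjugacy to $\mathbf{H}^{\vee}$-conjugacy. What must actually be shown is that two principal unipotent elements of $N_{\mathbf{G}^{\vee}\theta_0^{\vee}}(\mathbf{H}^{\vee})$ which are $\mathbf{G}^{\vee}$-conjugate are already $\mathbf{H}^{\vee}$-conjugate—that the (small, type L) normalization rigidifies each $\mathbf{G}^{\vee}$-class to a single $\mathbf{H}^{\vee}$-orbit. This requires its own argument (for instance via the combinatorics of $\mathbf{K}^{\vee}$-orbits on $\mathcal{B}^{\vee}$, or by checking directly that the recipe of Proposition~\ref{prop:BBtoL} produces $\mathbf{H}^{\vee}$-conjugate outputs on BB parameters in the same L-packet), not an appeal to Proposition~\ref{prop:Iinjective}.
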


Specializing Proposition \ref{prop:descriptionofprincipalLparams} to the case when $G$ is split (and hence $\theta_0^{\vee} = \mathrm{id}$), we obtain the following result.

\begin{corollary}\label{cor:order2}
There is a natural bijection
$$\{y \in \mathbf{G}^{\vee} \mid y^2=1\}/\mathbf{G}^{\vee} \simeq \{y \in N(\mathbf{H}^{\vee}) \mid y^2=1,  \ \mathfrak{b}^{\vee} \text{ is large and type L for } \Ad(y)\}/\mathbf{H}^{\vee}$$
\end{corollary}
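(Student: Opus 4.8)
The plan is to deduce Corollary \ref{cor:order2} from Proposition \ref{prop:descriptionofprincipalLparams} by tracking what happens when $\theta_0^{\vee} = \mathrm{id}$, i.e.\ when $G$ is split. In that case the $L$-group degenerates: $\mathbf{G}^L = \mathbf{G}^{\vee} \times \{1\}$, so a principal unipotent parameter is recorded (via Lemma \ref{lem:Lparamstopairs}) by an element $y \in \mathbf{G}^{\vee}$ with $\phi(\CC^{\times}) = 1$. The condition $\exp(2\pi i \lambda) = y^2$ with $\lambda = 0$ forces $y^2 = 1$. So $\Pi_0(\mathbf{G})$ is in bijection with $\{y \in \mathbf{G}^{\vee} : y^2 = 1\}/\mathbf{G}^{\vee}$ — this is the left-hand side.

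Next I would invoke Proposition \ref{prop:descriptionofprincipalLparams} directly: $\Pi_0(\mathbf{G}) \simeq \{\text{principal unipotent } y \in N_{\mathbf{G}^{\vee}\theta_0^{\vee}}(\mathbf{H}^{\vee})\}/\mathbf{H}^{\vee}$. When $\theta_0^{\vee} = \mathrm{id}$, the coset $\mathbf{G}^{\vee}\theta_0^{\vee}$ is just $\mathbf{G}^{\vee}$, so $N_{\mathbf{G}^{\vee}\theta_0^{\vee}}(\mathbf{H}^{\vee}) = N_{\mathbf{G}^{\vee}}(\mathbf{H}^{\vee}) = N(\mathbf{H}^{\vee})$, and the conditions defining ``principal unipotent'' simplify. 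Condition (i), $y^2 = 1$, is unchanged. The only subtlety is matching the pair of conditions (ii)--(iii) in the definition of ``principal unipotent $y$'' (namely $\mathfrak{b}^{\vee}$ small and type $L$ for $\Ad(y)$) with the single phrase ``$\mathfrak{b}^{\vee}$ is large and type L for $\Ad(y)$'' appearing in the Corollary. Here I would note that for the \emph{split} form, $G$ split means the roles of the involutions on $\mathbf{G}$ and on $\mathbf{G}^{\vee}$ are interchanged under the duality $\delta \mapsto \delta^{\vee} = -w_0\delta^t$: a small positive system on the $\mathbf{G}$ side (all imaginary simple roots compact, which for split $G$ is automatic since there are no imaginary roots at the split Cartan) dualizes to a large positive system on the $\mathbf{G}^{\vee}$ side. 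More precisely, I would unwind the definition of ``small'' for $\Ad(y)$ acting on $\mathbf{H}^{\vee}$ with $\mathfrak{b}^{\vee}$ fixed, and observe that under the transpose-and-negate operation relating the $\mathbf{G}$-side involution $\theta$ to the $\mathbf{G}^{\vee}$-side involution $\Ad(y)$, ``small'' (every imaginary simple root compact) corresponds to ``large'' (every imaginary simple root noncompact). So the stated conditions in the Corollary are precisely conditions (ii)--(iii) transported through the split duality.

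The key steps, in order: (1) specialize Lemma \ref{lem:Lparamstopairs} to $\theta_0^{\vee} = \mathrm{id}$, $\lambda = 0$ to get $\Pi_0(\mathbf{G}) \simeq \{y \in \mathbf{G}^{\vee}: y^2 = 1\}/\mathbf{G}^{\vee}$; (2) quote Proposition \ref{prop:descriptionofprincipalLparams}; (3) simplify $N_{\mathbf{G}^{\vee}\theta_0^{\vee}}(\mathbf{H}^{\vee})$ to $N(\mathbf{H}^{\vee})$; (4) reconcile the ``small/type L'' versus ``large/type L'' discrepancy via the split-case duality between $\theta$-type conditions on $\mathbf{G}$ and $\Ad(y)$-type conditions on $\mathbf{G}^{\vee}$. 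The main obstacle is step (4): getting the ``small'' versus ``large'' bookkeeping exactly right, since this hinges on how the involution $\delta^{\vee} = -w_0\delta^t$ twists the notion of compactness of imaginary roots, and on the fact that in the split case the relevant Cartan on $\mathbf{G}$ has no imaginary roots so the compatibility has to be read off on the dual side. I would handle this by carefully restating Definition \ref{def:largetypeZtypeL} for the involution $\Ad(y)$ and checking that the conditions in Proposition \ref{prop:descriptionofprincipalLparams}, when $\theta_0^{\vee}=\mathrm{id}$ and one uses $\delta^{\vee}= -w_0$, literally become ``$y^2=1$, $\mathfrak{b}^{\vee}$ large and type $L$ for $\Ad(y)$'', which is exactly the right-hand side of the Corollary.
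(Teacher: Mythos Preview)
Your steps (1)--(3) are exactly the paper's argument: the paper's entire proof is the single sentence ``Specializing Proposition \ref{prop:descriptionofprincipalLparams} to the case when $G$ is split (and hence $\theta_0^{\vee} = \mathrm{id}$), we obtain the following result.'' So the substantive content of your proposal matches.

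Your step (4), however, is chasing a phantom. The discrepancy between ``small'' (in the definition of principal unipotent $y$) and ``large'' (in the statement of the Corollary) is a typographical inconsistency in the paper, not a mathematical gap to be bridged. Both conditions refer to $\mathfrak{b}^{\vee}$ relative to the \emph{same} involution $\Ad(y)$ on $\mathbf{G}^{\vee}$; there is no duality passage between them, and no argument of the kind you sketch can convert one into the other. Your proposed mechanism---that the split-case duality $\delta \mapsto \delta^{\vee} = -w_0\delta^t$ interchanges compact and noncompact imaginary roots---does not apply here: that duality relates an involution on $\mathbf{G}$ to one on $\mathbf{G}^{\vee}$, whereas both sides of the discrepancy already live on $\mathbf{G}^{\vee}$. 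In fact, under $\theta \mapsto -\theta^t$ imaginary roots become real and vice versa, so the bookkeeping you propose would not even connect the relevant conditions. The correct resolution is simply to note that the Corollary should read ``small'' (or the definition should read ``large'') to match; the paper performs no additional work here.
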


Note that Corollary \ref{cor:order2} is a purely structural fact about connected reductive algebraic groups.

\appendix

\section{Clifford Theory for Harish-Chandra Modules}

Fix $\mathfrak{g}$ and $\mathbf{K}$ as in Section \ref{subsec:induction}. Suppose $\mathbf{K}'\subset \mathbf{K}$ is an index-2 subgroup and write $A= \mathbf{K}/\mathbf{K}'$. The theory of induction from finite-index subgroups (see \cite{Clifford1937}) has a $(\mathfrak{g},\mathbf{K})$-module analog. 

\begin{proposition}[Clifford Theory for Harish-Chandra Modules]\label{prop:cliffordtheoryindex2}
Choose an element $s \in \mathbf{K}\setminus \mathbf{K}'$, so that
$$\mathbf{K} = \mathbf{K}' \sqcup s\mathbf{K}'$$
Let $\epsilon$ be the one-dimensional $(\mathfrak{g},\mathbf{K})$-module which is trivial as a $\mathfrak{g}$-module with $\mathbf{K}$ acting by the nontrivial character of $A$. Since $\mathbf{K}'$ has finite index in $\mathbf{K}$, we have that $\mathbf{K}^0 \subset \mathbf{K}'$, so $\epsilon$ is indeed a well-defined $(\mathfrak{g},\mathbf{K})$-module.

If $X$ is an irreducible $(\mathfrak{g},\mathbf{K})$-module, we can define a second (possibly isomorphic) irreducible $(\mathfrak{g},\mathbf{K})$-module $X \otimes \epsilon$. If $X'$ is an irreducible $(\mathfrak{g},\mathbf{K}')$-module, we can define a second (possibly isomorphic) irreducible $(\mathfrak{g},\mathbf{K}')$-module $X'_s$ by twisting the $K$-action on $X'$ by $s$, i.e.
$$k' \cdot x := (sk's^{-1}) \cdot x$$
The assignments $X \mapsto X \otimes \epsilon$ and $X' \mapsto X'_s$ define $A$-actions on the sets $\mathrm{Irr}(\mathfrak{g},\mathbf{K})$ and $\mathrm{Irr}(\mathfrak{g},\mathbf{K}')$ of irreducible $(\mathfrak{g},\mathbf{K})$- and $(\mathfrak{g},\mathbf{K}')$-modules, respectively. We have

\begin{enumerate}
    \item If $X \in \mathrm{Irr}(\mathfrak{g},\mathbf{K})$ is fixed by $A$, then $\mathrm{Res}^{(\mathfrak{g},\mathbf{K})}_{(\mathfrak{g},\mathbf{K}')}X$ is reducible, with two irreducible summands. If $X, Y \in \mathrm{Irr}(\mathfrak{g},\mathbf{K})$ are $A$-conjugate, then $\mathrm{Res}^{(\mathfrak{g},\mathbf{K})}_{(\mathfrak{g},\mathbf{K}')}X$ and $\mathrm{Res}^{(\mathfrak{g},\mathbf{K})}_{(\mathfrak{g},\mathbf{K}')}Y$ are isomorphic, irreducible $(\mathfrak{g},\mathbf{K}')$-modules.
    \item If $X \in \mathrm{Irr}(\mathfrak{g},\mathbf{K}')$ is fixed by $A$, then $\mathbf{I}^{(\mathfrak{g},\mathbf{K})}_{(\mathfrak{g},\mathbf{K}')}X$ is reducible, with two irreducible summands. If $X', Y' \in \mathrm{Irr}(\mathfrak{g},\mathbf{K}')$ are $A$-conjugate, then $\mathbf{I}^{(\mathfrak{g},\mathbf{K})}_{(\mathfrak{g},\mathbf{K}')}X'$ and $\mathbf{I}^{(\mathfrak{g},\mathbf{K})}_{(\mathfrak{g},\mathbf{K}')}Y'$ are isomorphic, irreducible $(\mathfrak{g},\mathbf{K})$-modules.
    \item In this fashion, induction and restriction define inverse bijections
    $$\mathrm{Irr}(\mathfrak{g},\mathbf{K}')/A \longleftrightarrow \mathrm{Irr}(\mathfrak{g},\mathbf{K})/A$$
    These bijections exchange one-element and two-element $A$-orbits. 
\end{enumerate}
\end{proposition}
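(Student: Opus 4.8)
The statement to prove is Proposition \ref{prop:cliffordtheoryindex2}, the $(\mathfrak{g},\mathbf{K})$-module analogue of Clifford theory for an index-$2$ subgroup.

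\medskip

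The plan is to mimic the classical Clifford theory argument for finite groups, transported to the setting of Harish-Chandra pairs, using the adjunction between $\mathbf{I}^{(\mathfrak{g},\mathbf{K})}_{(\mathfrak{g},\mathbf{K}')}$ (induction) and $\mathrm{Res}^{(\mathfrak{g},\mathbf{K})}_{(\mathfrak{g},\mathbf{K}')}$ (restriction). First I would record the structural facts: since $[\mathbf{K}:\mathbf{K}']=2$ we have $\mathbf{K}^0 \subset \mathbf{K}'$, so restriction of a $(\mathfrak{g},\mathbf{K})$-module to $(\mathfrak{g},\mathbf{K}')$ changes nothing about the $\mathfrak{g}$-action or the differentiated $\mathbf{K}$-action; and since $\mathbf{K}'$ is closed of finite index, induction from $(\mathfrak{g},\mathbf{K}')$ to $(\mathfrak{g},\mathbf{K})$ is exact, is both left and right adjoint to restriction (Frobenius reciprocity, e.g.\ \cite{KnappVogan1995}), and satisfies $\mathbf{I}^{(\mathfrak{g},\mathbf{K})}_{(\mathfrak{g},\mathbf{K}')}(\mathrm{Res}\, X) \cong X \oplus (X\otimes \epsilon)$ by the projection formula. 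Dually, $\mathrm{Res}\,\mathbf{I}^{(\mathfrak{g},\mathbf{K})}_{(\mathfrak{g},\mathbf{K}')}X' \cong X' \oplus X'_s$ by the Mackey-type decomposition for the double coset $\mathbf{K}' \backslash \mathbf{K} / \mathbf{K}'$, which here has the two cosets $\mathbf{K}'$ and $s\mathbf{K}'$. These two natural isomorphisms are the engine of the whole argument and I would establish them carefully at the outset; everything else is a formal consequence.

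\medskip

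Next I would prove (1). Given irreducible $X \in \mathrm{Irr}(\mathfrak{g},\mathbf{K})$, the restriction $X' := \mathrm{Res}\,X$ is a finite-length $(\mathfrak{g},\mathbf{K}')$-module, and by adjunction $\mathrm{Hom}_{(\mathfrak{g},\mathbf{K}')}(X',X') = \mathrm{Hom}_{(\mathfrak{g},\mathbf{K})}(\mathbf{I}\,X', X) = \mathrm{Hom}_{(\mathfrak{g},\mathbf{K})}(X \oplus X\otimes\epsilon, X)$, which is $1$-dimensional if $X\not\cong X\otimes\epsilon$ and $2$-dimensional if $X\cong X\otimes\epsilon$; since $X'$ is completely reducible as a sum of $\le 2$ irreducibles (its length is at most $2$ because $\mathbf{I}\, X'$ has length at most $2\cdot(\text{length of }X')$ is the wrong direction — rather, $X' \hookrightarrow \mathrm{Res}\,\mathbf{I}\,X'$ has length $\le 2$ directly since $\dim_{\mathbb C}\mathrm{Hom}$ bounds it), this endomorphism computation forces: $X'$ irreducible exactly when $X\not\cong X\otimes\epsilon$, and $X' = X'_1 \oplus X'_2$ with $X'_1 \not\cong X'_2$ exactly when $X\cong X\otimes\epsilon$. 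The conjugacy statement — that $\mathrm{Res}\,X \cong \mathrm{Res}(X\otimes\epsilon)$ — is immediate since $\epsilon$ restricts trivially to $(\mathfrak{g},\mathbf{K}')$. Statement (2) is the mirror image: for irreducible $X' \in \mathrm{Irr}(\mathfrak{g},\mathbf{K}')$, compute $\mathrm{End}_{(\mathfrak{g},\mathbf{K})}(\mathbf{I}\, X') = \mathrm{Hom}_{(\mathfrak{g},\mathbf{K}')}(X', \mathrm{Res}\,\mathbf{I}\,X') = \mathrm{Hom}_{(\mathfrak{g},\mathbf{K}')}(X', X'\oplus X'_s)$, which is $1$- or $2$-dimensional according to whether $X'\cong X'_s$, and $X'\cong X'_s$ iff $X'$ is $A$-fixed; combined with the length bound on $\mathbf{I}\,X'$ this gives the two-summand dichotomy, and $\mathbf{I}\,X' \cong \mathbf{I}\,X'_s$ follows because $X'_s$ is a summand of $\mathrm{Res}\,\mathbf{I}\,X'$ and reciprocity then exhibits a nonzero (hence, by irreducibility, iso) map.

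\medskip

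Finally, (3) is bookkeeping: the two functors $\mathrm{Res}$ and $\mathbf{I}$ (suitably normalized — on a two-element orbit one must divide the naive induction/restriction output by $2$, or equivalently pick out a single summand) descend to maps $\mathrm{Irr}(\mathfrak{g},\mathbf{K})/A \leftrightarrow \mathrm{Irr}(\mathfrak{g},\mathbf{K}')/A$; the identities $\mathbf{I}\,\mathrm{Res}\,X \cong X\oplus X\otimes\epsilon$ and $\mathrm{Res}\,\mathbf{I}\,X'\cong X'\oplus X'_s$ show these are mutually inverse on orbits, and the explicit summand counts from (1) and (2) show a one-element orbit on one side corresponds to a two-element orbit on the other and vice versa. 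I expect the main obstacle to be the first paragraph: verifying the Mackey decomposition $\mathrm{Res}\,\mathbf{I}^{(\mathfrak{g},\mathbf{K})}_{(\mathfrak{g},\mathbf{K}')}X' \cong X'\oplus X'_s$ and the projection formula $\mathbf{I}(\mathrm{Res}\,X)\cong X\otimes(\mathbb C\oplus\epsilon)$ in the Harish-Chandra category — one must be careful about the disconnectedness of $\mathbf{K}$, the correct (algebraic, not analytic) model of induction from \cite[Chapter 5]{Vogan1981}, and the twist by $\det(\mathfrak{u})$ which here is trivial since $\mathbf{K}'$ and $\mathbf{K}$ have the same Lie algebra. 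Once those two natural isomorphisms are in hand, the rest is the standard $\mathrm{Hom}$-dimension argument.
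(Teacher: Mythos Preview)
Your approach is correct and is exactly what the paper has in mind. The paper does not actually give a proof: it simply states that the proposition ``can be deduced from the corresponding result for classical induction and a description (\cite{KnappVogan1995}, Proposition 2.75) of $I^{(\mathfrak{g},\mathbf{K})}_{(\mathfrak{g},\mathbf{K}')}$'' and leaves the details to the reader. Your sketch supplies precisely those details --- the projection formula, the Mackey decomposition, and the endomorphism-dimension argument via Frobenius reciprocity --- and the reference to \cite{KnappVogan1995} is what you would use to justify the concrete model of $\mathbf{I}^{(\mathfrak{g},\mathbf{K})}_{(\mathfrak{g},\mathbf{K}')}$ needed for the first paragraph.
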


Proposition \ref{prop:cliffordtheoryindex2} can be deduced from the corresponding result for classical induction and a description (\cite{KnappVogan1995}, Proposition 2.75) of $I^{(\mathfrak{g},\mathbf{K})}_{(\mathfrak{g},\mathbf{K}')}$. We leave the details to the reader.

\bibliographystyle{plain}
\bibliography{bibliography.bib}

\end{document}